\newcommand{\R}{\mathbb{R}}
\newcommand{\C}{\mathbb{C}}
\newcommand{\Z}{\mathbb{Z}}
\newcommand{\N}{\mathbb{N}}
\newcommand{\dd}{\mathrm{d}}
\renewcommand{\O}{\mathcal{O}}
\renewcommand{\S}{\mathbb{S}}
\DeclareMathOperator{\supp}{supp}
\DeclareMathOperator{\tr}{tr}
\DeclareMathOperator{\dist}{dist}
\newtheoremstyle{indented}
{7pt} %vertical space before
{7pt} % vertical space after
{} %{\addtolength{\@totalleftmargin}{2.5em}
\theoremstyle{definition}
\newtheorem{defn}{Definition}[section]
\newtheorem{notation}{Notations}[section]
\theoremstyle{plain}
\newtheorem*{theorem*}{Theorem}
\newtheorem{theorem}{Theorem}
\newtheorem{prop}[defn]{Proposition}
\newtheorem{prop*}{Proposition}      
\newtheorem{corr}[defn]{Corollary}
\newtheorem{lem}[defn]{Lemma}
\newtheorem{conj}{Conjecture}
\theoremstyle{definition}
\newtheorem{rem}[defn]{Remark} %remarks are not indented
\numberwithin{equation}{section}
\renewcommand*\env@matrix[1][*\c@MaxMatrixCols c]{%
  \hskip -\arraycolsep
  \let\@ifnextchar\new@ifnextchar
  \array{#1}}
\newcommand{\1}{\mathds{1}}
\newcommand{\X}{\mathbf{X}}
\newcommand{\x}{\mathbf{x}}
\newcommand{\Co}{\mathscr{C}}
\newcommand{\var}{\operatorname{var}}
\newcommand{\J}{\mathrm J}
\newcommand{\cst}{\mathrm c}
\newcommand{\Cst}{\mathrm C}
\title{Widom's conjecture: variance asymptotics and entropy bounds for
counting statistics of free fermions}
\author{Alix
Deleporte\thanks{alix.deleporte@universite-paris-saclay.fr},
Gaultier Lambert\thanks{glambert@kth.se}}
\affil{Universit\'e Paris-Saclay, CNRS, Laboratoire de math\'ematiques d'Orsay, 91405, Orsay, France.}
\affil{KTH Royal Institute of Technology, Department of Mathematics, 11428, Stockholm, Sweden.}
\newtheorem{assump}{Assumptions}
\begin{document} 

\maketitle

\begin{abstract}
{ We obtain a central limit theorem for bulk counting statistics
of free fermions in smooth domains of $\R^n$ with an explicit
description of the covariance structure.} This amounts to a study of
the asymptotics of norms of commutators between
spectral projectors of semiclassical Schrödinger operators and indicator functions { supported in the bulk}. In the spirit of the
Widom conjecture, we show that the squared Hilbert-Schmidt norm of
these commutators is of order $\hbar^{-n+1}\log(\hbar)$ as the
semiclassical parameter $\hbar$ tends to $
0$. We also give a new upper bound on the trace norm of these commutators
and applications to estimations of the entanglement entropy for free fermions.
\end{abstract}

\tableofcontents 

\section{Introduction}

The goal of this article is to explore the relationships between the
\emph{number variance} and \emph{entanglement entropy} of certain free
fermionic systems, on one hand, and the spectral theory of
semiclassical Schrödinger operators on the other hand.
Consider a Schrödinger operator on $L^2(\R^n)$:
\[
H_{\hbar} :=-\hbar^2\Delta+V , 
\]
where $\Delta$ is the standard (negative) Laplacian on $\R^n$, $\hbar>0$ plays the role of
the Planck constant, and $V:\R^n\to \R$ is bounded from below. 
The operator $H_{\hbar}$ is essentially self-adjoint with domain \[{\mathrm H}^2(\R^n)\cap \{u\in L^2,Vu\in
L^2\}.\]
We are interested in the fluctuations of the $N$ particles free fermionic state
associated with the operator $H_{\hbar}$ at zero temperature. Under assumptions on $V$ and $N$ which are specified below, this (random) point process is described by the probability density
$\mathbb{P}_N$ on $\R^{n\times N}$ with density
\begin{equation} \label{slater}
\mathbb{P}_N[\dd \x]=
\frac{1}{N!}\Big|\det_{N\times
N}\left[v_k(x_j)\right]\Big|^2, \qquad (x_1,\dots,x_N)  \in \R^{n\times N} , 
\end{equation}
where $(\lambda_k,v_k)_{1\leq k\leq N}$ are the $N$ lowest
eigenvalues, and associated orthonormal eigenfunctions, of
$H_{\hbar}$. Even though it is not emphasized, this spectral data and
the measure $\mathbb{P}_N$ depend on the semiclassical parameter~$\hbar$. We are interested in a joint limit where $\hbar\to 0$ and $N\to +\infty$,
while keeping fixed the \emph{Fermi energy} $\mu$. To this end, we fix
$\mu\in \R$ and choose
\[
N=N(\hbar) := \max\big\{ k\in\N : \lambda_k(\hbar) \le \mu \big\}.
\]
We work under the following assumptions on the external potential $V$:

\begin{assump} \label{asymp:V}
Fix $\mu \in\R$ and assume that $V \in L^1_{\rm loc}(\R^n)$ with $-\infty<\inf(V)<\mu$.
We assume that $\mathcal{D} : = \{V<\mu\}$ is relatively compact, $V$ is $C^\infty$ on a neighborhood of $\mathcal{D}$, and
$\partial_x V(x) \neq 0$ for all $x\in \partial \mathcal{D} =\{V=\mu\}$. 
\end{assump}

The probability measure \eqref{slater} gives rise to a determinantal
point process $\X := \sum_{j=1}^N \boldsymbol\delta_{x_j}$ on $\R^n$
associated with the integral kernel (also denoted $\Pi_{\hbar,\mu}$) of the spectral projection
\begin{equation} \label{Fermiproj}
\Pi_{\hbar,\mu}=\1_{(-\infty,\mu]}(H_{\hbar}).
\end{equation}

The set $\mathcal{D}$ is usually called the \emph{bulk} or
\emph{droplet}, and it is the region where the fermions concentrate.
This framework and our assumptions are similar to that of our previous
work \cite{deleporte_universality_2024} where we study the scaling
limits of the kernel $\Pi$ both in the bulk and at the boundary of the
droplet (see also \cite{dean_noninteracting_2016}) and give applications to the (real)
random variables $\X(\mathrm{f})=\sum_{j=1}^N\mathrm{f}(x_j)$ for
certain smooth functions $\mathrm{f}$.

The determinantal structure means that the correlation functions or
marginals of the measure
\eqref{slater} are of the form
\[
\rho_{N,k}(x_1,\cdots,x_k) = \det_{k\times k}\big[\Pi_{\hbar,\mu}(x_i,x_j) \big] , \qquad k\le N .
\]
Moreover, for any $\mathrm{f}\in L^{\infty}(\R^n)$, the random variable
$\X(\mathrm{f})=\sum_{j=1}^N\mathrm{f}(x_j)$ has the following Laplace transform:
\begin{equation}\label{eq:Laplace_determinantal}
\mathbb{E}[\exp \X(\mathrm{f}) ]=\det(1+(e^{\mathrm{f}}-1)\Pi_{\hbar,\mu})
\end{equation}
where the right-hand side is a Fredholm determinant on
$L^2(\R^n)$, since $ \Pi_{\hbar,\mu} $ is a finite-rank projection
with $\tr \Pi_{\hbar,\mu} = N(\hbar)$.

In \cite{deleporte_universality_2024}, we showed that with overwhelming probability, as
$\hbar\to0$, one has weak-$*$ convergence of probability measures 
\begin{equation} \label{dos}
N^{-1}\X \to \dd\varrho : = Z^{-1}\big(\mu-V\big)_+^{\frac{n}{2}} \dd x , 
\end{equation}
where $\dd x$ denotes the Lebesgue measure on $\R^n$, 
and
\begin{equation} \label{Z}
\begin{aligned}
Z&=\int(\mu-V)_+^{\frac n2}\dd x\\
N(\hbar) &\sim \frac{|B_n|Z}{(2\pi\hbar)^n} \quad\text{as
$\hbar\to0$,}
\end{aligned}
\end{equation}
where $|B_n|$ is the volume of the Euclidean ball $B_n = \{x\in\R^n : |x| \le 1 \}$. 
{ 
The convergence \eqref{dos} yields a \emph{law of large numbers} for counting statistics, in the sense that $\X(\mathrm{f})$ concentrates around its mean $N\int \mathrm{f} \dd\rho$ for $\mathrm{f}\in C^{\infty}_c(\R^n)$. 
}
In \cite{deleporte_central_2023,deleporte_universality_2024}, we also studied the fluctuations of
$\X(\mathrm{f})$ and we obtain a central limit theorem as
$\hbar \to 0$ with the variance being at most of order $\hbar^{-n+1}\asymp
N\hbar$.
Moreover, in dimension~1, \cite{Smith_21,deleporte_central_2023} the variance converges and its limit is described by a weighted $H^{1/2}$ Sobolev seminorm. 

\smallskip

This article focuses instead on the case where $\mathrm{f} =\1_\Omega$ is the
indicator function of an open, relatively compact
  subset $\Omega$ of the droplet $\mathcal{D}$ with a smooth
  boundary.
The \emph{counting statistics} $\X(\Omega)$ are directly relevant for physical applications; in particular the problem of measuring the \emph{entanglement entropy} of sub-systems that we shortly review.

Entanglement is a crucial property of quantum states which, for free
fermions, results from the Pauli exclusion principle, and is quantified by suitable concepts of entropy. 
For any open, relatively compact set $\Omega\Subset\R^n$, one can define its von Neumann entanglement entropy $\mathcal{S}_\Omega$ which measures the correlations between the state restricted to $\Omega$ and its complement. This quantity plays a crucial role in condensed matter physics and in quantum information, however it is difficult to estimate, both experimentally and theoretically.
Remarkably, at zero temperature, the entanglement entropy is not
extensive and is often of order $\hbar^{n-1}|\partial\Omega|$ where
$\hbar$ is the typical inter-particle distance. This is known as the \emph{area law} in the physics literature; see \cite{srednicki_entropy_1993,callan_geometric_1994} for an historic references.  
This property is expected for states with a finite correlation length (gapped systems), while for gapless systems, which exhibit long-range correlations, it is expected that the entropy is enhanced by an extra $\log(\hbar^{-1})$ factor. This has been demonstrated for critical quantum systems \cite{calabrese_entanglement_2004,korepin_universality_2004,dubail_conformal_2017}; for one-dimensional models, conformal field theory methods predict that the leading behavior of the entanglement entropy is universal and 
\begin{equation} \label{cft}
\mathcal{S}_\Omega \sim \tfrac{\cst}{3} \log(\hbar^{-1}) \qquad\text{where $\cst$ is the central charge of the corresponding CFT.}
\end{equation}
In dimension $n> 1$, physical arguments also predict that, for a
smooth domain $\Omega$, the entropy $\mathcal{S}_\Omega$  is of order $\hbar^{1-n} \log(\hbar^{-1})$. 
This problem has been investigated for gapless systems of free fermions, both numerically \cite{vicari_entanglement_2012} and  analytically \cite{gioev_entanglement_2006,calabrese_exact_2012,calabrese_random_2015}, providing many evidence for this \emph{enhanced area law}. 
Remarkably, these works predict a universal relationship between the
fluctuations of the counting statistic $\X(\Omega)$ and the
entanglement entropy, independent of the dimension:
\begin{equation} \label{Scorr}
\mathcal{S}_\Omega  \stackrel[N\to +\infty]{}{\sim} \tfrac{\pi^2}3{\rm var} \X(\Omega) .
\end{equation}
This relation provides an experimental mean to measure the
entanglement entropy of a fermionic system by estimating its quantum
fluctuations, which is easier to measure.

Apart from the case of constant coefficients
  differential operators, there is no rigorous bound in the literature
  on the
  entanglement entropy. The goal of this article is to rigorously explore these questions for the (Schr\"odinger) free Fermion model defined above by using techniques from semiclassical analysis. 
In particular, we obtain an equivalent for ${\rm var} \X(\Omega)$
which is consistent with Widom's conjecture (Theorem~\ref{thm:J2}) and
deduce a central limit theorem for $\X(\Omega)$
(Theorem~\ref{thm:clt}). We also obtain lower and upper bounds for the
entanglement entropy (Theorem~\ref{thm:ent}) which match
  up to $\log\log(\hbar^{-1})$.

\subsection{Statement of results}

We first compute an equivalent for the variance ${\rm
var}\X(\Omega)$ for an open set $\Omega \Subset \mathcal{D}$ with
smooth boundary, also known as \emph{number variance} in random matrix theory (Section~\ref{sec:rmt}).
By the determinantal structure, this variance corresponds to the Hilbert-Schmidt norm ($\J^2$-norm -- see \cite{simon_trace_2005} for some background on Schatten norms) of the commutator $[\Pi_{\hbar,\mu},\1_{\Omega}]$. 
\begin{theorem} \label{thm:J2}
Suppose that Assumptions \ref{asymp:V} hold. Let  $\Omega \Subset
\mathcal{D}$ be an open set with a smooth boundary and let $f\in C^\infty(\R^n)$. Then as $\hbar\to0$,
\[
\var \X(f|_\Omega)  = \tfrac12
\big\| [f|_\Omega, \Pi_{\hbar,\mu}] \big\|^2_{\J^2}  
=    (2\pi\hbar)^{1-n} \big( C_{\Omega,f} \log (\hbar^{-1})+ \O(1)\big)  
\]
where 
\begin{equation}\label{eq:variance}
C_{\Omega,f} =  \frac{\cst_{n-1}}{2\pi^2}  \int_{\partial\Omega} \big(\mu-V(\hat x)\big)_+^{\frac{n-1}2} f(\hat x)^2 \dd \hat x ,
\end{equation}
$\dd\hat x$ is the volume measure on the boundary $\partial\Omega$,
and $\cst_n =\frac{\pi^{\frac{n}{2}}}{\Gamma(\frac{n}{2}+1)}$ for
$n\ge 0$ $($in particular $\cst_n=|B_{n}|$ for $n\ge 1)$. 
\end{theorem}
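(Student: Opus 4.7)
The plan is to start from the standard identity for determinantal variance,
\[
\var \X(f|_\Omega) = \tfrac12 \big\|[f\1_\Omega, \Pi_{\hbar,\mu}]\big\|_{\J^2}^2 = \tfrac12 \iint \big|(f\1_\Omega)(x) - (f\1_\Omega)(y)\big|^2 |\Pi_{\hbar,\mu}(x,y)|^2 \dd x\dd y,
\]
and to split the integrand according to the position of $(x,y)$ relative to $\Omega$. The "both outside" part vanishes; the "both inside" part obeys $|f(x)-f(y)|^2\le \|\nabla f\|_\infty^2 |x-y|^2$, and the second-moment bound $\iint |x-y|^2 |\Pi_{\hbar,\mu}(x,y)|^2 \dd x\dd y = O(\hbar^{2-n})$ (from the semiclassical Weyl calculus, since $\Pi_{\hbar,\mu}$ has leading symbol $\1_{|\xi|^2+V(x)\le\mu}$) places it safely within the announced $(2\pi\hbar)^{1-n}\cdot O(1)$ error. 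The core of the proof is then to evaluate the crossing term $\iint_{\Omega\times\Omega^c} f(x)^2 |\Pi_{\hbar,\mu}(x,y)|^2 \dd x\dd y$, whose mass concentrates in an $\hbar$-neighborhood of $\partial\Omega$.

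On that neighborhood I would invoke the bulk universality of the kernel from~\cite{deleporte_universality_2024}: near each $\hat x\in\partial\Omega \Subset\mathcal D$,
\[
\Pi_{\hbar,\mu}(x,y) = \hbar^{-n} P_{\hat x}\!\left(\tfrac{x-y}{\hbar}\right) + (\text{lower order}),\qquad P_{\hat x}(z) = \frac{1}{(2\pi)^n}\int_{|k|\le k_F(\hat x)} e^{ik\cdot z}\dd k,
\]
with local Fermi momentum $k_F(\hat x)=\sqrt{\mu-V(\hat x)}$. In tubular coordinates $(\hat x, t)$ around $\partial\Omega$, freezing $\hat x$ to leading order and rescaling, the crossing integral factors as a tangential Fourier integration over the Fermi ball in $\R^{n-1}$ --- producing the geometric prefactor $\cst_{n-1} k_F(\hat x)^{n-1}$ --- times the one-dimensional half-space computation
\[
\int_0^{L}\!\!\int_{-L}^0 \Big|\tfrac{\sin(k_F(\hat x)(s-t)/\hbar)}{\pi(s-t)}\Big|^2 \dd t\,\dd s = \frac{1}{2\pi^2}\log(\hbar^{-1}) + O(1),
\]
where the logarithm comes from the intermediate scale $\hbar\ll |s-t|\ll 1$. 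Multiplying by $f(\hat x)^2$, assembling the factor $(2\pi\hbar)^{1-n}$ supplied by the tangential Fourier integration, and integrating $\hat x$ along $\partial\Omega$ reconstructs exactly the constant $C_{\Omega,f}$ announced in~\eqref{eq:variance}.

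The principal obstacle is to sharpen this heuristic to an $O(1)$ error on the coefficient of $\log(\hbar^{-1})$, which is considerably finer than the standard Weyl asymptotic precision. This demands (i) a local parametrix for $\Pi_{\hbar,\mu}$ uniform on the intermediate range $\hbar \lesssim |x-y|\lesssim \hbar^{1-\varepsilon}$, where the oscillatory free-fermion profile still drives $|\Pi_{\hbar,\mu}(x,y)|^2$ and generates the logarithm; (ii) quantitative control of the curvature corrections of $\partial\Omega$ and of the tangential variation of $V$, both of which a priori enter at order $\hbar^{1-n}$ and must be shown to integrate to a bounded quantity; and (iii) a tail estimate ensuring that the regime $|x-y|\gtrsim \delta$ contributes at most $O(\hbar^{1-n})$ despite the merely polynomial decay of $|\Pi_{\hbar,\mu}(x,y)|^2$ away from the diagonal. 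The fact that $\Omega\Subset \mathcal D$ conveniently keeps the analysis away from the turning surface $\partial\mathcal D$, where the symbol of $\Pi_{\hbar,\mu}$ is singular.
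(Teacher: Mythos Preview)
Your heuristic correctly identifies the mechanism --- the logarithm arises from the slow $|x-y|^{-(n+1)}$ decay of $|\Pi(x,y)|^2$ integrated against the boundary layer --- and your self-diagnosed obstacles (i)--(iii) are the right ones. But the proposal stops precisely where the proof begins: none of (i)--(iii) is resolved. The bulk universality of \cite{deleporte_universality_2024} is a microscopic statement ($|x-y|\sim\hbar$) and does not by itself cover the full range $\hbar\lesssim |x-y|\lesssim 1$ that generates the logarithm; your tangential/normal ``factorisation'' of the sine kernel is only valid for a flat boundary with constant $V$, and passing from that flat model to a curved $\partial\Omega$ with spatially varying $k_F$, while keeping the error at $O(1)$, is exactly obstacle (ii), for which you give no mechanism. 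As written, this is an outline of what has to be true, not a proof.

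The paper's route is genuinely different and never extracts pointwise kernel asymptotics on mesoscopic scales. It first replaces $\Pi_{\hbar,\mu}$ by a Fourier integral operator $\widetilde\Pi$ at cost $O_{\J^1}(\hbar^{1-n})$ (Proposition~\ref{prop:regularised_commutator}), then writes $\|[\widetilde\Pi,f|_\Omega]\|_{\J^2}^2$ directly as one oscillatory integral in roughly $4n+4$ variables with the Hamilton--Jacobi phase and a discontinuous amplitude. A sequence of stationary-phase reductions (Section~\ref{sec:red}) collapses this to $\int e^{i(x-y)\cdot\xi/\hbar}F(x,y,\xi)(f|_\Omega(x)-f|_\Omega(y))^2\,\dd x\,\dd y\,\dd\xi$, where $F$ lies in a class $\mathcal F$ of symbols smooth away from $\xi=0$ and vanishing like $|\xi|$ there (Definition~\ref{class:F}, Proposition~\ref{prop:red}). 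The indicator $\1_\Omega$ is then handled by a diffeomorphism to the unit ball, where $\widehat{\1_B}$ is an explicit Bessel function whose large-argument asymptotics produce the $\log(\hbar^{-1})$ (Proposition~\ref{prop:Ired}); a partition of unity removes the contractibility assumption (Section~\ref{sec:partition-unity-end}). The advantage of this route is that the $O(1)$ remainder is built into each stationary-phase step, so your obstacles (i)--(iii) never have to be confronted as separate estimates; the curvature of $\partial\Omega$ and the variation of $V$ are absorbed into the symbol $F$ rather than treated perturbatively around a flat local model. The cost is a substantially longer and more technical argument than what your outline suggests.
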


We expect that these asymptotics also hold for arbitrary sets $\Omega \Subset \R^n$ with a  piecewise smooth boundary.

An important consequence of Theorem \ref{thm:J2} is a central limit
theorem for counting statistics.
\begin{theorem} \label{thm:clt}
Suppose that Assumptions \ref{asymp:V} hold. Let $k\in\N$,
$\Omega_1, \cdots , \Omega_k \Subset \mathcal{D}$ be a collection of
open sets with smooth boundaries. 
Let $C_{\Omega} = C_{\Omega,1}$ be as in \eqref{eq:variance}.
Assume that for all $1\leq i,j \leq k$,  if $i\neq j$, $|\partial \Omega_i
\cap \partial \Omega_j|=0$ for the $(n-1)$-Haussdorff measure. Then,
one has in distribution as $\hbar\to 0$,  
\[
\left(\frac{\X(\Omega_1) - \mathbb{E}[\X(\Omega_1)]}{\sqrt{(2\pi\hbar)^{1-n}\log( \hbar^{-1})}} , \cdots, \frac{\X(\Omega_k) -  \mathbb{E}[\X(\Omega_k)]}{\sqrt{(2\pi\hbar)^{1-n}\log(\hbar^{-1})}} \right) \Rightarrow \mathcal{N}_{0,\Sigma}
\]
where the limit covariance matrix is \[\Sigma = \operatorname{diag}\big(C_{\Omega_1},\cdots, C_{\Omega_k}\big).\]
\end{theorem}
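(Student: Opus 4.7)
The plan is to combine the Cramér--Wold device with a Soshnikov-type cumulant bound for determinantal point processes, reducing the multivariate CLT to a one-dimensional variance computation. For $t = (t_1,\ldots,t_k) \in \R^k$, set $f_t = \sum_j t_j \1_{\Omega_j}$, so $\sum_j t_j \X(\Omega_j) = \X(f_t)$. By Cramér--Wold, it suffices to show that for every $t$, the random variable $(\X(f_t) - \mathbb{E}[\X(f_t)])/\sqrt{(2\pi\hbar)^{1-n}\log(\hbar^{-1})}$ converges in distribution to $\mathcal{N}(0, \sum_j t_j^2 C_{\Omega_j})$. Since $\|f_t\|_\infty$ is bounded in $\hbar$ and $\Pi_{\hbar,\mu}$ is an orthogonal projection, a Soshnikov-type bound $|\kappa_p(\X(f_t))| \leq C_p \|f_t\|_\infty^{p-2}\var \X(f_t)$ for $p \geq 2$, derivable from~\eqref{eq:Laplace_determinantal} using $\Pi^2 = \Pi$, reduces the CLT to the variance asymptotic $\var \X(f_t) \sim (2\pi\hbar)^{1-n}\log(\hbar^{-1}) \sum_j t_j^2 C_{\Omega_j}$: the method of moments then forces all normalized cumulants of order $\geq 3$ to vanish, giving Gaussianity.

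The variance is computed from $\var \X(f_t) = \tfrac12 \|[\Pi_{\hbar,\mu}, f_t]\|_{\J^2}^2$ by bilinear expansion:
\[
\var \X(f_t) = \sum_i t_i^2\, \var \X(\Omega_i) + 2\sum_{i<j} t_i t_j\, \operatorname{cov}(\X(\Omega_i),\X(\Omega_j)).
\]
The diagonal contributions yield $\sum_i t_i^2 C_{\Omega_i} (2\pi\hbar)^{1-n}\log(\hbar^{-1}) + O((2\pi\hbar)^{1-n})$ directly by Theorem~\ref{thm:J2} with $f \equiv 1$. For the off-diagonal terms, the pointwise identity $\1_{\Omega_i} + \1_{\Omega_j} = \1_{\Omega_i \cup \Omega_j} + \1_{\Omega_i \cap \Omega_j}$ gives
\[
2\operatorname{cov}(\X(\Omega_i),\X(\Omega_j)) = \var \X(\Omega_i \cup \Omega_j) + \var \X(\Omega_i \cap \Omega_j) - \var \X(\Omega_i) - \var \X(\Omega_j) + 2\operatorname{cov}(\X(\Omega_i \cup \Omega_j),\X(\Omega_i \cap \Omega_j)).
\]
Under the hypothesis $|\partial \Omega_i \cap \partial \Omega_j|_{n-1}=0$, the sets $\Omega_i \cup \Omega_j$ and $\Omega_i \cap \Omega_j$ have piecewise smooth boundary, and the boundary-integral identity $C_{\Omega_i \cup \Omega_j} + C_{\Omega_i \cap \Omega_j} = C_{\Omega_i} + C_{\Omega_j}$ is verified by direct decomposition (almost every point of $\partial \Omega_i$ falls into the interior of $\Omega_j$ or of its complement, splitting the boundary integral exactly between $\partial(\Omega_i \cup \Omega_j)$ and $\partial(\Omega_i \cap \Omega_j)$). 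Granted the piecewise smooth extension of Theorem~\ref{thm:J2} announced in the remark after that theorem, the first four variance terms combine to $O((2\pi\hbar)^{1-n})$. The remaining nested covariance is controlled via $\Omega_i \cup \Omega_j = (\Omega_i \cap \Omega_j) \sqcup (\Omega_i \triangle \Omega_j)$ in terms of $\var \X(\Omega_i \cap \Omega_j)$ and a disjoint-set covariance $|\operatorname{cov}(\X(E),\X(F))| = \int_E\int_F |\Pi_{\hbar,\mu}(x,y)|^2\,dx\,dy$, which is $O((2\pi\hbar)^{1-n})$ by semiclassical pointwise kernel bounds (the Fermi projector is concentrated on the scale $\hbar$ around the diagonal).

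The main obstacle is the extension of Theorem~\ref{thm:J2} to piecewise smooth boundaries, or equivalently the polarized Widom-type statement
\[
\langle [\Pi_{\hbar,\mu}, \1_A], [\Pi_{\hbar,\mu}, \1_B]\rangle_{\J^2} = (2\pi\hbar)^{1-n}\log(\hbar^{-1})\, \tilde{C}_{A,B} + O((2\pi\hbar)^{1-n}),
\]
with $\tilde{C}_{A,B}$ a boundary integral over $\partial A \cap \partial B$, hence vanishing under the measure-zero hypothesis. The underlying microlocal analysis should parallel that for a single set, with the logarithmic contribution arising only from pairs $(x,y)$ close to a shared boundary singularity.
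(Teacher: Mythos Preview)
Your overall framework is correct and matches the paper: Cram\'er--Wold plus Soshnikov's cumulant criterion for determinantal processes reduces everything to showing that the cross terms
\[
\operatorname{cov}\big(\X(\Omega_i),\X(\Omega_j)\big)=\tfrac12\tr\big([\Pi,\1_{\Omega_i}][\1_{\Omega_j},\Pi]\big)
\]
are $o\big(\hbar^{1-n}\log(\hbar^{-1})\big)$ whenever $|\partial\Omega_i\cap\partial\Omega_j|=0$. The gap is in how you propose to obtain this covariance bound.

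Your union/intersection decomposition requires Theorem~\ref{thm:J2} for $\Omega_i\cup\Omega_j$ and $\Omega_i\cap\Omega_j$, which have only \emph{piecewise} smooth boundary. The remark after Theorem~\ref{thm:J2} merely states that the authors \emph{expect} the result to extend to such sets; it is not proved in the paper, and the proof in Section~\ref{sec:comm} genuinely uses smoothness of $\partial\Omega$ (the boundary integrations by parts in Section~\ref{sec:error} and the diffeomorphism to a ball in Section~\ref{sec:int}). Your ``nested covariance'' step has the same problem: the disjoint sets $\Omega_i\triangle\Omega_j$ and $\Omega_i\cap\Omega_j$ share a boundary of positive $(n-1)$-measure (portions of $\partial\Omega_i$ inside $\Omega_j$ and vice versa), so the disjoint-set covariance $\int_E\int_F|\Pi(x,y)|^2\,\dd x\,\dd y$ is \emph{not} $\O(\hbar^{1-n})$ but is itself of order $\hbar^{1-n}\log(\hbar^{-1})$. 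So the argument, as written, is circular.

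The paper's route bypasses all of this by estimating the covariance directly. One first proves a pointwise off-diagonal bound on the regularised kernel (Lemma~\ref{prop:kerdiag}),
\[
|\widetilde\Pi(x,y)|^2\le \frac{C\hbar^{1-n}}{(\hbar+|x-y|)^{n+1}},
\]
and then a purely geometric estimate (Lemma~\ref{lem:cov}): under $|\partial\Omega_i\cap\partial\Omega_j|=0$,
\[
\int\frac{\big|(\1_{\Omega_i}(x)-\1_{\Omega_i}(y))(\1_{\Omega_j}(x)-\1_{\Omega_j}(y))\big|}{(\hbar+|x-y|)^{n+1}}\,\dd x\,\dd y=o\big(\log(\hbar^{-1})\big),
\]
via a dyadic decomposition and the volume bound $\big|\{x:\dist(x,\partial\Omega_i\cap\partial\Omega_j)<\eta\}\big|=o(\eta)$. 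Combining the two gives $\operatorname{cov}=o(\hbar^{1-n}\log(\hbar^{-1}))$ with no appeal to piecewise-smooth Widom asymptotics. Your closing paragraph gestures at exactly this ``polarized'' statement, but treats it as an alternative equivalent obstacle rather than recognising it as the much easier direct route.
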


Physical systems whose number variance are lower-order compared to the volume (or expected number of points) are called hyperuniform. 
This concept has been introduced in the theoretical
chemistry and statistical physics literature \cite{torquato_point_2008} where it provides a framework to classify quasicrystals and other disordered systems; we refer to the survey \cite{torquato_hyperuniform_2018} for examples of hyperuniform point processes and their properties. 
in the framework of \cite{torquato_point_2008,torquato_hyperuniform_2018}, Theorem~\ref{thm:J2} shows that the free fermion ground states are Class II hyperuniform point processes.

\smallskip

An important physical quantity related to $\X(\Omega)$ is the 
\emph{entanglement entropy} defined as follows. Let $s : \lambda
\in [0,1] \mapsto - \lambda \log(\lambda) -
(1-\lambda)\log(1-\lambda)$ -- this function is continuous with $s(0)=s(1)=0$, and
$s(\lambda)$ is the entropy of a Bernoulli random variable
${\bf B}_{\lambda}$ with parameter $\lambda$.

Given $\Omega \Subset \R^n$, we now define 
\begin{equation} \label{Ent}
\mathcal{S}_\Omega(\X) 
= \tr s( \Pi|_{\Omega})
=  \sum_{n\in\N} s(\sigma_{n}) , 
\end{equation}
where $\sigma_{n}$ are the eigenvalues  of operator $\Pi|_{\Omega} = \1_\Omega \Pi_{\hbar,\mu} \1_{\Omega}$. 
In particular, $0\le \Pi|_{\Omega} \le 1$ as operators so that $\sigma_{n}\in[0,1]$. 
Formula \eqref{Ent} comes from the fact that $\X(\Omega) \overset{\rm law}{=} \sum_{n\in\N} {\bf B}_{\sigma_{n}}$ where $({\bf B}_{\sigma_{n}})_{n\in\N}$ are independent Bernoulli random variables, so that there entropies sum up. 
This key observation is a consequence of Wick's Theorem for free
fermions, e.g. \cite{ST03a,ST03b} or \cite{hough_determinantal_2006} for a probabilistic interpretation. Moreover, \eqref{Ent} agrees with the physical definition of the entanglement entropy of the reduced state $\rho_{\Omega} = \tr_{\mathcal{H}(\Omega^c)}(\Pi_{\hbar,\mu})$
which comes from the decomposition of the fermionic Fock space $\mathcal{H}(\R^n) = \mathcal{H}(\Omega) \otimes \mathcal{H}(\Omega^c)$ where $\mathcal{H}(\Omega)=  \bigwedge L^2(\Omega)$. 
Indeed, using the canonical decomposition of the state $\Pi_{\hbar,\mu}$, one verifies that 
$\tr\big( \rho_{\Omega} \log \rho_{\Omega} \big) =  \sum_{n\in\N}
s(\sigma_n) $: see the introduction of \cite{gioev_szego_2006} or \cite[Section~7]{charles_entanglement_2018}  for a detailed computation.

A natural observation is that the entropy is bounded from below
by the variance, since $s(x)\geq x(1-x)$.
Hence, Theorem~\ref{thm:J2} yields a lower-bound for the entanglement entropy of $\Omega$.
We prove a complementary upper bound of the same order up to a
factor $\log\log(\hbar^{-1})$.

\begin{theorem} \label{thm:ent}Suppose that Assumptions \ref{asymp:V} hold.
Let $\Omega \Subset \mathcal{D}$ be an open set with a smooth boundary,
there exists constants $C_\Omega, c_{\Omega} >0$ so  that 
\[
c_{\Omega} \hbar^{1-n} \log(\hbar^{-1}) \le \mathcal{S}_\Omega\le  C_{\Omega} \hbar^{1-n} \log(\hbar^{-1}) \log\log (\hbar^{-1}) . 
\]
\end{theorem}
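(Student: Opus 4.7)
My plan is to derive both bounds from pointwise inequalities on the binary entropy $s$ combined with Schatten-norm information on the commutator $[\1_\Omega,\Pi_{\hbar,\mu}]$. The lower bound is immediate: since $s(\lambda)\ge \lambda(1-\lambda)$ on $[0,1]$ and the eigenvalues $\sigma_n$ of $\Pi|_\Omega = \1_\Omega\Pi_{\hbar,\mu}\1_\Omega$ satisfy
\[
\sum_n \sigma_n(1-\sigma_n) \;=\; \tr\bigl(\Pi|_\Omega - (\Pi|_\Omega)^2\bigr) \;=\; \tfrac12\|[\1_\Omega,\Pi_{\hbar,\mu}]\|_{\J^2}^2
\]
(a short computation using cyclicity of the trace and $\Pi_{\hbar,\mu}^2=\Pi_{\hbar,\mu}$), we get $\mathcal{S}_\Omega \ge \tfrac12\|[\1_\Omega,\Pi_{\hbar,\mu}]\|_{\J^2}^2$, and Theorem~\ref{thm:J2} then yields the desired lower bound of order $\hbar^{1-n}\log(\hbar^{-1})$ with an explicit constant.

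For the upper bound I would start from the elementary pointwise inequality
\[
s(\lambda) \;\le\; C_0\, \lambda(1-\lambda)\bigl(1+|\log(\lambda(1-\lambda))|\bigr), \qquad \lambda \in (0,1),
\]
which holds for some universal $C_0$ (checked by comparing asymptotics as $\lambda\to 0,1$, where both sides behave like $\lambda\log(1/\lambda)$, and numerically at $\lambda=1/2$). Setting $\tau_n := \sqrt{\sigma_n(1-\sigma_n)}$, these are the singular values of $B := \1_\Omega\Pi_{\hbar,\mu}\1_{\Omega^c}$ since $BB^* = \Pi|_\Omega - (\Pi|_\Omega)^2$, and applying the inequality to each $\sigma_n$ and summing yields
\[
\mathcal{S}_\Omega \;\le\; C_0 \sum_n \tau_n^2 \;+\; 2C_0 \sum_n \tau_n^2 \log(1/\tau_n).
\]
The first sum equals $\tfrac{C_0}{2}\|[\1_\Omega,\Pi_{\hbar,\mu}]\|_{\J^2}^2 = O(\hbar^{1-n}\log(\hbar^{-1}))$ by Theorem~\ref{thm:J2}. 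For the second, the scalar inequality $x\log(1/x) \le 1/e$ on $(0,1)$ gives
\[
\sum_n \tau_n^2\log(1/\tau_n) \;=\; \sum_n \tau_n\cdot\bigl(\tau_n\log(1/\tau_n)\bigr) \;\le\; e^{-1}\|B\|_{\J^1} \;\le\; e^{-1}\|[\1_\Omega,\Pi_{\hbar,\mu}]\|_{\J^1},
\]
using the identity $B = \1_\Omega[\1_\Omega,\Pi_{\hbar,\mu}]\1_{\Omega^c}$ together with the fact that multiplication by indicator functions has operator norm one.

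The proof therefore reduces to the trace-norm estimate
\[
\|[\1_\Omega,\Pi_{\hbar,\mu}]\|_{\J^1} \;\le\; C\,\hbar^{1-n}\log(\hbar^{-1})\log\log(\hbar^{-1}),
\]
which is the new commutator bound advertised in the introduction, and is the main obstacle. For a smooth cut-off $\chi$ the semiclassical pseudodifferential calculus delivers $\|[\chi,\Pi_{\hbar,\mu}]\|_{\J^1} = O(\hbar^{1-n})$ without difficulty, but the discontinuity of $\1_\Omega$ along $\partial\Omega$ forces a delicate approximation argument. I would approach it by replacing $\1_\Omega$ by a cut-off $\chi_\delta$ equal to $\1_\Omega$ outside a shell of thickness $\delta$ around $\partial\Omega$: the smooth commutator $[\chi_\delta,\Pi_{\hbar,\mu}]$ is controlled by symbolic calculus with a $\delta^{-1}$ penalty, whereas the residual $[\1_\Omega-\chi_\delta,\Pi_{\hbar,\mu}]$, supported in the shell, should be handled by combining a localized version of Theorem~\ref{thm:J2} (whose leading constant is proportional to the thickness $\delta$) with a rank estimate on the shell to recover a $\J^1$ bound; optimizing $\delta$ as a small power of $\log(\hbar^{-1})^{-1}$ should produce exactly the $\log\log(\hbar^{-1})$ loss. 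Given this trace-norm bound, the upper-bound half of the theorem follows from the short computation above.
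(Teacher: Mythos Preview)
Your lower bound is correct and matches the paper exactly. The upper bound, however, has a genuine gap in the interpolation step.

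Your pointwise estimate $\tau_n\log(1/\tau_n)\le e^{-1}$ only yields
\[
\sum_n \tau_n^2\log(1/\tau_n)\;\le\; e^{-1}\sum_n \tau_n \;\le\; e^{-1}\|[\1_\Omega,\Pi_{\hbar,\mu}]\|_{\J^1},
\]
so your bound is essentially $\mathcal{S}_\Omega \lesssim \|[\1_\Omega,\Pi_{\hbar,\mu}]\|_{\J^2}^2 + \|[\1_\Omega,\Pi_{\hbar,\mu}]\|_{\J^1}$. You then claim the proof reduces to $\|[\1_\Omega,\Pi_{\hbar,\mu}]\|_{\J^1}\le C\hbar^{1-n}\log(\hbar^{-1})\log\log(\hbar^{-1})$. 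But the paper does \emph{not} prove this; Theorem~\ref{thm:J1} only gives $\|[\1_\Omega,\Pi_{\hbar,\mu}]\|_{\J^1}=\O(\hbar^{1-n}\log^2(\hbar^{-1}))$, and its proof is a rather involved multi-scale/dyadic argument (Section~\ref{sec:MS}), not the smoothing-at-scale-$\delta$ scheme you sketch. Plugging the actual $\J^1$ bound into your inequality yields only $\mathcal{S}_\Omega=\O(\hbar^{1-n}\log^2(\hbar^{-1}))$, one logarithm short of the theorem.

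The fix is to sharpen the interpolation, not the $\J^1$ input. Writing $A=\sum_n\tau_n^2=\tfrac12\|[\1_\Omega,\Pi_{\hbar,\mu}]\|_{\J^2}^2$ and using Jensen's inequality with weights $\tau_n^2/A$ and the concave function $\log$,
\[
\sum_n \tau_n^2\log(1/\tau_n)
\;=\; A\cdot \sum_n \frac{\tau_n^2}{A}\,\log(1/\tau_n)
\;\le\; A\,\log\!\Big(\frac{\sum_n \tau_n}{A}\Big)
\;\le\; A\,\log\!\Big(\frac{\|[\1_\Omega,\Pi_{\hbar,\mu}]\|_{\J^1}}{\|[\1_\Omega,\Pi_{\hbar,\mu}]\|_{\J^2}^2}\Big).
\]
This is precisely the content of the paper's Lemma~\ref{lem:ent}. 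Since Theorems~\ref{thm:J2} and~\ref{thm:J1} give the ratio inside the logarithm as $\O(\log(\hbar^{-1}))$, the upper bound $C_\Omega\hbar^{1-n}\log(\hbar^{-1})\log\log(\hbar^{-1})$ follows. Your term-by-term bound discards exactly the averaging that produces the $\log\log$.
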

The upper estimate is proved by interpolation between Theorem
\ref{thm:J2} and the following estimate on the trace-norm ($\J^1$-norm) of a commutator.

\begin{theorem} \label{thm:J1}
Suppose that Assumptions \ref{asymp:V} hold. Let $\Omega \Subset
\mathcal{D}$ be an open set with a smooth boundary.
Then as $\hbar\to0$,  
\[
\big\| [\Pi_{\hbar,\mu}, \1_\Omega] \big\|_{\J^1} = \O\big( \hbar^{1-n}
\log^2 \hbar \big). 
\]
\end{theorem}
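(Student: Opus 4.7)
The plan is to bound the trace norm by combining the Hilbert--Schmidt estimate from Theorem~\ref{thm:J2} with a semiclassical effective rank bound. The starting point is the antisymmetric decomposition
\[
[\Pi_{\hbar,\mu}, \1_\Omega] = T - T^*, \qquad T := \Pi_{\hbar,\mu}\,\1_\Omega\,(1 - \Pi_{\hbar,\mu}),
\]
which reduces matters to bounding $\|T\|_{\J^1}$. Since the non-zero singular values of $T$ equal $\sqrt{\sigma_n(1-\sigma_n)}$, where $\{\sigma_n\}$ are the eigenvalues of the positive contraction $A := \1_\Omega\,\Pi_{\hbar,\mu}\,\1_\Omega$, the task becomes estimating
\[
\|T\|_{\J^1} = \sum_n \sqrt{\sigma_n(1-\sigma_n)}.
\]
Theorem~\ref{thm:J2} supplies the second-moment control $\sum_n \sigma_n(1-\sigma_n) = \tfrac{1}{2}\|[\Pi_{\hbar,\mu}, \1_\Omega]\|_{\J^2}^2 = O(\hbar^{1-n}\log\hbar^{-1})$, and Markov's inequality yields $\#\{n : \sigma_n(1-\sigma_n) \ge s\} \lesssim \hbar^{1-n}\log(\hbar^{-1})/s$, a bound which degenerates as $s \to 0$ and must be complemented at small spectral scales.

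Next I would establish an effective rank bound for those $\sigma_n$ bounded away from $\{0, 1\}$. The key observation is that the associated eigenvectors of $A$ are essentially concentrated in a tubular neighborhood $U_\delta = \{\dist(\,\cdot\,, \partial\Omega) \le \delta\}$ with $\delta = C\hbar \log\hbar^{-1}$; this follows from the rapid off-diagonal decay of the integral kernel $\Pi_{\hbar,\mu}(x,y)$ in the bulk, obtained from a Helffer--Sj\"ostrand representation of $\Pi_{\hbar,\mu}$ combined with Combes--Thomas type semiclassical resolvent estimates. One thus replaces $T$ by $\1_{U_\delta} T \1_{U_\delta}$ modulo a trace-norm remainder of negligible size. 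A semiclassical Weyl asymptotic applied to the compression $\1_{U_\delta}\Pi_{\hbar,\mu}\1_{U_\delta}$ then furnishes an effective rank bound of the form $r_\hbar \lesssim \hbar^{-n}|U_\delta \cap \{V \le \mu\}| = O(\hbar^{1-n}\log^\alpha\hbar^{-1})$ for an appropriate power $\alpha$.

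These two estimates are combined via a dyadic decomposition of the spectrum of $A$ into shells $\sigma_n(1-\sigma_n) \in [2^{-k-1}, 2^{-k}]$: on each shell the count is controlled by the minimum of the Markov and rank bounds, and summing the resulting geometric contributions over the $O(\log\hbar^{-1})$ relevant shells, after optimizing the crossover scale, produces $\|T\|_{\J^1} = O(\hbar^{1-n}\log^2\hbar^{-1})$. The extra logarithm compared to Widom's conjectured $O(\hbar^{1-n}\log\hbar^{-1})$ reflects the cost of using a robust effective rank estimate in place of sharp Widom-type spectral asymptotics.

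The main technical obstacle is the semiclassical effective rank bound in the variable-coefficient setting. In the constant-coefficient (Wiener--Hopf) case it follows from explicit Fourier-analytic arguments of Landau and Sobolev, but for a general Schr\"odinger operator one must combine a microlocal description of $\Pi_{\hbar,\mu}$ near the Fermi surface $\{|\xi|^2 + V(x) = \mu\}$ with a Weyl law for pseudodifferential operators with a discontinuous spatial cutoff $\1_\Omega$. Controlling the interplay between the phase-space boundary (the Fermi surface) and the physical boundary $\partial\Omega$ uniformly in $\hbar$ is the delicate part of the argument and ultimately determines the final logarithmic power.
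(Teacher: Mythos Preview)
Your proposal has a genuine gap at the ``effective rank bound'' step, and the mechanism you invoke to obtain it does not work here.

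First, the rapid off-diagonal decay you claim is not available. Combes--Thomas estimates give exponential decay of $(H_\hbar-z)^{-1}(x,y)$ only for $z$ at positive distance from the spectrum; since $\mu$ lies \emph{inside} the spectrum (Assumptions~\ref{asymp:V} make $\mu$ a regular value, not a gap), the projector $\Pi_{\hbar,\mu}$ has only \emph{polynomial} off-diagonal decay in the bulk, $|\Pi_{\hbar,\mu}(x,y)|^2\lesssim \hbar^{1-n}(\hbar+|x-y|)^{-(n+1)}$ (this is Lemma~\ref{prop:kerdiag}). With that decay, the step ``replace $T$ by $\1_{U_\delta}T\1_{U_\delta}$ modulo a negligible $\J^1$-remainder'' with $\delta=C\hbar\log\hbar^{-1}$ cannot be justified; bounding that remainder in trace norm is precisely as hard as the theorem itself.

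Second, even granting localization, a Weyl asymptotic for $\1_{U_\delta}\Pi_{\hbar,\mu}\1_{U_\delta}$ controls its \emph{trace}, not its rank. Your dyadic interpolation between Markov and a rank bound $R$ gives $\|T\|_{\J^1}\lesssim\sqrt{MR}$ with $M=\sum_n\sigma_n(1-\sigma_n)=O(\hbar^{1-n}\log\hbar^{-1})$; to land at $O(\hbar^{1-n}\log^2\hbar^{-1})$ you would need $R=O(\hbar^{1-n}\log^3\hbar^{-1})$ \emph{uniformly in the spectral threshold}, which is a Widom-type eigenvalue counting statement strictly stronger than what is being proved. The only rank bound available for free is $R\le\operatorname{rank}\Pi_{\hbar,\mu}=O(\hbar^{-n})$, which is far too weak.

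The paper's argument is entirely different and avoids rank or localization considerations. It first writes $[\Pi,\1_\Omega]$ as a sum of four terms of the form $\Gamma_\pm\Theta_\pm\Gamma_\mp\Theta_\mp$ with $\Gamma_\pm=\vartheta\1_{\R_\pm}(H)$, $\Theta_\pm=\vartheta\1_{\R_\pm}(\mathrm w)$ (Lemma~\ref{lem:decomp}), then decomposes \emph{both} factors dyadically in scale, $\Gamma_\pm\approx\sum_{k=1}^L\widetilde\chi_k^\pm(H)$ and $\Theta_\pm=\sum_{m=0}^L\vartheta_m^\pm(\mathrm w)$ with $L\asymp\log\hbar^{-1}$. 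The crucial algebraic point is that, because $\vartheta_j^+$ and $\vartheta_m^-$ have disjoint supports, each four-fold product equals $\widetilde\chi_i^+(H)\vartheta_j^+(\mathrm w)\,[[\widetilde\chi_k^-(H),\vartheta_m^-(\mathrm w)],\breve\vartheta_m^-(\mathrm w)]$, so its $\J^1$-norm factors as a product of two $\J^2$-norms. These are supplied, at every pair of scales $(\eta,\varepsilon)$ with $\eta\varepsilon\ge\hbar$, by Proposition~\ref{prop:J2_estimates_dyadic}: $\|\widetilde f_\eta(H)g_\varepsilon(\mathrm w)\|_{\J^2}^2\lesssim\hbar^{-n}\eta\varepsilon$ and $\|[[\widetilde f_\eta(H),g_\varepsilon(\mathrm w)],k_\varepsilon(\mathrm w)]\|_{\J^2}^2\lesssim\hbar^{4-n}(\eta\varepsilon)^{-3}$. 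Summing the resulting bounds over the dyadic indices produces the factor $L^2=\log^2\hbar^{-1}$.
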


From the Widom conjecture (Conjecture \ref{conj:Widom} below), we expect that as $\hbar\to0$,
$\mathcal{S}_\Omega  \sim \tfrac{\pi^2}3 {\rm var} \X(\Omega)$ and 
$\big\| [\Pi_{\hbar,\mu}, \1_\Omega] \big\|_{\J^1} \sim  \pi {\rm var}
\X(\Omega)$ so that the magnitude of the entropy is
  expected to be captured by our lower bound.

The more trivial estimate \[\big\| [\Pi_{\hbar,\mu}, \1_\Omega]
\big\|_{\J^1} = \O\big( \hbar^{-n} \big)\] leads via the same
interpolation to the upper-bound $S_{\Omega}=\O(\hbar^{1-n}\log^2(\hbar))$  appearing in the physics literature \cite{gioev_entanglement_2006}.

Note that the trace-norm of a commutator is generally more technical
to estimate than its Hilbert-Schmidt norm
(Theorem~\ref{thm:J2}). This is the main reason why we
presume that the estimate
of Theorem~\ref{thm:J1} is not sharp. 
Theorem~\ref{thm:J1} should be compared, both in its result and its
methods, to the main result of  \cite{fournais_optimal_2020} that we
now recall.

\begin{prop*} \label{thm:FM}
Let $f \in C^{\infty}_c(\R^n)$ and suppose that $\int |t| |  \widehat{f}(t) | \dd t <\infty$. Under the Assumptions~\ref{asymp:V}, 
\begin{equation} \label{FM}
\big\| [\Pi_{\hbar,\mu}, f] \big\|_{\J^1}= \O\bigg(\hbar^{1-n} \int |t| |  \widehat{f}(t) | \dd t \bigg)
\end{equation}
where the implied constant depends only on $(V,\mu)$. 
\end{prop*}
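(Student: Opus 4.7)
The plan is to use the Fourier decomposition of $f$ to reduce the problem to a bound on the commutator with plane waves, then exploit unitary conjugation to turn that into a stability estimate for the Fermi projector under a semiclassical perturbation. Writing $f(x)=(2\pi)^{-n}\int \widehat{f}(t)\,e^{it\cdot x}\,\dd t$ and letting $U_t$ denote multiplication by $e^{it\cdot x}$, linearity of the commutator and the triangle inequality for $\|\cdot\|_{\J^1}$ give
\[
\|[\Pi_{\hbar,\mu},f]\|_{\J^1}\le (2\pi)^{-n}\int |\widehat{f}(t)|\,\|[\Pi_{\hbar,\mu},U_t]\|_{\J^1}\,\dd t,
\]
so everything reduces to the single-exponential estimate $\|[\Pi_{\hbar,\mu},U_t]\|_{\J^1}\le C\,|t|\,\hbar^{1-n}$ with $C=C(V,\mu)$. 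Since $U_t$ is unitary, $\|[\Pi_{\hbar,\mu},U_t]\|_{\J^1}=\|U_t^*\Pi_{\hbar,\mu}U_t-\Pi_{\hbar,\mu}\|_{\J^1}$, and a direct gauge computation yields $U_t^*H_\hbar U_t=H_\hbar-2i\hbar^2 t\cdot\nabla+\hbar^2|t|^2$, i.e.\ a perturbation of effective semiclassical size $\hbar|t|$ once $\hbar\nabla$ is treated as an $\O(1)$ quantization of momentum.

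To convert this into a trace-norm bound I would follow the classical Helffer-Sjöstrand path argument: interpolate along $H_s:=H_\hbar+s(U_t^*H_\hbar U_t-H_\hbar)$ for $s\in[0,1]$, and for an almost analytic extension $\tilde{\chi}$ of a smooth cutoff which agrees with $\1_{(-\infty,\mu]}$ on the spectrum of every $H_s$, write
\[
U_t^*\Pi_{\hbar,\mu}U_t-\Pi_{\hbar,\mu}=\int_0^1\!\!\int_{\C}(z-H_s)^{-1}\dot H_s (z-H_s)^{-1}\,\dd\tilde{\chi}(z)\,\dd s.
\]
Under Assumptions~\ref{asymp:V}, $\mu$ is a regular value of $V$, and for $\hbar$ small the eigenvalues of each $H_s$ in a narrow window around $\mu$ can be controlled uniformly in $(s,t)$; the smoothing of the Heaviside at $\mu$ therefore contributes only a well-controlled finite-rank defect that does not change $\Pi_{\hbar,\mu}$. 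Each resolvent $(z-H_s)^{-1}$ is then studied by semiclassical functional calculus, and since $\dot H_s$ is a first-order semiclassical operator with coefficient proportional to $|t|$, the $s$-integrand is of order $|t|\hbar^{1-n}$ in trace norm.

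The hard point is extracting precisely the $\hbar^{1-n}$ power --- an ``area-law'' scaling --- instead of the naive $\hbar^{-n}$ one would get from $\tr\Pi_{\hbar,\mu}\asymp\hbar^{-n}$. This requires showing that the integrand concentrates, in trace norm, on a codimension-one subset of phase space, namely the Fermi surface $\{|\xi|^2+V(x)=\mu\}$, and is the technical heart of the argument (essentially a semiclassical Weyl law for trace norms of commutators). For the complementary regime $|t|\gtrsim\hbar^{-1}$, where the perturbative interpolation degenerates, one can simply invoke the trivial bound $\|[\Pi_{\hbar,\mu},U_t]\|_{\J^1}\le 2\tr\Pi_{\hbar,\mu}=\O(\hbar^{-n})$, so that in every regime $\|[\Pi_{\hbar,\mu},U_t]\|_{\J^1}\le C\min(|t|\hbar^{1-n},\hbar^{-n})\le C|t|\hbar^{1-n}$. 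Integrating this pointwise bound against $|\widehat{f}(t)|$ then yields the stated estimate~\eqref{FM}.
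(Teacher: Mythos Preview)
The paper does not prove this proposition; it is quoted verbatim as ``the main result of \cite{fournais_optimal_2020} that we now recall'' and is used as a black box (e.g.\ in the proof of Lemma~\ref{lem:decomp}). So there is no paper proof to compare against.

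That said, your outline has the right opening moves --- the Fourier decomposition of $f$ and the conjugation identity $U_t^*H_\hbar U_t=H_\hbar-2i\hbar^2 t\cdot\nabla+\hbar^2|t|^2$ are indeed how one reduces \eqref{FM} to a single-frequency estimate --- but the Helffer--Sj\"ostrand step, as you describe it, does not go through. There is no spectral gap at $\mu$: under Assumptions~\ref{asymp:V} the eigenvalues of $H_\hbar$ accumulate at $\mu$ with density $\sim\hbar^{-n}$ in energy (Lemma~\ref{lem:Weyl}), so no smooth $\tilde\chi$ can ``agree with $\1_{(-\infty,\mu]}$ on the spectrum of every $H_s$''. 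If you smooth the indicator on scale $\epsilon$, the rank of the defect is $\sim\epsilon\hbar^{-n}$, so you would need $\epsilon\sim\hbar$; but then the almost-analytic extension has $\overline\partial\tilde\chi\sim\hbar^{-k}$ derivatives and the Helffer--Sj\"ostrand integral no longer yields the claimed power of $\hbar$. You correctly flag that extracting $\hbar^{1-n}$ rather than $\hbar^{-n}$ is ``the technical heart of the argument'', but you have not supplied the mechanism that does it --- this is where the actual work in \cite{fournais_optimal_2020} lies (and where the paper's own Theorem~\ref{thm:J1} spends Sections~\ref{sec:hilb-schm-estim} and~\ref{sec:tr}).
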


\subsection{Organisational remarks}

In Subsection~\ref{sec:rmt}, we concisely review the connections between our results and the random matrix literature with an emphasize on the Szeg\H{o} asymptotics for determinants in the Fisher-Hartwig class.
Then, in Subsection~\ref{sec:Widom}, we report on a conjecture of Widom \cite{widom_class_1982} which relates  number variance and entanglement entropy of (free) fermionic systems to spectral function of  Schrödinger operators and we review the main progress on this conjecture.  

As in our previous work \cite{deleporte_central_2023,deleporte_universality_2024}, our methods to study the commutator $[\Pi_{\hbar,\mu},\1_{\Omega}]$ rely on the semiclassical machinery, in particular, on an approximation of the projection $\Pi_{\hbar,\mu}$ by a Fourier Integral operator. Under Assumptions~\ref{asymp:V}, this
approximation holds modulo an error of order $\hbar^{1-n}$ in trace-norm, which is
negligible in this context. 
In Section~\ref{sec:semiclassic}, we regroup the main notations and results from the literature on semiclassical Schrödinger operator $H_{\hbar}$ that we need to prove our main results.

Section~\ref{sec:hilb-schm-estim} gathers several additional Hilbert-Schmidt estimates for commutators between between regular or \emph{mild} spectral functions of  $H_{\hbar}$ and spatial functions at arbitrary scale. 

Section~\ref{sec:comm} is dedicated to the proofs of Theorem~\ref{thm:J2} and its consequence, Theorem~\ref{thm:clt}. 
The method relies on applying several stationary phase argument when the amplitude is discontinuous. The arguments are rather involved and therefore the proof is divided in several intermediate steps.  

In Section~\ref{sec:tr}, we rely on a  multi-scale argument (we
decompose both $\Pi_{\hbar,\mu}$ and $\1_{\Omega}$ in  dyadic pieces)
and the estimates from Section~\ref{sec:hilb-schm-estim} to prove
Theorem~\ref{thm:J1}.
Then, Theorem~\ref{thm:ent} follows by a simple Schatten-norm interpolation which is explained in Subsection~\ref{sec:ent}. 

Finally, in the Appendix~\ref{sec:stat-phase}, we gather different versions of the \emph{stationary phase method} that we need when the amplitude varies on possibly arbitrary small scales.

\subsection{Relation to random matrices and determinants in the Fisher-Hartwig class} \label{sec:rmt}
In dimension 1, for a non-trivial interval $[a,b] \subset \mathcal{D}$, Theorem~\ref{thm:J2} states that, with $N$ the particle number, 
\[
\var \X([a,b]) \sim  \tfrac{1}{\pi^2} \log N .
\]
In particular, this quantity is independent of the confining potential $V$ and it matches with the Gaussian Unitary Ensemble  (and sine process) number variance.
The GUE variance computation goes back to Dyson and Mehta \cite{mehta_random_2004} and the study of the fluctuations to 
\cite{costin_gaussian_1995}, see \cite{smith_full_2021} for additional precisions and \cite{marino_number_2016} for a study of large deviations. 
This universality is consistent with \eqref{cft} and not surprising as it comes from the microscopic fluctuations (given by the sine process) of the particles around $a,b$. 
Moreover, there is an exact correspondence between the model \eqref{slater} with $V : x\in\R \mapsto x^2$ and the Gaussian Unitary Ensemble (GUE) eigenvalues \cite{Soshnikov_01,calabrese_random_2015}.

\medskip

These number variance asymptotics are explained by the log-correlated structure of the eigenvalues of random matrices, an observation which originates from \cite{hughes_characteristic_2001,gustavsson_gaussian_2005,fyodorov_freezing_2012} and has been revisited in \cite{claeys_how_2021} from the perspective of Gaussian multiplicative chaos. In short, one can view $b\in\R\mapsto \X((-\infty,b])$ as a Gaussian log-correlated field regularized on scale $\hbar \asymp N^{-1}$ in the bulk.
This has been a very active research topic lately and we cannot review all related results here.
We simply mention that for the Gaussian $\beta$-ensemble\footnote{The
  Gaussian $\beta$-ensemble corresponds to a fermionic model with
  Calogero-Sutherland type interaction for $\beta\neq2$ and the
  $\O(1)$-term has been conjectured in \cite{smith_full_2021}. This
  term has been computed in the classical cases $\beta\in\{1,2,4\}$.}
(normalised so that $\mathcal D =(-1,1)$), with $\mathbf{Z}(b) :=
\frac{\X((-\infty,b])- \tfrac2\pi\int_{\lambda\le b}
  (1-\lambda^2)_+^{1/2} \dd \lambda }{\sqrt{\log N}}$, one has (uniformly) for $b_1,\dots, b_k \in \mathcal D$ such that $\Sigma$ exists, 
\begin{equation} \label{log_correlated}
\big(\mathbf{Z}(b_1),\cdots, \mathbf{Z}(b_k) \big) \Rightarrow \mathcal{N}_{0,\Sigma}  
\quad\text{where}\quad
\Sigma_{ij} =  \frac{1}{\pi^2\beta} \lim_{N\to\infty}\bigg( \frac{1}{\log N} \log_+ \min\bigg\{ N (1-b_j^2)^{3/2}, \frac{1-b_j^2}{|b_i-b_j|} \bigg\} \bigg) .
\end{equation}
\eqref{log_correlated} is due to \cite{gustavsson_gaussian_2005} for GUE ($\beta=2$) and also holds for general Wigner matrices \cite{orourke_gaussian_2010,mody_log-characteristic_2024} and $\beta$-ensembles \cite{bourgade_optimal_2022}. 
This CLT captures the behavior down to the microscopic scale as well as the edge effect, it should be compared to Theorem~\ref{thm:clt} for free fermions on $\R$. 
We refer to \cite{lambert_strong_2023,lambert_strong_2021} for more details on the log-correlated structure of the Gaussian $\beta$-ensemble, and \cite{lambert_law_2019,paquette_extremal_2022,bourgade_optimal_2023} for the most recent results on the maximum fluctuations of the eigenvalue counting function of different random matrix model. 
A result analogous to Theorem~\ref{thm:clt} has also been obtained recently for the circular Riesz gas using methods from statistical mechanics \cite{boursier_optimal_2023}.

\medskip

For the eigenvalues of Hermitian (or unitary) matrix models, which are also determinantal processes, there are explicit formulae for the Laplace transform of any linear statistic in terms of large Hankel (or Toeplitz) determinants. The basic example, known as the circular unitary ensemble (CUE), corresponds to determinants of the type \eqref{eq:Laplace_determinantal} involving spectral projector $\Pi_{\hbar}=\1(-\hbar^{2}\Delta_{\S^1} \le 1)$. In contrast to the Schrödinger model, Toeplitz determinants with a smooth symbol, as well as their continuous counterpart with $\1(-\hbar^{2}\Delta_{\R} \le 1)$, can be analyzed by a miscellany of different techniques and the (Gaussian) behavior of such determinants are known as \emph{Szeg\H{o} asymptotics}. These asymptotics have constituted a central problem in mathematical physics because of a plethora of applications (Ising model, impenetrable bosons, Toda chain, random matrices, etc.), see \cite{deift_toeplitz_2013} for a review and \cite{deleporte_central_2023} for more on the relation to one-dimensional free fermions. 
A special class of singular symbols, called the Fisher-Hartwig class,
plays a fundamental role in these applications, in particular to
describe the fluctuations of the characteristic polynomial and
eigenvalue counting functions of matrix models. These determinants also arise when computing the entanglement entropy of one-dimensional quantum spin chains \cite{jin_quantum_2004,keating_random_2004,keating_entanglement_2005}.
Fisher-Hartwig \cite{shuler_toeplitz_1969} already conjectured that the asymptotics of these determinants involved  extra $\log N$ terms. The first results on Fisher-Hartwig determinants go back to Widom's seminal work \cite{widom_toeplitz_1973} (see also \cite{widom_asymptotic_1974,widom_asymptotic_1976}) and the full conjecture was resolved in \cite{deift_asymptotics_2011} based on the connection with orthogonal polynomials with varying weights and the corresponding Riemann-Hilbert problem. 
The article \cite{deift_asymptotics_2011} relies on the (non-linear) steepest descent method \cite{deift_steepest_1993} for oscillatory Riemann-Hilbert problem to obtain precise asymptotics valid, for instance, for the Laplace transform of counting statistics. This requires  to develop new \emph{local parametrices} at the singular points. 
Our approach (to the free fermions case) is similar in spirit, since by using Fourier integral approximation for the Schrödinger propagator, we reduce the problem of computing the $\log N$ term in the variance to applying the stationary phase method to an oscillatory integral with a discontinuous amplitude. 
We cannot review the rich literature on Fisher-Hartwig determinants and we refer instead to the survey \cite{deift_toeplitz_2013}, \cite{charlier_asymptotics_2019,fahs_uniform_2021} for recent developments and further references, as well as \cite{krasovsky_correlations_2007,webb_characteristic_2015,claeys_how_2021,junnila_multiplicative_2023} for some probabilistic applications to random matrices. 

In dimension $n \ge 2$, there are several results in the physics literature concerning disk counting statistics of free fermions confined by a rotation-invariant potential \cite{Smith_21,smith_counting_2022} (by scaling, one can compare these predictions to the asymptotics of Theorem~\ref{thm:J2} in case $V(x) = |x|^q$ with $q>0$ and $\mu=1$). 
According to \eqref{eq:variance}, writing $V(x) =\mathrm{v}(r)$, for a disk $\Omega = \{|x|\le r\}$ within the bulk ($\mathrm{v}(r)<\mu$), 
\[
C_{\Omega} = \frac{\cst_{n-1}| \partial B_{n}| }{2\pi^2} r^{n-1}\big(\mu-\mathrm{v}(r)\big) ^{\frac{n-1}2} = \frac{n\cst_{n-1}\cst_n}{2\pi^2} r^{n-1}\big(\mu-\mathrm{v}(r)\big)^{\frac{n-1}2}  = \frac{1}{\pi^2\Gamma(n)} \big(2\pi r\big)^{n-1} \big(\mu-\mathrm{v}(r)\big)^{\frac{n-1}2} 
\]
since $\cst_{n-1}\cst_n = 2 \frac{(2\pi)^{n-1}}{\Gamma(n+1)} $ by Legendre duplication formula.
Hence, we obtain  as $\hbar\to0$,
\[ 
\var \X(rB_n)  =  \frac{\hbar^{1-n}\big( \log \hbar^{-1}+\O(1)\big)}{\pi^2\Gamma(n)}  \Big(r\sqrt{1-\mathrm{v}(r)}\Big)^{n-1} 
\]
which is consistent with \cite{Smith_21}, formula (4) with $\mu \leftarrow 1/\hbar$.
We also refer to formulae (S55) and (S56) in the supplementary material of \cite{Smith_21} for an expression of the $\O(1)$ correction terms in case of a rotation-invariant potential.

\subsection{Widom's conjecture \& entanglement entropy}
\label{sec:Widom}

From the viewpoint of spectral theory and physical applications, it is
of interest to describe spectral functions of the type
$g(\Pi_\hbar|_{\Omega})$ where $g:[0,1] \to \R_+$ is continuous,
$\Pi_\hbar$ is a semiclassical (self-adjoint) projection on
$L^2(\R^n)$, $\Omega \Subset \R^n$ be open and let $\Pi_\hbar|_{\Omega} = \1_\Omega\Pi_\hbar\1_\Omega $. The case where $g(0)=g(1)=0$ is of special interest. For instance, with $g:t \mapsto t(1-t)$, 
\[
g(\Pi_\hbar|_{\Omega}) =  \1_\Omega(\Pi_\hbar- \Pi_\hbar\1_\Omega\Pi_\hbar)\1_\Omega 
\]
so that 
\[
\tr g(\Pi_\hbar|_{\Omega}) =  \tr( (\Pi_\hbar- \Pi_\hbar\1_\Omega\Pi_\hbar)\1_\Omega  )
=- \tfrac12 \tr [\Pi_{\hbar},\1_\Omega]^2 ,
\]
which corresponds to the case of Theorem~\ref{thm:J2}; see more
generally Lemma \ref{lem:spectral_link}.

The model case where $\Pi_{\hbar}=\1(-\Delta<\hbar^{-2})$ in dimension
1 and $\Omega$ is an interval has been studied in \cite{landau_eigenvalue_1980} using the Mellin transform and the results were generalised to pseudodifferential operators with discontinuous symbols (replacing $\Pi_\hbar$ by $\operatorname{Op}_\hbar(a)$ where the amplitude $a:\R^{2}\to \C$ is  not necessarily continuous) in  \cite{widom_class_1982}.
Widom also conjectured in
\cite{widom_class_1982} an analogous result for pseudodifferential
operators with discontinuous symbols in higher dimensions. A notable first step was the
computation of the variance in \cite{gioev_entanglement_2006} (that
is, the case $g:\lambda\mapsto \lambda(1-\lambda)$), as well as an
upper estimate on the entropy (using a cruder bound on the
$\J^1$-norm) of the form $S_{\Omega}\leq C\hbar^{1-n}\log(\hbar)^2$. The proof
of the Widom conjecture when $g:\R\to\R$ is analytic, along with some
history and further references, is gathered in
\cite{sobolev_pseudo-differential_2013}.

\medskip
Recent developments focus on the case where $g$ is less regular, including
the entropy \cite{sobolev_functions_2017}, pseudodifferential
operators with matrix-valued symbols
\cite{finster_fermionic_2023,bollmann_widoms_2023}, and magnetic
Laplacians with large magnetic fields
\cite{charles_entanglement_2018,leschke_asymptotic_2021,pfeiffer_entanglement_2023}; in this
latter case the
analysis is slightly different due to a spectral gap in the ground state.
Having in mind the rich
applications to fermionic many-body physics and determinantal point
processes, it is relevant to try and extend this program to more
general spectral projectors.
Inspired by Widom's conjecture \cite{widom_class_1982}, in the context of this article, we propose the following conjecture.

\begin{conj}\label{conj:Widom}
Under the Assumptions \ref{asymp:V}, let $\Pi_{\hbar,\mu}$ be as in \eqref{Fermiproj} and let $\Pi_\hbar|_{\Omega} =\1_{\Omega} \Pi_{\hbar,\mu} \1_{\Omega}$. 
Let $g: [0,1] \to\R$ be continuous with $g(0)=g(1)=0$ such that either $g\ge 0$ or $t\mapsto \frac{g(t)}{t(1-t)}$ is in $L^1$. Then, as $\hbar \to 0$,
\[
\frac{\tr g(\Pi_\hbar|_{\Omega})}{{(2\pi\hbar)^{1-n}}\log\hbar^{-1}} \to 2C_{\Omega}\int_{[0,1]}\frac{g(\lambda)}{\lambda(1-\lambda)}\dd \lambda
\]
where $C_{\Omega}$ is given by \eqref{eq:variance} with $f=1$.
\end{conj}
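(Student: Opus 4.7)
The plan is to follow the Widom--Sobolev paradigm: first establish the asymptotic for a dense subalgebra of admissible $g$ by a direct semiclassical computation, then extend to continuous $g$ by approximation, using the trace-norm control of Theorem~\ref{thm:J1}.

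\textbf{Step 1 (Polynomial test functions).} I would first treat the family $g_m(\lambda) = [\lambda(1-\lambda)]^m$ for $m \ge 1$. The case $m=1$ is Theorem~\ref{thm:J2}, via the identity
\[
 \chi\Pi\chi(1-\chi\Pi\chi) = \chi\Pi[\Pi,\chi]\chi \qquad (\chi=\1_\Omega,\ \Pi=\Pi_{\hbar,\mu})
\]
obtained from $\Pi^2=\Pi$ and $\chi^2=\chi$, which gives $\tr g_1(\chi\Pi\chi) = \tfrac12 \|[\chi,\Pi]\|^2_{\J^2}$. For $m \ge 2$, iterate the same identity to write $\tr g_m(\chi\Pi\chi) = \tr\big(\chi\Pi[\Pi,\chi]\chi\big)^m$. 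Inserting the Fourier integral approximation of $\Pi_{\hbar,\mu}$ from Section~\ref{sec:semiclassic} yields a $2m$-fold oscillatory integral whose amplitude is discontinuous across $\partial\Omega$. A refinement of the stationary-phase analysis with discontinuous amplitudes in Appendix~\ref{sec:stat-phase} (developed for $m=1$ in Section~\ref{sec:comm}) extracts the single $\log(\hbar^{-1})$ contribution, localized at $\partial\Omega \cap \{V<\mu\}$. After normal rescaling at each boundary point, the leading constant reduces to a universal one-dimensional sine-kernel computation in the spirit of \cite{deleporte_universality_2024}; a direct evaluation should identify it as a multiple of $\int_0^1 [\lambda(1-\lambda)]^{m-1}\,\dd\lambda$, matching the conjectured coefficient for $g_m$. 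Linearity then extends the asymptotic to every polynomial vanishing at $\{0,1\}$.

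\textbf{Step 2 (Passage to continuous $g$).} Given $g \in C([0,1])$ with $g(0)=g(1)=0$, write $g(\lambda) = \lambda(1-\lambda) h(\lambda)$. When $h$ is continuous on $[0,1]$, approximate it uniformly by polynomials $h_\epsilon$ and set $g_\epsilon = \lambda(1-\lambda) h_\epsilon$. Then
\[
 \big|\tr(g-g_\epsilon)(\chi\Pi\chi)\big|\le \|h-h_\epsilon\|_\infty\, \tr \chi\Pi\chi(1-\chi\Pi\chi) = O\big(\|h-h_\epsilon\|_\infty\cdot \hbar^{1-n}\log\hbar^{-1}\big),
\]
which is of lower order. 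When $h \in L^1$ is unbounded near the endpoints, truncate $g$ by a small parameter $\eta$ and control the truncation error via Theorem~\ref{thm:J1}, using the naive bound $\|\chi\Pi\chi(1-\chi\Pi\chi)\|_{\J^1} = O(\hbar^{1-n}\log^2\hbar^{-1})$, then optimize in $\eta$. For $g \ge 0$ with $\int g/[\lambda(1-\lambda)]\,\dd\lambda = +\infty$, use monotone approximation from below by nonnegative polynomials $g_k \le g$; by operator monotonicity of the functional calculus, $\tr g_k(\chi\Pi\chi) \le \tr g(\chi\Pi\chi)$, and Step~1 forces the divergence of the ratio as $k\to\infty$.

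\textbf{Main obstacle.} The genuinely difficult step is the polynomial case for $m \ge 2$. The iterated commutator $(\chi\Pi[\Pi,\chi]\chi)^m$ produces a $2m$-fold oscillatory integral whose amplitude has discontinuities along several copies of $\partial\Omega$, and these can interact in nontrivial ways when one tries to identify the $\log(\hbar^{-1})$ coefficient. Either a careful multi-scale bookkeeping generalizing Section~\ref{sec:comm}, or a Helffer--Sj\"ostrand / contour-integral reformulation reducing the trace to uniform resolvent estimates on $(\chi\Pi\chi - z)^{-1}$ for $z \notin [0,1]$ (in the style of Sobolev's resolution of Widom's original conjecture \cite{sobolev_pseudo-differential_2013}), appears to be necessary. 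A secondary, but important, obstacle is sharpening Theorem~\ref{thm:J1} from $\log^2(\hbar^{-1})$ to the expected $\log(\hbar^{-1})$: this would clean up Step~2 in the marginal $L^1$ regime and cover the full generality of $g$ assumed in the conjecture.
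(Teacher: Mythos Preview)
The statement you are attempting to prove is labelled \emph{Conjecture} in the paper, and the paper does not prove it. The authors write explicitly, just after stating it, that ``In future work, we intend to return to proving Widom-type asymptotics for general spectral functions.'' What the paper does establish is the case $g(\lambda)=\lambda(1-\lambda)$ (Theorem~\ref{thm:J2}) together with the two-sided entropy bounds of Theorem~\ref{thm:ent}, which fall short of the conjecture by a $\log\log(\hbar^{-1})$ factor on the upper side. So there is no ``paper's own proof'' to compare against.

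Your outline is a reasonable program, and you correctly identify the two genuine obstructions. The first is that the polynomial case $g_m(\lambda)=[\lambda(1-\lambda)]^m$ for $m\ge 2$ requires controlling a $2m$-fold oscillatory integral with interacting discontinuities across several copies of $\partial\Omega$; the machinery of Section~\ref{sec:comm} is built for $m=1$ and does not extend straightforwardly. Your suggestion to import the resolvent or Helffer--Sj\"ostrand technology of \cite{sobolev_pseudo-differential_2013} is plausible but is itself a substantial project, since Sobolev's results are for constant-coefficient pseudodifferential operators, not spectral projectors of Schr\"odinger operators. The second obstruction is that your Step~2 in the $L^1$ regime leans on Theorem~\ref{thm:J1}, whose bound is $\O(\hbar^{1-n}\log^2\hbar^{-1})$ rather than the conjecturally sharp $\O(\hbar^{1-n}\log\hbar^{-1})$; with the extra logarithm the truncation-plus-approximation argument does not close, as you note. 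In short, your proposal is an honest sketch of what a proof would have to do, but it is not a proof, and the paper does not claim one either.
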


Lemma \ref{lem:spectral_link} gives a relationship
between Schatten-like norms of $[\Pi_{\hbar,\mu},\1_{\Omega}]$ and
traces of spectral functions of $\Pi_{\hbar}|_{\Omega}$. In
particular, Conjecture \ref{conj:Widom} contains as a particular case
the asymptotics of $\|[\Pi_{\hbar,\mu},\1_{\Omega}\|_{\J^1}$, of the
von Neumann entropy of Theorem \ref{thm:ent}, and also the Rényi
entropies. 
In these cases, as we already mentioned, this conjecture is also supported by numerical
simulations \cite{vicari_entanglement_2012} and  
 theoretical physics computations
 \cite{calabrese_exact_2012,calabrese_random_2015,dubail_conformal_2017}. Moreover,
 formula \eqref{eq:variance} agrees with the geometric
constants appearing in previously studied forms of the conjecture \cite{sobolev_pseudo-differential_2013,gioev_szego_2006,leschke_scaling_2014}.

In future work, we intend to return to proving Widom-type asymptotics for general
spectral functions, and to bridge the gap with the existing results concerning pseudodifferential operators with discontinuous symbols.

\subsection{Acknowledgements.}

A.D. acknowledges the support of the CNRS PEPS 2021 grant. G.L. acknowledges the support of the starting grant 2022-04882 from the Swedish Research Council and of the starting grant from the Ragnar S\"oderbergs Foundation. 

\section{Pseudodifferential operators and Fourier Integral operators}
\label{sec:semiclassic}

\subsection{Notation} \label{sec:not}

Throughout the article, we will not always emphasize that the operators $H= H_\hbar, \Pi=\Pi_{\hbar,\mu}$, and 
that functions, depend on the semiclassical parameter $\hbar$.
In particular, we will consider a class $S^\delta$ of functions (symbols) which depend on a small scale $\delta(\hbar)$.
\begin{defn}\label{def:Sdelta}
Let  $d\in\N$ and $\delta = \delta(\hbar)\in[\hbar,1]$. 
Given $\Omega \Subset \R^d$ open and bounded,  we define the following class of functions:
\[
S^{\delta}(\Omega)
= \big\{ a\in \Co^\infty_c(\Omega) :   \|a\|_{\Co^k} \le C_k\delta^{-k} 
\text{ for every $k\in\N_0$}\big\}. 
\]
We also use the notations $a\in S^{\delta}(\R^d)$, or $a\in
S^{\delta}$, if $a\in S^{\delta}(\Omega)$ for some $\Omega \Subset
\R^d$ bounded. \\
For $\Theta = \Theta(\hbar)\in (0,1]$, we also write $a = \O_{S^\delta}(\Theta)$ if $a/\Theta \in S^\delta$. 
\end{defn}

In particular, $a\in S^1(\R^d)$ if it is $C^\infty_c(\R^d)$, $\supp
(a) \subseteq \Omega$ with $\Omega$ bounded, and $\Omega, \|a\|_{\Co^k}$ are independent of $\hbar$ for all~$k\in\N$.

\smallskip

If a symbol depends on several variables, e.g.~$(x,z)\in\R^{n\times
  m}$, with different regularity exponents, say $\epsilon, \delta \in[\hbar,1]$, we will denote $a \in S^\epsilon_x \times S^\delta_z$ if for every indices $\alpha\in\N_0^n, \beta\in\N_0^m$ with $|\alpha|+|\beta|\le k$, 
\[
\sup_{x,z} \big|\partial_x^\alpha\partial^\beta_z a(x,\xi)\big| \le C_k \epsilon^{-|\alpha|} \delta^{-|\beta|}. 
\]

In the sequel, by \emph{constants}, we mean positive numbers, e.g.~$\Cst,\cst$,  independent of $\hbar$. Such \emph{constants} are allowed to depend on the potential $V$ and other auxiliary parameters. Sometimes, we denote $\cst_\alpha$ to emphasize that the constant $\cst$ depends on a parameter $\alpha$. 

\smallskip

We declare right now the following objects and conventions, which we
fix for the rest of the article.
\begin{enumerate}[leftmargin=*]
\item Replacing $V$ with $V-\mu$, we assume that $\mu=0$. The droplet
  is $\mathcal{D} = \{V<0\}$.
  \item $\Omega \Subset \mathcal{D}$ is a fixed relatively compact open set, with a
  smooth boundary. $\mathrm{w} : \R^n \to \R$ is a smooth, tends to
  $+\infty$ at
  $\infty$, such that $\Omega :=\{\mathrm{w}< 0\}$ and
  $\partial_x\mathrm{w} \neq 0$ on $\partial\Omega =
  \{\mathrm{w}=0\}$. $\Omega' \Subset \{V<0\}$ is an open
  neighbourhood of $\Omega$ inside the droplet.
\item $\underline{\tau}, \underline{\ell}, \underline{\cst}$ are small positive constants. The constant $\underline{\ell}$ is chosen much
  smaller than $\underline{\cst}$, and the constant $\underline{\tau}$
  is chosen much smaller than $\underline{\ell}$, and all constants
  are small, in a way which depends on $V$ and $\Omega$. In
  particular, we assume that $V$ is $C^\infty$ on $\{V<2\underline{\cst}\}$, that 
  $\partial_x V(x) \neq 0$ for all $x\in \{|V| \le
  \underline{\cst}\}$, and that $\Omega'\subset \{V\leq -2\underline{c}\}$.
\item  $\rho\in \mathcal{S}(\R,\R_+)$ is even with  $\int_\R
\rho(\lambda)\dd \lambda = 1$. Its Fourier transform $\hat{\rho}$ is supported inside
$[-\underline{\tau},\underline{\tau}]$.
\item Given $f\in L^{\infty}(\R^d)$, for $\delta\in(0,1]$, we denote
$
f_{\delta}:x\mapsto f(\delta^{-1}x). 
$
This notation admits one exception: we will denote 
\(
\rho_{\hbar}: u \mapsto \hbar^{-1}\rho(\hbar^{-1}u),
\)
that is, $\rho$ is scaled as a density rather than a function.
\item Given $\vartheta  \in \Co^\infty_c(\R)$ and given $\varkappa\in L^{\infty}(\R)$ with compact support,
we define
\(
\widetilde{\varkappa}:=\vartheta\cdot (\varkappa*\rho_{\hbar}).
\)
Note that if $\varkappa \in S^\eta$ with  $\eta\in[\hbar,1]$ (see~Definition~\ref{def:Sdelta}), then $\widetilde{\varkappa}\in S^\eta$. 
\end{enumerate}

\subsection{Pseudodifferential calculus}

The goal of this section is to gather classical results on spectral functions of $H$. 
The main ingredient is that spectral functions involving the propagator, of the form $\vartheta(H)e^{-i\frac{tH}{\hbar}}$ where $\vartheta\in \Co^{\infty}_c(\R)$ and $t\in \R$ is small, are given by \emph{Fourier
Integral operators}, up to negligible errors. 
Such strong approximations go back to \cite{chazarain_spectre_1980,helffer_comportement_1981}, but for
consistency, we extract these results from our previous work \cite[Proposition 2.11]{deleporte_universality_2024} as they fit the exact hypotheses of our main claims (see also  \cite{robert_autour_1987,dimassi_spectral_1999} for further reference). 
Recall that the potential $V$ satisfies Assumptions~\ref{asymp:V} with $\mu=0$ and  $\underline{\cst}>0$ is a small constant such that $V\in \Co^\infty$ on $\{V < \underline{\cst}\}$. 

\begin{prop}\label{prop:propag_as_FIO}
Let $\vartheta\in\Co^{\infty}_c((-\infty,\underline{\cst}])$. There
exists a smooth function $\varphi : \R^{2n+1}\to \R$ (independent of
$\hbar$) and $s\in S^1(\R^{3n+1})$, such that, uniformly for $t\in[-\underline{\tau},\underline{\tau}]$,
\[
\vartheta(H)e^{i\frac{tH}{\hbar}} : (x,y)\mapsto \frac{1}{(2\pi\hbar)^n}\int
e^{i\tfrac{\varphi(t,x,\xi)-\xi\cdot
y}{\hbar}}s(t,x,y,\xi)\dd \xi +\O_{\J^1}(\hbar^{\infty}).
\]
For every $t\in[-\underline{\tau},\underline{\tau}]$, the amplitude
$s$ is supported in a neighbourhood of
\[\{|x-y|\le \underline{\ell}, \;\vartheta(V(x)+|\xi|^2)\neq 0\}.\] Moreover, its principal part is given on the diagonal by, at $t=0$, 
\[
s(0,x,x,\xi)|_{\hbar=0}=\vartheta(|\xi|^2+V(x)) , \qquad (x,\xi) \in \R^{2n}. 
\]

The phase $\varphi$ is the solution of the following initial value
problem for the Hamilton-Jacobi equation on a neighbourhood of the support of $s$:
\begin{equation*}\label{eq:HJ}
\partial_t\varphi(t,x,\xi)=V(x)+|\partial_x\varphi(t,x,\xi)|^2 \qquad
\qquad \varphi|_{t=0}:(x,\xi)\mapsto x\cdot \xi.
\end{equation*}
\end{prop}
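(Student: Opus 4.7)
The plan is to construct the operator $\vartheta(H)e^{itH/\hbar}$ as a WKB-type Fourier integral operator (FIO) by hand for small $|t|$, following the classical strategy of Chazarain and Helffer--Robert. The key observation is that for small times no caustics form, so a global smooth phase exists on the region we need, and the energy cutoff $\vartheta(H)$ both localises $\xi$ to a compact set and lets us control remainders in trace norm.

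I would first construct the phase $\varphi$ by solving the Hamilton--Jacobi initial value problem
\[
\partial_t \varphi = V(x) + |\partial_x \varphi|^2, \qquad \varphi|_{t=0}(x,\xi) = x\cdot\xi
\]
via the method of characteristics for the Hamiltonian $p(x,\xi) = |\xi|^2 + V(x)$. At $t=0$ the generating map $(x,\xi)\mapsto x\cdot\xi$ produces the identity Lagrangian, and the Hamiltonian flow $\Phi_t$ is a smooth perturbation of it. For $|t|\le \underline\tau$ small enough (depending on $V$ on the compact set $\{V\le \underline{\rm c}\}$), the implicit function theorem guarantees that $\Phi_t$ remains a graph over the $(x,\xi)$ base, so a smooth global solution $\varphi(t,x,\xi)$ exists on a neighbourhood of $\{\vartheta(p)\neq 0\}$. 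This is where the smallness of $\underline\tau$ enters.

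Next I construct the amplitude by making the WKB ansatz
\[
U(t)(x,y) = \frac{1}{(2\pi\hbar)^n}\int e^{i(\varphi(t,x,\xi)-\xi\cdot y)/\hbar} a(t,x,y,\xi,\hbar) \,\dd\xi
\]
and plugging into $(\hbar D_t + H)U = 0$ with $U|_{t=0} = \operatorname{Id}$. Expanding $a \sim \sum_{j\ge 0} \hbar^j a_j$, the vanishing of the phase derivative is exactly Hamilton--Jacobi, the principal transport equation determines $a_0$ along the characteristics (with $a_0|_{t=0} = 1$), and each subsequent $a_j$ solves an inhomogeneous transport equation with source given by the previous terms. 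A standard Borel summation produces an actual symbol $a\in S^1$ realising the asymptotic series, so that the approximate propagator differs from the true one by a kernel whose $C^k$-norms are $O(\hbar^\infty)$ on the relevant compact set. To incorporate the cutoff $\vartheta(H)$ and produce the symbol $s$ with $s(0,x,x,\xi)|_{\hbar=0} = \vartheta(|\xi|^2+V(x))$, I would use the Helffer--Sjöstrand formula to realize $\vartheta(H)$ as a pseudodifferential operator with principal symbol $\vartheta\circ p$, and compose it with the FIO $U(t)$ via the standard FIO$\times\Psi$DO composition formula; this pins down the localisation in $\xi$ to a neighbourhood of $\{\vartheta(p)\neq 0\}$ and (combined with the characteristic speed bound) the support condition $|x-y|\le \underline\ell$ for $|t|\le\underline\tau$.

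The main obstacle is upgrading the pointwise/$C^k$ remainder estimate to a $J^1 = O(\hbar^\infty)$ bound uniformly in $t$. I would handle this by noting that the remainder kernel $R(t,x,y)$ is compactly supported in $(x,y)$ (because both $\vartheta(H)$ and the FIO have kernels localised where the classical flow is controlled, with exponential decay off-diagonal in the classically forbidden region) and $C^\infty$-small. On a fixed compact set a kernel whose Schwartz seminorms are $O(\hbar^N)$ has $J^1$-norm $O(\hbar^{N-2n})$, so taking $N$ arbitrarily large via the iterative WKB construction gives the claimed $O_{J^1}(\hbar^\infty)$ error. Everything else, including the initial condition $s(0,x,x,\xi)|_{\hbar=0} = \vartheta(p(x,\xi))$, follows from tracking the principal symbol through the composition at $t=0$, where $\varphi = x\cdot\xi$ reduces the FIO to the standard Weyl/Kohn--Nirenberg quantisation of $\vartheta\circ p$.
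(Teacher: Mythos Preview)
Your sketch is correct and follows precisely the classical WKB/FIO construction of Chazarain and Helffer--Robert that the paper invokes; the paper itself does not prove this proposition but simply cites it from \cite[Proposition~2.11]{deleporte_universality_2024} and the standard references \cite{chazarain_spectre_1980,helffer_comportement_1981,robert_autour_1987,dimassi_spectral_1999}. Your handling of the $\J^1$ remainder via the Schwartz-seminorm bound on a compact set is exactly the estimate \eqref{DS} that the paper records from \cite{dimassi_spectral_1999}.
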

\begin{rem}Note that $s$ silently depends on $\hbar$; in fact it can be taken to
be a smooth function of $\hbar$ on $[0,\hbar_0]$. Usually, $s$ is
represented by its Taylor expansion at $\hbar=0$, and the first term
of this expansion (the value at $0$) is called the \emph{principal
  symbol}. We will use this notion throughout this article, without
writing down explicitly the dependence of $s$ (and other amplitudes)
on $\hbar$.
\end{rem}
Using the Fourier inversion formula, 
Proposition \ref{prop:propag_as_FIO}  provides an approximation of
spectral functions of $H$ which are smooth on scales larger than
$\hbar$. More precisely we obtain the following corollary.
\begin{corr} \label{cor:propag}  
Let $\varphi$ be the phase from Proposition \ref{prop:propag_as_FIO}.
Let $\vartheta  \in \Co^\infty_c((-\infty,\underline{\cst}])$,  let $\varkappa\in L^{\infty}(\R)$ with compact support and let
\(
\widetilde{\varkappa}=\vartheta\cdot (\varkappa*\rho_{\hbar})
\)
as in the notations of Section \ref{sec:not}. Then, there exists $a\in S^1(\R^{3n+1})$ so that
\begin{equation*}
\widetilde{\varkappa}(H) : (x,y) \mapsto \frac{1}{(2\pi\hbar)^{n+1}}\int
e^{i\tfrac{\varphi(t,x,\xi)-y\cdot
\xi-t\lambda}{\hbar}}b(t,x,y,\xi)\varkappa(\lambda) \dd
\xi \dd t \dd \lambda+\O_{\J^1}(\hbar^{\infty}) .
\end{equation*}
The amplitude $b$  is supported in $\big\{ t\in
[\underline{\tau},\underline{\tau}] , |x-y| \le \underline{\ell}\}$
and on a small neighbourhood of \[\{(x,\xi) :\vartheta(V(x)+|\xi|^2)\neq 0\}.\]
\end{corr}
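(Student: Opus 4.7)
The plan is to express $\widetilde{\varkappa}(H)$ as an integral over $t$ of the Schr\"odinger propagator at time $t$ multiplied against $\hat\rho(t)$, and then to invoke Proposition \ref{prop:propag_as_FIO} pointwise in $t$ to turn the propagator into a Fourier integral operator. The conclusion follows by reading off $b$ from the resulting amplitude.

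Concretely, I would first use the inverse Fourier representation of $\rho_\hbar$, which gives
\[
\rho_\hbar(u) = \frac{1}{2\pi\hbar}\int \hat\rho(t)\, e^{itu/\hbar}\dd t
\]
with $\hat\rho$ supported in $[-\underline\tau,\underline\tau]$, and apply this to $u = H-\lambda$ via the functional calculus. Since $\widetilde{\varkappa}= \vartheta\cdot(\varkappa*\rho_\hbar)$, this produces
\[
\widetilde{\varkappa}(H)=\frac{1}{2\pi\hbar}\iint \hat\rho(t)\,\varkappa(\lambda)\, e^{-it\lambda/\hbar}\,\vartheta(H)e^{itH/\hbar}\dd t\dd\lambda ,
\]
where the joint integral makes sense because $\hat\rho$ is compactly supported in $t$ and $\varkappa$ has compact support in $\lambda$. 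Next, I would substitute the Fourier integral representation from Proposition \ref{prop:propag_as_FIO} for $\vartheta(H)e^{itH/\hbar}$, valid uniformly for $t\in[-\underline\tau,\underline\tau]$. Exchanging the $(t,\lambda)$-integration with the $\xi$-integration (all amplitudes are smooth and compactly supported in these slices), the three phases combine into $\varphi(t,x,\xi)-y\cdot\xi-t\lambda$ and the amplitudes combine into $b(t,x,y,\xi):=\hat\rho(t)\,s(t,x,y,\xi)$, giving exactly the claimed expression.

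Finally I would check the side conditions. That $b\in S^1(\R^{3n+1})$ is immediate: $s\in S^1$ uniformly in $t$ by Proposition~\ref{prop:propag_as_FIO}, and $\hat\rho$ is a fixed smooth compactly supported function of $t$; the product inherits all derivative bounds and the prescribed support properties (support in $|t|\le\underline\tau$, $|x-y|\le\underline\ell$, near $\{\vartheta(V(x)+|\xi|^2)\neq 0\}$). For the error, Proposition~\ref{prop:propag_as_FIO} provides an $\O_{\J^1}(\hbar^\infty)$ remainder uniformly in $t\in[-\underline\tau,\underline\tau]$; integrating against $\hbar^{-1}\hat\rho(t)\varkappa(\lambda)e^{-it\lambda/\hbar}$ over the compact set $[-\underline\tau,\underline\tau]\times \supp(\varkappa)$ only costs a factor $\O(\hbar^{-1})$, which is absorbed into $\hbar^\infty$.

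I do not expect a genuine obstacle here: the statement is essentially a formal consequence of Proposition~\ref{prop:propag_as_FIO} combined with Fourier inversion, and the only mildly delicate point is the uniformity of the $\J^1$ remainder in $t$, which is already guaranteed by that proposition. The one bookkeeping subtlety is making sure that the sign/normalization of the Fourier transform is consistent with the phase $\varphi(t,x,\xi)-y\cdot\xi-t\lambda$ appearing in the conclusion; this is matched by applying $\rho_\hbar$ to $H-\lambda$ rather than to $\lambda-H$ in the functional calculus step.
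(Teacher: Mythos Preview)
Your proposal is correct and follows exactly the approach the paper indicates: the paper states that the corollary follows from Proposition~\ref{prop:propag_as_FIO} ``using the Fourier inversion formula,'' and later (in the proof of Proposition~\ref{prop:regularised_commutator}) spells out that the amplitude is $b(t,x,y,\xi)=\sqrt{2\pi}\,\hat\rho(t)\,s(t,x,y,\xi)$, which is precisely what you obtain.
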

Up to a negligible error, controlled in the
trace-norm, the kernel of the operator $\widetilde{\varkappa}(H)$ is
given by an oscillatory integral. In fact, this kernel can be further
simplified if $\varkappa \in S^\eta$ with $\eta \ge \hbar^{\frac 12}$; see Proposition~\ref{prop:pseudo}. 
Let us also record two important observations regarding the support of the amplitude and the approximation of the phase. 

Since $ \underline{\tau}$ is much smaller than $\underline{\ell}$,
which itself is much smaller than $\underline{\cst}$, we obtain that,
if $\vartheta$ is supported in $[-\underline{\cst},\underline{\cst}]$,
one has
\begin{equation} \label{supp_bulk}
x\in \Omega',\;y\in\Omega',\; (t,x,y,\xi) \in \supp(a) \quad \Longrightarrow\quad |\xi| \ge \underline{\cst}. 
\end{equation}

Moreover, using the Hamilton-Jacobi equation, the phase has the following expansion as  $t\to 0$, 
\begin{equation} \label{phasephi}
\varphi(t,x,\xi)=x\cdot \xi + t( V(x) +|\xi|^2) + \O(t^2) 
\end{equation}
uniformly over the support of $a$, with a smooth error term, so that 
$\partial_t\varphi(t,x,\xi)=  V(x) +|\xi|^2  + \O(t)$ as  $t\to 0$.

\subsection{Preliminary commutator estimates}

We are interested in estimates for the Hilbert-Schmidt and trace-norm of the commutator $[\Pi,f|_\Omega]$ where $\Pi=\1_{\R_-}(H)$ is a spectral projector and $\Omega \Subset \mathcal{D}$ a subset of the droplet.  
A preliminary step to prove Theorem~\ref{thm:J2} consists in replacing $\Pi$ by a Fourier integral operator $\widetilde{\Pi}$, up to a negligible error when computing  $[\Pi,f|_\Omega]$. 
This operator is obtained by \emph{regularizing} $\Pi$ on scale $\hbar$
using Proposition~\ref{prop:propag_as_FIO}.

\begin{prop}\label{prop:regularised_commutator}
Let $\varphi$ be as in Proposition~\ref{prop:propag_as_FIO} and let $f\in C^{\infty}(\R^n)$. 
There exists $a \in  S^1(\R^{3n+2})$ such that
\[
[\Pi,f|_\Omega] = [\widetilde{\Pi}, f|_\Omega]+ \O_{\J^1}(\hbar^{1-n}) 
\]
where $\widetilde{\Pi}$ is a trace-class operator with a kernel 
\begin{equation} \label{kern_reg}
\widetilde{\Pi}: (x,y) \mapsto \frac{1}{(2\pi\hbar)^{n+1}}\int
e^{i\tfrac{\varphi(t,x,\xi)-y\cdot
\xi-t\lambda}{\hbar}}a(t,x,y,\xi,\lambda)\1\{\lambda\le0\} \dd
\xi \dd t \dd \lambda . 
\end{equation}
The amplitude $a$ is supported in
$\big\{t\in[-\underline{\tau},\underline{\tau}], |x-y|\le
\underline{\ell} ,|V(x)+|\xi|^2|\le \underline{\cst}, |\lambda| \le
\underline{\ell}\}$, satisfies \eqref{supp_bulk} and its principal part satisfies at $t=0$,
\begin{equation*}
a(0,x,x,\xi,\lambda)|_{\hbar=0}=\vartheta(|\xi|^2+V(x)) \chi(\lambda). 
\end{equation*}
\end{prop}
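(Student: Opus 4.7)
The plan is to decompose $\1_{\R_-}$ into a smooth piece and a jump piece localized near $\mu=0$, then apply Corollary~\ref{cor:propag} to regularize the jump and absorb the errors in the $\J^1$-remainder. Fix $\vartheta \in C^\infty_c([-\underline{\cst}, \underline{\cst}])$ with $\vartheta \equiv 1$ on $[-\underline{\cst}/2, \underline{\cst}/2]$ (this serves as the cutoff of Proposition~\ref{prop:propag_as_FIO} and enforces $|V(x) + |\xi|^2| \le \underline{\cst}$), $\chi \in C^\infty_c((-\underline{\ell}, \underline{\ell}))$ with $\chi \equiv 1$ on $[-\underline{\ell}/2, \underline{\ell}/2]$, and $\Theta \in C^\infty_c(\R)$ with $\Theta \equiv 1$ on $[\inf V - 1, \underline{\cst}]$, so that $\Theta(H)\Pi = \Pi$. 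Decompose
\[
\Pi = \bigl(\Theta(1-\vartheta)\1_{\R_-}\bigr)(H) + \bigl(\vartheta\1_{\R_-}\bigr)(H),
\]
where $\Theta(1-\vartheta)\1_{\R_-}$ is smooth and compactly supported because $(1-\vartheta)$ vanishes in a neighborhood of $0$.

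For the jump piece $\bigl(\vartheta\1_{\R_-}\bigr)(H)$, set $\varkappa := \chi\1_{\R_-}$ and apply Corollary~\ref{cor:propag} to obtain $b \in S^1(\R^{3n+1})$ (namely $b = s\hat\rho$ with $s$ from Proposition~\ref{prop:propag_as_FIO}) such that
\[
\widetilde\varkappa(H)(x,y) = \frac{1}{(2\pi\hbar)^{n+1}}\int e^{i(\varphi(t,x,\xi) - y\cdot\xi - t\lambda)/\hbar}\, b(t,x,y,\xi)\, \chi(\lambda)\1\{\lambda \le 0\}\, d\xi\, dt\, d\lambda + \O_{\J^1}(\hbar^\infty).
\]
Setting $a(t,x,y,\xi,\lambda) := b(t,x,y,\xi)\chi(\lambda) \in S^1(\R^{3n+2})$ identifies this with the claimed $\widetilde\Pi$: the factor $\chi(\lambda)$ enforces $|\lambda|\le\underline{\ell}$, the support of $b$ (inherited from $\hat\rho$ and $s$) gives the other constraints and \eqref{supp_bulk}, and the principal symbol at $(t,y)=(0,x)$ is $\vartheta(|\xi|^2+V(x))\chi(\lambda)$ since $\hat\rho(0)=1$.

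It remains to bound $\Pi - \widetilde\Pi$ in a way that yields the claimed commutator estimate. Writing $\1_{\R_-} = \chi\1_{\R_-} + (1-\chi)\1_{\R_-}$,
\[
\Pi - \widetilde\Pi = g_{\text{sm}}(H) + \bigl(\vartheta\cdot(\chi\1_{\R_-} - \chi\1_{\R_-}*\rho_\hbar)\bigr)(H), \quad g_{\text{sm}} := \Theta(1-\vartheta)\1_{\R_-} + \vartheta(1-\chi)\1_{\R_-}.
\]
The function $g_{\text{sm}}$ is smooth and compactly supported (the two cutoffs conspire to kill the jump of $\1_{\R_-}$), so $g_{\text{sm}}(H)$ is a semiclassical $\Psi$DO of order $0$ with a smooth symbol, and its commutator $[g_{\text{sm}}(H), f|_\Omega]$ is $\O_{\J^1}(\hbar^{1-n})$ by an adaptation of the Fournais--Mikkelsen estimate \eqref{FM} to smooth $\Psi$DO symbols and characteristic functions of smooth domains. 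For the regularization error, $\chi\1_{\R_-}$ has bounded variation and $\rho\in\mathcal{S}$ is even, so the standard BV--mollifier estimate gives $\|\chi\1_{\R_-} - \chi\1_{\R_-}*\rho_\hbar\|_{L^1} = \O(\hbar)$; combined with the Weyl-type estimate $\|g(H)\|_{\J^1} \le C\hbar^{-n}\|g\|_{L^1}$ for $g$ supported in the fixed compact set $\supp\vartheta$, this yields a trace-norm bound of order $\hbar^{1-n}$, hence the corresponding commutator bound.

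The main obstacle is the $\J^1$-estimate for $[g_{\text{sm}}(H), f|_\Omega]$, since the non-smoothness of $\1_\Omega$ forbids a direct appeal to \eqref{FM}; one must adapt the Fournais--Mikkelsen strategy (Fourier representation of the symbol $g_{\text{sm}}$, propagator estimates, and trace-norm bounds near $\partial\Omega$), leveraging the smoothness of $g_{\text{sm}}$ --- i.e.\ the fact that the Fermi jump has been isolated in $\widetilde\Pi$ --- to avoid any $\log\hbar^{-1}$ loss.
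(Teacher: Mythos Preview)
Your approach is essentially the same as the paper's: construct $\widetilde\Pi$ via Corollary~\ref{cor:propag} with $\varkappa=\chi\1_{\R_-}$, split $\Pi-\widetilde\Pi$ into a smooth spectral piece and a regularization error, then control each in $\J^1$. The paper also isolates the commutator estimate $\|[g_{\rm sm}(H),f|_\Omega]\|_{\J^1}=\O(\hbar^{1-n})$ as the crux and proves it separately (Proposition~\ref{prop:J1commsmooth}), exactly as you anticipate in your last paragraph.

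There is one genuine gap in your treatment of the regularization error. The inequality $\|g(H)\|_{\J^1}\le C\hbar^{-n}\|g\|_{L^1}$ that you invoke is \emph{false} in general: it would require the eigenvalue counting measure to be absolutely continuous with density $\O(\hbar^{-n})$, but the spectrum is discrete and nothing prevents an eigenvalue from sitting exactly where $g$ is concentrated (take $g=\1_{[\lambda_0-\epsilon,\lambda_0+\epsilon]}$ with $\epsilon\ll\hbar$ and $\lambda_0\in\Sp(H)$). The BV--$L^1$ mollifier estimate is therefore not enough on its own. What makes the argument work is the specific shape of $g:=\vartheta\cdot(\chi\1_{\R_-}-\chi\1_{\R_-}*\rho_\hbar)$: since $\rho$ is Schwartz, for every $k$ one can write $g=\chi_k(\hbar^{-1}\cdot)+\O(\hbar^k\vartheta)$ with $\chi_k\in C^\infty_c$. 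Then $\|\chi_k(\hbar^{-1}H)\|_{\J^1}\le C\,\tr\1\{|H|\le C\hbar\}=\O(\hbar^{1-n})$ by the sharp Weyl bound at scale $\hbar$ (Lemma~\ref{lem:Weyl}), and the tail is $\O(\hbar^{k-n})$ by the global Weyl bound \eqref{Weyl_global}. This is precisely how the paper closes the argument; replace your $L^1$ step by this and the proof is complete.
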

We expect  the error~$\O_{\J^1}(\hbar^{1-n})$ to be sharp; it is negligible compared to the asymptotics of Theorem~\ref{thm:J2}.
Note also that the kernel \eqref{kern_reg} has a discontinuous
amplitude, which makes its analysis non-trivial.

The proof of Proposition~\ref{prop:regularised_commutator} occupies
the rest of this section, the operator $\widetilde{\Pi}$ being constructed using Corollary~\ref{cor:propag} with $\varkappa = \chi\1_{\R_-}$ and two cutoffs: 
\begin{itemize}[leftmargin=*]
\item $\vartheta :\R \to[0,1]$ supported in
  $[-\underline{\cst},\underline{\cst}]$ and equal to $1$ on
  $[-\underline{\cst}/2, \underline{\cst}/2]$.
\item $\chi:\R \to [0,1]$, compactly support in $[-\underline{\ell},\underline{\ell}]$ and equal to 1 on a neighborhood of 0.
\end{itemize}
Then, we have
\begin{equation} \label{Pitilde}
\widetilde{\varkappa}(H) = \vartheta(H) (\chi\1_{\R_-}*\rho_{\hbar})(H) = \widetilde{\Pi}+\O_{\J^1}(\hbar^{\infty}) 
\end{equation}
where $\widetilde{\Pi}$ is the Fourier integral operator \eqref{kern_reg} and the amplitude 
\(
a : (t,x,y,\xi,\lambda) \mapsto \sqrt{2\pi}\widehat{\rho}(t)s(t,x,y,\xi)\chi(\lambda)
\)
with $s$ as in Proposition~\ref{prop:propag_as_FIO}. 
This yields both the support condition for $a$ and its principal part.

\smallskip

It remains to relate $\widetilde{\varkappa}(H)$ to the true projection $\Pi = \1_{\R_-}(H)$. 
The first (simple) step is to replace $\Pi$ by $\vartheta(H)\1_{\R_-}(H)$ where $\vartheta  \in \Co^\infty_c(\R)$ is supported in an arbitrary small neighbourhood of $\mu=0$.  

\begin{prop} \label{prop:J1commsmooth}
Let $\vartheta:\R \to\R$ be smooth and compactly supported in $(-\infty,\underline{\cst}]$. 
Let $\Omega \Subset \R^n$ be a set with a smooth boundary  and  $f \in \Co^\infty(\R)$, then as $\hbar\to0$, 
\[
\big\| [\vartheta(H), f|_\Omega] \big\|_{\J^1} =\O(\hbar^{1-n}) . 
\]
\end{prop}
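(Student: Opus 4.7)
The plan is to approximate $\vartheta(H)$ by a pseudodifferential operator with smooth compactly supported symbol, split the commutator with $f\1_\Omega$ via the Leibniz rule into a smooth interior piece and a rough boundary piece, and control the latter through a dyadic tubular decomposition around $\partial\Omega$ that exploits the rapid off-diagonal decay of the kernel of $\vartheta(H)$.

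First I would apply Corollary~\ref{cor:propag} with a smooth, compactly supported $\varkappa$ chosen so that $\vartheta\cdot(\varkappa\ast\rho_\hbar)=\vartheta+O(\hbar^\infty)$; this realises $\vartheta(H)$, up to an $\O_{\J^1}(\hbar^\infty)$-error, as a Fourier integral operator with smooth compactly supported amplitude. Repeated integration by parts in the $\xi$-variable of the oscillatory integral then yields the pointwise off-diagonal bound
\[
|\vartheta(H)(x,y)| \le C_N \hbar^{-n}\langle (x-y)/\hbar\rangle^{-N}, \qquad N\in\N,
\]
uniformly in $(x,y)$ on a fixed compact set. Using the identity $[\vartheta(H),f\1_\Omega]=[\vartheta(H),f]\1_\Omega + f[\vartheta(H),\1_\Omega]$, the first summand is controlled by standard semiclassical calculus: $[\vartheta(H),f]$ is pseudodifferential with principal symbol $-i\hbar\{\vartheta(|\xi|^2+V),f\}$, compactly supported in $(x,\xi)$ and of size $\hbar$, so the Weyl-law trace-norm bound yields $\|[\vartheta(H),f]\|_{\J^1}=\O(\hbar^{1-n})$; right-multiplication by $\1_\Omega$ is an $L^2$-contraction and preserves this.

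It then remains to bound $\|[\vartheta(H),\1_\Omega]\|_{\J^1}$, since $f$ is bounded. The strategy is to introduce smooth cutoffs $\chi_\ell$ subordinate to dyadic shells $S_\ell=\{d(\cdot,\partial\Omega)\sim 2^\ell\hbar\}$ for $0\le\ell\le C\log\hbar^{-1}$, beyond which the kernel is negligible. Since the commutator kernel vanishes unless $x,y$ lie on opposite sides of $\partial\Omega$, the kernel decay of the first step forces both points into the shells modulo $\O_{\J^1}(\hbar^\infty)$, and
\[
[\vartheta(H),\1_\Omega] = \sum_{\ell,\ell'} \chi_\ell [\vartheta(H),\1_\Omega]\chi_{\ell'} + \O_{\J^1}(\hbar^\infty).
\]
Each block will be estimated by the Schatten interpolation $\|T\|_{\J^1}\le\sqrt{\mathrm{rank}(T)}\,\|T\|_{\J^2}$: Weyl's law applied to the compression $\chi_\ell\vartheta(H)\chi_\ell$ yields an effective rank $\lesssim\hbar^{-n}|\supp\chi_\ell|\asymp\hbar^{1-n}2^{\min(\ell,\ell')}$, while the off-diagonal bound on $\vartheta(H)(x,y)$, combined with $|x-y|\gtrsim (2^\ell+2^{\ell'})\hbar$ for opposite-side pairs, gives $\|\chi_\ell[\vartheta(H),\1_\Omega]\chi_{\ell'}\|_{\J^2}^2 \le C_N \hbar^{1-n} 2^{-N\max(\ell,\ell')}$ for every $N$. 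Summing the resulting double geometric series (with $N$ large enough) will give the required $\O(\hbar^{1-n})$ bound. The main technical difficulty will be rigorously justifying the ``effective rank'' estimate, since $\chi_\ell\vartheta(H)\chi_\ell$ is not literally of finite rank; this should be handled either by truncating to a high-energy spectral subspace of $H$ or via a refined Schatten bound based on the Weyl count of the singular values in the shell.
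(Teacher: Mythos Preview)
Your Leibniz reduction to $[\vartheta(H),\1_\Omega]$ and the off-diagonal kernel decay are correct, and the dyadic tubular decomposition is a reasonable organization. The gap is exactly where you flag it: the ``effective rank'' bound $\mathrm{rank}(\chi_\ell\vartheta(H)\chi_{\ell'})\lesssim \hbar^{1-n}2^{\min(\ell,\ell')}$ is not true as stated --- the literal rank is only bounded by $\mathrm{rank}(\vartheta(H))\asymp\hbar^{-n}$ via Weyl's law, and plugging this into $\sqrt{\mathrm{rank}}\cdot\|\cdot\|_{\J^2}$ gives only $\O(\hbar^{1/2-n})$ after summation. Your two proposed remedies do not help: spectrally truncating $H$ is already done by $\vartheta$, and a ``local Weyl count of singular values in a shell'' is not a standard estimate and would itself be harder than the proposition. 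What actually closes the argument is to observe that for $\ell,\ell'\ge 1$ the indicator $\1_\Omega$ is \emph{constant} on $\supp\chi_\ell$ and on $\supp\chi_{\ell'}$, so the block kernel $\chi_\ell(x)(\1_\Omega(x)-\1_\Omega(y))\vartheta(H)(x,y)\chi_{\ell'}(y)$ is smooth and one may apply the Dimassi--Sj\"ostrand $\J^1$ bound \eqref{DS} directly (after the $\hbar$-rescaling), obtaining $\|\chi_\ell[\vartheta(H),\1_\Omega]\chi_{\ell'}\|_{\J^1}\le C_N\hbar^{1-n}2^{\min(\ell,\ell')-N\max(\ell,\ell')}$, which is summable. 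For the remaining boundary-layer blocks ($\ell=0$ or $\ell'=0$), pull the rough factor $\1_\Omega$ out as an operator-norm contraction and use either \eqref{DS} again (when the other index is $\ge 2$, since the smooth cutoffs are then separated) or the trivial $\J^2\times\J^2$ factorization $\|\chi_0\vartheta_1(H)\|_{\J^2}\|\vartheta_2(H)\chi_0\|_{\J^2}\lesssim\hbar^{1-n}$ (when both indices are small).

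The paper's proof avoids the dyadic tower altogether: it uses the algebraic identity $[\vartheta(H),\1_\Omega]=\vartheta(H)\1_\Omega(1-\vartheta(H))-(1-\vartheta(H))\1_\Omega\vartheta(H)$, sandwiches with a \emph{single} smooth cutoff $\mathrm{f}=\chi(\hbar^{-1}\mathrm{w})$ at scale $\hbar$ near $\partial\Omega$, and reduces to (i) commutators with functions in $S^\hbar$ that are constant outside an $\hbar$-tube (Lemma~\ref{lem:J1commnonsmooth}, proved via \eqref{DS}) and (ii) $\J^2\times\J^2$ products exploiting $\|\mathrm{f}\|_{L^2}^2=\O(\hbar)$ (Proposition~\ref{prop:J2_estimates_dyadic} and Remark~\ref{rem:J2norm}). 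This is shorter than a full dyadic argument, though your route, once repaired as above, would also work.
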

\noindent
Since it requires some auxiliary estimates, we postpone the proof of
Proposition~\ref{prop:J1commsmooth} to the end of this Section.

\smallskip

For the second step, going from $\vartheta(H)\1_{\R_-}(H)$ to $\widetilde{\Pi}$, we need the following estimates for the eigenvalue counting function on microscopic scale. The proof follows the reasoning from \cite[Theorem 4]{helffer_comportement_1981}. 
We emphasize that the argument is similar in spirit, albeit technically much more straightforward, to the application of the stationary phase method we will use to prove Theorems~\ref{thm:J2} and~\ref{thm:J1}. 

\begin{lem} \label{lem:Weyl}
Uniformly for $\lambda\in [-\underline{\cst},\underline{\cst}]$,
\[
\tr \1\{|H-\lambda| \le \hbar\} = \O(\hbar^{1-n}).
\]
\end{lem}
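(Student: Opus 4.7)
The plan is to dominate $\1\{|H-\lambda|\leq\hbar\}$ by a Schwartz majorant whose Fourier transform is compactly supported in $[-\underline{\tau},\underline{\tau}]$, represent the resulting smooth spectral function via the semiclassical propagator using Proposition~\ref{prop:propag_as_FIO}, and estimate the resulting oscillatory integral by integration by parts in the time variable. The main obstacle will be the stationary-phase step in this last reduction.

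First, I would fix an even, non-negative $\chi\in\mathcal{S}(\R)$ with $\chi\geq\1_{[-1,1]}$ and $\supp(\hat\chi)\subset[-\underline{\tau},\underline{\tau}]$ (for instance, the square of a suitable band-limited bump, rescaled). The spectral theorem gives $\1\{|H-\lambda|\leq\hbar\}\leq\chi((H-\lambda)/\hbar)$, so it is enough to show $\tr\chi((H-\lambda)/\hbar)=\O(\hbar^{1-n})$. Introducing an energy cutoff $\vartheta\in C_c^\infty(\R)$ supported in the range where Proposition~\ref{prop:propag_as_FIO} applies and equal to $1$ on a neighborhood of the effective range of $\lambda$, the rapid decay of $\chi$ together with a crude Weyl-type upper bound on the eigenvalue counting function of $H$ yields $\tr\chi((H-\lambda)/\hbar)=\tr\vartheta(H)\chi((H-\lambda)/\hbar)+\O(\hbar^\infty)$ uniformly in $\lambda\in[-\underline{\cst},\underline{\cst}]$.

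Next, Fourier inversion $\chi((H-\lambda)/\hbar)=(2\pi)^{-1}\int\hat\chi(t)\,e^{-it\lambda/\hbar}\,e^{itH/\hbar}\,dt$ combined with the FIO representation of $\vartheta(H)e^{itH/\hbar}$ on $[-\underline{\tau},\underline{\tau}]$ from Proposition~\ref{prop:propag_as_FIO} converts the trace (after setting $y=x$) into
\[
\tr\vartheta(H)\chi((H-\lambda)/\hbar)=\frac{1}{(2\pi\hbar)^{n}(2\pi)}\iiint\hat\chi(t)\,e^{i\Phi(t,x,\xi)/\hbar}\,s(t,x,x,\xi)\,dt\,dx\,d\xi+\O(\hbar^\infty),
\]
with phase $\Phi(t,x,\xi):=\varphi(t,x,\xi)-x\cdot\xi-t\lambda$ and bounded, compactly supported amplitude $s$.

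Finally, by Hamilton-Jacobi and $\varphi|_{t=0}=x\cdot\xi$, one has $\partial_t\Phi=V(x)+|\xi|^{2}-\lambda+\O(t)$; under Assumptions~\ref{asymp:V}, $|\nabla_{x,\xi}(V(x)+|\xi|^{2})|^{2}=|\nabla V(x)|^{2}+4|\xi|^{2}$ is bounded below on the amplitude support, either because $|\nabla V|$ is bounded below where $|V|\leq\underline{\cst}$, or because $|\xi|$ is bounded below where $V(x)<-\underline{\cst}$ and $\vartheta(V(x)+|\xi|^2)\neq 0$. The coarea formula then yields $(x,\xi)$-measure $\O(\hbar)$ for the critical tube $T_K:=\{|V(x)+|\xi|^2-\lambda|\leq K\hbar\}$, contributing $\O(\hbar)$ to the integral. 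Outside $T_K$, taking $K$ large enough relative to the $\O(t)$ correction to $\partial_t\Phi$ on $\supp\hat\chi$ (which is where smallness of $\underline{\tau}$ enters), $N$ integrations by parts in $t$ via $\tfrac{\hbar}{i\partial_t\Phi}\partial_t$ gain a factor $(\hbar/|V(x)+|\xi|^2-\lambda|)^{N}$, and a dyadic sum over shells $\{2^{k}K\hbar\leq|V(x)+|\xi|^2-\lambda|\leq 2^{k+1}K\hbar\}$ for $k\geq 0$ yields an additional $\O(\hbar)$. Combining, the triple integral is $\O(\hbar)$ and hence $\tr\chi((H-\lambda)/\hbar)=\O(\hbar^{1-n})$ uniformly in $\lambda$.
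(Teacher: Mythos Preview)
Your overall strategy---Schwartz majorant $\chi$ with $\supp\hat\chi\subset[-\underline\tau,\underline\tau]$, energy cutoff $\vartheta$, FIO representation of the propagator, reduction to an oscillatory integral---matches the paper's proof. The gap is in the last step.

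You claim that outside the tube $T_K=\{|V(x)+|\xi|^2-\lambda|\le K\hbar\}$ (with $K$ a fixed constant), integrating by parts in $t$ via $\tfrac{\hbar}{i\partial_t\Phi}\partial_t$ yields a gain $(\hbar/|V(x)+|\xi|^2-\lambda|)^N$. But from the Hamilton--Jacobi equation,
\[
\partial_t\Phi(t,x,\xi)=\bigl(V(x)+|\xi|^2-\lambda\bigr)+2t\,\xi\cdot\nabla V(x)+\O(t^2),
\]
and on $\supp\hat\chi$ the correction $2t\,\xi\cdot\nabla V$ has size $\O(\underline\tau)$, a \emph{fixed} constant independent of $\hbar$. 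Hence on the intermediate region $K\hbar<|V(x)+|\xi|^2-\lambda|<c\,\underline\tau$, which has $(x,\xi)$-volume $\asymp\underline\tau$ rather than $\O(\hbar)$, the function $t\mapsto\partial_t\Phi$ can vanish inside $[-\underline\tau,\underline\tau]$ (near $t_*\approx-(V+|\xi|^2-\lambda)/(2\xi\cdot\nabla V)$ whenever $\xi\cdot\nabla V\ne0$), so the integration by parts is simply not valid there. No constant $K$ and no smallness of $\underline\tau$ repairs this: since $\underline\tau$ is $\hbar$-independent, one always has $K\hbar\ll\underline\tau$ for small $\hbar$. As written, your argument only controls $\{|V+|\xi|^2-\lambda|\gtrsim\underline\tau\}$, leaving an $\O(\underline\tau)$ contribution from the near-critical zone and yielding only $\tr=\O(\hbar^{-n})$.

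The paper repairs this by also exploiting the $(x,\xi)$-oscillation. Using your own observation that $|\nabla_{x,\xi}(V(x)+|\xi|^2)|$ is bounded below on the amplitude support, one changes variables $(x,\xi)\mapsto(\eta,\omega)$ with $\eta:=V(x)+|\xi|^2$ and $\omega$ parametrising the level sets. The phase then reads $t(\varpi(t,\eta,\omega)-\lambda)$ with $\varpi=\eta+\O(t)$, and one applies stationary phase in the \emph{pair} $(t,\eta)$, keeping $\omega$ as a parameter: the unique critical point is $(t,\eta)=(0,\lambda)$, and the mixed second derivative $\partial_t\partial_\eta\bigl[t(\varpi-\lambda)\bigr]=1+\O(t)$ makes the $2\times2$ Hessian non-degenerate. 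This gives the $(t,\eta)$-integral $=\O(\hbar)$ uniformly in $(\omega,\lambda)$, hence the full integral is $\O(\hbar)$ and the trace $\O(\hbar^{1-n})$. The point is that the $t$-variable alone does not carry a non-degenerate critical structure on the relevant region; pairing it with the energy coordinate $\eta$ does.
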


\begin{proof}
First observe that it follows directly from Proposition~\ref{prop:propag_as_FIO} with $t=0$ that for any smooth function $\vartheta : \R\to\R_+$ supported on $(-\infty,\underline{\cst}]$, 
\begin{equation} \label{Weyl_global}
\|\vartheta(H)\|_{\J^1} = \int_{\R^n} \vartheta(H)(x,x) \dd x
= \frac{1}{(2\pi\hbar)^n}\int b(0,x,x,\xi)\dd x \dd \xi +\O(\hbar^{\infty})
= \O(\hbar^{-n}) .
\end{equation}
Here, we used that the operator $\vartheta(H)\ge 0$ is trace-class so that $\|\vartheta(H)\|_{\J^1} = \tr\vartheta(H)$. 

\smallskip

Let  $\mathcal{H}(x,\xi) := V(x)+|\xi|^2$ for $(x,\xi)\in\R^{2n}$.
Using the Notation of Section \ref{sec:not}, with $\widetilde{\delta_\lambda} := \vartheta\cdot (\delta_\lambda*\rho_{\hbar})$ for $\rho\in \mathcal{S}(\R,\R_+)$ and $\vartheta$ a cutoff equal to 1 on $[-\underline{\cst}/2, \underline{\cst}/2]$, one has $\rho(\frac{\cdot-\lambda}{\hbar}) =\hbar \widetilde{\delta_\lambda}   + \O(\hbar^\infty\vartheta)$ if $\lambda \in [-\underline{\cst}/2, \underline{\cst}/2]$. Then, by Proposition~\ref{prop:propag_as_FIO} and \eqref{Weyl_global}, 
\[
\rho\big(\tfrac{H-\lambda}{\hbar}\big): (x,y) \mapsto \frac{1/2\pi}{(2\pi\hbar)^{n}}\int
e^{i\tfrac{\varphi(t,x,\xi)-y\cdot
\xi-t\lambda}{\hbar}}b(t,x,y,\xi) \dd
\xi \dd t  +\O_{\J^1}(\hbar^{\infty}) 
\]
where $(t,x,\xi) \mapsto b(t,x,x,\xi) $ is supported on $t\in[-\underline{\tau},\underline{\tau}]$ and a small  neighbourhood of $\{\mathcal{H}(x,\xi)=\lambda\}$, and the error term is independent of $\lambda\in[-\underline{\cst}/2, \underline{\cst}/2]$.
In addition, this holds for any $\rho\in \mathcal{S}(\R,\R_+)$, not necessarily a probability density.
Thus, 
\[
\tr \rho\big(\tfrac{H-\lambda}{\hbar}\big)
\le \frac{1}{(2\pi\hbar)^{n}}\bigg| \int
e^{i\tfrac{\varphi(t,x,\xi)-x\cdot
\xi-t\lambda}{\hbar}}b(t,x,x,\xi) \dd x \dd
\xi \dd t \bigg| +(\hbar^{\infty}) .
\]
We now estimate the previous integral using the stationary phase method.  
Observe that according to \eqref{phasephi}, the phase is
$t(\mathcal{H}(x,\xi)-\lambda +\O(t))$ and, by
Assumptions~\ref{asymp:V}, the sets $\{\mathcal{H}(x,\xi)=\lambda\}$
for $\lambda\in[-\underline{\cst},\underline{\cst}]$ are diffeomorphic
to spheres. In particular, letting $\eta=\mathcal{H}(x,\xi)$, we can
make a (non-degenrate) change of coordinates $(x,\xi) \in
\R^{2n}\rightarrow (\eta,\omega) \in \R\times \S^{2n-1}$ such that
\[
\int e^{i\tfrac{\varphi(t,x,\xi)-x\cdot
\xi-t\lambda}{\hbar}}b(t,x,x,\xi) \dd\xi \dd t 
= \int e^{i\tfrac{t (\varpi(t,\eta,\omega)-\lambda)}{\hbar}} a(t,\eta,\omega) \dd \omega \dd \eta  \dd t 
\]
where $\varpi(t,\eta,\omega) = \eta +\O(t)$ with a smooth error and the amplitude $a\in S^1(\R^{2n+1})$ is supported in 
$\{t\in[-\underline{\tau},\underline{\tau}]\}$. 
Now, we apply the stationary phase method in the variable $(t,\eta)$, keeping $\omega\in \S^{n-1}$ fixed. 
There is a unique critical point $(t,\eta) = (0,\lambda)$ and the Hessian is the identity at the critical point. 
By Proposition~\ref{lem:statphase}, we obtain
\[
\bigg| \int
e^{i\tfrac{\varphi(t,x,\xi)-x\cdot
\xi-t\lambda}{\hbar}}b(t,x,x,\xi) \dd x \dd
\xi \dd t \bigg| =\O(\hbar)
\]
where the error term is uniform for $\{\lambda\in [-\underline{\cst},\underline{\cst}]\}$. By positivity, we conclude that there is a constant $C$ so that 
\[
\tr \1\{|H-\lambda| \le \hbar\} \le  \tr \rho\big(\tfrac{H-\lambda}{\hbar}\big)\le C \hbar^{1-n} . \qedhere
\]
\end{proof}

We are now ready to finish the proof of Proposition \ref{prop:regularised_commutator}.

\begin{proof}[End of the proof of Proposition~\ref{prop:regularised_commutator}]  
By Proposition~\ref{prop:J1commsmooth}, since the operator $H$ is bounded from below
\[
[\Pi,\1_\Omega] = [\vartheta(H)\Pi, \1_\Omega] +[(1-\vartheta(H))\Pi,\1_\Omega]=[\vartheta(H)\Pi, \1_\Omega] +\O_{\J^1} (\hbar^{1-n}) . 
\]
Recall that that $\widetilde{\varkappa}:=\vartheta\cdot (\chi\1_{\R_-}*\rho_{\hbar})$ where the mollifier $\rho$ is a Schwartz function and $\vartheta,\chi$ are appropriate cutoffs. By construction, for every $k\in\N$, there is a function $\chi_k : \R\to[-1,1]$, smooth with compact support so that 
\[
\widetilde{\varkappa} = \vartheta\1_{\R_-} + \chi_k (\hbar^{-1}\cdot)
+\O(\hbar^k\vartheta) . 
\]
Consequently, by Lemma~\ref{lem:Weyl} (see also \eqref{Weyl_global}), 
\[
\widetilde{\varkappa}(H)= \vartheta(H)\Pi +\O_{\J^1} (\hbar^{1-n})
\]
By \eqref{Pitilde}, this concludes the proof. 
\end{proof}

It remains to prove Proposition \ref{prop:J1commsmooth}. We start with
the following estimate from \cite{dimassi_spectral_1999}, formula (9.2): Let $K\in L^1(\R^{2n})$ be an integral kernel, then the corresponding operator satisfies
\begin{equation} \label{DS}
\|K\|_{\J^1} \le \sum_{\alpha\in\N^{2n}_0:|\alpha|\le 2n+1} \|\partial^\alpha_{x,y} K\|_{L^1(\R^{2n})} . 
\end{equation}

\begin{lem} \label{lem:J1commsmooth}
Let $\vartheta:\R \to\R$ be smooth and supported in $(-\infty,\underline{\cst}]$ and let $f:\R^n\to\R$ with compact support and $\|f\|_{\Co^1} \le 1$. Then, as $\hbar\to0$,
\[
[\vartheta(H),f] = O_{\J^1}(\hbar^{1-n})  . 
\]
\end{lem}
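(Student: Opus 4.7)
The plan is to bypass direct use of \eqref{DS} --- which is too crude to detect the semiclassical cancellation in an oscillatory kernel --- and instead to prove the bound by interpolation between Hilbert--Schmidt norms. Pick $\phi \in \Co^\infty_c((-\infty, \underline{\cst}])$ with $\phi \equiv 1$ on the support of $\vartheta$, so that $\vartheta(H) = \phi(H)\vartheta(H)$ by functional calculus and the Leibniz rule yields
\[
[\vartheta(H), f] = [\phi(H), f]\, \vartheta(H) + \phi(H)\, [\vartheta(H), f].
\]
The Schatten--H\"older inequality $\|AB\|_{\J^1} \le \|A\|_{\J^2}\|B\|_{\J^2}$ then reduces the claim to proving, for every $\psi \in \Co^\infty_c((-\infty,\underline{\cst}])$, the two estimates $\|\psi(H)\|_{\J^2} = \O(\hbar^{-n/2})$ and $\|[\psi(H), f]\|_{\J^2} = \O(\hbar^{1-n/2})$.

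The first estimate is a Weyl-type bound: $\|\psi(H)\|_{\J^2}^2 = \tr \psi^2(H) = \O(\hbar^{-n})$ follows directly from Proposition~\ref{prop:propag_as_FIO} at $t = 0$, exactly as in \eqref{Weyl_global}. For the second estimate, observe that the kernel of $[\psi(H), f]$ is $(f(y) - f(x))\,\psi(H)(x, y)$, and the hypothesis $\|f\|_{\Co^1} \le 1$ yields the Lipschitz bound $|f(y) - f(x)| \le |x - y|$, whence
\[
\|[\psi(H), f]\|_{\J^2}^2 \le \int |x - y|^2 |\psi(H)(x, y)|^2\, \dd x\, \dd y = \sum_{j=1}^n \|[x_j, \psi(H)]\|_{\J^2}^2,
\]
since $(x_j - y_j)\,\psi(H)(x, y)$ is the kernel of $[x_j, \psi(H)]$. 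To control each $\|[x_j, \psi(H)]\|_{\J^2}$, I would apply Proposition~\ref{prop:propag_as_FIO} at $t = 0$ to write
\[
\psi(H)(x, y) = (2\pi\hbar)^{-n}\int e^{i(x - y)\cdot\xi/\hbar}\, s(x, y, \xi)\, \dd\xi + \O_{\J^2}(\hbar^\infty)
\]
with $s \in S^1$ of compact support in $(x, y, \xi)$, and then integrate by parts in $\xi_j$ using the identity $(x - y)_j\, e^{i(x - y)\cdot\xi/\hbar} = -i\hbar\, \partial_{\xi_j} e^{i(x - y)\cdot\xi/\hbar}$. Boundary terms vanish by the compact $\xi$-support of $s$, and $(x-y)_j\psi(H)$ emerges as $i\hbar$ times an FIO of exactly the same type as $\psi(H)$, with amplitude $\partial_{\xi_j} s$ in place of $s$.

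The technical heart of the argument is then a Plancherel-type estimate for the $\J^2$ norm of this modified FIO: after the semiclassical change of variables $y = x - \hbar v$ one is reduced to $\int e^{iv\cdot\xi}\,\partial_{\xi_j} s(x, x-\hbar v, \xi)\, \dd\xi$, and Plancherel in the $v$-variable (together with a Taylor expansion to absorb the $\hbar v$-dependence in $y$) yields $\|\text{FIO}\|_{\J^2}^2 = (2\pi\hbar)^{-n}\int |\partial_{\xi_j} s(x, x, \xi)|^2\, \dd x\, \dd \xi + \O(\hbar^{1-n}) = \O(\hbar^{-n})$. This produces $\|[x_j, \psi(H)]\|_{\J^2} = \O(\hbar^{1-n/2})$, so summing in $j$ and combining with the first estimate,
\[
\|[\vartheta(H), f]\|_{\J^1} \le 2\,\O(\hbar^{1-n/2})\cdot\O(\hbar^{-n/2}) = \O(\hbar^{1-n}).
\]
The crucial feature of this route is that no derivative of $f$ beyond the Lipschitz bound extracted from $\|f\|_{\Co^1} \le 1$ is ever used; the semiclassical price is paid entirely by the integration by parts in the conjugate variable $\xi$, whose amplitude is smooth by the Fourier integral structure of $\psi(H)$.
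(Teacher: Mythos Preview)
Your approach is correct and takes a genuinely different route from the paper's. The paper rescales the commutator kernel to the unit scale and then applies the crude bound \eqref{DS}, which controls $\|K\|_{\J^1}$ by $L^1$ norms of up to $2n+1$ derivatives of the kernel; after rescaling, the Lipschitz gain $|f(\hbar x)-f(\hbar y)|\le \hbar|x-y|$ combines with the rapid off-diagonal decay of $\widehat{a}$ to yield $\O(\hbar^{1-n})$. Your route instead factorises $\vartheta(H)=\phi(H)\vartheta(H)$, uses Schatten--H\"older to reduce to $\J^2$ bounds, and extracts the semiclassical gain by a single integration by parts in $\xi$ inside $[x_j,K_0]$. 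This buys you something real: as you note, only the Lipschitz constant of $f$ is ever used, whereas the paper's application of \eqref{DS} implicitly differentiates the factor $f(\hbar x)-f(\hbar y)$ up to $2n+1$ times and hence tacitly relies on higher regularity of $f$ than the stated $\|f\|_{\Co^1}\le 1$ (harmless in the paper's applications, where $f$ is smooth, but an imprecision nonetheless). Your argument also makes transparent \emph{why} one derivative of $f$ is enough: the commutator structure trades one power of $|x-y|$ for one power of $\hbar$ via $\partial_\xi$.

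One small point to tighten: when you write $\|[\psi(H),f]\|_{\J^2}^2\le \sum_j\|[x_j,\psi(H)]\|_{\J^2}^2$, remember that $x_j$ is unbounded and the $\O_{\J^1}(\hbar^\infty)$ remainder $E$ in $\psi(H)=K_0+E$ need not have $[x_j,E]\in\J^2$. The clean fix is to split the commutator \emph{before} passing to $x_j$: write $[\psi(H),f]=[K_0,f]+[E,f]$, dispose of the second term by $\|[E,f]\|_{\J^2}\le 2\|f\|_\infty\|E\|_{\J^2}=\O(\hbar^\infty)$, and only then bound $\|[K_0,f]\|_{\J^2}^2\le\sum_j\|[x_j,K_0]\|_{\J^2}^2$, which is now manifestly finite since $K_0$ has compactly supported kernel. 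Your Plancherel computation for $\|[x_j,K_0]\|_{\J^2}$ is then routine.
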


\begin{proof}
By Proposition~\ref{prop:propag_as_FIO} with $t=0$, there is $a\in S^1(\R^{3n})$ so that 
\begin{equation} \label{pseudoop}
\vartheta(H) : (x,y)\mapsto \frac{1}{(2\pi\hbar)^n}\int
e^{i\tfrac{\xi\cdot
(x-y)}{\hbar}}a(x,y,\xi)\dd \xi +\O_{\J^1}(\hbar^{\infty}).
\end{equation}
Hence,
\[
[\vartheta(H),f] = K+\O_{\J^1}(\hbar^{\infty}) , \qquad 
K : (x,y)\mapsto \frac{1}{(2\pi\hbar)^n}\int
e^{i\tfrac{\xi\cdot
(x-y)}{\hbar}}a(x,y,\xi)(f(x)-f(y))\dd \xi . 
\]
Up to a unitary scaling (changing $x\leftarrow x\hbar$), which fixes the $\J^1$-norm, the kernel $K$ is equivalent to 
\[
\widetilde{K} : (x,y) \mapsto  \frac{1}{(2\pi)^{n}} \int
e^{i{(x-y)\cdot \xi}}a(\hbar x,\hbar y,\xi)\big(f(\hbar x)-f(\hbar y)\big)\dd \xi 
= \widehat{a} (\hbar x, \hbar y, x-y)\big(f(\hbar x)-f(\hbar y)\big) .
\]
where $\widehat{a}(x,y,\cdot)$ denotes the Fourier transform (appropriately normalized) of $\xi \mapsto a(x,y,\xi)$ for $(x,y)\in\R^{2n}$ -- in particular, $\widehat{a} \in \mathcal{S}(\R^{3n})$ with norms independent of $\hbar$.

Hence, if $\|f\|_{\Co^1} \le 1$, we can bound for any $k\in\N_0$,
\[
|\widetilde{K}(x,y)| \le   \hbar \frac{ \varphi_k(\hbar x)| |x-y|}{1+|x-y|^{k}} 
\]
where $\varphi_k \in \mathcal{S}(\R^{n})$. 
This estimate implies that $\|\widetilde{K}\|_{L^1(\R^{2n})} = \O(\hbar^{1-n})$. 

In addition, observe that by a similar computation for any index $\alpha \in \N^{2n}_0$ and $k\in\N_0$, 
\[
\partial_{x,y}^\alpha \widetilde{K}(x,y) 
= \O_\alpha\bigg( \hbar \frac{ \varphi_k(\hbar x)| |x-y|}{1+|x-z|^{k}} \bigg)
\]
Thus, using \eqref{DS}, this shows that $\|K\|_{\J^1} = \O(\hbar^{1-n})$, which completes the proof.
\end{proof}

In order to prove Proposition~\ref{prop:J1commsmooth}, we need a similar estimates for (smooth) function which are constant except on a $\hbar$-neighborhood of the boundary $\partial \Omega$. 
Let $\mathrm{w}$ be as in Notation~\ref{def:g} below; this includes functions of the type
$\mathrm{f}= \chi(\hbar^{-1}\mathrm{w})$ with $\chi :\R\to \R_+$ is smooth and compactly supported.  

\begin{lem} \label{lem:J1commnonsmooth}
Let $\vartheta:\R \to\R$ be smooth and supported in
$(-\infty,\underline{\cst}]$. Let $\mathrm{f} \in S^\hbar(\R^n)$ and
suppose that $\mathrm{f}$ is constant on $\{ x\in\R^n : \mathrm{w}(x)
> c \hbar \} $ and constant on $\{  x\in\R^n : \mathrm{w}(x)
<-c \hbar \} $.
Then, as $\hbar\to0$,
\[
\|[\vartheta(H),\mathrm{f}] \|_{\J^1} =O(\hbar^{1-n})  .
\]
\end{lem}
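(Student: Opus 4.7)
The strategy is to adapt the proof of Lemma~\ref{lem:J1commsmooth} to handle the weaker regularity $\mathrm{f}\in S^\hbar$. Following that proof, I will represent $\vartheta(H)$ via \eqref{pseudoop} as a pseudodifferential operator, form the kernel of $[\vartheta(H),\mathrm{f}]$ as an oscillatory integral, and perform the unitary rescaling $x\mapsto \hbar x$ to obtain, up to an $O_{\J^1}(\hbar^\infty)$ error, the scaled kernel
\[
\widetilde K(x,y) = \widehat a(\hbar x, \hbar y, x-y)\bigl(\mathrm{f}(\hbar x) - \mathrm{f}(\hbar y)\bigr),
\]
with $\widehat a$ Schwartz in its third variable and compactly supported in the first two. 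Since this rescaling is unitary, $\|[\vartheta(H),\mathrm{f}]\|_{\J^1}=\|\widetilde K\|_{\J^1}$, and by \eqref{DS} it suffices to bound $\|\partial^\alpha_{x,y}\widetilde K\|_{L^1}$ for $|\alpha|\le 2n+1$.

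The main new ingredient, and the central technical point, is a sharper bound on the factor $\mathrm{f}(\hbar x)-\mathrm{f}(\hbar y)$ than the one available in the smooth case. The naive pointwise estimate $|\mathrm{f}(\hbar x) - \mathrm{f}(\hbar y)| \le C\min(1,|x-y|)$ coming from $\mathrm{f}\in S^\hbar$ lacks the extra factor $\hbar$ that made Lemma~\ref{lem:J1commsmooth} work, and is too weak on the region where $\hbar x,\hbar y$ lie on opposite sides of $\partial\Omega$ outside the tube $T_\hbar := \{|\mathrm{w}|\le c\hbar\}$. The key observation is that $\nabla\mathrm{f}$ is supported in $T_\hbar$, and $|T_\hbar|=O(\hbar)$ by the transversality hypothesis $\partial\mathrm{w}\ne 0$ on $\partial\Omega$. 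Writing
\[
\mathrm{f}(\hbar x)-\mathrm{f}(\hbar y) = \hbar(y-x)\cdot \int_0^1 \nabla\mathrm{f}\bigl(\hbar x + s\hbar(y-x)\bigr)\, \dd s
\]
and combining $|\nabla\mathrm{f}|\le C\hbar^{-1}$ with $\supp\nabla\mathrm{f}\subset T_\hbar$ gives the refined pointwise bound
\[
|\mathrm{f}(\hbar x)-\mathrm{f}(\hbar y)| \le C|x-y|\int_0^1 \1\bigl[\hbar x + s\hbar(y-x)\in T_\hbar\bigr]\, \dd s.
\]

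Plugging this into $\|\widetilde K\|_{L^1}$, swapping the $s$-integration with $(x,y)$, and performing the linear change of variables $u=x+s(y-x)$, $v=y-x$ (whose Jacobian is $1$) decouples the integral: the $v$-integral of $|v|(1+|v|)^{-k}$ is $O(1)$ by the Schwartz decay of $\widehat a$, the $u$-integral is restricted to $\{\hbar u\in T_\hbar\}$ and hence has Lebesgue measure $|T_\hbar|/\hbar^n = O(\hbar^{1-n})$, and the $s$-integral contributes only $O(1)$. This yields $\|\widetilde K\|_{L^1}=O(\hbar^{1-n})$.

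The derivative bounds follow from the same argument via Leibniz expansion. Derivatives of $\widehat a(\hbar x,\hbar y,x-y)$ in $x,y$ either act on the first two slots, contributing an extra factor $\hbar$ through the chain rule, or on the third, preserving the Schwartz decay in $x-y$. A derivative falling on $\mathrm{f}(\hbar x)-\mathrm{f}(\hbar y)$ either retains the difference structure --- to which the tube-fraction estimate above applies --- or, for a multi-index $\beta$ of order $|\beta|\ge 1$, produces a term $\hbar^{|\beta|}\partial^\beta \mathrm{f}(\hbar x) = O(1)$ supported in $\{\hbar x\in T_\hbar\}$, which again integrates to $O(\hbar^{1-n})$ thanks to $|\{x:\hbar x\in T_\hbar\}|=O(\hbar^{1-n})$. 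Summing over $|\alpha|\le 2n+1$ and invoking \eqref{DS} completes the proof.
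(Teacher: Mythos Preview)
Your proof is correct and follows the same overall skeleton as the paper's: represent $\vartheta(H)$ as a pseudodifferential operator via \eqref{pseudoop}, rescale unitarily to the kernel $\widetilde K$, and apply \eqref{DS}. The execution of the core $L^1$ bound differs. The paper splits the integration domain according to whether $x,y$ lie in the rescaled tube $\triangle=\{|\mathrm w(\hbar\,\cdot)|\le c\hbar\}$ (which has measure $O(\hbar^{1-n})$) or in its complement; on $\triangle^c\times\triangle^c$ the difference $f(x)-f(y)$ is nonzero only when $x,y$ lie on opposite sides of $\partial\Omega$, and the Schwartz decay of $\widehat a$ in $x-y$ together with the geometry of $\partial\Omega$ yields the bound. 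Your mean-value/tube-fraction estimate bypasses this case analysis: writing the difference as a line integral of $\nabla\mathrm f$ (supported in $T_\hbar$) and changing variables to $(u,v)=(x+s(y-x),y-x)$ reduces directly to the measure of the rescaled tube. This is a bit cleaner and also gives a more explicit treatment of the Leibniz terms for the derivatives, whereas the paper observes that all derivatives of $\widetilde K$ are bounded independently of $\hbar$ and invokes ``the same argument''.
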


\begin{proof} 
Let $\triangle := \{x\in\R^n : |\mathrm{w}(\hbar x)| \le c \hbar \}$ be a rescaled $\hbar$-neighborhood of $\partial\Omega$. 
In particular, since $\Omega$ is a smooth compact set, 
$|\triangle| =\O(\hbar^{1-n})$. 
Let $f(x) := \mathrm{f}(\hbar x)$ so that all the norms $\|f\|_{\Co^k}$ are all controlled independently of $\hbar$; since $\mathrm{f}\in S^\hbar(\R^n)$. 
Like in the proof of Lemma~\ref{lem:J1commsmooth}, one  has
$\| [\vartheta(H),\mathrm{f}] \|_{\J^1} =  \|\widetilde K\|_{\J^1} +O(\hbar^{1-n})  $ where the operator $\widetilde K$ has kernel 
\[
\widetilde K : (x,y) \mapsto  \widehat{a} (\hbar x, \hbar y, x-y) \big\{f(x)- f(y)\big\} 
\]
where  $\widehat{a} \in \mathcal{S}(\R^{3n})$ with norms independent of $\hbar$. 
In particular, by a simple volume estimate
\[
\| \widetilde K\|_{L^1} = \int_{\triangle^c\times\triangle^c} \hspace{-.5cm} |\widetilde  K(x,y)| \dd y \dd x  +\O(|\triangle|) , \qquad |\triangle|=\O(\hbar^{1-n}) .
\]
Then, by assumptions, for $x,y\in\triangle^c=\R^{n}/\triangle$, 
\[
\big| \mathrm{f}(x)-\mathrm{f}(y)\big|
\le \1\{\mathrm{w}(x) \ge c\hbar, - \mathrm{w}(y) \ge c\hbar\}+\1\{\mathrm{w}(y) \ge c\hbar , - \mathrm{w}(x) \ge c\hbar\} . 
\]
If $|\varkappa(z)| \le \frac{C_k}{(1+|z|)^{2k}}$ for $z\in\R^n$, $k\in\N$, using that $\mathrm{w}(x) \asymp \dist(x, \partial\Omega)$, we also have 
\[\begin{aligned}
\int \1\{\mathrm{w}(\hbar x) \ge c\hbar, - \mathrm{w}(\hbar y) \ge c\hbar\} 
\varkappa(x-y) \dd x\dd y
\le C |\triangle|
=\O(\hbar^{1-n}) .
\end{aligned}\] 
Since $|\widehat{a}(x,y,z)|\le \varkappa(z)$ for $(x,y,z)\in\R^{3n}$ with $\varkappa$ as above, this shows that 
\[
\int_{\triangle^c\times\triangle^c} \hspace{-.5cm} |\widetilde  K(x,y)| \dd y \dd x=\O(\hbar^{1-n}) .
\]
This establishes that $\| \widetilde K\|_{L^1}=\O(\hbar^{1-n})$. 
In addition, since all derivatives of $\widetilde K$ are controlled independently of $\hbar$, we can similarly bound for any $\alpha \in \N_0^n$, $\|\partial^\alpha_{x,y}\widetilde K\|_{L^1(\R^{2n})} = \O_\alpha(\hbar^{1-n})$ using the same argument. 
By \eqref{DS}, we conclude that
$\|\widetilde K\|_{\J^1} = \O(\hbar^{1-n})$.
\end{proof}

\begin{rem} \label{rem:J2norm}
The same argument shows that if $\mathrm{f}$ is constant on $\{ x\in\R^n : |\mathrm{w}(x)| > c \hbar \} $, without any smoothness condition (such as $\mathrm{f}\in S^\hbar$),
\[
\|[\vartheta(H),\mathrm{f}] \|_{\J^2}^2 = \|\widetilde K\|_{L^2(\R^{2n})}^2 =O(\hbar^{1-n})  .
\]
Indeed, there is no need to differentiate the kernel $K$ in order to obtain this estimate. 
In particular, this shows that 
\[
\|[\vartheta(H),\1_\Omega] \|_{\J^2}^2 
=O(\hbar^{1-n})  . 
\]
\end{rem}

Equipped with these estimates, we are ready to proceed to prove Proposition~\ref{prop:J1commsmooth}. 

\begin{proof}[Proof of  Proposition~\ref{prop:J1commsmooth}]
Recall that 
$\Omega \Subset \R^n$ be a smooth compact set and let $\chi\in C^\infty_c(\R)$.
Let $\mathrm{f}= \chi(\hbar^{-1}\mathrm{w})$ where $\chi :\R \to [0,1]$ smooth, compactly supported, so that $\mathrm{f}$ satisfies the assumptions Lemma~\ref{lem:J1commnonsmooth}. 
The first step is to expand
\[\begin{aligned}[]
&[\vartheta(H),\1_{\Omega}] = \vartheta(H)\1_{\Omega}(1-\vartheta(H)) -(1-\vartheta(H))\1_{\Omega}\vartheta(H) \\
&=\mathrm{f} \vartheta(H)\1_{\Omega}(1-\vartheta(H))   -(1-\vartheta(H))\1_{\Omega}\vartheta(H)\mathrm{f}+ (1-\mathrm{f}) \vartheta(H)\1_{\Omega}(1-\vartheta(H)) -(1-\vartheta(H))\1_{\Omega}\vartheta(H)(1-\mathrm{f}) . 
\end{aligned}\]
Let $\mathrm{h} = (1-\mathrm{f})\1_\Omega$. The function
$\mathrm{h}$ also satisfies the assumptions of Lemma~\ref{lem:J1commnonsmooth} so that 
\[\|  [\vartheta(H),\mathrm{f}] \|_{\J^1} , \|  [\vartheta(H),\mathrm{h}] \|_{\J^1}=\O(\hbar^{1-n}).\]
Hence, we have 
\[\begin{aligned}[]
&(1-\mathrm{f}) \vartheta(H)\1_{\Omega}(1-\vartheta(H)) -(1-\vartheta(H))\1_{\Omega}\vartheta(H)(1-\mathrm{f}) \\
&\qquad=\vartheta(H)\mathrm{h}(1-\vartheta(H)) - (1-\vartheta(H))\mathrm{h}\vartheta(H)
+[\vartheta(H),\mathrm{f}]\1_{\Omega}(1-\vartheta(H)) + (1-\vartheta(H))\1_{\Omega} [\vartheta(H),\mathrm{f}]  \\ 
&\qquad= [\vartheta(H),\mathrm{h}]+\O_{\J^1}\big([\vartheta(H),\mathrm{f}]\big) \\
&\qquad= \O_{\J^1}(\hbar^{1-n}) .
\end{aligned}\]

Moreover, we can decompose $\1_\Omega= \mathrm{k} +\theta$ where 
$\mathrm{k}$ satisfies the assumptions of Lemma~\ref{lem:J1commnonsmooth} with $\mathrm{k} =0$ on $ \supp(\mathrm{f})$ and $\theta$ is discontinuous, supported in a $\hbar$-neighborhood of $\partial\Omega$. 
In particular,  $\|\mathrm{f}\|_{L^2}^2, \|\theta\|_{L^2}^2=\O(\hbar)$ and
$\|  [\vartheta(H),\mathrm{k}] \|_{\J^1}=\O(\hbar^{1-n}) $ so that 
\[\begin{aligned}[]
\mathrm{f} \vartheta(H)\1_{\Omega}(1-\vartheta(H))  
&= \mathrm{f} \vartheta(H)\mathrm{k}(1-\vartheta(H)) + \mathrm{f} \vartheta(H)\theta(1-\vartheta(H))  \\
& = \mathrm{f}[\vartheta(H),\mathrm{k}](1-\vartheta(H))
+ \mathrm{f} \vartheta(H)[\vartheta(H),\theta] + \mathrm{f} \psi(H)^2\theta  
\end{aligned}\]
where $\psi=\sqrt{\vartheta(1-\vartheta)}$ -- $\psi\in \Co^\infty_c(\R)$.
By Proposition \ref{prop:J2_estimates_dyadic} and Remark~\ref{rem:J2norm}, 
\[
\|\mathrm{f} \vartheta(H)[\vartheta(H),\theta]\|_{\J^1} \le \|\mathrm{f} \vartheta(H) \|_{\J^2} \|[\vartheta(H),\theta]\|_{\J^2} =\O(\hbar^{1/2-n}\|\mathrm{f}\|_{L^2}) =\O(\hbar^{1-n})
\]
and 
\[
\|\mathrm{f} \psi(H)^2\theta |_{\J^1} \le \|\mathrm{f} \psi(H) \|_{\J^2} \|\theta\psi(H) \|_{\J^2}
=\O(\hbar^{-n}\|\mathrm{f}\|_{L^2}\|\theta\|_{L^2}) =\O(\hbar^{1-n}). 
\]
This shows that 
\[
\|\mathrm{f} \vartheta(H)\1_{\Omega}(1-\vartheta(H))\|_{\J^1} \le \|[\vartheta(H),\mathrm{k}]\|_{\J^1} + \O(\hbar^{1-n})= \O(\hbar^{1-n}) .
\]
Similarly, $\|(1-\vartheta(H))\1_{\Omega}\vartheta(H)\mathrm{f}\|_{\J^1} =\O(\hbar^{1-n})   $, so we conclude that 
\(
\|[\vartheta(H),\Omega]\|_{\J^1}= \O(\hbar^{1-n})  
\).
\end{proof}
\section{Hilbert-Schmidt estimates for mild spectral functions}
\label{sec:hilb-schm-estim}

This section is devoted to preliminary estimates that we will use in
the proof of Theorem \ref{thm:J1}: these estimates are Hilbert-Schmidt
norms of products or commutators between spectral functions of
$H_{\hbar}$ and multiplication operators, with relatively small
supports: the spectral functions will be supported on small
neighbourhoods of $0$, and the multiplication operators will be
supported on small neighbourhoods of $\partial \Omega$.

Even though they are relatively independent, these estimates also
serve as a warm-up for Section \ref{sec:comm} where we will compute
the limit of the Hilbert-Schmidt norm of the ``full'' commutator
$[\Pi_{\hbar},\1_{\Omega}]$ for Theorem \ref{thm:clt}. The same kind of stationary phase
arguments are used in both cases and they are simpler to
apply in the context of this section.

Recall the notations from Section~\ref{sec:not} and assume that $\vartheta  \in \Co^\infty_c(\R)$ is supported in  $[-\underline{\cst},\underline{\cst}]$. 

\begin{prop}\label{prop:J2_estimates_dyadic}
Let $f,g : L^{\infty}(\R,\R)$, be two functions supported in $[-\underline{\cst},\underline{\cst}]$. 
For every $\hbar\in (0,1]$, for every $\eta,\varepsilon\in [\hbar,1]$,
\begin{equation}
\label{eq:J2_dyadic_product}
\|\widetilde{f}_{\eta}(H)g_{\varepsilon}(\mathrm{w})\|_{\J^2}^2 +  \|f_{\eta}(H)g_{\varepsilon}(\mathrm{w})\|_{\J^2}^2\leq
C\hbar^{-n}\eta\varepsilon.
\end{equation}
Let $f,g ,k \in C^\infty_c(\R)$ be supported in $[-\underline{\cst},\underline{\cst}]$. For every $\hbar\in (0,1]$, for every $\eta,\varepsilon\in [\hbar,1]$ with $\eta\varepsilon\geq \hbar$, 
\begin{align}
\label{eq:J2_dyadic_comm}
\|[\widetilde{f}_{\eta}(H),g_{\varepsilon}(\mathrm{w})]\|_{\J^2}^2&\leq
                                                          C\frac{\hbar^{n-2}}{\eta\varepsilon}\\
\label{eq:J2_dyadic_dbcomm}
\|[[\widetilde{f}_{\eta}(H),g_{\varepsilon}(\mathrm{w})],k_{\varepsilon}(\mathrm{w})]\|_{\J^2}^2&
                                                                               \leq
C\frac{\hbar^{n-4}}{\eta^3\varepsilon^3}.
\end{align}
\end{prop}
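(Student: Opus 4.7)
The strategy is to reduce all three bounds to a common mechanism: pointwise control of the diagonal of spectral functions of $H$ localized near energy $0$, combined with the trading of factors $(x-y)$ appearing in kernels against factors of $\hbar/\eta$ produced by the Fourier integral representation from Corollary~\ref{cor:propag}. The threshold $\eta\varepsilon\ge\hbar$ in \eqref{eq:J2_dyadic_comm}--\eqref{eq:J2_dyadic_dbcomm} corresponds exactly to the regime $\hbar/\eta\le\varepsilon$ in which the kernel scale is dominated by the support width $\varepsilon$ of $g_\varepsilon(\mathrm{w})$, making this trade sharp.

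For the product bound \eqref{eq:J2_dyadic_product}, I would write
\[
\|A\, g_\varepsilon(\mathrm{w})\|_{\J^2}^2 = \int g_\varepsilon(\mathrm{w}(y))^2\,(A^*A)(y,y)\,\dd y
\]
with $A=\widetilde{f}_\eta(H)$ or $A=f_\eta(H)$, and bound the diagonal of $A^*A$ pointwise by $C\hbar^{-n}\eta$. In the smooth case this follows directly from Proposition~\ref{prop:propag_as_FIO} at $t=0$: the principal symbol of $|\widetilde{f}_\eta|^2(H)$ is $|\widetilde{f}_\eta|^2(V(y)+|\xi|^2)$, supported in a shell of width $O(\eta)$, whose $\xi$-volume in the bulk is $O(\eta)$. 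In the $L^\infty$ case, one dominates $|f_\eta|^2\le\|f\|_\infty^2\mathds 1_{\{|\lambda|\le\underline{\cst}\eta\}}$ and controls the indicator by a Schwartz majorant mollified at scale $\hbar$, reducing to the smooth case (this is the same type of argument as the proof of Lemma~\ref{lem:Weyl}). The remaining integral $\int g_\varepsilon(\mathrm{w})^2\le C\varepsilon$ is immediate from the coarea formula since $\partial\Omega$ is smooth and compact.

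For the commutator bound, I start from the identity
\[
\|[\widetilde{f}_\eta(H),G]\|_{\J^2}^2=\iint|\widetilde{f}_\eta(H)(x,y)|^2\,|G(x)-G(y)|^2\,\dd x\dd y,\qquad G=g_\varepsilon(\mathrm{w}).
\]
From $\|\nabla G\|_\infty\le C\varepsilon^{-1}$ one has $|G(x)-G(y)|\le C|x-y|/\varepsilon$, and $G$ is supported in a $C\varepsilon$-tube $\Delta_\varepsilon$ of $\partial\Omega$. With $\chi_\varepsilon$ a smooth cutoff equal to $1$ on $\supp G$ and supported in $\Delta_\varepsilon$, the right-hand side is controlled by $\varepsilon^{-2}\|\chi_\varepsilon[X,\widetilde{f}_\eta(H)]\|_{\J^2}^2$, using the kernel identity $(x-y)\widetilde{f}_\eta(H)(x,y)=[X,\widetilde{f}_\eta(H)](x,y)$. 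The main claim is that $[X,\widetilde{f}_\eta(H)]$ is, modulo negligible errors, a pseudodifferential operator with symbol of magnitude $\hbar/\eta$, supported in the same energy shell. This is obtained from Corollary~\ref{cor:propag}: performing the $\lambda$-integration first produces a factor $\eta\widehat{f}(\eta t/\hbar)$ that effectively restricts $|t|\lesssim\hbar/\eta$, and since $\partial_\xi\varphi(t,x,\xi)=x+O(t)$ by \eqref{phasephi}, a single integration by parts in $\xi$ trades the factor $(x-y)$ for $-i\hbar\partial_\xi$ on the amplitude, producing a symbol of size $\hbar/\eta$ after accounting for the derivative of $\widetilde{f}_\eta$ in its energy argument. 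Applying \eqref{eq:J2_dyadic_product} to this new operator yields $\|\chi_\varepsilon[X,\widetilde{f}_\eta(H)]\|_{\J^2}^2\le C(\hbar/\eta)^2\hbar^{-n}\eta\varepsilon$, and the announced bound $\hbar^{2-n}/(\eta\varepsilon)$ follows after dividing by $\varepsilon^2$.

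The double commutator bound \eqref{eq:J2_dyadic_dbcomm} follows by iterating: the kernel of $[[\widetilde{f}_\eta(H),G],K]$ factors as $\widetilde{f}_\eta(H)(x,y)(G(x)-G(y))(K(x)-K(y))$, and $|G(x)-G(y)|\,|K(x)-K(y)|\le C|x-y|^2/\varepsilon^2$, so two factors of $(x-y)$ are traded against $(\hbar/\eta)^2$ via the iterated commutator $[X_j,[X_k,\widetilde{f}_\eta(H)]]$, whose symbol has magnitude $(\hbar/\eta)^2$ by the same argument. Multiplying by $\varepsilon^{-4}$ gives $(\hbar/\eta)^4\hbar^{-n}\eta\varepsilon/\varepsilon^4=\hbar^{4-n}/(\eta^3\varepsilon^3)$. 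The main technical obstacle is the careful bookkeeping of the integration-by-parts argument for the iterated position commutators, which requires tracking how derivatives fall on the amplitude $b$ and on lower-order terms from $\partial_\xi\varphi-x=O(t)$; these error terms are under control precisely because after $\lambda$-integration the effective $|t|$ is of order $\hbar/\eta\le 1$, so each extra factor of $t$ contributes an additional $\hbar/\eta$ that is not worse than the principal contribution.
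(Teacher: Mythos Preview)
Your approach to \eqref{eq:J2_dyadic_product} via the pointwise diagonal bound $(A^*A)(y,y)\le C\hbar^{-n}\eta$ is correct and more direct than the paper's route, which expresses the full Hilbert--Schmidt norm as an oscillatory integral and applies stationary phase in $(y,\zeta,r,s)$ before collapsing the remaining $(t,\sigma)$ integral. Your monotonicity reduction for the non-smooth case matches the paper's.

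For \eqref{eq:J2_dyadic_comm}--\eqref{eq:J2_dyadic_dbcomm} your route is genuinely different. The paper works directly on the oscillatory integral representing $\|[\widetilde f_\eta(H),g_\varepsilon(\mathrm w)]\|_{\J^2}^2$ and applies stationary phase with mild symbols; this forces a case split according to whether $\eta\le\hbar^{1/2}$ (FIO representation, Proposition~\ref{prop:stat_tempered}) or $\eta\ge\hbar^{1/2}$ (pseudodifferential representation, Proposition~\ref{prop:pseudo}). Your reduction via $|G(x)-G(y)|\le C|x-y|/\varepsilon$ and the position commutator avoids this split and is structurally cleaner. The trade $(x_j-y_j)\leadsto\hbar/\eta$ can indeed be made precise by your integration by parts in $\xi_j$, followed by a second integration by parts in $\lambda$ to convert the residual factor $r_j=\partial_{\xi_j}\varphi-x_j=t\cdot(\text{smooth})$ into $(\hbar/\eta)(f')_\eta$ via $te^{i\Phi/\hbar}=i\hbar\partial_\lambda e^{i\Phi/\hbar}$.

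There is, however, a gap at the step ``Applying \eqref{eq:J2_dyadic_product} to this new operator''. Your proof of \eqref{eq:J2_dyadic_product} rests on the identity $A^*A=|\widetilde f_\eta|^2(H)$, which lets you read off the diagonal from the pseudodifferential symbol. After the integrations by parts, the resulting operator is an FIO of the same phase but with amplitude $i\eta\,\partial_{\xi_j}b$ or $\tilde r_j b$ --- it is no longer a spectral function of $H$, so that shortcut is unavailable and you must bound $\int|x-y|^{2}|K(x,y)|^2\,\dd y$ (equivalently the diagonal of $[X_j,K][X_j,K]^*$) directly from the FIO form. That computation is exactly the stationary-phase argument the paper carries out in Section~\ref{sec:hilbert-schmidt-norm}; it uses only that the amplitude lies in $S^1$ with the stated support, so it transfers to your modified amplitude without change. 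Once you invoke that, your argument closes and the iteration for the double commutator goes through as you describe. In short: your scheme is valid and arguably more transparent than the paper's case analysis, but it does not bypass the core stationary-phase estimate --- it relocates it to the diagonal bound for the modified FIO.
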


Observe that $\widetilde{f}_{\eta}$ is a smooth function in the class $S^\eta(\R)$, according to Definition~\ref{def:Sdelta}, then using Corollary~\ref{cor:propag}, one can replace (up to a negligible error) $\widetilde{f}_{\eta}(H)$ by a Fourier integral operator whose amplitude is in $S^1_{t,x,y,\xi} \times S^\eta_\lambda$. This allows us to apply the stationary phase method (Appendix~\ref{sec:stat-phase}) in order to prove the above Hilbert-Schmidt estimates. The proof of Proposition
\ref{prop:J2_estimates_dyadic}  occupies the rest of this section and it is organized as follows.
\begin{itemize}[leftmargin=*]
\item In Section~\ref{sec:hilbert-schmidt-norm}, we proceed with the
  estimate for
  $\|\widetilde{f}_{\eta}(H)g_{\varepsilon}(\mathrm{w})\|_{\J^2}^2$. This
  quantity is given (up to a negligible error) by the oscillatory integral
  \eqref{eq:HS_norm_product_dyadic_as_osc_int}. After an appropriate
  change of coordinates, we can simplify
  \eqref{eq:HS_norm_product_dyadic_as_osc_int} by a
  stationary phase argument. This reduces the problem to the
  computation of
  another oscillatory integral \eqref{comm_std}, with a mild
  amplitude, but with the \emph{classical phase} $(x,\xi)\mapsto
  x\cdot \xi$. 
This method is important and we will use variants of it throughout the rest of this article. Then, we can deduce a similar estimate for $\|f_{\eta}(H)g_{\varepsilon}(\mathrm{w})\|_{\J^2}^2$ by monotonicity. 

\item In Section~\ref{sec:commutators-at-small}, we adapt the method
  from Section~\ref{sec:hilbert-schmidt-norm} to estimate the
  Hilbert-Schmidt norms of the commutators \eqref{eq:J2_dyadic_comm}
  and\eqref{eq:J2_dyadic_dbcomm}, if the spectral scale $\eta$ is
  smaller than $\hbar^{\frac 12}$. Some difficulty comes from
  controlling the \emph{support of the amplitude} after a first
  application of
  stationary phase. 
Note that the proofs of \eqref{eq:J2_dyadic_comm} and \eqref{eq:J2_dyadic_dbcomm} are analogous. 

\item In Section~\ref{sec:commutators-at-large}, we prove
  \eqref{eq:J2_dyadic_comm} and \eqref{eq:J2_dyadic_dbcomm} in the
  complementary regime where the spectral scale $\eta$ is larger than $\hbar^{\frac 12}$. The proof relies on the fact that in this regime, $\widetilde{f}_{\eta}(H)$ is  (up to a negligible error) a pseudo-differential operator (Proposition~\ref{prop:pseudo}). 
Then, using again the stationary phase method in a more direct way, we complete the proofs of  \eqref{eq:J2_dyadic_comm}--\eqref{eq:J2_dyadic_dbcomm}. 
\end{itemize}

\subsection{Hilbert-Schmidt norm of products; proof of \eqref{eq:J2_dyadic_product}}
\label{sec:hilbert-schmidt-norm}

This subsection focuses on the proof of
\eqref{eq:J2_dyadic_product}.
Starting from Corollary \ref{cor:propag},  we have
\begin{equation} \label{eq:HS_norm_product_dyadic_as_osc_int} 
\begin{aligned} 
\|\widetilde{f}_{\eta}(H_{\hbar})g_{\varepsilon}(\mathrm{w})\|_{\J^2}^2=
\frac{1}{(2\pi\hbar)^{2n+2}}\int &e^{i\tfrac{\Psi_0(x,y,\xi_1,\xi_2,t_1,t_2,\lambda_1,\lambda_2)}{\hbar}}  B_0(x,y,\xi_1,\xi_2,t_1,t_2) \\
&\times f_{\eta}(\lambda_1) f_{\eta}(\lambda_2)  g_{\varepsilon}(\mathrm{w}(x))^2\dd t_1\dd t_2\dd \lambda_1\dd
\lambda_2\dd \xi_1\dd \xi_2\dd x \dd y +\O(\hbar^{\infty})
\end{aligned}
\end{equation}
where the error is independent of $(\eta,\varepsilon)$, the amplitude
$B_0$ belongs to $S^1(\R^{3n+2})$, and we have introduced the phase
\begin{equation} \label{phase0}
\Psi_0:(x,y,\xi_1,\xi_2,t_1,t_2,\lambda_1,\lambda_2)\mapsto
\varphi(t_1,x,\xi_1)-\varphi(t_2,x,\xi_2)-y\cdot(\xi_1-\xi_2)-t_1\lambda_1+t_2\lambda_2  . 
\end{equation}
The amplitude
\begin{equation} \label{symbol0}
B_0 : (x,y,\xi_1,\xi_2,t_1,t_2) \mapsto b(t_1,x,y,\xi_1)\overline{b(t_2,x,y,\xi_2)} \chi(x,y)
\end{equation}
where  $\chi$ is a smooth cutoff supported in $\Omega^{\prime 2}$.
We can introduce this cutoff since  $g_{\varepsilon}(\mathrm{w})$ is supported in $\{|\mathrm{w}|<\underline{\cst}\}$
and $b$ is supported in $  |x-y| \le \underline{\ell}$ with $\underline{\ell}\ll \underline{\cst}$. 
In particular, according to \eqref{supp_bulk}, the amplitude $B_0$ is supported in 
\[
\big\{ t_1,t_2\in [\underline{\tau},\underline{\tau}] , x,y \in \Omega', |x-y| \le \underline{\ell}, |\xi_1|,|\xi_2| \ge \underline{\cst} \big\}. 
\]
This allows to make the following change of coordinates in the integral \eqref{eq:HS_norm_product_dyadic_as_osc_int}:
\begin{equation} \label{change_coord}
\begin{cases}
t= \tfrac{t_1+t_2}{2} , & \lambda=\frac{\lambda_1+\lambda_2}{2} ,\qquad\xi=  \tfrac{\xi_1+\xi_2}{2}=r\omega \quad\text{with $r>0$, $\omega\in \S^{n-1}$} \\
s=t_1-t_2  , &  \sigma=\lambda_2-\lambda_1 , \quad 
\zeta= \xi_1-\xi_2
\end{cases}.
\end{equation}
We write 
\[
\Psi_0(x,y,\xi_1,\xi_2,t_1,t_2,\lambda_1,\lambda_2)
= \Psi_1(x,y,r,t,\lambda,\omega,\zeta,s) + \sigma t
\]
so that  $\Psi_1$ is independent of $\sigma$ and  
\begin{equation}  \label{eq:HS_norm_product} 
\begin{aligned}
\|\widetilde{f}_{\eta}(H_{\hbar})g_{\varepsilon}(w)\|_{\J^2}^2=
\frac{1}{(2\pi\hbar)^{2n+2}}\int &e^{i\tfrac{ \Psi_1(x,y,r,t,\lambda,\omega,\zeta,s) + \sigma t}{\hbar}}  B_1(x,y,\zeta,\omega,r,s,t)\\
&\times f_{\eta}(\lambda+\tfrac{\sigma}{2})f_{\eta}(\lambda-\tfrac{\sigma}{2}) g_{\varepsilon}(w(x))^2\dd y\dd\zeta  \dd r \dd s\dd \omega  \dd t \dd \sigma \dd \lambda \dd x +\O(\hbar^{\infty}).
\end{aligned}
\end{equation}
At this stage, we apply the stationary phase method to the previous
integral in the variable $(y,\zeta,r,s)$ keeping
$(t,\omega,\sigma,\lambda,x)$ fixed. The equations for the  critical point(s) are (using the notation \eqref{change_coord})
\[
\begin{cases}
\partial_y\Psi_1 =0 \\
\partial_\zeta\Psi_1 =0 \\
\partial_r\Psi_1 =0\\
\partial_s\Psi_1 =0
\end{cases}
\quad\Longleftrightarrow\qquad
\begin{cases}
\zeta=0 \\
y =\tfrac12 \big(\partial_{\xi}\varphi(t_1,x,\xi_1)+ \partial_{\xi}\varphi(t_2,x,\xi_2)\big) \\
0 =\tfrac12 \omega\cdot\big(\partial_{\xi}\varphi(t_1,x,\xi_1)- \partial_{\xi}\varphi(t_2,x,\xi_2)\big) \\
\lambda= \tfrac12 \big(\partial_t\varphi(t_1,x,\xi_1)+\partial_t\varphi(t_2,x,\xi_2)\big). 
\end{cases}
\] 
The first equation is equivalent to $\xi_1=\xi_2 =r \omega$ with $r\ge c_2$.  
Then, using that $\partial_{\xi}\varphi(t_1,x,\xi) = x + t\xi +\O(t^2)$ 
and  $\partial_{t}\varphi(t,x,\xi) = V(x) + |\xi|^2 +\O(t)$, these
equations reduce to (uniformly in $\omega,\sigma,\lambda,x$)
\[\begin{cases}
\zeta=0 \\
y = x+\O(rt)\\
0 = s r +\O(st)\\
\lambda=  \partial_t\varphi(t,x,r\omega)
\end{cases}
\quad\Longleftrightarrow\qquad
\begin{cases}
\zeta=0 \\
y = x+\O(rt)\\
s=0\\
r = \sqrt{\lambda-V(x)+\O(t)}.
\end{cases}
\]
Since we restrict our attention to $|t|,|s|<\underline{\tau}$, there
is a unique critical point $(y,\zeta,r,s) = (y_\star,0,r_\star,0)$. At the critical point,  $\Psi_1 =0 $ and 
\begin{equation} \label{critpt}
{\rm Hess} \Psi_1(x,t,\lambda,\omega) =
\begin{pmatrix} 0 &\mathrm{I} & 0  &0 \\\mathrm{I}& * & * & 0
\\ 0 & * &0 &2r_\star+\O(t) \\ 0 &0 & 2r_\star+\O(t) &0
\end{pmatrix}; \qquad 
\begin{cases} r_\star(x,t,\lambda,\omega) =  \sqrt{\lambda-V(x)+\O(t)} \\
y_\star(x,t,\lambda,\omega) = x+\O(t) 
\end{cases}.
\end{equation}
In particular $\det {\rm Hess} \Psi_1 = 4(\lambda-V(x))+\O(t)$ is
non-degenerate. As far as the phases are concerned, we are in position to apply stationary phase, and
$y_{\star},r_{\star}$ are smooth functions of $\omega,\sigma,\lambda,x$.

Since $B_1$ belongs to $S^1$, by Proposition~\ref{lem:statphase},
there exists a symbol $B_2\in S^1(\R^{2n+2})$ such that
\[
\frac{1}{(2\pi\hbar)^{n+1}}\int
e^{i\tfrac{\Psi_1(x,y,r,t,\lambda,\omega,\zeta,s)}{\hbar}}B_1(x,y,\zeta,\omega,r,s,t) \dd y\dd\zeta  \dd r \dd s
=B_2(x,\omega,\lambda,t) . 
\]
Let $\displaystyle B_3 :(x,\lambda,t) \mapsto \int_{\S^{n-1}}
B_2(x,\omega,\lambda,t)  \dd \omega$, so that $B_3\in
S^1(\R^{n+2})$. We have simplified
\eqref{eq:HS_norm_product_dyadic_as_osc_int} into
\begin{equation} \label{comm_std}
\|\widetilde{f}_{\eta}(H_{\hbar})g_{\varepsilon}(\mathrm{w})\|_{\J^2}^2=
\frac{1}{(2\pi\hbar)^{n+1}}\int e^{i\tfrac{\sigma t}{\hbar}}  B_3(x,\lambda,t) f_{\eta}(\lambda+\tfrac{\sigma}{2})f_{\eta}(\lambda-\tfrac{\sigma}{2}) g_{\varepsilon}(\mathrm{w}(x))^2  \dd t \dd \sigma \dd \lambda \dd x +\O(\hbar^{\infty}) .
\end{equation}
Now, we compute the   integral \eqref{comm_std} by applying the stationary phase method in the variables $(t,\sigma)$ keeping $(x,\lambda)$ as parameters. Note that the amplitude is in $S^1_t \times S^\eta_\sigma$ with $\eta \ge \hbar$ and it satisfies 
\[
\int   B_3(x, \lambda,t) f_{\eta}(\lambda+\tfrac{\sigma}{2})f_{\eta}(\lambda-\tfrac{\sigma}{2}) g_{\varepsilon}(\mathrm{w}(x))^2\dd \lambda\dd x
= \O_{S^1_t \times S^\eta_\sigma}\big(\eta\varepsilon\big)
\]
since $\|g_{\varepsilon}(\mathrm{w})\|_{L^2(\R^n)}^2=\O(\varepsilon)$
and $\|f_{\eta}\|_{L^2(\R)}^2=\O(\eta)$. Hence, by Corollary~\ref{corr:phase_stat_dyadic},  we conclude that 
\[
\|\widetilde{f}_{\eta}(H_{\hbar})g_{\varepsilon}(\mathrm{w})\|_{\J^2}^2 = \O\big(\hbar^{-n}\eta\varepsilon\big) . 
\]  

This proves the first estimate in \eqref{eq:J2_dyadic_product}.
To deduce the second estimate, we can assume that $f, g\ge 0$ and we choose the function  $\vartheta \in C^\infty_c(\R\to\R_+)$ so that $\widetilde{f}_{\delta} \ge f_\delta$. Then, as operators, for any $\varkappa : \R^n\to\R_+ $ bounded with compact support,
\[
\varkappa\widetilde{f}_{\eta}(H_{\hbar})^2\varkappa \ge  \varkappa f_{\eta}(H_{\hbar})^2\varkappa . 
\]
With $\varkappa = \sqrt{g_{\varepsilon}(\mathrm{w})}$,  taking traces on both sides, we conclude that  
\(
\|\widetilde{f}_{\eta}(H_{\hbar})g_{\varepsilon}(\mathrm{w})\|_{\J^2}^2 \ge \|f_{\eta}(H_{\hbar})g_{\varepsilon}(\mathrm{w})\|_{\J^2}^2.
\)
\qed 

\subsection{Hilbert-Schmidt norm of commutators; proof of \eqref{eq:J2_dyadic_comm}--\eqref{eq:J2_dyadic_dbcomm} at small spectral scales}
\label{sec:commutators-at-small}

This section is devoted to the proof of \eqref{eq:J2_dyadic_comm} and
\eqref{eq:J2_dyadic_dbcomm} 
under the assumption that the spectral scale $\eta$ is smaller than $\hbar^{\frac 12}$. 
The arguments follow the strategy from the proof of
\eqref{eq:J2_dyadic_product} in the previous section, but they rely
on the stationary phase with mild symbols
(Proposition~\ref{prop:stat_tempered}); the hypotheses will be
satisfied because $\varepsilon\eta\geq \hbar$ and in particular
$\varepsilon\geq \hbar^{\frac 12}$, since we have assumed that $\eta
\le \hbar^{\frac 12}$.

We begin with \eqref{eq:J2_dyadic_comm}. Introducing a dyadic sequence
of open sets
\[
  \mho_k : = \{  |\mathrm{w}| <  \varepsilon_k\}\text{ with }\varepsilon_k =2^k
  \varepsilon \text{ for }k\ge 0,\]
since without loss of generality $\supp(g) \subset [-1,1]$, one has $\supp(g_\varepsilon(\mathrm{w})) \subset \mho_0$.

As in
\eqref{eq:HS_norm_product_dyadic_as_osc_int}--\eqref{eq:HS_norm_product},
our starting point is the change of variables \eqref{change_coord}
leading to the expression
\begin{equation} \label{eq:HS_norm_comm_int} 
\begin{aligned}
\|[\widetilde{f}_{\eta}(H_{\hbar}),g_{\varepsilon}(\mathrm{w})]\|_{\J^2}^2=
\frac{1}{(2\pi\hbar)^{2n+2}}\int & e^{i\tfrac{ \Psi_1(x,y,r,t,\lambda,\omega,\zeta,s) + \sigma t}{\hbar}}  B_{1,\varepsilon}(x,y,\zeta,\omega,r,s,t)\\
&\times f_{\eta}(\lambda+\tfrac{\sigma}{2})f_{\eta}(\lambda-\tfrac{\sigma}{2})\dd y\dd\zeta  \dd r \dd s\dd \omega  \dd t \dd \sigma \dd \lambda \dd x +\O(\hbar^{\infty}) .
\end{aligned} 
\end{equation}
Here
\[
B_{1,\varepsilon}(x,y,\zeta,\omega,r,s,t) = B_0(x,y,\zeta,\omega,r,s,t)\big(g_\varepsilon(\mathrm{w}(x))-g_\varepsilon(\mathrm{w}(y))\big)^2
\]
The change of coordinates \eqref{change_coord} is still
well-defined, and as in the previous subsection, with respect to the
variables $(y,\zeta,r,s)$, there is a unique critical point
$(y_{\star},0,r_{\star},0)$, satisfying \eqref{critpt}. The amplitude $B_{1,\varepsilon}$ belongs to
$S^\varepsilon(\R^{4n+2})$ with $\varepsilon\geq \hbar^{\frac 12}$, so
we can apply the stationary phase method to the integral \eqref{eq:HS_norm_comm_int},
with respect to the variables $(y,\zeta,r,s)$, the other variables
being treated as parameters. All in all, by Proposition~\ref{prop:stat_tempered}, there exists a  symbol $B_{3,\varepsilon}\in S^\varepsilon(\R^{n+2})$ such that (integrating also over $\omega\in \S^{n-1}$)
\[
\frac{1}{(2\pi\hbar)^{n+1}}\int
e^{i\tfrac{\Psi_1(x,y,r,t,\lambda,\omega,\zeta,s)}{\hbar}}B_{1,\varepsilon}(x,y,\zeta,\omega,r,s,t) \dd y\dd\zeta  \dd r \dd s \dd \omega
=B_{3,\varepsilon}(x,\lambda,t) . 
\]
Moreover, this symbol has the following properties: $B_{3,\varepsilon}$ is supported in $\{x\in\Omega\}$, 
\begin{equation} \label{B2_exp}
B_{3,\varepsilon}(x,\lambda,0) = \O\big(\hbar\varepsilon^{-2} \big) 
\end{equation}
and, for any fixed $\ell \in \N$, there is a small constant $c>0$ so that for every $k\ge 2$, 
\begin{equation} \label{B2_supp}
B_{3,\varepsilon}(x,\lambda,t)=\O_{S^\varepsilon_t}\big(\big(\tfrac{\hbar}{\varepsilon\varepsilon_k}\big)^\ell\big), \qquad \text{for } x\in (\mho_k\setminus \mho_{k-1}),\,  |t|\le c \varepsilon_k ,\, |\lambda|\le c .
\end{equation}
Here, \eqref{B2_exp} follows directly from \eqref{eq:phase_stat} with
$\ell=1$, using that $L^0$ is a multiplication operator and
$y_\star|_{t=0} =x$. Then, \eqref{B2_supp} follows from
\eqref{eq:supp_stat}. Indeed, since
$ y_\star(x,t,\lambda,\omega) = x+\O(t)$ and we assume that
$\partial_x \mathrm{w} \neq 0$ on $\{\mathrm{w}=0\}$, we have for every $k\ge 2$, 
\[
\dist(y_\star(x,t,\lambda,\omega),\mho_0) \ge \varepsilon_k , \qquad \text{for } x\in (\mho_k\setminus \mho_{k-1}),\,  |t|\le c \varepsilon_k ,\, |\lambda|\le c ,\, \omega\in \S^{n-1} .
\]

By \eqref{eq:HS_norm_comm_int}, this computation implies that 
\begin{equation} \label{eq:HS_norm_comm} 
\|[\widetilde{f}_{\eta}(H_{\hbar}),g_{\varepsilon}(\mathrm{w})]\|_{\J^2}^2=
\frac{1}{(2\pi\hbar)^{n+1}}\int e^{i\tfrac{\sigma t}{\hbar}}  B_{3,\varepsilon}(x,\lambda,t) f_{\eta}(\lambda+\tfrac{\sigma}{2})f_{\eta}(\lambda-\tfrac{\sigma}{2})   \dd t \dd \sigma \dd \lambda \dd x +\O(\hbar^{\infty}) .
\end{equation}

Since $|\mho_1| \le C \varepsilon$ and $\|f_{\eta}\|_{L^2(\R)}^2=\O(\eta)$, we have 
\[
\int_{\{x\in \mho_1\}}  B_{3,\varepsilon}(x, \lambda,t) f_{\eta}(\lambda+\tfrac{\sigma}{2})f_{\eta}(\lambda-\tfrac{\sigma}{2}) \dd \lambda\dd x
= \O_{S^\varepsilon_t \times S^\eta_\sigma}\big(\eta\varepsilon\big) ,
\]
thus by Corollary~\ref{corr:phase_stat_dyadic} (with $\ell=2$),  
\begin{multline} \label{eq:HS_norm_comm_dyadic} 
\frac{1}{2\pi\hbar}
\int_{\{x\in \mho_1\}}  e^{i\tfrac{\sigma t}{\hbar}}  B_{3,\varepsilon}(x,\lambda,t) f_{\eta}(\lambda+\tfrac{\sigma}{2})f_{\eta}(\lambda-\tfrac{\sigma}{2})   \dd t \dd \sigma \dd \lambda \dd x 
\\= \int_{\{x\in \mho_1\}}  B_{3,\varepsilon}(x,\lambda,0) f_{\eta}(\lambda)^2 \dd x\dd \lambda+ \O\big(\tfrac{\hbar^{2}}{\eta\varepsilon}\big) . 
\end{multline}
Here, we used that the amplitude of the previous integral is in
$S^\varepsilon_t \times S^\eta_\sigma$ with $\varepsilon\eta\geq
\hbar$, that $B_{3,\varepsilon}$ is independent of $\sigma$, and that $ \sigma \mapsto  f_{\eta}(\lambda+\tfrac{\sigma}{2})f_{\eta}(\lambda-\tfrac{\sigma}{2})$ is a odd function. 
Then, by \eqref{B2_exp}, the previous integral is $ \O\big(\hbar\varepsilon^{-1}\eta\big) $. 
Since by assumption $ \eta^2\le \hbar$, we conclude that
\[
\int \1\{x\in \mho_1\}  e^{i\tfrac{\sigma t}{\hbar}}  B_{3,\varepsilon}(x,\lambda,t) f_{\eta}(\lambda+\tfrac{\sigma}{2})f_{\eta}(\lambda-\tfrac{\sigma}{2})   \dd t \dd \sigma \dd \lambda \dd x 
=  \O\big(\tfrac{\hbar^{3}}{\eta\varepsilon}\big) . 
\]

In addition, according to  \eqref{B2_supp}, by Corollary~\ref{corr:phase_stat_perturbation}, for  every $\ell\ge 2$, 
\[
\int \1\{x\in \mho_k\setminus \mho_{k-1}\} e^{i\tfrac{t\sigma}{\hbar}}B_{3,\varepsilon}(x,\lambda,t)  f_{\eta}(\lambda+\tfrac{\sigma}{2})f_{\eta}(\lambda-\tfrac{\sigma}{2}) \dd t\dd\sigma\dd \lambda\dd x 
=\O\big(\tfrac{\hbar^{\ell+2}}{\eta^\ell\varepsilon_k^\ell}\big) 
\]
using that $|\mho_k| \le C \varepsilon_k$ and $(\lambda,\sigma)
\mapsto
f_{\eta}(\lambda+\tfrac{\sigma}{2})f_{\eta}(\lambda-\tfrac{\sigma}{2})$
is in $S^\eta$ and supported in $\{|\lambda|,|\sigma| \le C
\eta\}$. Since $B_{3,\varepsilon}$ is supported in $\{x\in\Omega\}$
and $\sum_{k\ge 2} \varepsilon_k^{-\ell} = \O(\varepsilon^{-\ell})$, we deduce
\begin{equation*}
\int \1\{x\notin \mho_1\} e^{i\tfrac{t\sigma}{\hbar}} B_{3,\varepsilon}(x, \lambda,t) f_{\eta}(\lambda+\tfrac{\sigma}{2})f_{\eta}(\lambda-\tfrac{\sigma}{2}) \dd t\dd\sigma\dd \lambda\dd x 
=\O\big(\tfrac{\hbar^{\ell+2}}{\eta^\ell\varepsilon^\ell}\big) . 
\end{equation*}
Going back to \eqref{eq:HS_norm_comm}, by combing these estimates, we obtain 
\[
\|[\widetilde{f}_{\eta}(H_{\hbar}),g_{\varepsilon}(\mathrm{w})]\|_{\J^2}^2= \O\big(\tfrac{\hbar^{2-n}}{\eta\varepsilon}\big) . 
\]
This completes the proof of \eqref{eq:J2_dyadic_comm} in the case
where the spectral scale $\eta$ is smaller than $\hbar^{\frac 12}$. 

\smallskip

Now, we turn to the proof of \eqref{eq:J2_dyadic_dbcomm} in the same regime. 
The Hilbert-Schmidt norm of the double commutator $\|[[\widetilde{f}_{\eta}(H_{\hbar}),g_{\varepsilon}(\mathrm{w})],k_{\varepsilon}(\mathrm{w})]\|_{\J^2}^2$ is expressed as in \eqref{eq:HS_norm_comm_int} with, instead of $B_{1,\varepsilon}$, the symbol
\[
\widetilde{B}_{1,\varepsilon}(x,y,\zeta,\omega,r,s,t) = B_0(x,y,\zeta,\omega,r,s,t)\big(g_\varepsilon(\mathrm{w}(x))-g_\varepsilon(\mathrm{w}(y))\big)^2\big(k_\varepsilon(\mathrm{w}(x))-k_\varepsilon(\mathrm{w}(y))\big)^2 .
\]
We can again apply the same stationary phase method; the amplitude
$\widetilde{B}_{1,\varepsilon}$ belongs to $S^\varepsilon(\R^{4n+2})$ with $\varepsilon\geq \hbar^{\frac 12}$, so by Proposition~\ref{prop:stat_tempered}, there exists a  symbol $\widetilde{B}_{3,\varepsilon}\in S^\varepsilon(\R^{n+2})$ such that
\[
\|[[\widetilde{f}_{\eta}(H_{\hbar}),g_{\varepsilon}(\mathrm{w})],k_{\varepsilon}(\mathrm{w})]\|_{\J^2}^2
= \frac{1}{(2\pi\hbar)^{n+1}}\int e^{i\tfrac{\sigma t}{\hbar}}  \widetilde{B}_{3,\varepsilon}(x,\lambda,t) f_{\eta}(\lambda+\tfrac{\sigma}{2})f_{\eta}(\lambda-\tfrac{\sigma}{2})   \dd t \dd \sigma \dd \lambda \dd x +\O(\hbar^{\infty}), 
\]
and $\widetilde{B}_{3,\varepsilon}$ also satisfies \eqref{B2_exp}--\eqref{B2_supp}. 
In particular, using \eqref{B2_supp} and Corollary~\ref{corr:phase_stat_perturbation} as above, for any fixed $\ell\in\N$,
\[
\int \1\{x\notin \mho_1\} e^{i\tfrac{t\sigma}{\hbar}} \widetilde{B}_{3,\varepsilon}(x, \lambda,t) f_{\eta}(\lambda+\tfrac{\sigma}{2})f_{\eta}(\lambda-\tfrac{\sigma}{2}) \dd t\dd\sigma\dd \lambda\dd x 
=\O\big(\tfrac{\hbar^{\ell+2}}{\eta^\ell\varepsilon^\ell}\big) . 
\]

In fact, $\widetilde{B}_{3,\varepsilon}$ satisfies stronger estimates than \eqref{B2_exp}. 
Since $y_\star|_{t=0} =x$, we have for any smooth differential operator $L=L_{(y,\zeta,r,s)}$ of degree 2, 
\[
L\widetilde{B}_{1,\varepsilon}(x,y,\zeta,\omega,r,s,t) \big|_{(y,\zeta,r,s)=(y_\star,0,r_\star,0),\, t=0} =0
\]
so that, according to \eqref{eq:phase_stat} with $\ell=2$,
\[
B_{3,\varepsilon}(x,\lambda,0) = \O\big(\hbar^2\varepsilon^{-4}\big) .
\]
Similarly, by  \eqref{eq:phase_stat} with $\ell=1$,
\[
\partial_t^2 B_{3,\varepsilon}(x,\lambda,t)\big|_{t=0} = \O\big(\hbar\varepsilon^{-4}  \big) . 
\]

Hence, instead of \eqref{eq:HS_norm_comm_dyadic}, we apply
Corollary~\ref{corr:phase_stat_dyadic} (with $\ell=4$),  and obtain
\[\begin{aligned}
\frac{1}{2\pi\hbar}
&\int \1\{x\in \mho_1\}  e^{i\tfrac{\sigma t}{\hbar}}  \widetilde{B}_{3,\varepsilon}(x,\lambda,t) f_{\eta}(\lambda+\tfrac{\sigma}{2})f_{\eta}(\lambda-\tfrac{\sigma}{2})   \dd t \dd \sigma \dd \lambda \dd x \\ 
&= \int \1\{x\in \mho_1\} \Big( B_{3,\varepsilon}(x,\lambda,0) f_{\eta}(\lambda)^2 + 
\big(\tfrac{\hbar}{2\eta}\big)^2\partial_t^2 B_{3,\varepsilon}(x,\lambda,0)
\big(f_{\eta}'(\lambda)^2 -f_{\eta}''(\lambda)f_{\eta}(\lambda)\big)\Big)
\dd x\dd \lambda+ \O\big(\tfrac{\hbar^{4}}{\eta^3\varepsilon^3}\big) . 
\end{aligned}\]
Since $ \eta^2\le \hbar$ in this regime and $\|f^{(j)}_\eta\|_{L^2(\R)}^2 = \O_j(\eta)$ for $j\in\N$, we conclude that
\[
\int \1\{x\in \mho_1\}  e^{i\tfrac{\sigma t}{\hbar}}  \widetilde{B}_{3,\varepsilon}(x,\lambda,t) f_{\eta}(\lambda+\tfrac{\sigma}{2})f_{\eta}(\lambda-\tfrac{\sigma}{2})   \dd t \dd \sigma \dd \lambda \dd x 
=\O\big(\tfrac{\hbar^{5}}{\eta^3\varepsilon^3}\big) .
\]
By combining the previous estimates, we conclude that 
\[
\|[[\widetilde{f}_{\eta}(H_{\hbar}),g_{\varepsilon}(\mathrm{w})],k_{\varepsilon}(\mathrm{w})]\|_{\J^2}^2 = \O\big(\tfrac{\hbar^{4}}{\eta^3\varepsilon^3}\big) . 
\]
This completes the proof of \eqref{eq:J2_dyadic_dbcomm} in the case
where the spectral scale $\eta$ is smaller than $\hbar^{\frac 12}$. 

\subsection{Hilbert-Schmidt norm of commutators; proof of \eqref{eq:J2_dyadic_comm}--\eqref{eq:J2_dyadic_dbcomm} at large spectral scales}
\label{sec:commutators-at-large}

This section is concerned with the proof of \eqref{eq:J2_dyadic_comm}
and \eqref{eq:J2_dyadic_dbcomm}
under the assumption that $\eta\geq \hbar^{\frac 12}$. In this regime,
$f_{\eta}(H_{\hbar})$ is a pseudodifferential operator with
symbol in $S^{\eta}$ and small support.

\begin{prop} \label{prop:pseudo}
Let  $\mathcal{H}(x,\xi) := V(x)+|\xi|^2$ for $(x,\xi)\in\R^{2n}$ and
$\chi :\R^n\to[0,1]$ be a smooth function with compact support, and
equal to 1 on the ball of radius $\underline{\ell}$.
Let $\widetilde{f}_\eta$ be as in Proposition \ref{prop:J2_estimates_dyadic} for $\eta\in [\sqrt\hbar, 1]$. 
Then there exists a symbol $p_{\eta} \in S^{\eta}(\R^{2n})$ such that 
\[
\widetilde{f}_\eta(H_{\hbar}) : (x,y) \mapsto \frac{\chi(x-y)}{(2\pi\hbar)^{n}}\int
e^{i\tfrac{(x-y)\cdot \xi}{\hbar}}p_{\eta}(x,\xi)\dd \xi  +\O_{\J^1}(\hbar^{\infty})
\]
and for any  $\delta\in[\eta,1]$, 
\begin{equation}\label{Lyap2}
p_{\eta}  =\O_{S^\eta}\big(\big(\tfrac{\hbar}{\eta\delta}\big)^{\infty}\big)  
\qquad\text{uniformly on }\{|\mathcal{H}(x,\xi)|> \mathrm{C} \delta\} .
\end{equation}
In particular, $\|p_{\eta}\|_{L^1_\xi\times L^\infty_x} = \O(\eta)$
. \end{prop}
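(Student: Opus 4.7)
The plan is to start from Corollary~\ref{cor:propag} applied with $\varkappa = f_\eta$, which represents $\widetilde{f}_\eta(H_\hbar)$ as the oscillatory integral
\[
(x,y) \mapsto \frac{1}{(2\pi\hbar)^{n+1}} \int e^{i(\varphi(t,x,\xi) - y\cdot\xi - t\lambda)/\hbar}\, b(t,x,y,\xi)\, f_\eta(\lambda)\, \dd t \dd\lambda \dd\xi + \O_{\J^1}(\hbar^\infty),
\]
with $b\in S^1(\R^{3n+1})$ supported as described there. Since $f_\eta \in S^\eta$ and $\eta \geq \sqrt{\hbar}$, the total amplitude belongs to the mild class $S^1_{t,x,y,\xi} \times S^\eta_\lambda$, which is precisely the scope of the stationary phase for mild amplitudes (Proposition~\ref{prop:stat_tempered}). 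I will apply it in the variables $(t,\lambda)$, treating $(x,y,\xi)$ as parameters.

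The phase $\Psi(t,\lambda) = \varphi(t,x,\xi) - t\lambda$ satisfies $\partial_t\Psi = \partial_t\varphi - \lambda$ and $\partial_\lambda\Psi = -t$, so, using \eqref{phasephi}, its unique critical point is $(t_\star,\lambda_\star) = (0,\mathcal{H}(x,\xi))$ with $\mathcal{H}(x,\xi) = V(x)+|\xi|^2$. The Hessian $\bigl(\begin{smallmatrix}\partial_{tt}\varphi & -1 \\ -1 & 0\end{smallmatrix}\bigr)$ has determinant $-1$, and the critical value is $x\cdot\xi$. Stationary phase therefore produces a symbol $q_\eta \in S^\eta(\R^{3n})$, supported in $\{|x-y| \leq \underline{\ell}\}$, with diagonal principal part $\vartheta(\mathcal{H}(x,\xi))\, f_\eta(\mathcal{H}(x,\xi))$, such that
\[
\widetilde{f}_\eta(H_\hbar) : (x,y) \mapsto \frac{\chi(x-y)}{(2\pi\hbar)^n} \int e^{i(x-y)\cdot\xi/\hbar}\, q_\eta(x,y,\xi)\, \dd\xi + \O_{\J^1}(\hbar^\infty).
\]
To pass from $q_\eta(x,y,\xi)$ to a left-quantized symbol $p_\eta(x,\xi)$, I would invoke the standard asymptotic identity
\[
p_\eta(x,\xi) \sim \sum_{\alpha} \frac{(-i\hbar)^{|\alpha|}}{\alpha!}\, \partial_\xi^\alpha \partial_y^\alpha q_\eta(x,y,\xi)\big|_{y=x},
\]
obtained by Taylor-expanding in $y-x$ together with $(y-x)^\alpha e^{i(x-y)\cdot\xi/\hbar} = (-i\hbar)^{|\alpha|}\partial_\xi^\alpha e^{i(x-y)\cdot\xi/\hbar}$ and integration by parts in $\xi$. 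Each term loses at most a factor $\hbar/\eta$ (as $\xi$-derivatives of $f_\eta(\mathcal{H})$ cost $\eta^{-1}$ while $y$-derivatives of $b$ are $\O(1)$), and the series sums modulo $\O_{S^\eta}(\hbar^\infty)$ because $\hbar/\eta \leq \sqrt{\hbar}$.

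The bound \eqref{Lyap2} is the main technical obstacle. Every term of the stationary phase expansion is evaluated at $(t_\star,\lambda_\star)=(0,\mathcal{H}(x,\xi))$ and involves derivatives of $f_\eta$ at $\mathcal{H}(x,\xi)$; since $f$ has compact support, all such terms vanish identically on $\{|\mathcal{H}(x,\xi)|> C\eta\}$. For the stationary phase remainder on $\{|\mathcal{H}(x,\xi)| > C\delta\}$ with $\delta\in[\eta,1]$, the critical point in $\lambda$ lies outside $\supp f_\eta$, and on the support of the amplitude one has $|\partial_t\Psi| = |\partial_t\varphi - \lambda| \geq c\delta$ (for $t$ in the small interval $[-\underline{\tau},\underline{\tau}]$) together with $|\partial_\lambda\Psi| = |t|$; a careful iteration of non-stationary integration by parts, carefully balancing $t$-IBP (gaining $\hbar/\delta$ per step with $\O(1)$ amplitude cost) and $\lambda$-IBP (gaining $\hbar/(|t|\eta)$ per step), yields the claimed $\O_{S^\eta}((\hbar/(\eta\delta))^\infty)$ decay. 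Finally, $\|p_\eta\|_{L^1_\xi\times L^\infty_x} = \O(\eta)$ follows immediately: $p_\eta$ is uniformly bounded and, modulo $\hbar^\infty$, concentrated in the shell $\{|\mathcal{H}(x,\xi)| \leq C\eta\}$, whose $\xi$-volume at fixed $x$ in $\{V \leq -\underline{\cst}\}$ is $\O(\eta)$.
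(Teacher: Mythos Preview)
Your overall strategy matches the paper's: apply stationary phase in $(t,\lambda)$ to the FIO representation from Corollary~\ref{cor:propag}, then remove the $y$-dependence to get a left symbol $p_\eta$. The paper does the second step by writing $p_\eta$ as an explicit oscillatory integral $p_\eta(x,\xi)=\frac{\varkappa(\xi)}{(2\pi\hbar)^n}\int e^{-iu\cdot\zeta/\hbar}q_\eta(x,x+u,\xi+\zeta)\,\dd u\,\dd\zeta$ and invoking Proposition~\ref{prop:stat_tempered_classic}, while you use the formal change-of-quantization series; these are equivalent. For \eqref{Lyap2} the paper simply quotes the support estimate \eqref{eq:supp_stat} (and its propagation through the second oscillatory integral via Remark~\ref{rk:decay}), whereas your hands-on splitting into $t$- and $\lambda$-integrations by parts is essentially the proof of Lemma~\ref{prop:nonstatphase} specialised to this phase; it works once one observes that wherever $|\partial_t\Psi|$ fails to be bounded below by $c\delta$ one necessarily has $|t|\geq c'\delta$, so that the full gradient $|\nabla_{t,\lambda}\Psi|$ stays bounded below by a multiple of $\delta$ on the support of the amplitude.

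There is a real gap in your last sentence. The claim that $p_\eta$ is ``modulo $\hbar^\infty$ concentrated in $\{|\mathcal{H}|\le C\eta\}$'' fails at the endpoint $\eta=\sqrt{\hbar}$: applying \eqref{Lyap2} with $\delta=\eta$ only gives $p_\eta=\O_{S^\eta}\big((\hbar/\eta^2)^\infty\big)=\O_{S^\eta}(1)$ just outside the $\eta$-shell, so the tail is \emph{not} negligible and integrating the naive bound $|p_\eta|=\O(1)$ over the full (bounded) $\xi$-support yields only $\O(1)$, not $\O(\eta)$. The paper's fix is a dyadic decomposition: on annuli $\{C\delta_{k-1}<|\mathcal{H}|\le C\delta_k\}$ with $\delta_k=\eta\,2^k$, apply \eqref{Lyap2} at scale $\delta_{k-1}$ with exponent $2$ and multiply by the annulus $\xi$-volume $\O(\delta_k)$, giving contributions $\O\big(\hbar^2/(\eta^2\delta_k)\big)$; summing over $k\ge1$ yields $\O(\hbar^2/\eta^3)\le\O(\eta)$ exactly because $\eta^2\ge\hbar$. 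Without this dyadic step the $L^1_\xi$ bound does not close.
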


\begin{proof}
Recall that according to Proposition \ref{prop:propag_as_FIO},
\[
\widetilde{f}_\eta(H_{\hbar}) : (x,y)\mapsto\frac{1}{(2\pi\hbar)^{n+1}}\int
e^{i\tfrac{\varphi(t,x,\xi)-y\cdot\xi-t\lambda}{\hbar}}a(x,y,\xi,t)f_\eta(\lambda)\dd t \dd \lambda \dd \xi+O_{\J^1}(\hbar^{\infty}).
\]
We define
\begin{equation} \label{pseudob}
b : (x,y,\xi) \mapsto \frac{1}{2\pi\hbar}\int
e^{i\tfrac{\Phi(t,\lambda,x,\xi)}{\hbar}}a(x,y,\xi,t)f_\eta(\lambda)\dd t \dd \lambda , \qquad 
\Phi(t,\lambda,x,\xi):=\varphi(t,x,\xi)-x\cdot\xi-t\lambda.
\end{equation}
This oscillatory integral satisfies the hypotheses of Proposition
\ref{prop:stat_tempered}: the amplitude belongs to $S^{\eta}_{t,\lambda}(\R^2)$, the phase
$\Phi$ has a unique critical point $(t,\lambda)=(0,V(x)+|\xi|^2)$, and
the Hessian at this point is non-degenerate. Moreover, the amplitude
is in $S^1_y$, the phase is independent of $y$. Consequently, there is a symbol $b\in
S^{\eta}_{x,\xi} \times S^1_y$ such that 
\[
\text{$b$ satisfies  \eqref{Lyap} with }\mathcal{A}=\{(x,\xi)\in \R^{2n}, f_{\eta}(V(x)+|\xi|^2)\neq 0\} ,
\]
and we have 
\[
\widetilde{f}_\eta(H_{\hbar})(x,y)=\frac{1}{(2\pi\hbar)^{n}}\int
e^{i\frac{( x-y)\cdot \xi}{\hbar}}b(x,y,\xi)\dd \xi+O_{\J^1}(\hbar^{\infty}) .
\]
Observe also that since $a$ is supported in a neighborhood of size $\underline{\ell}$
of the diagonal $\{x=y\}$, we can freely include the cutoff
$\chi(x-y)$ in the previous kernel. Now, it remains to eliminate the
fact that $b$ depends on $y$. To this end, let $\varkappa : \R^n\to[0,1]$ be a smooth cutoff, with a fixed compact support, such that $\varkappa(\xi)=1$ on a neighborhood of $\mathcal B := \bigcup_{(x,y) \in\R^{2n}}\supp\big(\xi\mapsto b(x,y,\xi)\big)$, and introduce 
\[
p_{\eta} : (x,\xi) \mapsto \frac{\varkappa(\xi)}{(2\pi\hbar)^n} \int e^{-i\tfrac{u\cdot \zeta}{\hbar}} b(x,x+u,\xi+\zeta)\, \dd u \dd\zeta .
\]
Note that we introduce the cutoff $\varkappa$ simply so that $p_\eta$
has a fixed compact support. Then, applying again Proposition
\ref{prop:stat_tempered}, $p_{\eta}\in S^{\eta}(\R^{2n})$ and, according to Remark~\ref{rk:decay}, $p_{\eta}$  satisfies \eqref{Lyap2}. 
Finally, by Proposition~\ref{prop:stat_tempered_classic} (the saddle point is $(z,\xi)=(y,\zeta)$), since $b(x,z,\zeta) \varkappa(\xi)$ is in $S^{0}_{z,\xi} \times S^{\eta}_{x,\xi}$, we obtain for any $y\in \R^n$,
\[\begin{aligned}
\int e^{i\frac{(x-y)\cdot
\xi}{\hbar}}p_{\eta}(x,\xi)\dd \xi 
&= \frac{1}{(2\pi\hbar)^{n}} \int e^{i\frac{(x-y)\cdot\xi+(x-z)\cdot(\zeta-\xi)}{\hbar}}b(x,z,\zeta) \varkappa(\xi)\dd z\dd \xi \dd \zeta \\
&= \int e^{i\frac{(x-y)\cdot\zeta}{\hbar}} \big\{ b(x,y,\zeta) + \O_{S^\eta_{x,\zeta}}(\hbar^\infty)\big\} \dd\zeta
\end{aligned}\]
where we used that $\varkappa(\xi)=1$ on $\mathcal B$, so that $\partial_\xi^k \varkappa(\xi)=0$ for every $k\ge 1$ on the support of $b$. 
In particular, the error (times the cutoff $\chi(x-y)$) is a kernel in $\O_{S^\eta_{x,y}}(\hbar^\infty)$. Using the estimate \eqref{DS}, this corresponds to an operator $\O_{\J^1}(\hbar^\infty)$ as claimed.

Finally, to estimate $\|p_{\eta}\|_{L^1_\xi\times L^\infty_x} = \O(\eta)$, we observe that by assumption, $\partial_{x,\xi} \mathcal H \neq 0$ for $x \in \{\mathcal H = 0\}$, so that $\big| \xi \in\R^n : |\mathcal{H}(x,\xi)| \le \mathrm{C} \delta  \big| = \O(\delta)$ for any $0<\delta \le \underline{\cst}$ for some small constant $\underline{\cst}$. 
Then, by considering dyadic scales $\delta_k = \eta 2^k$ for $k\ge 0$, we deduce from the estimate \eqref{Lyap2} that uniformly for $x\in\R^n$, 
\[
\int |p_{\eta}(x,\xi)|\dd \xi =\O \bigg( \eta+  \sum_{k\ge 1} \frac{\hbar^2}{\eta^2\delta_k} \bigg) = \O(\eta)
\]
since $\hbar^2 \eta^{-4} \le 1$.
This concludes the proof. 
\end{proof} 

From Proposition \ref{prop:pseudo}, by applying the stationary phase
for mild symbols (in the form of
Proposition~\ref{prop:stat_tempered_classic}), one can prove the
relevant commutator estimates when the spectral scale $\eta$ is larger
than $\hbar^{\frac 12}$, in which case we only need to assume that
$\varepsilon\eta \ge \hbar$. We focus on \eqref{eq:J2_dyadic_comm}.

Since $f_{\eta}(H_{\hbar})$ and $g_{\epsilon}(\mathrm{w})$ are
self-adjoint, from the expression of the kernel of $f_{\eta}(H_{\hbar})$ in Proposition
\ref{prop:pseudo}, we obtain
\begin{multline}
\|[f_{\eta}(H_{\hbar}),g_{\varepsilon}(\mathrm{w})]\|_{\J^2}^2\\=\frac{1}{(2\pi\hbar)^{2n}}\int
e^{i\frac{(x-y)\cdot(\xi-\zeta)}{\hbar}}p_{\eta}(x,\xi)p_{\eta}(y,\zeta)\chi(x-y)^2(g_\varepsilon(\mathrm{w}(x))-g_{\varepsilon}(\mathrm{w}(y)))^2\dd
\zeta\dd y \dd x \dd \xi +\O(\hbar^{\infty}). 
\end{multline}

We split this integral in two parts by introducing a cutoff. Let $\chi_1:\R\to[0,1]$ be a smooth compactly supported function, let $\chi_\varepsilon : = \chi(\varepsilon^{-1}\mathrm{w})$ and $ \chi^\dagger_\varepsilon := 1-\chi_\varepsilon$ be such that $ \chi^\dagger_\varepsilon \cdot g_{\varepsilon}(\mathrm{w})=0$. 
We have 
\begin{multline}\label{J2comm1}
\|[f_{\eta}(H_{\hbar}),g_{\varepsilon}(\mathrm{w})]\|_{\J^2}^2=\frac{1}{(2\pi\hbar)^{2n}}\int
p_{\eta}(x,\xi) \bigg( \int e^{i\frac{(x-y)\cdot(\xi-\zeta)}{\hbar}} p_{\eta}(y,\zeta)Q(x,y)\dd
\zeta\dd y \bigg) \dd x \dd \xi \\
+ \frac{1}{(2\pi\hbar)^{n}}\int  p_{\eta}(y,\zeta)  g_{\varepsilon}(\mathrm{w}(y))^2 R(y,\zeta) \dd
\zeta\dd y +\O(\hbar^{\infty}) 
\end{multline} 
where
\begin{align*}
  Q : (x,y)&\mapsto \chi(x-y)^2\chi_\varepsilon(x)
             (g_\varepsilon(\mathrm{w}(x))-g_{\varepsilon}(\mathrm{w}(y)))^2\\
  R: (y,\zeta) &\mapsto  \frac{1}{(2\pi\hbar)^{n}} \int
e^{i\frac{(x-y)\cdot(\xi-\zeta)}{\hbar}}\chi(x-y)^2p_{\eta}(x,\xi)
  \chi_\varepsilon^\dagger(x) \dd x \dd \xi.
\end{align*}

The relevant property is that, with $\ell =2$,
\begin{equation} \label{Qondition}
(y,\zeta) \mapsto \varepsilon^{-1} p_{\eta}(y,\zeta) \int Q (x,y) \dd x
\text{ is in $S^{\varepsilon}_y \times S^{\eta}_\zeta$  with $\varepsilon\eta \ge \hbar$  and $\partial_y^k Q(x,y)|_{y=x} =0$ for $k<\ell$}.
\end{equation}
Then, applying Corollary~\ref{corr:phase_stat_dyadic} with $\ell=2$,  we obtain (uniformly for $\xi \in\R^{n}$)
\[
\frac{1}{(2\pi\hbar)^{n}}\int e^{i\frac{(x-y)\cdot(\xi-\zeta)}{\hbar}} p_{\eta}(y,\zeta)Q(x,y)\dd \zeta\dd y \dd x= \O\big(\tfrac{\hbar^2}{\eta^2\varepsilon}\big) .
\]
Since
\[
  \sup_{x\in \R^n}\int |p_{\eta}(x,\xi)|\dd \xi = \O(\eta),
\]this implies that 
$\eqref{J2comm1}= \O\big(\tfrac{\hbar^{2-n}}{\eta\varepsilon}\big)$. 

Then, the amplitude of the integral $R$ is in $S^{\delta}_x \times S^{\eta}_\xi$ with $\delta=\min\{\varepsilon,\eta\}$, so applying either Proposition~\ref{prop:stat_tempered_classic}  in case $\delta= \varepsilon$ or Lemma~\ref{prop:nonstatphase} in case $\delta=\eta$ with $\eta\geq \hbar^{\frac 12}$ (here, $\chi_\varepsilon^\dagger(x) =0$ for $x\in B(y,\varepsilon)$ if $y\in \{g_{\varepsilon}(\mathrm{w}) \neq 0\}$),we obtain in both cases 
\[
R(y,\zeta)
= \O\big(\big(\tfrac{\hbar}{\eta\varepsilon}\big)^{\infty}\big) 
\]
with the required uniformity. 
Thus, this implies that the second line of \eqref{J2comm1} is $\O\big(\big(\tfrac{\hbar}{\eta\varepsilon}\big)^{\infty}\hbar^{-n}\eta\varepsilon\big)$. Combining these estimates, this concludes the proof of \eqref{eq:J2_dyadic_comm}. 

Now, we turn to the estimate \eqref{eq:J2_dyadic_dbcomm}; the argument
being the same. Using Proposition \ref{prop:pseudo}, we also have
\begin{multline}\label{J2comm2}
\|[[f_{\eta}(H_{\hbar}),g_{\varepsilon}(\mathrm{w})],k_{\varepsilon}(\mathrm{w})]\|_{\J^2}^2=\frac{1}{(2\pi\hbar)^{2n}}\int
p_{\eta}(x,\xi) \bigg( \int e^{i\frac{(x-y)\cdot(\xi-\zeta)}{\hbar}} p_{\eta}(y,\zeta)\widetilde{Q}(x,y)\dd
\zeta\dd y \bigg) \dd x \dd \xi \\
+ \frac{1}{(2\pi\hbar)^{n}}\int  p_{\eta}(y,\zeta)  g_{\varepsilon}(\mathrm{w}(y))^2 k_{\varepsilon}(\mathrm{w}(y))^2\widetilde{R}(y,\zeta) \dd
\zeta\dd y +\O(\hbar^{\infty}) 
\end{multline} 
where
\begin{align*}
  \widetilde{Q} : (x,y)&\mapsto \chi(x-y)^2\chi_\varepsilon(x)
             (g_\varepsilon(\mathrm{w}(x))-g_{\varepsilon}(\mathrm{w}(y)))^2(k_\varepsilon(\mathrm{w}(x))-k_{\varepsilon}(\mathrm{w}(y)))^2\\
  \widetilde{R}: (y,\zeta) &\mapsto  \frac{1}{(2\pi\hbar)^{n}} \int
e^{i\frac{(x-y)\cdot(\xi-\zeta)}{\hbar}}\chi(x-y)^2p_{\eta}(x,\xi)
  \chi_\varepsilon^\dagger(x) \dd x \dd \xi.
\end{align*}
Now $\widetilde{Q}$ satisfies the condition \eqref{Qondition} with $\ell=4$. 
Thus, the first term of the right-hand-side of \eqref{J2comm2}
is $\O\big(\tfrac{\hbar^{4-n}}{\eta^3\varepsilon^3}\big)$ and the integral involving $\widetilde{R}$ is again  $\O\big(\big(\tfrac{\hbar}{\eta\varepsilon}\big)^{\infty}\hbar^{-n}\eta\varepsilon\big)$.
This yields the estimate \eqref{eq:J2_dyadic_dbcomm}.

\section{Hilbert-Schimdt norm of commutators: Proof of Theorem \ref{thm:J2}.}
\label{sec:comm}

Throughout this section, we use the notation from Section~\ref{sec:not}.
In particular, $\Omega \Subset \mathcal{D}$ is a fixed relatively
compact open subset of the bulk with smooth boundary,
$f\in C^{\infty}(\R^n)$, and we denote $f|_\Omega =f\1_\Omega$ (this function is also  viewed as a bounded operator on $L^2(\R^n)$).
Using Proposition~\ref{prop:regularised_commutator}, we focus on analysing the commutator $[\widetilde{\Pi}, f|_\Omega] $ where the regularized kernel $\widetilde{\Pi}$ is given by the oscillatory integral \eqref{kern_reg}.
Then, with the phase \eqref{phase0}, one has 
\begin{equation} \label{comm1}
\begin{aligned}
\big\| [\widetilde{\Pi}, f|_\Omega] \big\|^2_{\J^2} = \tr\big( [\widetilde{\Pi},f|_\Omega]  [f|_\Omega,\widetilde{\Pi}^*] \big)
=&\frac{1}{(2\pi\hbar)^{2n+2}}\int
e^{i\frac{\Psi_0(x,y,\xi_1,\xi_2,t_1,t_2,\lambda_1,\lambda_2)}{\hbar}}A_0(x,y,\xi_1,\xi_2,t_1,t_2,\lambda_1,\lambda_2) \\
&\qquad(f|_\Omega(x)-f|_\Omega(y))^2  \1_{\lambda_1\leq
\mu}\1_{\lambda_2\leq \mu}\dd t_1\dd t_2\dd \lambda_1\dd
\lambda_2\dd \xi_1\dd \xi_2\dd x \dd y , 
\end{aligned}
\end{equation}
where the amplitude $A_0 $ belongs to $S^1(\R^{6n+4})$ and its principal part at $(t_1,t_2,y) = (0,0,x)$ is given by  
\begin{equation} \label{symbolA}
A_0(x,x,\xi_1,\xi_2,0,0,\lambda_1,\lambda_2)|_{\hbar=0}=  \vartheta(V(x)+|\xi_1|^2)  \vartheta(V(x)+|\xi_2|^2) \chi(\lambda_1)\chi(\lambda_2) .
\end{equation}
Moreover, since $\Omega \Subset \{V<0\}$, the function $(x,y) \mapsto (f|_\Omega(x)-f|_\Omega(y))^2 \1\{|x-y| \le \underline{\ell}\}$ is supported in $\Omega'\times\Omega'$ where $\Omega' \subset \{V<-2\underline{\cst}\}$ is a neighbourhood of $\Omega$ in the bulk. Consequently, by \eqref{supp_bulk}, $A_0$ is supported in 
\begin{equation}\label{suppA0}
\big\{t\in[-\underline{\tau},\underline{\tau}], (x,y)\in \Omega'\times\Omega',  |x-y|\le \underline{\ell},  |\xi_1|, |\xi_2|\ge \underline{\cst}, |\lambda_1|,|\lambda_2| \le \underline{\ell}\} .
\end{equation}
for small constants $\underline{\tau}\ll\underline{\ell}\ll \underline{\cst}$.

\smallskip

The proof consists of the following steps.
\begin{itemize}[leftmargin=*]
\item In Section~\ref{sec:est}, we gather some preliminary estimates for singular integrals with discontinuous amplitudes. 
\item In Section~\ref{sec:red}, using the stationary phase method, we perform a series of reductions to write the oscillatory integral \eqref{comm1}  as
\begin{equation} \label{red}
\| [\widetilde{\Pi}, f|_\Omega] \|^2_{\J^2} =  \frac{-1}{(2\pi\hbar)^{2n}} \int
e^{i\frac{(x-y)\cdot\xi}{\hbar}} F(x,y,\xi)(f|_\Omega(x)-f|_\Omega(y))^2\dd \xi \dd x\dd y +  \O(\hbar^{1-n}) .
\end{equation}
where the function $F$ has compact support, is smooth on $\R^3\setminus\{\xi=0\}$ and $|F(x,y,\xi)| \le C|\xi|$. 
\item In Section~\ref{sec:int}, we study integrals of the form
  \eqref{red} in the particular case where the set $\Omega$ is $C^\infty$-diffeomorphic to
  the ball $B = \{x\in\R^n : |x|< 1\}$.
After a change of variables and further reduction steps, we show that  
\[
\eqref{red} = \frac{1}{(2\pi\hbar)^{2n}} \int e^{i\frac{(x-y)\cdot\xi}{\hbar}} g(\tfrac{x+y}{2},\xi) \1_{B}(x)\1_{B}(y)\dd \xi \dd x\dd y +  \O(\hbar^{1-n}) ,
\]
where $g$ is smooth on $
\R^{2n}\setminus\{\xi=0\}$  and  $g(v,\xi) = |\xi| R(v) +\O(|\xi|^2)$ as $\xi\to0$.
Using the asymptotics of the Fourier transform $\widehat\1_B$, which is related to Bessel functions, we obtain that the leading term of \eqref{red} is given by
\(
(2\pi\hbar)^{1-n} \frac{\log \hbar^{-1}}{\pi^2}  \int_{\partial B} R(\hat x) f(\hat x)^2 \dd \hat x .
\)

\item Finally, in Section \ref{sec:partition-unity-end}, we use a partition of unity type argument to conclude the proof of
Theorem \ref{thm:J2} without the topological assumption.

\item In Section \ref{sec:clt}, we prove Theorem \ref{thm:clt}. The Gaussian asymptotic fluctuations for counting statistics is a direct consequence of Theorem \ref{thm:J2}  (the variance of $\X(\Omega)$ diverges for any smooth set $\Omega$) and we only need to estimate covariances. 
Using the off-diagonal decay of the regularized kernel
(Proposition~\ref{prop:kerdiag}), we argue that for two smooth sets
$\Omega_1, \Omega_2 \Subset \mathcal D$ such that $|\partial
\Omega_1\cap \partial \Omega_2|=0$, the covariance satisfies $\tr\big( [\Pi, \1_{\Omega_1}]  [\1_{\Omega_2},\Pi] \big) = o(\log(\hbar^{-1})\hbar^{1-n})$ as $\hbar\to0$.
\end{itemize}

\subsection{Preliminary estimates}
\label{sec:est}

In this section we gather several estimates that will be important for
the proof of Theorem \ref{thm:clt}.

\begin{lem} \label{lem:key_estimate}
There exists a constant $C_n$ such that the following is true: let
$\Omega \Subset \R^n$ be a bounded open set with a smooth
boundary. Denote by $|\partial \Omega|$ the $(n-1)$-Hausdorff measure of
the smooth, compact hypersurface $\partial \Omega$. Then for every
$\hbar>0$ sufficiently small,
\[
\int_{|x-y|\leq \hbar}\frac{|\1_\Omega(x) - \1_\Omega(y)|^2}{2} \dd x \dd y 
=  \int_{|x-y|\leq \hbar} \1_\Omega(x)\1_{\Omega^c}(y) \dd x \dd y 
\sim C_n|\partial \Omega|\hbar^{n+1} . 
\]
For any $\varkappa : \R^{2n}\to [0,1]$ with compact support, 
\begin{equation} \label{bdsub}
\int\frac{|\1_\Omega(x) - \1_\Omega(y)|^2}{2|x-y|^n} \varkappa(x,y) \dd x \dd y 
=  \int\frac{\1_\Omega(x)\1_{\Omega^c}(y)}{|x-y|^n} \varkappa(x,y)\dd x \dd y 
<\infty .
\end{equation}
Moreover, given $\overline\chi :\R^n \to [0,1]$ smooth such that $\overline\chi(x)=0$ for $|x| \le \underline{\cst}$, there exists a constant $C$ so that for any $\hbar>0$,
\begin{equation} \label{bdsup}
\int  \overline\chi(\tfrac{x-y}{\hbar})  \frac{|\1_\Omega(x) - \1_\Omega(y)|^2}{2|x-y|^{n+2}}  \dd x \dd y 
=  \int  \overline\chi(\tfrac{x-y}{\hbar})  \frac{\1_\Omega(x)\1_{\Omega^c}(y)}{|x-y|^{n+2}} \dd x \dd y 
\le C/\hbar . 
\end{equation}
\end{lem}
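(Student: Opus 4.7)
The plan is to reduce all three statements to a single geometric estimate on the measure of a ``boundary strip'' of $\Omega$. First, since $\1_\Omega$ is $\{0,1\}$-valued, the pointwise identity
\[
(\1_\Omega(x)-\1_\Omega(y))^2 = \1_\Omega(x)\1_{\Omega^c}(y)+\1_\Omega(y)\1_{\Omega^c}(x)
\]
holds; after symmetrizing the (symmetric) weight in $(x,y)$ and dividing by $2$, this yields the right-hand equalities in all three claims. I then substitute $u=x-y$ and define
\[
g(u) := \big|\{x\in\Omega : x-u\in\Omega^c\}\big|,
\]
so that each integral reduces to $\int w(u)\,g(u)\,\dd u$ for the corresponding weight $w$.

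The essential geometric input is the linear bound $g(u)\le C_\Omega|u|$ uniformly in $u$, together with the refined asymptotic
\[
g(u) = \int_{\partial\Omega}\big(u\cdot\nu(\hat x)\big)_+\dd\hat x + O(|u|^2) \qquad\text{as } u\to 0,
\]
where $\nu$ denotes the outward unit normal to $\partial\Omega$. I would derive this by introducing tubular neighborhood coordinates $x = \hat x + s\nu(\hat x)$ for $\hat x\in\partial\Omega$, $|s|<\epsilon_0$, with Jacobian $1+O(s)$; such coordinates exist by smoothness and compactness of $\partial\Omega$. For $u$ sufficiently small, the set $\{x\in\Omega : x-u\in\Omega^c\}$ is exactly the one-sided strip of width $(u\cdot\nu(\hat x))_+$ along $\partial\Omega$, with the curvature correction of order $|u|^2$. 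For $|u|$ bounded below, the trivial bound $g(u)\le|\Omega|$ closes the linear estimate.

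Given this, all three conclusions follow by elementary integration. For the first claim, Fubini and the rescaling $u=\hbar v$ give
\[
\int_{|u|\le\hbar} g(u)\,\dd u = \hbar^{n+1}\int_{\partial\Omega}\int_{|v|\le 1}\big(v\cdot\nu(\hat x)\big)_+\dd v\,\dd\hat x + O(\hbar^{n+2}),
\]
and the inner $v$-integral is independent of $\hat x$ by rotational invariance of $\{|v|\le 1\}$, factoring out as $C_n := \int_{|v|\le 1}(v_1)_+\,\dd v$; the leading term is then $C_n|\partial\Omega|\hbar^{n+1}$. For the second claim, the compact support of $\varkappa$ restricts $|u|\le R$, so the integrand is dominated by $C_\Omega|u|^{1-n}$, which is integrable near $0$ in $\R^n$. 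For the third claim, the cutoff forces $|u|\ge \underline{\cst}\hbar$ on the relevant range, so splitting at $|u|=1$,
\[
\int \overline\chi(u/\hbar)\frac{g(u)}{|u|^{n+2}}\,\dd u \le C_\Omega\int_{\underline{\cst}\hbar}^{1} r^{-2}\,\dd r + |\Omega|\int_{1}^{\infty} r^{-3}\,\dd r \le C/\hbar.
\]

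The only genuinely delicate step is establishing the tubular neighborhood expansion for $g(u)$ at small $u$ with quantitative control of the curvature correction; this is standard for smooth compact hypersurfaces, but it is the main obstacle if one wished to relax regularity. Once this expansion is in hand, everything reduces to one-dimensional radial integrations.
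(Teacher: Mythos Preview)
Your proof is correct and rests on the same geometric input as the paper (tubular coordinates near $\partial\Omega$), but the packaging differs. You isolate the function $g(u)=|\{x\in\Omega:x-u\notin\Omega\}|$, prove once that $g(u)=\int_{\partial\Omega}(u\cdot\nu)_+\,\dd\hat x+O(|u|^2)$, and then all three statements reduce to one-dimensional radial integrals. The paper instead establishes the first asymptotic directly in tubular coordinates, and then derives \eqref{bdsub} and \eqref{bdsup} by a dyadic decomposition in $|x-y|$, feeding the bound $\int_{|x-y|\le\eta}\1_\Omega(x)\1_{\Omega^c}(y)\,\dd x\,\dd y=O(\eta^{n+1})$ back in at each scale. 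Your route is more direct and yields the explicit $C_n=\int_{B_n}(v_1)_+\,\dd v$; the paper's dyadic argument is slightly more robust in that it only needs the $O(\eta^{n+1})$ upper bound, not the full first-order expansion of $g$. One small imprecision: for \eqref{bdsub} the weight $\varkappa(x,y)$ is not a function of $u=x-y$ alone, so the integral does not literally ``reduce to'' $\int w(u)g(u)\,\dd u$; but since only finiteness is needed and $0\le\varkappa\le 1$ has compact support, bounding $\varkappa$ by $\1_{\{|u|\le R\}}$ recovers exactly your estimate.
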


\begin{proof}
Let 
\[
\Omega_{\hbar}=\{x \in \Omega : \dist(x,\partial
\Omega)<\hbar\} 
\]
be an $\hbar$-neighborhood of $\partial\Omega$ inside $\Omega$.
Since $\partial \Omega$ is smooth and compact, if $\hbar$ is small enough, then the following change of variables is well defined 
\[\begin{aligned}
\Omega_{\hbar} & \mapsto  \partial\Omega \times (0,1) \\
x&\mapsto  (\hat{x},t) =  \big(\underset{q\in \partial \Omega}{\arg\min}|q-x|^2, 
\hbar^{-1}  \dist(x, \partial \Omega)\big)  .
\end{aligned}\]
This corresponds to the (orthogonal) projection of $x \in  \Omega_{\hbar}$ onto 
$ \partial\Omega$ and the volume form $\dd x \sim  \hbar \dd t \dd \hat{x} $ as $\hbar\to0$ where $\dd \hat{x}$ denotes the volume measure on $\partial\Omega$.
Hence, as $\hbar\to0$
\[
\int \1_{\Omega}(x) \1_{|x-y|\le \hbar}\1_{\Omega^c}(y) \dd x \dd y 
\sim \hbar\int_{\partial \Omega \times (0,1)}  \bigg(\int_{\Omega^c} \1_{|\hat{x} - \hbar t \nu_\Omega(\hat{x}) -y| \le \hbar}  \dd y \bigg) \dd \hat{x}  \dd t 
\]
where $\nu_\Omega(\hat{x})$ denotes the (exterior) unit normal vector at $\hat{x} \in\Omega$.

For a fixed $\hat{x} \in \partial\Omega$ and $t>0$, 
\[
\int_{\Omega^c} \1_{|\hat{x} - \hbar t \nu_\Omega -y| \le \hbar}  \dd y
\sim \hbar^n\int_{\{y \cdot  \mathrm{e}_1 \ge 0 \}} \1_{|y+t \mathrm{e}_1|\le 1 } \dd y
\]
which follows by rescaling and using normal coordinates around $\hat{x}$.  
This shows that there exists a constant $C_n$ so that as $\hbar\to 0$,

\[
\int_{|x-y|\leq \hbar} \1_\Omega(x)\1_{\Omega^c}(y) \dd x \dd y  \sim C_n \hbar^{n+1}\int_{\partial \Omega} \dd\hat{x} .
\]

To obtain \eqref{bdsub}, observe that for $s\in\R_+$ 
\[
\int \frac{\1_\Omega(x)\1_{\Omega^c}(y)}{|x-y|^s}  \varkappa(x,y) \dd x \dd y
\le \sum_{k\in\Z : 2^k \le C/\hbar} (\hbar 2^k)^{-s} \int_{|x-y|\leq \hbar 2^{k+1}} \hspace{-.3cm}\1_\Omega(x)\1_{\Omega^c}(y) \dd x \dd y  
\]
where the constant $C$ depends only on $\varkappa$. 
The pervious argument shows that the integrals on the RHS are  $\O\big((\hbar 2^k)^{n+1}\big)$  
so that 
\[
\int\frac{\1_\Omega(x)\1_{\Omega^c}(y)}{|x-y|^s} \varkappa(x,y) \dd x \dd y
\le  C \hbar^{n+1-s}  \sum_{k\in\Z : 2^k\le C/\hbar}  2^{k(n+1-s)} =O(1)
\]
provided that $s<n+1$; in which case the sum is geometrically growing.
On the other hand, for $s>n+1$, upon adding a different cutoff to exclude that diagonal, we obtain
\[
\int  \overline\chi(\tfrac{x-y}{\hbar})  \frac{\1_\Omega(x) \1_{\Omega^c}(y)}{|x-y|^{s}}  \dd x \dd y \le \sum_{k\in\N_0} (\hbar 2^k)^{-s} \int_{|x-y|\leq \hbar 2^{k+1}} \hspace{-.3cm}\1_\Omega(x)\1_{\Omega^c}(y) \dd x \dd y  \le  C \hbar^{n+1-s} 
\]
since the series is convergent. This completes the proof of \eqref{bdsup}. 
\end{proof}

Lemma~\ref{lem:key_estimate} has the two following consequences.

\begin{prop} \label{prop:key}
Let $\Omega \Subset \R^n$ be a relatively compact open set with a smooth boundary, $f\in C^\infty(\R^n)$ and recall that $f|_\Omega= f \1_\Omega$.
For any $\varkappa : \R^{2n}\to [0,1]$ with compact support,
\[
\int\frac{|f|_\Omega(x) - f|_\Omega(y)|^2}{2|x-y|^n} \varkappa(x,y) \dd x \dd y 
<\infty .
\]
Moreover, for any cutoff $\chi \in C^\infty_c$,
\begin{equation} \label{bdsubmod}
\int  \chi(\tfrac{x-y}{\hbar}) (f|_\Omega(x)-f|_\Omega(y))^2  \dd x \dd y = \O(\hbar^{n+1}) .
\end{equation}
\end{prop}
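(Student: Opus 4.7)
The plan is to reduce both statements to the already-proven Lemma \ref{lem:key_estimate} by peeling off the smooth factor $f$ from the indicator $\1_\Omega$. The basic identity is
\[
f|_\Omega(x) - f|_\Omega(y) = \bigl(f(x)-f(y)\bigr)\1_\Omega(x) + f(y)\bigl(\1_\Omega(x) - \1_\Omega(y)\bigr),
\]
so by $|a+b|^2\le 2|a|^2+2|b|^2$,
\[
\bigl|f|_\Omega(x) - f|_\Omega(y)\bigr|^2 \le 2\1_\Omega(x)|f(x)-f(y)|^2 + 2|f(y)|^2\bigl|\1_\Omega(x)-\1_\Omega(y)\bigr|^2 .
\]

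For the first claim, I would fix a relatively compact open set $U\subset \R^n$ containing the $x$- and $y$-projections of $\supp(\varkappa)$. Since $f\in C^\infty(\R^n)$, the mean value inequality gives $|f(x)-f(y)|\le \|\nabla f\|_{L^\infty(U)}|x-y|$ on $U\times U$. Thus the first term contributes
\[
\int \frac{\1_\Omega(x)|f(x)-f(y)|^2}{|x-y|^{n}}\varkappa(x,y)\dd x \dd y \le C\int_{U\times U}|x-y|^{2-n}\dd x\dd y,
\]
which is finite (in polar coordinates around the diagonal, one gets $\int |u|\, \dd |u|$, integrable). The second term is bounded by $\|f\|_{L^\infty(U)}^2$ times the integral in \eqref{bdsub} of Lemma \ref{lem:key_estimate} (with a slightly enlarged cutoff), hence finite.

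For \eqref{bdsubmod}, apply the same splitting. The smooth piece is handled by the change of variables $u=(x-y)/\hbar$, $v=y$:
\[
\int \chi\bigl(\tfrac{x-y}{\hbar}\bigr)\1_\Omega(x)|f(x)-f(y)|^2\dd x\dd y \le C\hbar^{n+2}\int |\chi(u)|\,|u|^2\dd u\cdot|\Omega| = O(\hbar^{n+2}),
\]
which is even better than required. For the indicator piece, since $\chi$ has compact support there exists $R>0$ with $\supp(\chi)\subset\{|u|\le R\}$, and since $\1_\Omega(x)-\1_\Omega(y)$ vanishes unless $x$ or $y$ lies within distance $R\hbar$ of $\partial\Omega$, the $y$-integration is confined to a fixed bounded neighborhood of $\Omega$ on which $f$ is bounded. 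Thus, using the first identity of Lemma \ref{lem:key_estimate},
\[
\int \chi\bigl(\tfrac{x-y}{\hbar}\bigr)|f(y)|^2\bigl|\1_\Omega(x)-\1_\Omega(y)\bigr|^2\dd x\dd y \le C\int_{|x-y|\le R\hbar}\1_\Omega(x)\1_{\Omega^c}(y)\dd x\dd y = O(\hbar^{n+1}).
\]
Summing the two contributions yields \eqref{bdsubmod}. No genuine obstacle arises: the proof is essentially a routine consequence of Lemma \ref{lem:key_estimate} together with the Lipschitz bound for $f$ on compact sets; the only point requiring care is checking that the cutoff $\varkappa$ (resp.\ the rescaled cutoff $\chi(\cdot/\hbar)$) restricts the integration to a region on which the smooth factor $f$ is bounded.
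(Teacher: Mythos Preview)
Your proof is correct and, for the two statements actually in the proposition, arguably cleaner than the paper's. You use the linear splitting
\[
f|_\Omega(x)-f|_\Omega(y)=(f(x)-f(y))\1_\Omega(x)+f(y)(\1_\Omega(x)-\1_\Omega(y))
\]
together with $|a+b|^2\le 2|a|^2+2|b|^2$, and then feed each piece directly into the Lipschitz bound for $f$ and into Lemma~\ref{lem:key_estimate}. The paper instead uses the exact quadratic identity
\[
|f|_\Omega(x)-f|_\Omega(y)|^2=(f(x)-f(y))(f|_\Omega(x)-f|_\Omega(y))+f(x)f(y)\bigl(\1_\Omega(x)\1_{\Omega^c}(y)+\1_{\Omega^c}(x)\1_\Omega(y)\bigr),
\]
and for \eqref{bdsubmod} applies Cauchy--Schwarz to the cross term to get a bootstrap inequality $X\le C\hbar^{(n+2)/2}\sqrt{X}+\O(\hbar^{n+1})$. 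Your approach avoids this bootstrap entirely. The paper's identity, however, is reused in the same proof to record the companion off-diagonal estimate \eqref{bdsupmod} (which is needed later, e.g.\ in Lemma~\ref{lem:commsmooth}); your inequality-based splitting would also yield \eqref{bdsupmod}, but the paper's exact identity makes the parallel with \eqref{bdsup} slightly more transparent.
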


\begin{proof}
We have
\[
|f|_\Omega(x) - f|_\Omega(y)|^2 = 
\big(f(x)-f(y)\big)\big(f|_\Omega(x)-f|_\Omega(y) \big)+ f(x)f(y) \big( \1_\Omega(x) \1_{\Omega^c}(y)+ \1_{\Omega^c}(x) \1_{\Omega}(y) \big).
\]
Then according to Lemma~\ref{lem:key_estimate} \eqref{bdsub},
\[
\int\frac{|f|_\Omega(x) - f|_\Omega(y)|^2}{|x-y|^n} \varkappa(x,y) \dd x \dd y 
= \int \frac{\big(f(x)-f(y)\big)\big(f|_\Omega(x)-f|_\Omega(y) \big)}{|x-y|^n} \varkappa(x,y) \dd x \dd y +\O(1) .
\]
Since $f$ is smooth, the first term is controlled by 
$\displaystyle\int \frac{\varkappa(x,y)}{|x-y|^{n-1}} \dd x \dd y<\infty$.

Similarly, by  Cauchy-Schwarz's inequality,
\[\begin{aligned}
\int  \chi(\tfrac{x-y}{\hbar}) &(f|_\Omega(x)-f|_\Omega(y))^2  \dd x \dd y \\
&\le \sqrt{\int  \chi(\tfrac{x-y}{\hbar}) (f(x)-f(y))^2  \dd x \dd y }
\sqrt{\int  \chi(\tfrac{x-y}{\hbar}) (f|_\Omega(x)-f|_\Omega(y))^2  \dd x \dd y}
+\O(\hbar^{n+1}) \\
& =  \sqrt{\O(\hbar^{n+2})\int  \chi(\tfrac{x-y}{\hbar}) (f|_\Omega(x)-f|_\Omega(y))^2  \dd x \dd y}
+\O(\hbar^{n+1}) 
\end{aligned}\]
where we used that $ \int  \chi(\tfrac{x-y}{\hbar}) (f(x)-f(y))^2  \dd x \dd = \O(\hbar^{n+2})$ for $f$ Lipchitz-continuous.
This inequality implies \eqref{bdsubmod}, otherwise upon dividing by $\sqrt{\int  \chi(\tfrac{x-y}{\hbar}) (f|_\Omega(x)-f|_\Omega(y))^2  \dd x \dd y}$ we obtain a contradiction. 

Let us also record that by \eqref{bdsup} and an analogous argument, given a cutoff $\overline\chi$, we have
\[
\int  \overline\chi(\tfrac{x-y}{\hbar})  \frac{|f|_\Omega(x) - f|_\Omega(y)|^2}{|x-y|^{n+2}}  \dd x \dd y \le
\sqrt{\int \frac{|f(x) - f(y)|^2}{|x-y|^{n+2}}  \dd x \dd y}
\sqrt{\int  \overline\chi(\tfrac{x-y}{\hbar})  \frac{|f|_\Omega(x) - f|_\Omega(y)|^2}{|x-y|^{n+2}}  \dd x \dd y} +\O(\hbar^{-1}) .
\] 
Since $\displaystyle\int \frac{|f(x) - f(y)|^2}{|x-y|^{n+2}}  \dd x \dd y <\infty$, we conclude that
\begin{equation} \label{bdsupmod}
\int  \overline\chi(\tfrac{x-y}{\hbar})  \frac{|f|_\Omega(x) - f|_\Omega(y)|^2}{|x-y|^{n+2}}  \dd x \dd y =\O(\hbar^{-1}) . \qedhere
\end{equation}
\end{proof}

\begin{lem}\label{lem:cov}
Let $\Omega_1,\Omega_2 \Subset \R^n$ be two relatively compact open
sets with smooth boundary such that the $(n-1)$-Hausdorff measure of the
intersection $\partial \Omega_1\cap \partial \Omega_2$ is zero. Then as $\hbar\to 0$,
\begin{equation} \label{covlog}
\int \frac{|(\1_{\Omega_1}(x)-\1_{\Omega_1}(y))(\1_{\Omega_2}(x)-\1_{\Omega_2}(y))|}{(\hbar +|x-y|)^{n+1}}\dd
x \dd y=o(\log(\hbar^{-1})).
\end{equation}
If in fact there exists $\beta<n-1$ such that $H^{\beta}(\partial
\Omega_1\cap \partial \Omega_2)<+\infty$, then
\begin{equation} \label{covlogHaus}
\int \frac{|(\1_{\Omega_1}(x)-\1_{\Omega_1}(y))(\1_{\Omega_2}(x)-\1_{\Omega_2}(y))|}{(\hbar +|x-y|)^{n+1}}\dd
x \dd y=\O(1).
\end{equation}
\end{lem}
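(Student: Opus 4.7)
The plan is to exploit that the integrand vanishes unless $x$ and $y$ lie on opposite sides of \emph{both} $\partial \Omega_1$ and $\partial \Omega_2$. Writing $r = |x-y|$ and $\mathcal{N}_i(r) := \{z\in \R^n : \dist(z, \partial \Omega_i) < r\}$, this forces both $x$ and $y$ to belong to $\mathcal{N}_1(r)\cap \mathcal{N}_2(r)$. I will perform a dyadic decomposition at the scale of $|x-y|$ and bound each piece using the auxiliary function
\[
\phi(r) := \mathcal{H}^{n-1}\big(\{\hat x \in \partial \Omega_1 : \dist(\hat x, \partial \Omega_2) < r\}\big),
\]
which measures how much of $\partial \Omega_1$ lies within Euclidean distance $r$ of $\partial \Omega_2$. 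Under the hypothesis of \eqref{covlog}, monotone convergence gives $\phi(r) \to 0$ as $r\to 0$, since the sets $\{\hat x \in \partial \Omega_1 : \dist(\hat x, \partial \Omega_2) < r\}$ decrease to $\partial \Omega_1 \cap \partial \Omega_2$, which has zero $(n-1)$-Hausdorff measure.

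Using normal coordinates along the smooth compact hypersurface $\partial \Omega_1$, the triangle inequality yields $|\mathcal{N}_1(r) \cap \mathcal{N}_2(r)| \leq C r \phi(2r)$; combined with the trivial bound $|B(x,2r)|=O(r^n)$, the support set $\{(x,y) : r \leq |x-y| < 2r,\ \text{integrand}\neq 0\}$ has Lebesgue measure $\le C r^{n+1} \phi(Cr)$. Splitting the integral into $|x-y|\le \hbar$ and $|x-y|\in [2^k \hbar, 2^{k+1}\hbar]$ for $k=0,\ldots, K$ with $K\asymp \log_2\hbar^{-1}$, and bounding the denominator from below by $(2^k \hbar)^{n+1}$, each dyadic piece contributes $O(\phi(C\cdot 2^k \hbar))$. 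Denoting by $I(\hbar)$ the left-hand side of \eqref{covlog}, this yields
\[
I(\hbar) \leq C\sum_{k=-1}^{K}\phi(C\cdot 2^k \hbar) + O(1).
\]
To conclude \eqref{covlog}, I use a standard $\varepsilon/\delta$ argument: given $\varepsilon>0$, choose $\delta$ so that $\phi(r)<\varepsilon$ for $r<\delta$; the indices $k$ with $C\cdot 2^k \hbar < \delta$ contribute at most $\varepsilon K$, while the remaining $O(\log \delta^{-1})$ indices contribute $O(1)$. Dividing by $\log \hbar^{-1}$ and letting first $\hbar\to 0$ then $\varepsilon\to 0$ gives $I(\hbar) = o(\log \hbar^{-1})$.

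For \eqref{covlogHaus}, the plan is to upgrade $\phi(r)\to 0$ to a power-law bound $\phi(r) = O(r^\alpha)$ for some $\alpha = \alpha(n,\beta) > 0$; the dyadic sum is then geometrically convergent, yielding $I(\hbar) = O(1)$. The main obstacle is establishing this decay rate, because $\phi(r)$ measures the $(n-1)$-volume in $\partial \Omega_1$ of points within Euclidean distance $r$ of \emph{all of} $\partial \Omega_2$, not merely of the intersection locus $K := \partial \Omega_1 \cap \partial \Omega_2$, so high-order tangential contact between the two smooth boundaries must be ruled out. The hypothesis $\mathcal{H}^\beta(K)<\infty$ with $\beta<n-1$ prevents $\partial \Omega_1$ and $\partial \Omega_2$ from sharing an $(n-1)$-dimensional piece, and combined with the smoothness of both hypersurfaces, a local normal-form analysis near $K$ together with a Minkowski-type covering of $K$ by $O(r^{-\beta})$ balls of radius $r$ should provide the required $\alpha>0$.
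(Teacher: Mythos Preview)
Your argument for \eqref{covlog} is correct and follows the same dyadic strategy as the paper. You are in fact more careful on one point: the paper asserts that the integrand, restricted to $|x-y|<\eta$, is supported in $\{x:\dist(x,\partial\Omega_1\cap\partial\Omega_2)<\eta\}$, which is not literally true---being within $\eta$ of each boundary does not force being within $\eta$ of their intersection. Your formulation via $\mathcal{N}_1(r)\cap\mathcal{N}_2(r)$ and the auxiliary function $\phi$ is the right way to phrase the volume bound, and the remainder of the dyadic argument is identical to the paper's.

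For \eqref{covlogHaus}, however, there is a genuine gap: you correctly flag high-order tangency as the obstacle but then assert that smoothness plus a normal-form and covering argument ``should provide'' a power-law bound $\phi(r)=O(r^\alpha)$. This is false under the stated hypothesis. In $\R^2$, arrange $\partial\Omega_1$ and $\partial\Omega_2$ to coincide, in a local chart, with the graphs $b=0$ and $b=e^{-1/a^2}$; then $K=\partial\Omega_1\cap\partial\Omega_2$ is a single point (so $\mathcal{H}^0(K)<\infty$ and $\beta=0<n-1$), yet $\phi(r)\asymp(\log r^{-1})^{-1/2}$ has no polynomial decay, and the dyadic sum $\sum_{k}\phi(C\cdot 2^k\hbar)$ is of order $(\log\hbar^{-1})^{1/2}$. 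A matching lower bound on the integral itself (choose $x$ just below the first graph and $y$ just above the second, with first coordinate $|a|\lesssim(\log\eta^{-1})^{-1/2}$) shows this growth is real, not an artefact of the upper-bound method. So neither your covering sketch nor the paper's support claim, which suffers from the same flaw, yields \eqref{covlogHaus} from $\mathcal{H}^\beta(K)<\infty$ alone; an extra hypothesis such as transversality or finite-order contact of the two boundaries appears to be needed.
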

\begin{proof}
We claim that as $\eta\to0$, the integral 
\begin{equation} \label{cov1}
\int_{|x-y|\leq\eta}(\1_{\Omega_1}(x)-\1_{\Omega_1}(y))(\1_{\Omega_2}(x)-\1_{\Omega_2}(y))\dd x \dd y = o(\eta^{1+n}) .
\end{equation}
Indeed, since the integrand is supported on $\{x \in \R^n :
,\dist(x,\partial\Omega_1\cap \partial\Omega_2)<\eta\}$ and
$H^{n-1}(\partial \Omega_1\cap \partial \Omega_2)=0$, by a volume
estimate we obtain
\[
\big|\{(x,y) \in \R^{2n} : \dist(x,\partial\Omega_1\cap \partial
\Omega_2), \dist(x,y)<\eta\}\big| = o(\eta^{1+n}) .
\]
Moreover, for every $\hbar_0>0$, 
\[
\int_{|x-y|>\hbar_0}\frac{(\1_{\Omega_1}(x)-\1_{\Omega_1}(y))(\1_{\Omega_2}(x)-\1_{\Omega_2}(y))}{|x-y|^{n+1}}\dd
x \dd y \le C/\hbar_0
\]

The result follows directly form these estimates by a dyadic decomposition; one has for $\hbar \le \hbar_0$, 
\[
\eqref{covlog} \le \sum_{k \le \log(1/\hbar)} 
\int_{\hbar2^{k-1}\leq |x-y|\leq \hbar2^{k}}\frac{|(\1_{\Omega_1}(x)-\1_{\Omega_1}(y))(\1_{\Omega_2}(x)-\1_{\Omega_2}(y))|}{|x-y|^{n+1}}\dd x \dd y +\O(\hbar_0^{-1})
\]
Using \eqref{cov1}, with $\eta = \hbar 2^{k}$, these integrals are all
$o(1)$ uniformly for $k \le \log(\hbar^{-1})$, $\hbar\le \hbar_0$, as $\hbar_0\to 0$. 
This shows that 
\(
\eqref{covlog} = o(\log(\hbar^{-1}))
\)
by choosing $\hbar_0$ sufficiently small.

Under any positive improvement on the Hausdorff dimension of $\partial
\Omega_1\cap \partial \Omega_2$, one has instead, for some $\epsilon>0$,
\[
\big|\{(x,y) \in \R^{2n} : \dist(x,\partial\Omega_1\cap \partial
\Omega_2), \dist(x,y)<\eta\}\big| = o(\eta^{1+n+\epsilon}),
\]
and now the estimates of the dyadic decomposition form a convergent series. This
concludes the proof.
\end{proof}

As we already alluded to, the following class of functions plays a key
role when bounding the integral~\eqref{comm1}. 

\begin{defn}[Class $\mathcal{F}$] \label{class:F}
Let $m,n\in \N_0$, define the class 
\[\begin{aligned}
\mathcal{F}(\R^m,\R^n) = \big\{ 
f : \R^m \times \R^n \to \R \text{ continuous with compact support} ;
x\mapsto f(x,\xi) \text{ is smooth for }\xi\in\R^n , \\
\text{and for every }j,k\in\N_0, \, 
\big\|\partial_\xi^k \partial_x^j f(x,\xi)\big\| \le C_{j,k} |\xi|^{1-k} \text{ for all } (x,\xi) \in \R^m \times \R^n \big\} . 
\end{aligned}\]
\end{defn}

We record the following two lemmas without proofs as they are direct consequences of this definition.

\begin{lem}
Functions in $\mathcal{F}$ are Lipschitz continuous with respect to $\xi$ and there are smooth on $\R^{n}\setminus\{0\}$.  
If $g \in C^\infty_c(\R^m\times \R^n\times\R)$ with $g(x,\xi,0) =0$,  then 
$f(x,\xi) = g(x,\xi,|\xi|)$ is in the class~$\mathcal{F}$. 
We also emphasize that the class $\mathcal{F}$ is stable under smooth change of variables on $\R^m\times \R^n$ which coincide with the identity outside of a compact set.
\end{lem}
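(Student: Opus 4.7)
The plan is to verify each of the three assertions in turn, each being a direct calculus check from the defining inequalities of $\mathcal{F}$.

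\textbf{Lipschitz continuity in $\xi$ and smoothness away from the origin.} The Lipschitz bound in $\xi$ is precisely the case $j=0$, $k=1$ of the defining inequality, namely $\|\partial_\xi f(x,\xi)\|\leq C_{0,1}$ uniformly in $(x,\xi)$. Smoothness on $\R^m\times(\R^n\setminus\{0\})$ is equally immediate: the defining inequality presupposes the existence and local boundedness of every mixed partial $\partial_\xi^k\partial_x^j f$ on that open set, and combined with the assumed smoothness of $x\mapsto f(x,\xi)$ this yields joint $C^\infty$ regularity there.

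\textbf{The composition $f(x,\xi)=g(x,\xi,|\xi|)$.} The plan is to apply Hadamard's lemma to $g$ in the third variable: since $g(x,\xi,0)=0$, one can write $g(x,\xi,r)=r\,h(x,\xi,r)$ with $h\in C^\infty_c(\R^m\times\R^n\times\R)$, so that $f(x,\xi)=|\xi|\,h(x,\xi,|\xi|)$. Away from the origin, $\xi\mapsto|\xi|$ is smooth and a direct computation gives $\partial_\xi^\alpha|\xi|=\O(|\xi|^{1-|\alpha|})$. Applying Leibniz to split the $k$ $\xi$-derivatives between the factor $|\xi|$ and the factor $h(x,\xi,|\xi|)$, and using the Faà di Bruno formula on the latter to handle the chain through the third argument, every resulting term is at worst $\O(|\xi|^{1-k})$; the extremal case arises when all $k$ derivatives land on $|\xi|$, leaving a single factor of $|\xi|$, whereas other terms carry additional positive powers. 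Uniform constants follow from compact support of $h$, and the $x$-derivatives simply pass through without affecting the $|\xi|$-scaling.

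\textbf{Stability under change of variables.} Let $\Phi=(\Phi_1,\Phi_2)$ be a smooth diffeomorphism of $\R^m\times\R^n$ coinciding with the identity outside a compact set $K$. Outside $K$ the composition $f\circ\Phi$ equals $f$ and there is nothing to check; inside $K$ both $|x|$ and $|\xi|$ are bounded, so for any fixed $\xi$ bounded away from zero the bounds are trivial with $|\xi|^{1-k}$ controlled by a constant. The delicate issue is behavior near the zero section $\{\xi=0\}$, since the $k=0$ bound forces $f(x,0)=0$: for $f\circ\Phi$ to inherit this, one needs $\Phi$ to preserve $\{\xi=0\}$, i.e.\ $\Phi_2(x,0)=0$. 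Under this (implicit) assumption, which is the relevant case in the paper's later use, the fiber Jacobian $\partial_\xi\Phi_2(x,0)$ is invertible by the diffeomorphism property, and compactness of $K$ yields the two-sided comparison $c|\xi|\leq|\Phi_2(x,\xi)|\leq C|\xi|$ uniformly on $K$. The bounds of $\mathcal{F}$ then propagate through $f\circ\Phi$ by the chain rule and Faà di Bruno, exactly as in the second assertion.

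The only genuine obstacle is that mild gap in the third assertion: the class $\mathcal{F}$ is not stable under \emph{arbitrary} compactly supported diffeomorphisms of $\R^m\times\R^n$, since a generic $\Phi$ moves the zero section $\{\xi=0\}$ and destroys the vanishing built into the $k=0$ bound. In the intended applications of Section~\ref{sec:comm} the change of variables is fiberwise over the $x$-variable (or at least tangent to the zero section), so this is automatic, but it deserves a word of clarification in the statement.
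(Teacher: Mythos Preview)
The paper itself provides no proof of this lemma, stating only that it is ``a direct consequence of [the] definition.'' Your argument fills that gap correctly for the first two assertions: the Lipschitz bound is exactly the $j=0,k=1$ case of the defining inequality, and your Hadamard--Leibniz--Fa\`a di Bruno treatment of $g(x,\xi,|\xi|)$ is the standard and correct route.

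Your critique of the third assertion is well taken. As stated, the lemma is not quite true: an arbitrary compactly supported diffeomorphism of $\R^m\times\R^n$ need not preserve the zero section $\{\xi=0\}$, and the $k=0$ inequality $|f(x,\xi)|\le C|\xi|$ forces $f(x,0)=0$, which would be destroyed. Your proposed fix---requiring $\Phi_2(x,0)=0$, whence $c|\xi|\le|\Phi_2(x,\xi)|\le C|\xi|$ on compacta and the bounds propagate by chain rule---is the right one. You are also correct that this is harmless in practice: the only place the paper invokes this stability is Proposition~\ref{prop:smooth-diffeo}, where $\Phi(x,y,\xi)=(\varphi(x),\varphi(y),D\varphi(\tfrac{x+y}{2})^{-*}\xi)$ is fiberwise linear in $\xi$ and trivially preserves $\{\xi=0\}$. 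So the lemma's statement is slightly sloppy, but your proof under the corrected hypothesis is sound and matches what the paper actually needs.
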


\begin{lem} \label{lem:trunc}
Let $a \in \mathcal{F}(\R^m,\R^n)$. 
For any cutoff $\chi\in C^\infty_c(\R^n)$, as $\hbar\to 0$,
\[
\int  \chi(\tfrac{\xi}{\hbar}) |a(x,\xi)| \dd \xi \dd x  =\O(\hbar^{n+1}) . 
\]   
\end{lem}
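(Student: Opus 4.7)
The plan is to prove this estimate by a direct calculation, using only the most basic properties encoded in the definition of the class $\mathcal{F}$. The key observation is that by taking $j=k=0$ in the defining inequality, any $a \in \mathcal{F}(\R^m,\R^n)$ satisfies the pointwise bound $|a(x,\xi)| \le C|\xi|$; moreover, $a$ has compact support in $(x,\xi) \in \R^m \times \R^n$, so the projection $K \subset \R^m$ of $\supp(a)$ onto the first factor has finite Lebesgue measure, independent of $\hbar$. The smoothness of $a$ away from $\{\xi = 0\}$ and the higher-derivative bounds are not needed here; they will presumably be used in subsequent estimates.

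With these observations, I would simply write
\[
\int \chi(\tfrac{\xi}{\hbar}) |a(x,\xi)| \dd \xi \dd x
\le C|K| \int |\chi(\tfrac{\xi}{\hbar})|\, |\xi| \dd \xi,
\]
and then perform the change of variables $\xi = \hbar \eta$ to obtain
\[
\int |\chi(\tfrac{\xi}{\hbar})|\, |\xi| \dd \xi
= \hbar^{n+1} \int |\chi(\eta)|\, |\eta| \dd \eta = \O(\hbar^{n+1}),
\]
since $\chi \in C^\infty_c(\R^n)$ makes the last integral a finite constant. Combining these two displays yields the claim. There is no real obstacle: the lemma is essentially a restatement of the Lipschitz-at-the-origin property of elements of $\mathcal{F}$ combined with a trivial rescaling.
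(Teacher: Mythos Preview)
Your proof is correct and is exactly the ``direct consequence of the definition'' that the paper has in mind; the paper states this lemma without proof, and your argument---using the $j=k=0$ bound $|a(x,\xi)|\le C|\xi|$, the compact $x$-support, and the rescaling $\xi=\hbar\eta$---is precisely the intended one.
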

The last lemma is complemented by the following claim on the zone
$\{|\xi|\geq \hbar\}$.

\begin{lem} \label{lem:1}
Let $n,k\in \N$ with $k\leq n+2$. Let $b: \R^{3n} \to \R$ have compact
support, smooth with respect to $\xi$ on $\R^{n}\setminus\{0\}$ and
such that for every $j\in\N_0$, and $\xi\in \R^n$,
\begin{equation} \label{conda1}
\int \frac{\| \partial_\xi^j b(x,y,\xi)\|}{|x-y|^{n+2-k}} \dd x \dd y \le  |\xi|^{1-j-k}   , \qquad \text{for }\xi\in\R^n.
\end{equation}
Let $\chi\in C^{\infty}(\R^{3n})$ be such that, for every $x,y,\xi\in \R^n$,
\[
  \1_{|\xi|\leq 1}\leq \chi(x,y,\xi) \leq \1_{|\xi|\leq 2}.
\]
Then as $\hbar\to0$,
\[
\int e^{i\frac{(x-y)\cdot \xi}{\hbar}}(1-\chi(x,y,\tfrac{\xi}{\hbar}))b(x,y,\xi) \dd \xi \dd x \dd y  =\O(\hbar^{n+1-k}) .
\]
\end{lem}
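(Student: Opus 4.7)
The strategy is to exploit the oscillation of the phase $e^{i(x-y)\cdot\xi/\hbar}$ by integrating by parts in $\xi$ exactly $m := n+2-k$ times; the assumption $k\leq n+2$ ensures $m\geq 0$. The amplitude $A(x,y,\xi):=\bigl(1-\chi(x,y,\xi/\hbar)\bigr)\,b(x,y,\xi)$ vanishes on the neighborhood $\{|\xi|\leq\hbar\}$ of the potential singularity of $b$ at $\xi=0$, so $A$ is smooth in $\xi$ on all of $\R^n$ and has compact support; hence no boundary terms arise from the integration by parts.

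Introduce, for $x\neq y$, the first-order operator
\[
L := \frac{\hbar}{i\,|x-y|^2}\,(x-y)\cdot\nabla_\xi,
\]
which satisfies $L\,e^{i(x-y)\cdot\xi/\hbar}=e^{i(x-y)\cdot\xi/\hbar}$ and whose $\xi$-transpose equals $-L$ (its coefficients are independent of $\xi$). Iterating $m$ times,
\[
\int e^{i(x-y)\cdot\xi/\hbar}\,A(x,y,\xi)\,\dd\xi=(-1)^m\int e^{i(x-y)\cdot\xi/\hbar}\,L^m A(x,y,\xi)\,\dd\xi,
\]
and a direct computation yields $|L^m A|\leq C\,\hbar^m\,|x-y|^{-m}\,\max_{|\beta|=m}|\partial_\xi^\beta A|$. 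By Fubini and the triangle inequality, the task reduces to controlling, for each $|\beta|=m$, integrals of the shape
\[
\hbar^m\int\frac{|\partial_\xi^\beta A(x,y,\xi)|}{|x-y|^{n+2-k}}\,\dd\xi\,\dd x\,\dd y,
\]
since $m=n+2-k$. The Leibniz rule splits $\partial_\xi^\beta A$ into a \emph{main term} $\bigl(1-\chi(x,y,\xi/\hbar)\bigr)\,\partial_\xi^\beta b$ plus \emph{remainder terms} of the form $\hbar^{-|\alpha|}\,(\partial_\xi^\alpha\chi)(x,y,\xi/\hbar)\,\partial_\xi^{\beta-\alpha}b$ with $1\leq|\alpha|\leq m$.

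For the main term, hypothesis \eqref{conda1} with $j=m$ gives $\int|\partial_\xi^m b|/|x-y|^{n+2-k}\,\dd x\,\dd y\leq|\xi|^{-n-1}$, and integrating over $\{|\xi|\geq\hbar\}$ (where $1-\chi(\cdot/\hbar)$ is supported) using the compact support of $b$ produces $\O\bigl(\hbar^m\cdot\hbar^{-1}\bigr)=\O(\hbar^{n+1-k})$, the claimed bound. For each remainder term, the factor $\partial_\xi^\alpha\chi$ localizes $\xi$ to the annulus $\hbar\leq|\xi|\leq 2\hbar$; combining the hypothesis with $j=m-|\alpha|$, the pointwise size $|\xi|^{1+|\alpha|-m-k}\sim\hbar^{1+|\alpha|-m-k}$, and the volume $\O(\hbar^n)$ of the annulus yields a contribution of order $\hbar^{m-|\alpha|}\cdot\hbar^{n+2+|\alpha|-m-k}=\hbar^{n+2-k}$, strictly better than the main term.

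The main point to watch is precisely this last step: the $\hbar^{-|\alpha|}$ generated when derivatives fall on the dyadic cutoff $\chi(\cdot/\hbar)$ is a priori dangerous, but is exactly compensated by the localization to $\{|\xi|\sim\hbar\}$ of volume $\O(\hbar^n)$. Once this bookkeeping is done, no further input is needed.
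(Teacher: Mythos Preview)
Your argument is correct and follows essentially the same route as the paper: repeated integration by parts in $\xi$ using the vector field $\tfrac{\hbar}{i|x-y|^2}(x-y)\cdot\nabla_\xi$, with the cutoff terms handled by the localisation $|\xi|\sim\hbar$. The paper organises this as a decreasing induction on $k$ (one integration by parts per step), while you perform all $m=n+2-k$ at once via Leibniz; the content is the same. One small arithmetic slip: your remainder terms contribute $\hbar^{m-|\alpha|}\cdot\hbar^{n+1+|\alpha|-m-k}=\hbar^{n+1-k}$, not $\hbar^{n+2-k}$, so they are of the \emph{same} order as the main term rather than strictly better --- but this does not affect the claimed bound.
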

\begin{proof}
  We will prove this claim by decreasing order of the value of $k$: at
  $k=n+2$, the claim follows from the simple fact that, for any fixed
  $R>0$, as $\hbar\to 0$, one has
  \[
    \int_{\hbar\leq |\xi|\leq R}|\xi|^{-n-1}=\O(\hbar^{-1}).
  \]
  Suppose now that the claim holds for some value of $k$ and let us
  prove it for $k-1$; to this end we integrate by parts
  \begin{multline*}
    \int e^{i\frac{(x-y)\cdot
        \xi}{\hbar}}(1-\chi(x,y,\tfrac{\xi}{\hbar}))b(x,y,\xi) \dd \xi
    \dd x \dd y \\
    =i\hbar\int e^{i\frac{(x-y)\cdot
        \xi}{\hbar}}(1-\chi(x,y,\tfrac{\xi}{\hbar}))\frac{(x-y)\cdot \partial_{\xi}b(x,y,\xi)}{\|x-y\|^2} \dd \xi
    \dd x \dd y\\
    -i\int e^{i\frac{(x-y)\cdot
        \xi}{\hbar}}(\partial_{\xi}\chi)(x,y,\tfrac{\xi}{\hbar})\cdot \frac{(x-y)b(x,y,\xi)}{|x-y|^2} \dd \xi
    \dd x \dd y
  \end{multline*}
  Note that the first term in the equation is exactly of the form
  given before, where now $B=\frac{(x-y)\cdot
    \partial_{\xi}b(x,y,\xi)}{\|x-y\|^2}$ satisfies \eqref{conda1};
  thus
  \[
    i\hbar\int e^{i\frac{(x-y)\cdot
        \xi}{\hbar}}(1-\chi(x,y,\tfrac{\xi}{\hbar}))\frac{(x-y)\cdot \partial_{\xi}b(x,y,\xi)}{|x-y|^2} \dd \xi
    \dd x \dd y=\O(\hbar^{n+2-k}).
  \]
  It remains to bound
  \[
    i\int e^{i\frac{(x-y)\cdot
        \xi}{\hbar}}(\partial_{\xi}\chi)(x,y,\tfrac{\xi}{\hbar})\cdot \frac{(x-y)b(x,y,\xi)}{|x-y|^2} \dd \xi
    \dd x \dd y =\O\left(\int_{\hbar\leq |\xi|\leq
        2\hbar}|\xi|^{2-k}\right)=\O(\hbar^{n+1-k}).
    \]
  This concludes the proof.
\end{proof}

Finally, we also need an estimate for the commutator between a pseudodifferential operators with smooth, compactly supported symbols and a function with jump discontinuities.
This is to be compared with Proposition~\ref{prop:J1commsmooth}, albeit simpler.

\begin{lem} \label{lem:commsmooth}
Let $f\in C^{\infty}(\R^n)$, $a\in S^1(\R^{3n})$
and 
\(\displaystyle
K: (x,y)\mapsto \frac{1}{(2\pi\hbar)^n}\int
e^{i\tfrac{\xi\cdot
(x-y)}{\hbar}}a(x,y,\xi)\dd \xi .
\)
Then, for any bounded open set $\Omega\Subset \R^n$ with smooth boundary, 
\[
\big\| [f|_\Omega, K] \big\|^2_{\J^2} =O(\hbar^{1-n}).
\]
\end{lem}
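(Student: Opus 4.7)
The starting point is the standard identity
\[
\big\| [f|_\Omega, K] \big\|_{\J^2}^2 = \int_{\R^{2n}} \big|f|_\Omega(x)-f|_\Omega(y)\big|^2 |K(x,y)|^2 \dd x \dd y,
\]
so the task reduces to combining off-diagonal decay of $K$ with the discontinuity estimates on $f|_\Omega$ already proved in Proposition~\ref{prop:key}.

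Since $a\in S^1(\R^{3n})$ is smooth and has a fixed compact support in $(x,y,\xi)$, integrating by parts $N$ times in $\xi$ in the oscillatory integral defining $K(x,y)$ yields the standard pointwise bound
\[
|K(x,y)| \le \frac{C_N\,\hbar^{-n}}{(1+|x-y|/\hbar)^N}\,\chi(x,y) \qquad \text{for every } N\in\N_0,
\]
where $\chi\in C^\infty_c(\R^{2n})$ depends only on the support of $a$. The plan is then to split the $(x,y)$-integration into the near-diagonal region $\{|x-y|\le \hbar\}$ and its complement, and apply the two estimates of Proposition~\ref{prop:key} respectively.

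In the near-diagonal region we use the crude bound $|K(x,y)|^2 \le C\,\hbar^{-2n}\chi(x,y)$ combined with \eqref{bdsubmod}: picking a cutoff $\chi_0\in C^\infty_c(\R^n)$ equal to $1$ on $\{|u|\le 1\}$,
\[
\int \chi_0\!\big(\tfrac{x-y}{\hbar}\big) \big|f|_\Omega(x)-f|_\Omega(y)\big|^2 |K(x,y)|^2 \dd x \dd y = O\!\big(\hbar^{-2n}\cdot \hbar^{n+1}\big)=O(\hbar^{1-n}).
\]
For the far-diagonal region, using the decay of $K$ with $N$ chosen so that $2N=n+2$ (a non-integer $N$ is harmless since one may take an integer $N\geq n+2$ and absorb extra powers of $\hbar$), one obtains on $\{|x-y|>\hbar\}$
\[
|K(x,y)|^2 \le C\,\hbar^{2-n}\,|x-y|^{-n-2}\chi(x,y),
\]
so that \eqref{bdsupmod} applied to $\overline\chi=1-\chi_0$ yields
\[
\int (1-\chi_0)\!\big(\tfrac{x-y}{\hbar}\big) \big|f|_\Omega(x)-f|_\Omega(y)\big|^2 |K(x,y)|^2 \dd x \dd y = O\!\big(\hbar^{2-n}\cdot \hbar^{-1}\big)=O(\hbar^{1-n}).
\]

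Summing the two contributions gives the claimed estimate $\|[f|_\Omega,K]\|_{\J^2}^2=O(\hbar^{1-n})$. There is no real obstacle: the off-diagonal decay of the kernel of a pseudodifferential operator with $S^1$ symbol is standard, and all the work concerning the jump of $\1_\Omega$ across $\partial\Omega$ has already been packaged into Proposition~\ref{prop:key}. The only point requiring a tiny bit of care is the choice of cutoffs $\chi_0,\overline\chi=1-\chi_0$ so that $\overline\chi$ vanishes near $0$ and $\chi_0$ is compactly supported, which is what makes both \eqref{bdsubmod} and \eqref{bdsupmod} directly applicable.
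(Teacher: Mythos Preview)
Your proposal is correct and follows essentially the same approach as the paper: both arguments rely on off-diagonal decay of $K$ obtained by integration by parts in $\xi$, together with the two estimates \eqref{bdsubmod} and \eqref{bdsupmod} from Proposition~\ref{prop:key}. The paper organizes things slightly differently---it expands $|K(x,y)|^2$ as a double $\xi$-integral, changes variables to $\xi=\xi_1-\xi_2$, and then integrates by parts $n+2$ times---whereas you bound $|K(x,y)|$ pointwise first; your version is a touch more direct. (A small remark: your parenthetical about non-integer $N$ is unnecessary, since on $\{|x-y|>\hbar\}$ one has $\hbar/|x-y|<1$, so any integer $N\ge \lceil (n+2)/2\rceil$ already yields $|K|^2\le C\hbar^{2-n}|x-y|^{-n-2}$.)
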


\begin{proof}
By definition, the Hilbert-Schmidt norm, 
\begin{align*}
\| [f|_\Omega, K] \big\|^2_{\J^2} =\frac{1}{(2\pi\hbar)^{2n}}\int 
e^{i\frac{(x-y)\cdot(\xi_1-\xi_2)}{\hbar}}  (f|_\Omega(x)-f|_\Omega(y))^2a\left(x,y,\xi_1\right)\overline{a}\left(y,x,\xi_2\right) \dd \xi_1 \dd \xi_2  \dd x \dd y .
\end{align*}
We make a change of variable $\xi = \xi_1-\xi_2$ and $\zeta = \frac{ \xi_1+\xi_2}{2}$ so that 
\[
\| [f|_\Omega, K] \big\|^2_{\J^2}
= \frac{1}{(2\pi\hbar)^{2n}}\int 
e^{i\frac{(x-y)\cdot \xi}{\hbar}}  (f|_\Omega(x)-f|_\Omega(y))^2 b\left(x,y,\xi,\zeta\right) \dd \xi \dd \zeta  \dd x \dd y 
\]
where $b\in S^1(\R^{4n})$.
Using the bound \eqref{bdsubmod}, up to an error  $\O(\hbar^{1-n})$, we can introduce a  cutoff excluding the diagonal in the previous integral. Then, we perform integrations by parts with respect to $\xi$;  we obtain for any $k\in\N_0$
\[
\| [f|_\Omega, K] \big\|^2_{\J^2}
= \frac{(i \hbar)^k}{(2\pi\hbar)^{2n}}\int  \overline\chi(\tfrac{x-y}{\hbar})
e^{i\frac{(x-y)\cdot\xi}{\hbar}}  (f|_\Omega(x)-f|_\Omega(y))^2 
\frac{(x-y)^{\otimes k}\cdot \partial_\xi^k b\left(x,y,\xi,\zeta\right) }{|x-y|^{2k}} \dd \xi \dd \zeta  \dd x \dd y +\O(\hbar^{1-n}) .
\]
We can bound for $(x,y,\xi,\zeta) \in\R^{4n}$ with $k = n+2$,
\[
\left|\frac{(x-y)^{\otimes k}\cdot \partial_\xi^k b\left(x,y,\xi,\zeta,\hbar\right) }{|x-y|^{2k}}\right| \le  \frac{\varkappa(\xi,\zeta)}{|x-y|^{n+2}}
\]
where $\varkappa \in C^\infty_c(\R^{2n})$. Thus, by \eqref{bdsupmod}, we conclude that $\big\| [f|_\Omega, K] \big\|^2_{\J^2} =O(\hbar^{1-n})$.
\end{proof}

\subsection{Reduction to an oscillatory integral} \label{sec:red}

Starting from the expression \eqref{comm1}, using the stationary phase
method and certain well-chosen changes of variables, we can prove the following result. 

\begin{prop} \label{prop:red}
Under the assumptions from Section~\ref{sec:not}, there is a symbol $F\in\mathcal{F}(\R^{2n},\R^n)$ (independent of $\Omega$) such that as $\hbar\to0$,
\[
\| [\widetilde{\Pi}, f|_\Omega]  \|^2_{\J^2}= \frac{-1}{(2\pi\hbar)^{2n}}\int
e^{i\tfrac{(x-y)\cdot\xi}{\hbar}} F(x,y,\xi)(f|_\Omega(x)-f|_\Omega(y))^2\dd \xi \dd x\dd y +  \O(\hbar^{1-n}) . 
\] 
Moreover, the principal part of $F$ satisfies uniformly as $\xi \to 0$, 
\begin{equation} \label{Fpp}
F(x,x,\xi)|_{\hbar=0} =\cst_{n-1}  |\xi| |V(x)|^{\frac{n-1}2} + \O(|\xi|^2)
\end{equation}
with the constant $\cst_{n-1}$ as in Theorem~\ref{thm:J2} for $n\in\N$. 
\end{prop}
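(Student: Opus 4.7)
The plan is to reduce the integral \eqref{comm1} to the claimed form through a sequence of stationary-phase arguments, preserving the variables $(x,y)$ and one dual variable (the final $\xi$) because the amplitude $(f|_\Omega(x)-f|_\Omega(y))^2$ is not smooth in $x$ or $y$. After the change of variables \eqref{change_coord} and polar coordinates $\xi=r\omega$, the expansion \eqref{phasephi} gives
\begin{equation*}
\Psi_0 = (x-y)\cdot\zeta + s\big(V(x)+r^2-\lambda\big) + t\sigma + \text{h.o.t. in }(s,t,\zeta),
\end{equation*}
so that $\zeta=\xi_1-\xi_2$ is the natural candidate for the final Fourier variable paired with $x-y$.

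The first reduction applies stationary phase in $(s,r)$ keeping $(x,y,\zeta,\omega,t,\lambda,\sigma)$ as parameters, exactly as in Section~\ref{sec:hilbert-schmidt-norm}: the unique critical point is $(s,r)=(0,r_\star)$ with $r_\star(x,t,\lambda,\omega)=\sqrt{\lambda-V(x)}+\O(t)$, well-defined on the bulk region $\{\lambda>V(x)\}$ with non-degenerate Hessian of determinant $-4r_\star^2+\O(t)$; outside this region $|\partial_s\Psi_0|\gtrsim 1$, yielding a $\O(\hbar^\infty)$ contribution. After this reduction the integral becomes
\begin{equation*}
\tfrac{1}{(2\pi\hbar)^{2n+1}}\int e^{i((x-y)\cdot\zeta+t\sigma)/\hbar} B(x,y,\zeta,\omega,t,\lambda,\sigma) (f|_\Omega(x)-f|_\Omega(y))^2 \1\{\lambda-\tfrac{\sigma}{2}\le 0,\,\lambda+\tfrac{\sigma}{2}\le 0\},
\end{equation*}
integrated over all remaining variables, where $B\in S^1$ is smooth and compactly supported as dictated by \eqref{suppA0}.

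I would then integrate out $(\omega,\lambda,t,\sigma)$. The $(t,\sigma)$-integration is against the non-degenerate quadratic phase $t\sigma/\hbar$, but the amplitude carries the discontinuity $\1\{|\sigma|\le 2|\lambda|\}$ for $\lambda\le 0$, whose boundary terms at $\sigma=\pm 2|\lambda|$ must be tracked via an integration by parts in $\sigma$. Combining the smooth-amplitude contribution at $(t,\sigma)=(0,0)$ with these jump contributions, and then integrating in $\lambda\in(V(x),0]$ and $\omega\in \S^{n-1}$, yields the claimed Fourier representation. The compact support and smoothness of $F$ in $(x,y,\zeta)$ away from $\zeta=0$ are inherited from the stationary-phase data; the linear bound $|F(x,y,\zeta)|\le C|\zeta|$, together with the full symbol-class condition $F\in\mathcal{F}(\R^{2n},\R^n)$, comes from noting that the would-be density at $\zeta=0$ must reproduce $|\widetilde{\Pi}(x,y)|^2$, whose on-diagonal behavior is controlled by Weyl-type estimates as in Lemma \ref{lem:Weyl}, so that the singularity at $\zeta=0$ is at most linear in $|\zeta|$ and the derivatives satisfy the required bounds.

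For the principal part \eqref{Fpp}, setting $y=x$ and restricting to leading order in $\hbar$, the stationary-phase in $(s,r)$ at $t=0$ evaluates the amplitude at $r_\star=\sqrt{-V(x)}$, so the $\omega$-integration over $\S^{n-1}$ produces the area factor $|\S^{n-1}|\cdot|V(x)|^{(n-1)/2}$ times a combinatorial factor from the Jacobians; using $|\S^{n-1}|=n|B_n|$ and the Legendre duplication formula (as invoked in Section \ref{sec:rmt}) this reproduces $\mathrm{c}_{n-1}|\zeta||V(x)|^{(n-1)/2}$. The main obstacle is the careful bookkeeping in the $(t,\sigma)$ step: the two jumps $\1\{\lambda_j\le 0\}$ interact with the quadratic phase $t\sigma$ in a way that must be controlled so that only the stated linear singularity at $\zeta=0$ survives, and so that the logarithmic enhancement of the variance promised by Theorem \ref{thm:J2} emerges only subsequently, through the analysis of $\widehat{\1_\Omega}$ in Section~\ref{sec:int}, rather than from an additional singularity in $F$ itself.
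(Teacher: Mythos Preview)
Your outline follows a different route from the paper. The paper works in \emph{separate} polar coordinates $\xi_j=r_j\omega_j$, applies stationary phase in the four variables $(t_1,r_1,t_2,r_2)$, observes that the resulting phase vanishes on $\{x=y\}$ and therefore factorises \emph{exactly} as $(x-y)\cdot\zeta(x,y,\omega_1,\omega_2,\lambda,\sigma)$, and then changes variables $(\omega_2,\sigma)\to\xi$ near $\omega_1\approx\omega_2$; the region $|\zeta|\ge\delta$ is controlled separately (Step~4, Proposition~\ref{prop:err1}), and the class-$\mathcal F$ property of $F$ is obtained by a further change $(\lambda,\omega)\to(\varsigma,\nu)$ (Proposition~\ref{prop:symbolF}). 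Your route (averaged polar coordinates, stationary phase in $(s,r)$, then in $(t,\sigma)$) could in principle be made to work, but as written it has two concrete gaps.

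First, your expansion of $\Psi_0$ buries the cross term $2tr\,\omega\cdot\zeta$ in ``h.o.t.''. After stationary phase in $(s,r)$ the reduced phase is $(x-y)\cdot\zeta+t(\sigma+2r_\star\omega\cdot\zeta)+\O(t^2)$, so the $(t,\sigma)$ critical point is $(0,\,-2r_\star\omega\cdot\zeta)$, not $(0,0)$. It is precisely through this shift that the indicator $\1\{|\sigma|\le-2\lambda\}$ acquires $\zeta$-dependence: at the critical point it becomes $\1\{r_\star|\omega\cdot\zeta|\le|\lambda|\}$, and the \emph{complement} $\1\{r_\star|\omega\cdot\zeta|>|\lambda|>0\}$ integrated over $(\omega,\lambda)$ is what produces $F(x,x,\zeta)\sim\cst_{n-1}|\zeta||V(x)|^{(n-1)/2}$. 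Placing the critical point at $\sigma=0$ leaves no mechanism for $F$ to pick up its linear-in-$|\zeta|$ behaviour; your appeal to Weyl-type estimates for $|\widetilde\Pi(x,y)|^2$ is a heuristic consistency check, not a proof of the derivative bounds $|\partial_\xi^k F|\le C|\xi|^{1-k}$ required by Definition~\ref{class:F}, and it does not separate which piece becomes $F$ and which becomes the $\O(\hbar^{1-n})$ smooth-commutator term handled by Lemma~\ref{lem:commsmooth}.

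Second, the change $(\xi_1+\xi_2)/2=r\omega$ is singular where $\xi_1\approx-\xi_2$: there $r_\star^2=\lambda-V(x)-|\zeta|^2/4$ can be arbitrarily small, so the Hessian determinant $-4r_\star^2$ degenerates. In other words your first reduction only controls the region $|\zeta|$ small; the contribution from $|\zeta|$ bounded away from zero still carries the discontinuous factor $(f|_\Omega(x)-f|_\Omega(y))^2$ and requires a separate argument. In the paper this is exactly Step~4: one integrates by parts once in $x$ and once in $y$, producing boundary integrals over $\partial\Omega\times\partial\Omega$, and then applies stationary phase in $(\hat y,\omega_j)$ to show this piece is $\O(\hbar^{1-n})$. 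Your proposal does not address it.
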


Starting from formula \eqref{comm1}, let us explain the main steps of the proof of Proposition~\ref{prop:red}.
\begin{itemize}[leftmargin=*] 
\item {\bf Step 1.} As in Section~\ref{sec:hilbert-schmidt-norm}, we perform a change of coordinates similar to \eqref{change_coord} 
\begin{equation} \label{cv}
\begin{cases}
\lambda=\frac{\lambda_1+\lambda_2}{2} ,\qquad\xi_1=  r_1\omega_1\\
\sigma=\lambda_2-\lambda_1 , \quad  \xi_2=r_2 \omega_2
\end{cases}
\quad\text{with $r_1,r_2>\underline{\cst}$, $\omega_1,\omega_2\in \S^{n-1}$}. 
\end{equation}
This is justified because of the support condition \eqref{suppA0},
which ensures that $\xi_1$ and $\xi_2$ are bounded away from zero. Observe that $ \1_{\lambda_1\leq
0}\1_{\lambda_2\leq 0}=\1_{|\sigma|\le -2\lambda}$ under this change
of variables.
Then, we can perform a stationary phase with respect to the variables
$(t_1,r_1,t_2,r_2)$, keeping $(x,y,\omega_1,\omega_2,\lambda,\sigma)$
fixed, and we obtain
an integral of the type
\begin{equation} \label{intred1}
\frac{1}{(2\pi\hbar)^{2n}}\int
e^{i\frac{\Psi_1(x,y,\omega_1,\omega_2,\lambda,\sigma)}{\hbar}}(f|_\Omega(x)-f|_\Omega(y))^2 A_1(x,y,\omega_1,\omega_2,\lambda,\sigma)\1_{\{|\sigma|\le -2\lambda\}}\dd x \dd y \dd \omega_1 \dd \omega_2\dd \lambda\dd\sigma
\end{equation}
where the amplitude $A_1 \in S^1(\R^{4n+2})$ is also supported in $\big\{t\in[-\underline{\tau},\underline{\tau}], (x,y)\in \Omega^{\prime2},  |x-y|\le \underline{\ell}, |\lambda| \le \underline{\ell}\}$. 

\item {\bf Step 2.} We study the phase $\Psi_1$.
This phase vanishes
along the diagonal $\{x=y\}$ and it can be factorised into
\[
\Psi_1(x,y,\omega_1,\omega_2,\lambda,\sigma)
= (x-y)\cdot \zeta(x,y,\omega_1,\omega_1,\lambda,\sigma)   
\]
where $\zeta\in\R^{4n+2} \to \R^n$ is smooth and approximated by
$\zeta \simeq \sqrt{\lambda-V(x)} (\omega_1-\omega_2) $ for $x-y$ small.
This allows us to show that for $\omega_1\in \S^{n-1}$, $x\in
\Omega'$, $|x-y| \le \underline{\ell}$ fixed,  the map $(\omega_2,
\sigma) \mapsto \zeta$  is a (smooth) diffeomorphism from a
neighborhood of $(\omega_1,0)$ in $S^{n-1}\times \R$  onto $\{\zeta\in
\R^n  : |\zeta|\le \delta \}$, where $\delta$ can be chosen much
larger than $\underline{\ell}$ and much smaller than $\underline{\cst}$.

We will justify in step 4 that the main contribution to the integral \eqref{intred1} comes from this region.
Hence, by a change of coordinates, the main term in $\| [\widetilde{\Pi}, f|_\Omega]  \|^2_{\J^2}$ is 
\[
\frac{-1}{(2\pi\hbar)^{2n}}\int  
e^{i\frac{(x-y) \cdot \zeta}{\hbar}}(f|_\Omega(x)-f|_\Omega(y))^2 A_2(x,y,\omega ,\zeta,\lambda )\1_{\{|\sigma(x,y,\zeta,\omega,\lambda)| >-2\lambda>0 \}} \dd x \dd y \dd \zeta \dd \omega\dd \lambda
\]
where  $A_2 \in S^1(\R^{4n+1})$, $\sigma : \R^{4n+1}\to\R$ is smooth and $\sigma \simeq  2\sqrt{\lambda-V(x)}\, \zeta \cdot\omega$.  

\item {\bf Step 3.} 
At this stage, we argue that symbols of the type
\[
(x,y,\xi) \mapsto  \int_{\S^{n-1} \times\R} A_2(x,y,\omega,\xi,\lambda)\1_{\{|\sigma(x,y,\xi,\omega,\lambda)| >-2\lambda>0 \}} \dd \omega\dd \lambda
\] 
are in the class $\mathcal F$ and we prove \eqref{Fpp}.

\item {\bf Step 4.} It remains to show that the remaining part of the integral \eqref{intred1},
\begin{equation}\label{intremain}\begin{aligned}
\frac{1}{(2\pi\hbar)^{2n}}\int  e^{i\tfrac{(x-y)\cdot \zeta(x,y,\omega_1,\omega_1,\lambda,\sigma)   
}{\hbar}} 
&(1-\chi_\delta)\big( \zeta(x,y,\omega_1,\omega_1,\lambda,\sigma)    \big)(f|_\Omega(x)-f|_\Omega(y))^2\\
&A_2(x,y,\omega_1,\omega_2,\lambda,\sigma)\1_{\{|\sigma|\le -2\lambda\}}\dd x \dd y \dd \omega_1 \dd \omega_2\dd \lambda\dd\sigma,
\end{aligned}
\end{equation}
is $\O(\hbar^{1-n})$. Here, as usual $\chi_{\delta}(\cdot)=\chi(\delta
\cdot)$, and the function $\chi:\R^n\to [0,1]$ is smooth, equal to $1$ on
$B(0,\tfrac{\delta}{2})$ and to $0$ outside $B(0,\delta)$.

If $|x-y|\leq \underline{\ell}$ with $\underline{\ell}$ small with
respect to $\delta$, then the oscillating phase
\eqref{intremain}  has no stationary point. Thus we expect
\eqref{intremain} to be rather small. The issue is that the amplitude is not smooth with respect to $(x,y)$.
Nevertheless, we  can integrate by parts twice, once with respect to
$x$ and once with respect to $y$, and this reduces \eqref{intremain}
(up to an error  $\O(\hbar^{1-n})$)  to an integral over the (smooth)
boundary of $\Omega$. It remains to bound an integral of the type
\[
\frac{1}{(2\pi\hbar)^{2(n-1)}}\int_{\{ \hat x, \hat y \in \partial \Omega \}\times\{\omega_1,\omega_2 \in \partial B\}}
e^{i\tfrac{(\hat x-\hat y)\cdot \zeta(\hat x, \hat y,\omega_1,\omega_1,\lambda,\sigma)   
}{\hbar}} \overline{\chi_\delta}(\omega-\omega_2)) A_3(\hat x, \hat y,\omega_1,\omega_2,\lambda,\sigma) \dd \hat x \dd \hat y \dd \omega_1 \dd \omega_2
\]
where $A_3 \in S^1(\R^{4n+2})$.
To complete the proof, we argue that for a $\hat x \in
\partial\Omega$, the phase has non-singular Hessian in one of the pair $(\hat y, \omega_1)$ or
$(\hat y, \omega_2)$. 
Hence, by a stationary phase argument, this integral is at most of order $\O(\hbar^{1-n})$. This completes the proof. 
\end{itemize}
\subsubsection{Step 1: Stationary phase} \label{sec:statphase}

We make the change of coordinates \eqref{cv} (the Jacobian of this change of variable is $(r_1r_2)^{n-1}$ for $n\ge 1$) and study the critical point of the phase $\Psi_0$ in the variables $(t_1,r_1,t_2,r_2)$. According to \eqref{phase0} and \eqref{phasephi}, we verify that
\begin{equation} \label{devcrit} 
\begin{aligned}
\partial_{t_1} \Psi_1(x,y,\xi_1,\xi_2,t_1,t_2,\lambda_1,\lambda_2)
& = \partial_{\xi}\varphi(t_1,x,\xi_1)- \lambda_1 = V(x) + r_1^2 - \lambda_1 +  \O(t_1)  \\
\partial_{r_1} \Psi_1(x,y,\xi_1,\xi_2,t_1,t_2,\lambda_1,\lambda_2)
& = \omega_1 \cdot \big( \partial_{\xi}\varphi(t_1,x,\xi_1)-y \big)
= \omega_1 \cdot \big( (x-y) + \O(t_1^2)  \big) + 2 t_1 r_1 
\end{aligned}
\end{equation}
and similarly for $\partial_{t_2} \Psi_1, \partial_{r_2} \Psi_1$
switching $1\leftarrow 2$ and the sign.

Let us first study the first line of \eqref{devcrit}. Since $|t_1|\leq \underline{\tau}\ll
\underline{c}\leq \lambda-\sigma/2-V(x)$, for every $t_1$ there exists a unique
$r_1^c$ solving the first equation, it is a smooth function of all
other parameters, and
$r_1^c|_{t_1=0}=\sqrt{\lambda+\sigma/2-V(x)}$. In particular,
$r_1^c\geq \underline{c}$. Now we turn to the
second line of \eqref{devcrit}: with $r_1=r_1^c$ bounded away from
below, for every $|x-y|\leq \underline{\ell}\ll \underline{c}$ there
exists at most one solution for $t_1$ such that $|t_1|\leq
\underline{\tau}$; moreover at $y=x$ this solution exists and is equal
to $0$.

All in all, for $x-y$ small there exists a unique stationary point given by
\begin{equation} \label{critptrt}
\begin{aligned}
r_1^c&=\sqrt{\lambda+\sigma/2-V(x)}+O(|x-y|) \\
t_1^c &= - \frac{ \omega_1 \cdot (x-y) }{2 \sqrt{\lambda+\sigma/2-V(x)}}+ \O(|x-y|^2)
\end{aligned}
\end{equation}
and similarly for $(r_2^c, t_2^c)$ replacing $(\lambda+\sigma) \leftarrow (\lambda-\sigma)$ and $\omega_1 \leftarrow \omega_2$. 
Differentiating \eqref{devcrit} again, we obtain that the Hessian of $\Psi_1$ with respect to $(t_1,r_1,t_2,r_2)$ is of the form
\[
2 \left(\begin{smallmatrix}
\star &r_1+\O(t_1)&0&0\\
r_1+\O(t_1)&t_1+\O(t_1^2)&0&0\\
0&0& \star
& - r_2+\O(t_2)\\
0&0&r_2+\O(t_2)&t_2+\O(t_2^2)
\end{smallmatrix} \right) .
\] 
For $r_1,r_2 \geq \underline{\cst}/4$ and $|t_1|,|t_2|\leq \underline{\tau}$, this Hessian is non-degenerate and its determinant, evaluated at the critical point \eqref{critptrt}, is given by
\begin{equation} \label{discrit}
\J(x,y,\omega_1,\omega_2,\sigma,\lambda)
=  16\big((\lambda-V(x))^2-\sigma^2/4\big)+\O(|x-y|).
\end{equation}

We introduce a new phase
\[
\Psi_1:(x,y,\omega_1,\omega_2,\lambda,\sigma)\mapsto
\Psi_0(x,y,t_1^c,t_2^c,r_1^c\omega_1,r_2^c\omega_2,\lambda+\tfrac\sigma2,\lambda-\tfrac\sigma2) 
\] 
and we denote 
\begin{equation} \label{def:R}
R= R(x,\lambda) := \sqrt{\lambda-V(x)} . 
\end{equation}
By \eqref{critptrt}, if $x=y$ and $\sigma=0$ one has $R=r_j^c$ for
$j\in\{1,2\}$ as well as 
$\J=(2R)^4$. 

\smallskip

We are in position to apply Lemma~\ref{lem:statphase} to the integral
\eqref{comm1}, with the variables
$(x,y,\omega_1,\omega_2,\lambda,\sigma)$ as parameters. There is an amplitude $A_1 \in S^1(\R^{4n+2})$ so that
\begin{multline} \label{comm2} 
\big\| [\widetilde{\Pi}, f|_\Omega] \big\|^2_{\J^2}\\= \frac{1}{(2\pi\hbar)^{2n}}\int
e^{i\tfrac{\Psi_1(x,y,\omega_1,\omega_2,\lambda,\sigma)}{\hbar}}(f|_\Omega(x)-f|_\Omega(y))^2A_1(x,y,\omega_1,\omega_2,\lambda,\sigma) \1_{\{|\sigma|\le -2\lambda\}}\dd x \dd y \dd \omega_1\dd \omega_2\dd \sigma\dd \lambda  . 
\end{multline}
Moreover, the principal symbol of $A_1$ is given as follows, in terms of \eqref{critptrt} and \eqref{discrit}, 
\[
A_1(x,y,\omega_1,\omega_2,\lambda,\sigma)|_{\hbar=0}
=  \frac{(r_1^c r_2^c)^{n-1}}{\sqrt{\J}}  A_0(x,y,r_1^c\omega_1,r_2^c\omega_2,t_1^c,t_2^c,\lambda+\tfrac\sigma2,\lambda-\tfrac\sigma2) .
\]
The factor $(r_1^cr_2^c)^{n-1}$ comes from the Jacobian of the change
of coordinates.
In particular, according to \eqref{symbolA}, at $(y,\sigma) = (x,0)$, 
\begin{align}\notag
A_1(x,x,\omega_1,\omega_2,\lambda,0)
& = \frac{R^{2n-2} A_0(x,x, R \omega_1,R\omega_2,0,0,\lambda,\lambda)}{\sqrt{R^4}}
= R^{2n-4} \vartheta(V(x)+R^2)^2
\chi(\lambda)^2/4 \\
&\label{symbold}
=R^{2(n-2)} \mathrm{g}(\lambda)/4 , 
\qquad\qquad x\in\Omega' , \lambda\in\R, \omega_1,\omega_2 \in \S^{n-1} ,
\end{align}
where $R$ is given by \eqref{def:R} and $\mathrm{g} := \vartheta^2\chi^2$ satisfies $\mathrm{g}(0)=0$. 
Moreover,  $A_1$ is supported in
\[
\big\{(x,y)\in \Omega'\times\Omega',  |x-y|\le \underline{\ell}, |\lambda| \le \underline{\ell}\} .
\]

\subsubsection{Step 2: Study of the phase $\Psi_1$} \label{sec:phase}

By \eqref{critptrt}, $t_1^c=t_2^c=0$ if $x=y$, then with $\xi_j = r_j^c \omega_j$,  we have on the diagonal
\[
\Psi_1(x,x,\omega_1,\omega_2,\lambda,\sigma)= \varphi(0,x,\xi_1)-\varphi(0,x,\xi_2)-(\xi_1-\xi_2)\cdot x = 0 .
\]
Thus there exists a smooth function $\zeta : \R^{2n+2} \to \R^n$ such that we can write
\begin{equation} \label{phase2}
\Psi_1(x,y,\omega_1,\omega_2,\lambda,\sigma)= ( 
x-y)\cdot \zeta(x,y,\omega_1,\omega_2,\lambda,\sigma).
\end{equation}

Moreover, the original phase
$\Psi_1$ is anti-symmetric with respect to the exchange
$\xi_1\leftrightarrow \xi_2,t_1\leftrightarrow
t_2, \lambda_1\leftrightarrow \lambda_2$.
From the critical point equation, this implies that $(r_1^c,t_1^c)=(r_2^c,t_2^c)=(r^c,t^c)$ if $(\omega_1,\sigma) =(\omega_2,0)$ 
so that
\[
\Psi_1(x,y,\omega,\omega,\lambda,0)
= \Psi_0(x,y,t^c,t^c,r^c\omega,r^c\omega,\lambda,\lambda) 
= 0
\]
Hence
\[
\zeta(x,y,\omega,\omega,\lambda,0)=0 , \qquad \text{on }\big\{(x,y)\in \Omega'\times\Omega',  |x-y|\le \underline{\ell}, |\lambda| \le \underline{\ell}\} .
\]
In particular, with $\mathcal{Z}(x,\omega_1,\omega_2,\sigma,\lambda) = \zeta(x,x,\omega_1,\omega_2,\sigma,\lambda)$, 
\begin{equation} \label{Z2}
\zeta(x,y,\omega_1,\omega_2,\sigma,\lambda) = \mathcal{Z}(x,\omega_1,\omega_2,\sigma,\lambda) 
+\O\Big(\sqrt{ |\omega_1-\omega_2|^2 +  |\sigma|^2}|x-y|\Big) . 
\end{equation}

Recall that $|x-y| < \underline{\ell}$ in the integral in questions,
so we first study $\mathcal{Z}$ instead of $\zeta$. Note that
\begin{equation} \label{phaseZ}
\mathcal{Z}(x,\omega_1,\omega_2,\sigma,\lambda)=
- \partial_y \Psi_1  \big|_{y=x}  = -  \partial_y \Psi_0 \big|_{y=x,t_1=t_2=0,r_1=r_1^c,r_2=r_2^c}  . 
\end{equation}
Here we used the property of the critical point for which  $ \partial_{t_j} \Psi_1 |_{t_1^c,r_1^c,t_2^c,r_2^c} =   \partial_{r_j} \Psi_1 |_{t_1^c,r_1^c,t_2^c,r_2^c}=0 $ for $j\in\{1,2\}$ and $t_1^c=t_2^c=0$ if $x=y$. Since $ \partial_y \Psi_1= \xi_1-\xi_2 $, by \eqref{critptrt},  this yields
\begin{equation} \label{Z1}
\mathcal{Z}(x,\omega_1,\omega_2,\sigma,\lambda) = 
r_1^c \omega_1 - r_2^c \omega_2
= \omega_1\sqrt{\lambda-V(x)+\tfrac\sigma2}- \omega_2\sqrt{\lambda-V(x)-\tfrac\sigma2}.
\end{equation}
Linearizing this function for small~$\sigma$ (here, $|\sigma|\le 2\underline{\ell}$), we obtain
\[
\mathcal{Z}(x,\omega_1,\omega_2,\sigma,\lambda)  
=  ( \omega_1-\omega_2 )  \big(R+ \O(\sigma^2) \big) 
+ (\omega_1 +\omega_2)  \tfrac{\sigma}{4R}  \big(1+ \O(\sigma^2) \big)  
\]
with $R\ge \underline{\cst}$ as in \eqref{def:R}. 

We now consider the equation $\mathcal{Z}(x,\omega_1,\omega_2,\sigma,\lambda) = \xi$ for fixed $(x, \omega_1,\lambda) \in \Omega' \times \S^{n-1}\times [-\underline{\ell},\underline{\ell}]$ and $\xi\in\R^n$. 
We have 
\[
\omega_2 \big(R- \tfrac{\sigma}{4R} \big) =  \omega_1  \big(R+ \tfrac{\sigma}{4R} \big) - \xi  + \O(\sigma^2) . 
\] 
To solve this equation, we decompose $\omega_2 =  \alpha \omega_1
+\nu$ where $\nu \in \omega_1^\perp$ and $\alpha = \sqrt{1-|\nu|^2}  \ge 0$, we obtain
\[ \begin{cases}
\alpha   = 1+ \tfrac{\sigma}{2R^2} - \tfrac{\xi_1}{R-
  \tfrac{\sigma}{4R} }  + \O(\sigma^2)   ,  &\text{ where }\xi_1 = \xi\cdot \omega_1 \\
\nu =  - \tfrac{\xi^\perp}{R- \tfrac{\sigma}{4R} }  + \O(\sigma^2)  ,
&\text{ where }\xi  = \xi_1+ \xi^\perp . 
\end{cases}\] 
In particular, we have 
\[
1 = \alpha^2 + |\nu|^2 = 1+ \tfrac{|\xi|^2}{(R-\sigma/4R)^2}
+  \tfrac{\sigma}{R^2} - \tfrac{2 \xi_1}{R-\sigma/4R} + \O(\sigma^2) . 
\]

Since $\delta\ll  \underline{\cst}\leq R $, the last equation
determines $\sigma$ for $|\xi|<\delta$, and
\[
\sigma = 2 R \xi_1 + \O(|\xi|^2)
\qquad\text{and then}\qquad
\omega_2 = \omega_1 -\xi^\perp/R +  \O(|\xi|^2) .
\]

To summarize, choosing a small parameter $\delta$ with
$\underline{\ell}\ll \delta \ll \underline{\cst}$, given $(x, \omega_1,\lambda) \in \Omega'\times \S^{n-1}\times\R$ and $\xi\in \delta B_n$, the equation
$\mathcal{Z}(x,\omega_1,\omega_2,\sigma,\lambda) = \xi$ 
has a unique smooth solution $(\sigma,\omega_2) \in \R \times \S^{n-1}$.
Moreover, by \eqref{Z1}, we also compute 
\[
\partial_{\sigma} \mathcal{Z}
= \tfrac{\omega_1+\omega_2}{4R}  +\O\big(\sigma(\omega_1-\omega_2)\big) , \qquad
\partial_{\omega_2} \mathcal{Z} = R \mathrm{I}_n+ \O(\sigma) ,
\]
where $\mathrm{I}_n$ denotes the identity matrix. These derivatives are non-degenerate in a neighbourhood of the previous solution, so according  to \eqref{Z2}, the equation $\zeta(x,y,\omega_1,\omega_2,\sigma,\lambda) = \xi$,  given $(x, \omega_1,\lambda) \in \Omega'\times \S^{n-1}\times\R$ and $(y,\xi)\in\R^{2n}$ with $|x-y|<\underline{\ell}$, $|\xi|<\delta$, also has a  unique smooth solution  $(\sigma,\omega_2) \in \R \times \S^{n-1}$ with the expansions: 
\begin{equation}\label{sigma1}
\begin{cases}
\sigma = 2 R \xi_1 +  \O\big(|\xi|(|\xi|+ |x-y|)\big)  \\
\omega_2 =   \omega_1 -\xi^\perp/R +\O\big(|\xi|(|\xi|+ |x-y|)\big) 
\end{cases}
, \qquad \xi= (\xi_1,\xi^\perp) ,\, \xi_1 = \xi \cdot \omega_1 , \,  R=\sqrt{\lambda-V(x)} . 
\end{equation} 

Hence, we can make a change of variable $\xi\in \delta B_n \mapsto
(\sigma,\omega_2)$, after which the phase becomes
\[
\Psi_1(x,y,\omega_1,\omega_2,\sigma,\lambda)=  
(x-y)\cdot \xi . 
\]

To use this, we must split the integral \eqref{comm2} in two parts by introducing a cutoff in $\mathcal{Z}$ (equivalently in $\xi$ after the change of coordinates):
\[
\big\| [\widetilde{\Pi}, f|_\Omega] \big\|^2_{\J^2}
= I_{1,\hbar} + I_{2,\hbar}
\]
where 
\begin{equation} \label{comm3} 
\begin{aligned} 
I_{1,\hbar}   & = \frac{1}{(2\pi\hbar)^{2n}}\int
e^{i\tfrac{\Psi_1(x,y,\omega_1,\omega_2,\sigma,\lambda)}{\hbar}}(f|_\Omega(x)-f|_\Omega(y))^2 A_1'(x,y,\omega_1,\omega_2,\sigma,\lambda)  \1_{\{|\sigma|\le -2\lambda\}}\dd x \dd y \dd \omega_1\dd \omega_2\dd \sigma\dd \lambda , \\
I_{2,\hbar}   & = \frac{1}{(2\pi\hbar)^{2n}}\int
e^{i\tfrac{(x-y)\cdot \xi}{\hbar }}(f|_\Omega(x)-f|_\Omega(y))^2
A_2(x,y,\omega_1,\xi,\lambda) \1_{\{|\sigma(x,y,\xi,\omega_1,\lambda)|\le -2\lambda\}}\dd x \dd y \dd \omega_1\dd \xi\dd \lambda .
\end{aligned}
\end{equation}

We tune a smooth cutoff so that
\begin{itemize}[leftmargin=*] 
\item the amplitude $A_1'$ belongs to  $S^1(\R^{4n+2})$ with $A_1'(x,y,\omega_1,\omega_2,\sigma,\lambda) =0$ on the set 
$\big\{|\mathcal{Z}\big(x,\omega_1,\omega_2,\sigma,\lambda) |\le \delta \big\}$ 
and $A_1'$ is supported on the set $(x, \lambda) \in \Omega' \times[-\underline{\ell},\underline{\ell}]$  and $|x-y| \le \underline{\ell}$ with $\underline{\ell}\ll \delta$. 
\item the amplitude $A_2 \in S^1(\R^{4n+1})$ and it is supported in
\[(x, \lambda) \in \Omega' \times\R,\;(y,\xi) \in \R^{2n},\;|y-x|+|\lambda|+|\xi|\le 2\delta\ll
  \underline{\cst}.
  \]
\end{itemize}

Finally, the principal part of $A_2$ satisfies on the diagonal $x=y$
and at $\xi=0$ (where $\mathcal{Z}=0$ also)
\begin{equation*}
A_2(x,x,0,\omega_1,\lambda)|_{\hbar=0} = A_1(x,x,\omega_1,\omega_1,\lambda,0) \bigg(\frac{\dd \sigma \dd\omega_2}{\dd \xi_1 \dd \xi^\perp}\bigg)\bigg|_{y=x, \xi=0}
\end{equation*}
since $(\sigma,\omega_2)=(0,\omega_1)$ when $(y,\xi) =(x,0)$, cf.~\eqref{sigma1}. Moreover, we compute the Jacobian
\[
\frac{\dd \sigma \dd\omega_2}{\dd\xi_1\dd\xi^\perp} \bigg|_{y=x, \xi=0}=   \left| \det \left(\begin{smallmatrix} 
2R &  \hdots & 0 &\hdots \\
\vdots & \ddots \\
0& &  \mathrm{I}_{n-1}/R  \\
\vdots & & & \ddots
\end{smallmatrix}\right) \right| = 2R^{2-n} . 
\]
Hence, by \eqref{symbold} with $R(x,0) = |V(x)|^{1/2}$, we obtain 
\begin{equation} \label{symbolg1}
A_2(x,x,0,\omega,0) = R(x,0)^{n-2}  \mathrm{g}(0)/2 = |V(x)|^{n/2-1} /2 , \qquad 
x\in\Omega' , \omega\in \S^{n-1}.
\end{equation}

\subsubsection{Step 3: the main symbol is in the class $\mathcal F$} \label{sec:F}

We now prove the following fact.
\begin{prop} \label{prop:symbolF}
The symbol
\[
F: (x,y,\xi) \in \R^{3n} \times (0,1]  \mapsto \int_{\S^{n-1}\times\R}
\hspace{-.5cm} A_2(x,y,\xi,\omega,\lambda)
\1_{\{|\sigma(x,y,\xi,\omega,\lambda)| >-2\lambda>0 \}} \dd \omega\dd \lambda .
\]
is in the class $\mathcal F(\R^{2n},\R^n)$ (cf.~Definition~\ref{class:F}) and it satisfies \eqref{Fpp}. 
\end{prop}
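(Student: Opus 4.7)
The plan is to carry out the $\lambda$-integration by explicitly resolving the implicit constraint $|\sigma|>-2\lambda>0$, then integrate over $\omega\in\S^{n-1}$ and verify the bounds defining $\mathcal{F}$.

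First I would parametrize the region where the indicator is nonzero. By \eqref{sigma1},
\[
\sigma(x,y,\xi,\omega,\lambda) = 2R(x,\lambda)\,\xi\cdot\omega + \O\bigl(|\xi|(|\xi|+|x-y|)\bigr),
\]
with $R(x,\lambda)=\sqrt{\lambda-V(x)}$ bounded below by a positive constant on the support of $A_2$. In particular $\sigma$ is smooth in $\lambda$ with $|\partial_\lambda\sigma|\lesssim|\xi|$, so for $\delta$ small the equation $|\sigma(x,y,\xi,\omega,\lambda)|+2\lambda=0$ has a unique solution $\lambda=-\Lambda(x,y,\xi,\omega)\in(-2\delta,0)$ whenever $\xi\cdot\omega\neq 0$. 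The implicit function theorem yields
\[
\Lambda(x,y,\xi,\omega)=R(x,0)\,|\xi\cdot\omega|+\O\bigl(|\xi|(|\xi|+|x-y|)\bigr),
\]
which is continuous and nonnegative, smooth on $\{\xi\cdot\omega\neq 0\}$, Lipschitz in $\xi$, and satisfies $\Lambda\le C|\xi|$. The indicator becomes $\1_{\{-\Lambda<\lambda<0\}}$, so
\[
F(x,y,\xi)=\int_{\S^{n-1}}\Phi(x,y,\xi,\omega)\,\dd\omega,\qquad \Phi:=\int_{-\Lambda(x,y,\xi,\omega)}^{0}A_2(x,y,\xi,\omega,\lambda)\,\dd\lambda.
\]

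Next I would verify the $\mathcal{F}$-class bounds. Compact support is inherited from $A_2$, and smoothness in $x,y$ of each integrand follows since $\Lambda$ and $A_2$ are smooth in $(x,y)$. Differentiating under the integral, the $(x,y)$-derivatives remain bounded by $C\Lambda\le C|\xi|$, giving $|\partial_{x,y}^j F|\le C_j|\xi|$. For $\xi$-derivatives, the only source of singularity is $\Lambda$, whose leading profile $|\xi\cdot\omega|$ is positively homogeneous of degree $1$ in $\xi$. Standard estimates for convolutions with smooth densities on the sphere give
\[
\int_{\S^{n-1}}\bigl|\partial_\xi^k\Lambda(x,y,\xi,\omega)\bigr|\,\dd\omega=\O\bigl(|\xi|^{1-k}\bigr),
\]
while the boundary contribution from differentiating the upper/lower limit of the $\lambda$-integral produces $A_2(\cdot,-\Lambda)\cdot\partial_\xi^k\Lambda$ and the interior contribution is $\O(\Lambda)$ times symbol estimates. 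Summing, we obtain $|\partial_\xi^k\partial_{x,y}^j F|\le C_{j,k}|\xi|^{1-k}$, hence $F\in\mathcal{F}(\R^{2n},\R^n)$.

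Finally I would read off the principal part. At $y=x$, $\hbar=0$, Taylor expanding $A_2$ around $\lambda=0$, $\xi=0$ and using \eqref{symbolg1} with $R(x,0)=|V(x)|^{1/2}$,
\[
\Phi(x,x,\xi,\omega)\bigr|_{\hbar=0}=\tfrac{1}{2}|V(x)|^{n/2-1}\cdot|V(x)|^{1/2}\,|\xi\cdot\omega|+\O(|\xi|^2).
\]
Integrating over $\S^{n-1}$ and using the classical identity $\int_{\S^{n-1}}|\xi\cdot\omega|\,\dd\omega=2\cst_{n-1}\,|\xi|$ (obtained in spherical coordinates from $|S^{n-2}|=2\pi^{(n-1)/2}/\Gamma((n-1)/2)$ and the definition $\cst_{n-1}=|B_{n-1}|=\pi^{(n-1)/2}/\Gamma((n+1)/2)$), we obtain
\[
F(x,x,\xi)\bigr|_{\hbar=0}=\cst_{n-1}\,|V(x)|^{(n-1)/2}\,|\xi|+\O(|\xi|^2),
\]
as claimed in \eqref{Fpp}.

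The main obstacle is the derivative estimate in $\xi$ near the origin: the threshold $\Lambda$ has a leading profile $|\xi\cdot\omega|$ that is only Lipschitz across $\{\xi\cdot\omega=0\}$, so individual $\xi$-derivatives of the integrand are genuinely singular. What saves the argument is the averaging over $\omega\in\S^{n-1}$, which smooths out the singularity in exactly the manner dictated by homogeneity of degree one, giving precisely the $|\xi|^{1-k}$ bound required by the class $\mathcal{F}$.
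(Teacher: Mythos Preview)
Your outline and the principal-part computation are fine and match the paper. The gap is in the $\xi$-derivative step. For each fixed $\omega$, the threshold $\Lambda(\cdot,\xi,\omega)$ has leading part proportional to $|\xi\cdot\omega|$, which is linear on each half-space $\{\pm\xi\cdot\omega>0\}$; hence its pointwise higher $\xi$-derivatives vanish a.e.\ in $\omega$. If you apply the Leibniz rule to $\Phi=\int_{-\Lambda}^0 A_2\,\dd\lambda$ and then integrate over $\omega$ pointwise, you would get $\int_{\S^{n-1}}|\partial_\xi^k\Phi|\,\dd\omega=\O(1)$ for every $k$, which is not the bound you want and, more importantly, is not equal to $\partial_\xi^k F$. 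Indeed, already for the model $f(\xi)=\int_{\S^{n-1}}|\xi\cdot\omega|\,\dd\omega=c|\xi|$ one has $\partial_\xi^2 f\sim|\xi|^{-1}$, whereas the pointwise second derivative of the integrand is $0$ a.e. The missing mass sits on the singular set $\{\xi\cdot\omega=0\}$, where $\partial_\xi^k\Lambda$ is distributional; the displayed bound $\int|\partial_\xi^k\Lambda|\,\dd\omega=\O(|\xi|^{1-k})$ therefore has no meaning as a Lebesgue integral for $k\ge 2$, and your final inequality $|\partial_\xi^k F|\le C_{k}|\xi|^{1-k}$ is not established by the argument as written.

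The paper avoids this entirely by moving all $\xi$-dependence out of the discontinuous factor. After a first change of variables $(\lambda,\omega)\to(\varsigma,\nu)$ with $\varsigma=-\lambda/r$ (using $\sigma=2r\,\nu\cdot\xi$), the indicator becomes $\1_{\{|\nu\cdot\xi|>\varsigma>0\}}$; then writing $\xi=R\theta$, rescaling $\varsigma\leftarrow R\varsigma$, and rotating $\nu\leftarrow \mathcal R_\theta\nu$ yields
\[
F(x,y,\xi)=R\int_{\S^{n-1}\times\R} B\bigl(x,y,\xi,\mathcal R_\theta\nu,R\varsigma\bigr)\,\1_{\{|\nu_1|>\varsigma>0\}}\,\dd\nu\,\dd\varsigma,
\]
where the indicator no longer depends on $\xi$. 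Now $\partial_\xi^k$ hits only the smooth amplitude and the factors $R=|\xi|$, $\mathcal R_{\theta(\xi)}$; since $\|\partial_\xi^k R\|=\O(|\xi|^{1-k})$ and $\|\partial_\xi^k\mathcal R_{\theta(\xi)}\|=\O(|\xi|^{-k})$, the $\mathcal F$-bounds follow immediately. If you want to rescue your route, you would need to make the distributional bookkeeping across $\{\xi\cdot\omega=0\}$ precise---but that is essentially reinventing this change of variables.
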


According to \eqref{comm3} and the previous definition, we can rewrite
\[ 
I_{2,\hbar}  = \frac{-1}{(2\pi\hbar)^{2n}}\int
e^{i\tfrac{(x-y)\cdot \xi}{\hbar }} (f|_\Omega(x)-f|_\Omega(y))^2 F(x,y,\xi)  \dd x \dd y \dd \xi  + \|[f|_\Omega,K]\|^2
\]
where
\[ 
K : (x,y) \mapsto \frac{1}{(2\pi\hbar)^{n}}\int
e^{i\tfrac{(x-y)\cdot \xi}{\hbar }}
a(x,y,\xi) \dd x \dd y \dd \xi  \quad\text{and}\quad
a :  \R^{3n} \mapsto \int A_2(x,y,\xi,\omega,\lambda) \1_{\{\lambda<0\}} \dd \lambda\dd \omega  . 
\]
The amplitude $a$ belongs to $S^1(\R^{3n})$ ($\|a\|_{\Co^k}$ are
bounded uniformly in $\hbar$ by differentiating under the integral). 
Consequently, by Lemma~\ref{lem:commsmooth}, 
\(
\|[f|_\Omega,K]\|^2=\O(\hbar^{1-n})
\)

\smallskip

We will show in the next section (Proposition~\ref{prop:err1}) that the integral $I_{1,\hbar} = \O(\hbar^{1-n})$  as $\hbar\to0$. This allows to conclude that
\[
\big\| [\widetilde{\Pi}, f|_\Omega] \big\|^2_{\J^2}  = \frac{1}{(2\pi\hbar)^{2n}}\int
e^{i\tfrac{(x-y)\cdot \xi}{\hbar }}(f|_\Omega(x)-f|_\Omega(y))^2  F(x,y,\xi)  \dd x \dd y \dd \xi  + \O(\hbar^{1-n})  
\]
which completes the proof of Proposition~\ref{prop:red}. 

\begin{proof}[Proof of Proposition~\ref{prop:symbolF}]~
  
First observe that according to \eqref{sigma1}, there exist two smooth functions 
$\nu : (x,y,\xi,\omega,\lambda) \in \R^{4n+1} \mapsto \S^{n-1}$ and $r : (x,y,\xi,\omega,\lambda) \in \R^{4n+1} \mapsto \R_+$  such that  
\begin{equation} \label{sigma2}
\sigma = 2 r \nu \cdot \xi , \qquad 
(r,\nu) = (R,\omega) + \O\big(|x-y|+|\xi|\big). 
\end{equation}
Then, we claim that for $\big\{(x,y,\xi)\in\R^{3n}: x\in\Omega' , |x-y|, |\xi|<2\delta\big\}$, we can make  a (smooth) change of coordinates
\[
(\lambda,\omega) \in\R \times \S^{n-1} 
\mapsto (\varsigma, \nu)  \in \R \times \S^{n-1} , \qquad \varsigma= -\lambda/r
\]
in the integral $F$. 
We denote by $\J: (x,y,\xi,\nu,\varsigma) \in \R^{4n+1} \to \R_+$ the corresponding Jacobian. 
Observe that if $(y,\xi) = (x,0)$, $\varsigma= -\lambda/\sqrt{\lambda-V(x)}$ and, by solving this quadratic equation and choosing the appropriate root, we obtain
\[
\lambda= -\varsigma\sqrt{-V(x)} +\O(\varsigma^2) ,\qquad \omega=\nu .
\]
Thus, in this case, 
\begin{equation} \label{J}
\J(x,x,0,\nu,\varsigma) = \sqrt{-V(x)} +\O(\varsigma) .
\end{equation}

Generally, $|\varsigma| \le \underline{\cst}$ (as $r \ge \underline{\cst}$ for $x\in \Omega'$ and $|y-x| , |\lambda|, |\xi|\le 2\delta$ with $\delta\ll \underline{\cst}$), so a similar computation using \eqref{sigma2} shows that $\J(x,y,\xi,\nu,\varsigma)$ is non-degenerate.
Hence, this change of coordinates is admissible and there is an amplitude $B \in S^1(\R^{4n+1})$ so that
\begin{equation} \label{F1}
F   = \int_{\S^{n-1}\times\R}
\hspace{-.5cm} 
B(x,y,\xi,\nu,\varsigma)  \1_{\{|\nu\cdot\xi| > \varsigma >0\}} \dd \nu\dd \varsigma .
\end{equation}

Moreover, by \eqref{sigma2}, the principal part of $B$ is given by, if $(y,\xi,\varsigma) = (x,0,0)$ with $x\in\Omega'$, $\nu\in\S^{n-1}$, 
\begin{equation} \label{Bprinc}
\begin{aligned}
B(x,x,0,\nu,0)|_{\hbar=0} &= A_2(x,x,0,\nu,0)|_{\hbar=0} \J(x,x,0,\nu,0) \\
&= |V(x)|^{\frac{n-1}{2}}/2
\end{aligned}
\end{equation}
using \eqref{symbolg1} and  \eqref{J}. 
\smallskip

Using spherical coordinates, writing  $\xi = R \theta$ with $(R, \theta)\in\R_+ \times\S^{n-1}$ and making a change of variable $\varsigma \leftarrow R \varsigma$ in \eqref{F1}, we obtain for $(x,y,\xi) \in\R^{3n}$
\begin{equation} \label{F2}
F(x,y,\xi)   = R \int_{\S^{n-1}\times\R}
\hspace{-.5cm} 
B(x,y,\xi,\nu,R\varsigma)  \1_{\{|\nu\cdot\vartheta| > \varsigma >0 \}} \dd \nu\dd \varsigma  .
\end{equation}
Then, since $B$ is smooth, by \eqref{Bprinc}, the principal part of $F$ satisfies on the diagonal $\{x=y\}$,
\[\begin{aligned}
F(x,x,\xi) &= R \int_{\S^{n-1}\times\R}
\hspace{-.5cm} \big(
B(x,x,0,\nu,0)+\O(R) \big) \1_{\{|\nu\cdot\vartheta| > \varsigma >0 \}} \dd \nu\dd \varsigma  \\ 
&= \frac{|\xi||V(x)|^{\frac{n-1}{2}}}2 \left\{  \int_{\S^{n-1}} |\nu_1| \dd \nu + \O(|\xi|) \right\} \\
&=\cst_{n-1} |\xi||V(x)|^{\frac{n-1}{2}} + \O(|\xi|^2)
\end{aligned}\]
uniformly for $x\in\Omega'$. Here, we used that for $n \ge 2$, 
\[
\frac 12 \int_{\S^{n-1}} |\nu_1|  \dd \nu  = \frac{|\S^{n-2}|}{n-1} = |B_{n-1}| 
\]
given the relationship between the volume of the unit sphere $\S^{n-1} =\partial B_n$ and the unit ball $B_n\subset \R^{n}$ (in particular, $|\S^0| = |B_1| =2$). 
In dimension $n=1$, the situation is special as $\nu\in\{\pm1\}$, so that $\cst_0 = 1$.
This establishes \eqref{Fpp}.

\smallskip

For $\theta\in\S^{n-1}$, let $\mathcal R_\theta : \S^{n-1} \to \S^{n-1}$ be the rotation so that $\mathcal R_\theta^*(\theta) =\mathrm{e}_1$ (the first coordinate vector in $\R^n$).
By \eqref{F2}, using the invariance under rotation of the Haar measure $\dd \nu$ on the sphere $\S^{n-1}$, we obtain 
\begin{equation} \label{F3}
F (x,y,\xi) = R \int_{\S^{n-1}\times\R}
\hspace{-.5cm} 
B(x,y,\xi,\mathcal R_\theta \nu,R\varsigma)  \1_{\{|\nu_1| > \varsigma >0 \}} \dd \nu\dd \varsigma  , \qquad \xi = R\theta , \, (R, \theta)\in\R_+ \times\S^{n-1}. 
\end{equation}
This function is clearly smooth in $(x,y,\xi)\in\R^{2n}\times\{\R^n\setminus 0\}$. 
In fact, for any $k\in\N_0$, as $R(\xi) \to0$, 
\[
\big\|\partial_\xi^k \mathcal R_{\theta(\xi)}  \big\|= \O_k(R^{-k}) , \qquad \big\| \partial_\xi^kR(\xi) \big\| = \O_k(R^{1-k}) .
\]
Indeed, $\theta \in \S^{n-1} \mapsto \mathcal R_\theta$ is smooth,
with  $\partial_\xi R(\xi) = \theta(\xi)$ for $\xi \neq 0$  and $\big\| \partial_\xi^k\theta(\xi) \big\| = \O_k(R^{-k})$.
Thus, by differentiating \eqref{F3} under the integral. we conclude that for any $k\in\N_0$,
\(
\big\| \partial_\xi^k F(x,y,\xi)\big\|=  \O_k(R^{1-k}) .
\)
as $R(\xi) \to0$. 
This completes the proof that $F\in\mathcal{F}(\R^{2n},\R^n)$.
\end{proof}

\subsubsection{Step 4: Control of  $I_{1,\hbar}$} \label{sec:error}

At this stage, it remains to show that  $I_{1,\hbar} =
\O(\hbar^{1-n})$, where we recall that $I_{1,\hbar}$ is given by \eqref{comm3}. By \eqref{comm3}, we can  work with $(\sigma,\lambda)$ fixed and it suffices to obtain the following bounds (We drop the dependency of $\Psi_1, \mathcal{Z},$ etc on $(\sigma,\lambda)$ for notational convenience. The proof of Proposition~\ref{prop:err1} relies on the stationary phase as formulated in Proposition~\ref{lem:statphase} which allows to deal with the parameters  $(\sigma,\lambda)$ in a uniform way.).

\begin{prop} \label{prop:err1}
Let $(\Psi_1,\mathcal{Z})$ be as in Section~\ref{sec:phase} and assume that $A\in S^1(\R^{4n})$ is supported on the set
$\big\{x\in\Omega', |x-y| \le \underline{\ell} , |\mathcal{Z}\big(x,\omega_1,\omega_2)| > \delta \big\}$ with  $\underline{\ell} \ll \delta \ll 1$. 
Then, as $\hbar\to0$, 
\[
\frac{1}{(2\pi\hbar)^{2n}}\int_{\omega_1,\omega_2\in\S^{n-1}}
\hspace{-1cm}e^{i\tfrac{\Psi_1(x,y,\omega_1,\omega_2)}{\hbar}}(f|_\Omega(x)-f|_\Omega(y))^2 A(x,y,\omega_1,\omega_2) \dd x \dd y \dd \omega_1\dd \omega_2 = \O(\hbar^{1-n}) . 
\]
\end{prop}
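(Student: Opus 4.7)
The strategy exploits two structural features. First, the phase $\Psi_1 = (x-y)\cdot \zeta(x,y,\omega_1,\omega_2)$ is non-stationary in $(x,y)$ on $\supp A$: since $|\mathcal{Z}|\ge \delta$ and $|x-y|\le \underline{\ell}\ll\delta$, one has $|\partial_y\Psi_1|, |\partial_x\Psi_1|\ge \delta/2$. Second, the amplitude $(f|_\Omega(x)-f|_\Omega(y))^2$ fails to be smooth only through the jumps of $\1_\Omega$ across $\partial\Omega$ in each variable. My plan is to integrate by parts twice (once in $y$, once in $x$) using the non-stationarity, extract the resulting surface contributions via the distributional identity $\partial_y \1_\Omega = -\nu_\Omega\,\dd\mathcal{H}^{n-1}_{\partial\Omega}$, and then apply the stationary phase method to the boundary integral on $\partial\Omega\times\partial\Omega\times \S^{n-1}\times\S^{n-1}$.

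For the first step, using the non-stationary phase operator $L_y := \tfrac{\hbar}{i|\partial_y\Psi_1|^2}\,\partial_y\Psi_1\cdot \partial_y$ (which satisfies $L_y e^{i\Psi_1/\hbar}=e^{i\Psi_1/\hbar}$) and its formal adjoint, the Leibniz rule yields (i) a smooth volume term of size $\hbar$ with $\1_\Omega(y)$ intact, whose iterated IBP in $y$ reduce it to $\O(\hbar^N)$ for any $N$, and (ii) a surface term of size $\hbar$ supported on $\{\hat y\in\partial\Omega\}$. Applying the analogous $L_x$ once more eliminates the $\1_\Omega(x)$ discontinuity, and the net result is that, up to $\O(\hbar^\infty)$,
\[
I_{1,\hbar} = \frac{\hbar^2}{(2\pi\hbar)^{2n}}\int_{\partial\Omega\times\partial\Omega\times\S^{n-1}\times\S^{n-1}} e^{i\Psi_1(\hat x,\hat y,\omega_1,\omega_2)/\hbar}\, B(\hat x,\hat y,\omega_1,\omega_2)\dd\hat x\dd\hat y\dd\omega_1\dd\omega_2 ,
\]
with $B\in S^1$ smooth and supported on $\{|\hat x - \hat y|\le \underline{\ell},\, |\mathcal{Z}|\ge \delta\}$. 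It then suffices to show this boundary integral is $\O(\hbar^{n-1})$.

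For the second step, using $\partial_{\omega_j}\mathcal{Z} = \mp r_j^c\mathrm{I}_n + \O(\sigma)$ (restricted to $T_{\omega_j}\S^{n-1}$) and $\Psi_1 = (\hat x - \hat y)\cdot \zeta$, the mixed Hessian block reads
\[
\partial_{\omega_j}^\tau\partial_{\hat y}^\tau \Psi_1 = \mp r_j^c\, P_{T_{\hat y}\partial\Omega} \,P_{T_{\omega_j}\S^{n-1}} + \O(\underline{\ell}) ,
\]
which is invertible iff $\omega_j\cdot\nu_\Omega(\hat y)\ne 0$. On the critical set of $\Psi_1$ in $\hat y$ one has $\zeta\parallel \nu_\Omega(\hat y)$ with $|\zeta|\ge\delta/2$, and since $\zeta\simeq r_1^c\omega_1-r_2^c\omega_2$, the projection onto $\nu_\Omega(\hat y)$ forces $\max\{|\omega_1\cdot\nu_\Omega(\hat y)|,|\omega_2\cdot\nu_\Omega(\hat y)|\}$ to be bounded below. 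A partition of unity subordinate to $\{|\omega_1\cdot\nu_\Omega(\hat y)|\ge c\}$, $\{|\omega_2\cdot\nu_\Omega(\hat y)|\ge c\}$, and their complement reduces the problem to three pieces: on the first two, the parametric stationary phase of Proposition~\ref{lem:statphase} applied in the $2(n-1)$ variables $(\hat y,\omega_j)$ (with $\hat x,\omega_{3-j},\sigma,\lambda$ as parameters) yields the factor $\hbar^{n-1}$; on the complement, $\partial_{\hat y}^\tau\Psi_1 = -\zeta^\tau + \O(\underline{\ell})$ is bounded below (since $\zeta$ is tangent to $\partial\Omega$ there and $|\zeta|\ge\delta/2$), and repeated tangential IBP in $\hat y$ give $\O(\hbar^N)$.

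The main obstacle is verifying non-degeneracy of the $2(n-1)\times 2(n-1)$ Hessian uniformly in the parameters: the diagonal blocks $\partial_{\hat y}^{\tau,2}\Psi_1$ and $\partial_{\omega_j}^{\tau,2}\Psi_1$ each carry a factor $\hat x - \hat y$ and are thus $\O(\underline{\ell})$, so by choosing $\underline{\ell}$ small relative to $c$ and $\delta$, the determinant equals $(r_j^c)^{2(n-1)}(\omega_j\cdot\nu_\Omega(\hat y))^2 + \O(\underline{\ell})$, bounded away from zero on each patch. The dependence on $(\sigma,\lambda)$ is smooth and uniform, so Proposition~\ref{lem:statphase} handles these as external parameters and one obtains the required bound $\O(\hbar^{n-1})$ for the boundary integral.
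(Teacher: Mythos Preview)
Your approach matches the paper's: reduce to a double-boundary integral by integrating by parts (exploiting non-stationarity of $\Psi_1$ in $x,y$ on $\supp A$), then apply stationary phase on $\partial\Omega\times\partial\Omega\times\S^{n-1}\times\S^{n-1}$ by pairing $\hat y$ with one of the $\omega_j$. Your Step~2 is essentially identical to the paper's: the paper partitions on $|\Pi_{\hat x}^\perp\omega_j|$ rather than $|\omega_j\cdot\nu_\Omega(\hat y)|$, but these are equivalent since $|\hat x-\hat y|\le\underline{\ell}$, and your Hessian computation (determinant $\sim (r_j^c)^{2(n-1)}(\omega_j\cdot\nu)^2$) and the non-stationary fallback on the complement agree with the paper's.

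Step~1, however, has a gap. You claim that the volume term after one IBP in $y$ (``with $\1_\Omega(y)$ intact'') can be reduced to $\O(\hbar^N)$ by iterated IBP, and that the net result is a single double-boundary integral up to $\O(\hbar^\infty)$. This is not correct: each subsequent IBP in $y$ again hits $\1_\Omega(y)$ and produces a \emph{new} surface term, so iteration does not drive the volume remnant to $\O(\hbar^N)$ without accumulating a whole family of surface contributions that you have not tracked. The paper resolves this cleanly by first expanding $(f|_\Omega(x)-f|_\Omega(y))^2$ into terms containing only $\1_\Omega(x)$, only $\1_\Omega(y)$, or the product $\1_\Omega(x)\1_\Omega(y)$. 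The first two are smooth in one of the variables, so repeated IBP in that variable yields $\O(\hbar^\infty)$ with no boundary terms. For the cross term, one IBP in $y$ gives a surface term on $\{\hat y\in\partial\Omega\}$ plus a volume term of the \emph{same form} with an extra factor of $\hbar$; an induction (not an iteration to $\O(\hbar^\infty)$) then shows the whole expression is $\O(\hbar^{1-n})$ once the surface term is. The same inductive device is repeated for the IBP in $x$ on the surface term. The gap is easily repaired along these lines, but as written your reduction to the boundary integral is not justified.
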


\begin{proof} 
Expanding the square, we need to deal with two integrals of the type
\begin{align}
\label{int1}
&\frac{1}{(2\pi\hbar)^{2n}}\int
e^{i\tfrac{\Psi_1(x,y,\omega_1,\omega_2)}{\hbar}} A_1(x,y,\omega_1,\omega_2) \1_\Omega(x)\dd x \dd y \dd \omega_1\dd \omega_2 \\
\label{int2}
&\frac{1}{(2\pi\hbar)^{2n}}\int
e^{i\tfrac{\Psi_1(x,y,\omega_1,\omega_2)}{\hbar}} A_2(x,y,\omega_1,\omega_2) \1_\Omega(x)\1_\Omega(y) \dd x \dd y \dd \omega_1\dd \omega_2 
\end{align}
where the amplitudes $A_1,A_2$ belong to $S^1(\R^{4n})$. There is a
minus sign, but we do not expect a cancelation in this regime.

According to \eqref{phaseZ},
\begin{equation} \label{partialPhase2}
\partial_y \Psi_1(x,y,\omega_1,\omega_2)=
- \mathcal{Z}(x,\omega_1,\omega_2)+ \O(x-y)  .
\end{equation}
Hence, if $A\in S^1(\R^{4n})$  is supported in 
$\big\{ |x-y| \le \underline{\ell} , |\mathcal{Z}\big(x,\omega_1,\omega_2)| > \delta \big\}$ with $\underline{\ell} \ll \delta \ll \underline{\cst}$, 
$\partial_y \Psi_1$ does not vanish on $\supp(A)$
and we can write 
\begin{equation*}
A(x,y,\omega_1,\omega_2) = 
\tfrac{1}{2\pi}J(x,y,\omega_1,\omega_2)  \cdot \partial_y\Psi_1(x,y,\omega_1,\omega_2) ,
\end{equation*}
where $J: \R^{4n}\to\R^n$ is also in $S^1$ with the same support as
$A$.

In these circumstances, we can repeat this procedure and integrate by parts with respect to $y$
arbitrarily often in \eqref{int1}, and obtain
\[
  \eqref{int1}=\O(\hbar^{\infty}).
\]
For \eqref{int2}, we integrate by parts once and find
that there are $B_1,B_2\in S^1(\R^{4n})$ so that
\begin{align}
\label{int3}
\eqref{int2}&=\frac{1}{(2\pi\hbar)^{2n-1}}\int_{\{\hat y\in\partial\Omega\}}
\hspace{-.5cm} e^{i\tfrac{\Psi_1(x,\hat y,\omega_1,\omega_2)}{\hbar}} B_1(x,\hat y,\omega_1,\omega_2) \1_\Omega(x)\dd x \dd \hat{y} \dd \omega_1\dd \omega_2 \\
&\label{int4}\quad
+\frac{1}{(2\pi\hbar)^{2n-1}}\int
e^{i\tfrac{\Psi_1(x,y,\omega_1,\omega_2)}{\hbar}} B_2(x,y,\omega_1,\omega_2) \1_\Omega(x)\1_\Omega(y) \dd x \dd y \dd \omega_1\dd \omega_2 .
\end{align}
Here, $B_1(\cdot) = \nu_\Omega(\hat{y}) \cdot J(\cdot) A_2(\cdot)$
where $\nu_\Omega$ is the exterior normal to  $\partial\Omega$, and we
integrate the
variable $\hat{y}$ with respect to the volume
measure over $\partial\Omega$ -- it is worth remembering here that
$\Omega$ has a smooth boundary.

The integral \eqref{int4} has the same form as the original integral
\eqref{int2} with an extra power of $\hbar$. 
Thus, if we show that $\eqref{int3} = \O(\hbar^{-n+1})$, by induction, we can conclude that also $\eqref{int2} =\O(\hbar^{-n+1})$.

\smallskip

Then we consider the integral \eqref{int3}. Similarly to \eqref{phaseZ}, the phase $\partial_x \Psi_1$ does not vanish on $\supp(B_1)$. Thus, integrating by part with respect to $x$, we obtain
\begin{equation} 
\label{int5}
\eqref{int3}=\frac{1}{(2\pi\hbar)^{2(n-1)}}\int_{\{\hat x,\hat y\in\partial\Omega\}}
\hspace{-.5cm} e^{i\tfrac{\Psi_1(\hat x,\hat y,\omega_1,\omega_2)}{\hbar}} B_3(\hat x,\hat y,\omega_1,\omega_2)\dd \hat{x} \dd \hat{y} \dd \omega_1\dd \omega_2  
+\frac{Q_\hbar}{2\pi\hbar}
\end{equation}
where $B_3\in S^1(\R^{4n})$ and $Q_\hbar$ is an integral of the same type as \eqref{int3}. 
Thus, by the same reasoning as above, it suffices to show that the
boundary integral in \eqref{int5} is $\O(\hbar^{1-n})$ to conclude that the original integral is also of order $\O(\hbar^{1-n})$. 

In dimension $n=1$, obviously $\eqref{int5}=\O(1)$, so this reasoning already shows that the integral in question is bounded, as required.

In what follows, we assume that $n\ge 2$ and focus on the boundary
integral \eqref{int5} for a fixed $\hat{x}\in \partial\Omega$. Recall
that the amplitude $B_3$ belongs to $S^1$ and it is supported on 
\(
\big\{\hat{x} , \hat{y} \in \partial\Omega :  |\hat{x} - \hat{y}| \le \underline{\ell} , |\mathcal{Z}\big(\hat{x},\omega_1,\omega_2)| > \delta \big\}.
\)
Let $\Pi_{\hat x}$ denote the orthogonal projection on $T_{\hat x}(\partial \Omega)$. On the above set ($\underline{\ell}$ is small), 
\[
(\hat{x}-\hat{y}) = \Pi_{\hat x}(\hat{x}-\hat{y}) +  \O\big(|\hat{x}-\hat{y}|^2\big) 
\]
then, by \eqref{phase2}, the phase satisfies
\begin{equation} \label{phase3}
\begin{aligned}
\Psi_1(\hat{x},\hat{y},\omega_1,\omega_2)&  = 
(\hat{x}-\hat{y}) \cdot  \mathcal{Z}(\hat{x},\omega_1,\omega_2) + \O\big(|\hat{x}-\hat{y}|^2\big) \\
& = \Pi_{\hat x}(\hat{x}-\hat{y}) \cdot  \Pi_{\hat x}\mathcal{Z}(\hat{x},\omega_1,\omega_2) +  \O\big(|\hat{x}-\hat{y}|^2\big) .
\end{aligned}
\end{equation}

We split the  integral \eqref{int5} in several parts depending on $\Pi_{\hat x}^\perp(\omega_j) $ for $j\in\{1,2\}$. 
Let
\[
\mathcal{S}_{\hat x,\underline{\ell}}^{\pm} := \big\{ \omega \in \S^{n-1}  :  \pm \Pi_{\hat x}^\perp(\omega)  > \ell \big\}  
\]
and  $\chi_{\hat x,\underline{\ell}}^{j} :  \S^{n-1} \to [0,1] $ for
$j\in\{0,+,-\}$ be smooth cutoffs such that \[\1_{\mathcal{S}_{\hat x,
      2\underline{\ell}}^{\pm}} \le \chi_{\hat x,
    \underline{\ell}}^{\pm} \le \1_{\mathcal{S}_{\hat x,
      \underline{\ell}}^{\pm}}\qquad \qquad 
\chi_{\hat x, \underline{\ell}}^{0} +\chi_{\hat x, \underline{\ell}}^{-} +\chi_{\hat x, \underline{\ell}}^{+} =\1_{\S^{n-1}}.\] 
We first consider the integrals
\begin{equation} \label{comm9}
(\hat{x},\omega_2) \mapsto \frac{1}{(2\pi\hbar)^{2(n-1)}}
\int \chi_{\hat x, \underline{\ell}}^{\pm}(\omega_1) e^{i\tfrac{\Psi_1(\hat{x},\hat{y},\omega_1,\omega_2)}{\hbar}} B_3(\hat{x},\hat{y},\omega_1,\omega_2)  \dd \hat y\dd \omega_1. 
\end{equation}
Since the map $ \S^{n-1}\ni\omega \mapsto \Pi_{\hat x}\omega$ is a diffeomorphism 
from $\mathcal{S}_{\hat x, \underline{\ell}}^{\pm}$ to its image, we can make a change of variables
\[
\hat y \to u= \Pi_{\hat x}(\hat y - \hat x) , \qquad \omega_1 \to v= \Pi_{\hat x}(\omega_1) , 
\]
in \eqref{comm9}. 
We obtain 
\[
\eqref{comm9} =\frac{1}{(2\pi\hbar)^{2(n-1)}}  \int e^{i\tfrac{\Psi_2(\hat{x}, \omega_2, u,v)}{\hbar}} B_4(\hat x ,\omega_2, u,v)  \dd u \dd v
\]
where the amplitude $B_4$ belongs to $S^1$ and the new phase $\Psi_2$ satisfies, by \eqref{phase3} and \eqref{Z1},
\[
\Psi_2(\hat{x}, \omega_2, u,v)=  u \cdot \big(R_+  v - R_- \Pi_{\hat x}(\omega_2) \big)  +\O(u^2) ,
\qquad R_{\pm} = \sqrt{\lambda-V(\hat{x}) \pm \sigma} .
\]
Any critical point of $\Psi_2$ with respect to $(u,v)$ satisfies $u=0$ and the Hessian is non-degenerate with determinant $R_+^{2(n-1)}$ and $R_\pm \ge \sqrt{\underline{\cst}}$  
(there is at most one critical point given by $v=\Pi_{\hat x}(\omega_2)R_-/R_+$). 
Thus, by Proposition~\ref{lem:statphase} with $d=2(n-1)$, one has the
following estimate, uniformly in $ (\hat x, \omega_2) \in \partial\Omega\times\S^{n-1}$ (as well as the auxiliary parameters  $(\sigma,\lambda)\in\R^2$):
\[
\eqref{comm9} = \O(\hbar^{n-1}) . 
\]
By the same argument, with the same uniformity, 
\[
\int \chi_{\hat x, \underline{\ell}}^{\pm}(\omega_2) e^{i\frac{\Psi_1(\hat{x},\hat{y},\omega_1,\omega_2)}{\hbar}} B_3(\hat{x},\hat{y},\omega_1,\omega_2)  \dd \hat y\dd \omega_2  = \O(\hbar^{n-1}) .
\]

Hence, we are left to deal with the integral
\[
\hat x \mapsto \frac{1}{(2\pi\hbar)^{2(n-1)}}  \int \chi_{\hat x, \underline{\ell}}^{0}(\omega_1)  \chi_{\hat x, \underline{\ell}}^{0}(\omega_2) e^{i\frac{\Psi_1(\hat{x},\hat{y},\omega_1,\omega_2)}{\hbar}} B_3(\hat{x},\hat{y},\omega_1,\omega_2)  \dd \hat y\dd \omega_1 \dd \omega_2 .
\]
Since ${\hat x} \mapsto \Pi_{\hat x}$ is smooth (we assume that $\partial\Omega$ is smooth) and $\partial_{\hat y}=  \Pi_{\hat y}(\partial_y)+\O(x-y)$, according to \eqref{partialPhase2}, 
\[
\partial_{\hat y} \Psi_1(\hat{x},\hat{y},\omega_1,\omega_2)
= \Pi_{\hat x}\mathcal{Z}(\hat{x},\omega_1,\omega_2)  + \O(\underline{\ell}) 
\qquad\text{for $\big\{ \hat y \in \partial\Omega :  |\hat y- \hat x| \le \underline{\ell} \big\}$}.
\]

Thus, on the support of the previous integral,  $\big\{ | \Pi_{\hat x}^\perp(\omega_1)| , |\Pi_{\hat x}^\perp(\omega_2)| \le \underline{\ell} ,  |\mathcal{Z}\big(\hat x,\omega_1,\omega_2)| > \delta \big\}$ with $\underline{\ell}\ll\delta$.
Since $\mathcal{Z}$ is a linear combination of $(\omega_1,\omega_2)$, see~\eqref{Z2}, we have $ \Pi_{\hat x}  \mathcal{Z}(\hat x,\omega_1,\omega_2) = \mathcal{Z}(\hat x,\omega_1,\omega_2) + \O(\underline{\ell})$ and then
\[
\big|\partial_{\hat y} \Psi_1(\hat{x},\hat{y},\omega_1,\omega_2)\big| \ge \delta/2 . 
\]
This shows that the phase has no critical point in the previous integral. Hence, by Lemma~\ref{lem:statphase} (non-stationary phase version), we conclude that 
uniformly in $\hat{x}\in\partial\Omega$ (as well as the auxiliary parameters  $(\sigma,\lambda)\in\R^2$), 
\[
\int \chi_{\hat x, \underline{\ell}}^{0}(\omega_1)  \chi_{\hat x, \underline{\ell}}^{0}(\omega_2) e^{i\tfrac{\Psi_1(\hat{x},\hat{y},\omega_1,\omega_2)}{\hbar}} B_3(\hat{x},\hat{y},\omega_1,\omega_2)  \dd \hat y\dd \omega_1 \dd \omega_2 
= \O(\hbar^\infty).
\]

Altogether, this establishes that the integrals \eqref{int5}, \eqref{int3} as well as the original integral are all $\O(\hbar^{1-n})$ with the required uniformity in $(\sigma,\lambda)\in\R^2$. 
Going back to \eqref{comm3}, we conclude that $I_{1,\hbar} = \O(\hbar^{1-n})$ and this also finalizes the proof of Proposition~\ref{prop:red}. 
\end{proof}

\subsection{Case of a contractible open set} \label{sec:int}

It remains to study integrals of the form given by the right-hand-side
of Proposition \ref{prop:red}. We first do so in the case where
$\Omega$ is topologically simple: if it is diffeomorphic to the unit
ball, then we can proceed by a change of variables.

We denote $B = B_n= \{x\in\R^n : |x| <1\}$. 

\begin{prop} \label{prop:intasymp}
Let $\Omega \Subset \R^n$ be a  bounded open set with a smooth
boundary; suppose that there is a $C^\infty$-diffeomorphism $\varphi : \R^n \to \R^n$ with $\varphi^{-1}(\Omega)=B$.  
Let $F \in \mathcal F(\R^{2n},\R^n)$ according to~Definition~\ref{class:F} and assume that as $r\to0$,
\begin{equation} \label{condR}
F(x,x,r\omega) = rR(x) + \O(r^2)
\end{equation}
uniformly for $(x,\omega)\in \Omega'\times \S^{n-1}$.
Let $f\in C^\infty(\R^n)$ and denote $\mathrm{f}= f\1_{\Omega}$.
Then, as $\hbar\to0$, 
\[
\int e^{i\frac{(x-y)\cdot\xi}{\hbar}} F(x,y,\xi) (f|_\Omega(x)-f|_\Omega(y))^2 \dd \xi \dd x \dd y 
=  (2\pi\hbar)^{n+1} \bigg( \frac{\log \hbar^{-1}}{-\pi^2}  \int_{\partial\Omega} R(\hat x) f(\hat x)^2 \dd \hat x + \O(1) \bigg) 
\]
where $\dd\hat x$ denotes the volume measure on the boundary $\partial\Omega$. 
\end{prop}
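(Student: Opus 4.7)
The plan is first to use the $C^\infty$-diffeomorphism $\varphi$ to reduce the problem to the model case $\Omega=B$. Setting $x=\varphi(x')$, $y=\varphi(y')$, and $\xi=M(x',y')^{-T}\xi'$ where $M(x',y'):=\int_0^1 D\varphi(tx'+(1-t)y')\dd t$ satisfies $\varphi(x')-\varphi(y')=M(x',y')(x'-y')$, the phase transforms into the canonical $(x'-y')\cdot\xi'$, and the amplitude into a new $\tilde F\in\mathcal{F}(\R^{2n},\R^n)$. The Jacobian factors from $\dd x, \dd y, \dd\xi$ combine with the transformed outward-normal direction on $\partial B$ so that the boundary integral transforms correctly: $\int_{\partial B}\tilde R(\hat x',\hat x')\tilde f(\hat x')^2\dd\sigma(\hat x') = \int_{\partial\Omega}R(\hat x)f(\hat x)^2\dd\hat x$. (On $\partial B$ the new coefficient may a priori depend on the outward-normal direction, but the argument that follows applies equally well to this case.) Henceforth we assume $\Omega=B$.

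Next, we use the algebraic identity
\[
(f|_B(x)-f|_B(y))^2 = f(x)f(y)(\1_B(x)-\1_B(y))^2 + (f(x)-f(y))(f|_B(x)-f|_B(y)).
\]
In the second summand, the factor $f(x)-f(y)=\O(|x-y|)$ paired with the oscillating phase, via one integration by parts in $\xi$, extracts an $\hbar$ at the cost of one $\xi$-derivative of $F$ (admissible since $F\in\mathcal{F}$); the resulting integral is then controlled by Lemma \ref{lem:commsmooth} and Proposition \ref{prop:key}, and contributes $\O(\hbar^{n+1})$ (no log). Expanding $(\1_B(x)-\1_B(y))^2 = \1_B(x)+\1_B(y)-2\1_B(x)\1_B(y)$, the single-indicator pieces symmetrise via $(x,y,\xi)\leftrightarrow(y,x,-\xi)$ into an integral of the form $\int e^{i(x-y)\cdot\xi/\hbar}G(x,y,\xi)f(x)^2\1_B(x)\dd\xi\dd x\dd y$ with $G(x,y,\xi)=F(x,y,\xi)+F(y,x,-\xi)$ still in $\mathcal{F}$; since $G(x,x,0)=0$, a Fourier-transform computation (or another application of Lemma \ref{lem:commsmooth}) shows this is again $\O(\hbar^{n+1})$. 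Consequently, the logarithm must arise from the product term
\[
\mathcal{I}(\hbar):=-2\int e^{i(x-y)\cdot\xi/\hbar} F(x,y,\xi)f(x)f(y)\1_B(x)\1_B(y)\dd\xi\dd x\dd y.
\]

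For $\mathcal{I}(\hbar)$, Taylor-expand $F(x,y,\xi)=F(v,v,\xi)+\O(|x-y|\cdot|\xi|)$ at $v=(x+y)/2$; the $\O(|x-y|)$ remainder is absorbed into an $\O(\hbar^{n+1})$ error by a further integration by parts in $\xi$. The principal part is then evaluated by Plancherel after rescaling $\xi=\hbar\eta$: it reduces to
\[
-2\hbar^{n+1}\int R(v) f(v)^2 \bigg(\int |\eta|\,|\widehat{(\chi_v\1_B)}(\eta)|^2\dd\eta\bigg)\dd v+\O(\hbar^{n+1})
\]
up to a midpoint-localisation $\chi_v$. The Bessel asymptotic $J_{n/2}(r)\sim\sqrt{2/(\pi r)}\cos(r-(n+1)\pi/4)$ gives $|\widehat{\1_B}(\eta)|^2\sim 2^{n+1}\pi^{n-1}|\eta|^{-(n+1)}\cos^2(|\eta|-(n+1)\pi/4)$ as $|\eta|\to\infty$, so after angular integration and averaging of $\cos^2$ to $1/2$,
\[
\int_{1\le|\eta|\le\hbar^{-1}}|\eta|\,|\widehat{\1_B}(\eta)|^2\dd\eta \sim |\partial B|\cdot 2^n\pi^{n-1}\log\hbar^{-1}.
\]
Because the logarithm is generated entirely by the large-$|\eta|$ tail (i.e.\ by the boundary jump of $\1_B$), the coefficient $f(v)^2 R(v)$ localises on $\partial B$, yielding
\[
-2\cdot 2^n\pi^{n-1}\hbar^{n+1}\log(\hbar^{-1})\int_{\partial B}\hspace{-.2cm} R(\hat x)f(\hat x)^2\dd\sigma = (2\pi\hbar)^{n+1}\frac{-\log\hbar^{-1}}{\pi^2}\int_{\partial B}\hspace{-.2cm} R(\hat x)f(\hat x)^2\dd\sigma,
\]
matching the claim.

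The main obstacle is the careful quantitative control of the Taylor-remainder terms arising from the $(x,y)$-dependence of $F$, which must be shown to contribute only at the level $\O(\hbar^{n+1})$ and not $\O(\hbar^{n+1}\log\hbar^{-1})$; this uses the $\mathcal{F}$-class Lipschitz bounds in $\xi$ combined with the Hausdorff-measure estimates of Lemma \ref{lem:key_estimate}. The other delicate point is extracting the exact constant $-1/\pi^2$, which rests on the Bessel asymptotics for $\widehat{\1_B}$ and the averaging of $\cos^2$ to $1/2$, together with tracking all prefactors through the rescaling $\xi=\hbar\eta$.
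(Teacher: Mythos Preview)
Your overall architecture matches the paper: reduce to $\Omega=B$ by a diffeomorphism, strip off the smooth $f(x)-f(y)$ contributions, isolate the term $-2\int e^{i(x-y)\cdot\xi/\hbar}F\cdot f(x)f(y)\1_B(x)\1_B(y)$, and extract the $\log$ via the Bessel asymptotics of $\widehat{\1_B}$. Your change of variables is a legitimate variant: instead of the paper's choice $\xi\mapsto D\varphi(\tfrac{x+y}{2})^{-*}\xi$ (which leaves a cubic phase error $Q(v,u)\xi$ to be controlled by repeated integration by parts, cf.\ Proposition~\ref{prop:smooth-diffeo}), you use the exact linearisation $M(x',y')=\int_0^1 D\varphi$, which makes the phase exactly $(x'-y')\cdot\xi'$ at the price of a direction-dependent $\tilde R(x',\omega)$. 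This trade-off is fine, and the boundary Jacobian identity you allude to is precisely the computation \eqref{Jacborder1}--\eqref{Jacborder2} in the paper.

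The genuine gap is in your final ``Plancherel'' step. After reducing to $g(v,\xi)=F(v,v,\xi)f(v)^2$ with $v=\tfrac{x+y}{2}$, you cannot simply freeze $R(v)f(v)^2$ and write the remaining $(x,y,\xi)$-integral as $\int|\eta|\,|\widehat{\chi_v\1_B}(\eta)|^2\dd\eta$: the midpoint $v$ is coupled to $(x,y)$, so no direct Plancherel is available, and the object ``$\chi_v\1_B$'' is not defined. More importantly, the assertion that ``the coefficient $R(v)f(v)^2$ localises on $\partial B$'' is exactly the non-trivial content of the proposition and cannot be taken for granted. In the paper this localisation arises through a specific mechanism (Proposition~\ref{prop:Ired}): one Fourier-transforms in $v$ to get $\widehat g(\zeta,\xi)$, expands the Bessel product as
\[
\widehat{\1_B}(\tfrac\zeta2+\tfrac\xi\hbar)\widehat{\1_B}(\tfrac\zeta2-\tfrac\xi\hbar)\sim \frac{\cos(2|\xi|/\hbar+2c_n)+\cos(\zeta\cdot\omega)}{\pi(|\xi|/\hbar)^{n+1}},
\]
and then the $\cos(\zeta\cdot\omega)$ term, upon inverse Fourier transform in $\zeta$, evaluates $g$ at $v=\omega\in\partial B$. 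This is how the value of $Rf^2$ on the sphere, and not some bulk average, emerges as the $\log$ coefficient. Your sketch does not supply this (or an equivalent) argument, so the identification of the constant $\int_{\partial B}Rf^2\dd\sigma$ is unjustified. A correct completion requires either reproducing this Fourier-in-$v$ computation, or an alternative argument showing that the $\log$-divergent part of $\int e^{iu\cdot\xi/\hbar}|\xi|\,\1_B(v+\tfrac u2)\1_B(v-\tfrac u2)\dd\xi\dd u$ concentrates as a surface measure on $\{v\in\partial B\}$ with the correct density.
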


Without loss of generality, we assume that $\varphi = \mathrm{I}$ on $\R^n \setminus \Omega'$. Throughout this section, we denote 
\begin{equation} \label{oscint}
L_\hbar  : = \int e^{i\frac{(x-y)\cdot\xi}{\hbar}} F(x,y,\xi) (f|_\Omega(x)-f|_\Omega(y))^2 \dd \xi \dd x \dd y . 
\end{equation}

The proof of Proposition~\ref{prop:intasymp} will be divided in three steps.
\begin{itemize}[leftmargin=*] 
\item In Section~\ref{sec:chv}, using the map $\varphi$ as a change of coordinates, we show that we can reduce \eqref{oscint} to the case where $\Omega = B$.
\item In Section~\ref{sec:ball}, we obtain the asymptotics of
  \eqref{oscint} when $\Omega = B$ by using the expression of the
  Fourier transform $\widehat{\1_B}$ in terms of a Bessel function, cf.~Lemma~\ref{lem:Bessel}.
\item Combining these results, we complete the proof in Section~\ref{sec:proofintasymp}.
\end{itemize}

These computations rely importantly on the estimates from Section~\ref{sec:est} for integrals involving $\1_\Omega$. 

\subsubsection{Change of variables} \label{sec:chv}

\begin{prop}\label{prop:smooth-diffeo}
Under the assumptions of Proposition~\ref{prop:intasymp}, define a map
\[
\Phi :(x,y,\xi) \in \R^{3n} \mapsto \big(\varphi(x),\varphi(y), D\varphi(\tfrac{x+y}{2})^{-*}\xi\big) 
\]
and let $G := \J_\Phi\,  F\circ\Phi$. Then $G$ is also in the class $\mathcal F$
and, as $\hbar\to 0$, 
\[\begin{aligned}
\int e^{i\frac{(x-y)\cdot\xi}{\hbar}} F(x,y,\xi) (f|_\Omega(x)-f|_\Omega(y))^2 \dd \xi \dd x \dd y 
= \int e^{i\frac{(x-y)\cdot\xi}{\hbar}} G(x,y,\xi) (\mathrm{g}_B(x)-\mathrm{g}_B(y))^2\dd \xi \dd x \dd y 
+ \O(\hbar^{n+1}) 
\end{aligned}\]
where $\mathrm{g}_B = f|_\Omega(\varphi)=f(\varphi)\1_B$.
\end{prop}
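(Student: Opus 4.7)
The plan is the natural one: pull back the $(x,y)$ integration by $\varphi$ so that $\Omega$ becomes $B$, and then rectify the resulting non-linear phase by a linear change of $\xi$ based at the midpoint $w=\tfrac{x+y}{2}$. Substituting $x \leftarrow \varphi(x)$, $y \leftarrow \varphi(y)$ on the left-hand side turns $f|_\Omega(\varphi(\cdot))$ into $\mathrm{g}_B(\cdot)$ and introduces a Jacobian $|J_\varphi(x)||J_\varphi(y)|$, while replacing the phase by $(\varphi(x)-\varphi(y))\cdot\xi/\hbar$. The crucial observation is the \emph{midpoint Taylor expansion}
\[
\varphi(x)-\varphi(y) = D\varphi(w)(x-y) + R(x,y), \qquad R(x,y) = O(|x-y|^3),
\]
in which the $O(|x-y|^2)$ term cancels by symmetry about $w$. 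Setting $\eta = D\varphi(w)^* \xi$ yields $d\eta = |\det D\varphi(w)|\,d\xi$, and the overall prefactor is then precisely $\J_\Phi = \frac{|J_\varphi(x)||J_\varphi(y)|}{|\det D\varphi(w)|}$, so that the transformed amplitude is exactly $G=\J_\Phi \cdot F\circ\Phi$. The phase becomes $(x-y)\cdot\eta/\hbar + \Xi(x,y,\eta)/\hbar$ with $\Xi=R(x,y)\cdot D\varphi(w)^{-*}\eta = O(|x-y|^3 |\eta|)$.

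The fact that $G\in\mathcal F$ is then routine: since the $\xi$-component of $\Phi$ is linear in $\xi$ with $|D\varphi(w)^{-*}\xi|\asymp|\xi|$ uniformly on the compact set on which we work, and since $F\in\mathcal F$ satisfies $|\partial_\xi^k\partial^j_{x,y}F|\le C|\xi|^{1-k}$, the chain rule together with the Leibniz rule applied to the smooth factor $\J_\Phi$ gives the same estimates for $G$. Compact support of $G$ is inherited from that of $F$ together with the diffeomorphism property of $\varphi$. It remains to control the error
\[
E_\hbar = \int e^{i(x-y)\cdot\eta/\hbar}\bigl(e^{i\Xi/\hbar}-1\bigr) G(x,y,\eta) \bigl(\mathrm{g}_B(x)-\mathrm{g}_B(y)\bigr)^2 \dd x \dd y \dd \eta.
\]

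This error estimate is the one nontrivial step and is the expected main obstacle. I would split the $(x,y)$ integration according to the threshold $|x-y|\le\hbar$ versus $|x-y|>\hbar$. On the near-diagonal zone, $|\Xi/\hbar|\le C\hbar^2$ gives $|e^{i\Xi/\hbar}-1|\le C\hbar^2$ uniformly, while $|G|\le C|\eta|$ on a compact support and Proposition~\ref{prop:key} yields $\int_{|x-y|\le\hbar}(\mathrm{g}_B(x)-\mathrm{g}_B(y))^2 \dd x \dd y=O(\hbar^{n+1})$, so the total contribution is $O(\hbar^{n+3})$. For the off-diagonal part I would write $e^{i\Xi/\hbar}-1=\tfrac{i}{\hbar}\int_0^1 \Xi\, e^{is\Xi/\hbar}\dd s$, absorb $e^{is\Xi/\hbar}$ into the oscillatory factor so that the $s$-dependent modified frequency $(x-y)_s := (1-s)(x-y) + s D\varphi(w)^{-1}(\varphi(x)-\varphi(y))$ remains of size $|x-y|$, and then integrate by parts $N$ times in $\eta$, each integration by parts producing a factor $\hbar/|x-y|$. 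The amplitude $\Xi G$ has order $|x-y|^3|\eta|^2$, so after $N\ge n+3$ integrations the remaining $(x,y)$-integral is dominated by $\hbar^{N-1}\int |x-y|^{3-N}(\mathrm{g}_B(x)-\mathrm{g}_B(y))^2 \dd x\dd y$, which is $O(\hbar^{n+1})$ by a dyadic application of Lemma~\ref{lem:key_estimate}. Some additional care is needed because $G\in\mathcal F$ is only Lipschitz at $\eta=0$, so the $\eta$-integration by parts must be supplemented by a dyadic decomposition in $|\eta|$ avoiding the origin, in the spirit of Lemma~\ref{lem:1}; this is the technical heart of the argument but does not require any new ideas beyond those already developed in Section~\ref{sec:est}.
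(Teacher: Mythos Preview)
Your proposal is correct and follows the same overall route as the paper: pull back by $\varphi$, use the midpoint Taylor expansion $\varphi(x)-\varphi(y)=D\varphi(w)(x-y)+O(|x-y|^3)$ to reduce to the standard phase plus a cubic perturbation $\Xi$, and then control the error by repeated integration by parts in the frequency variable, combined with a cutoff near $\eta=0$ and the estimates of Section~\ref{sec:est}.

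The one organisational difference is your Duhamel interpolation $e^{i\Xi/\hbar}-1=\tfrac{i}{\hbar}\int_0^1\Xi\,e^{is\Xi/\hbar}\,ds$, which absorbs the perturbation into a modified \emph{linear} phase $(x-y)_s\cdot\eta$. The paper instead keeps $e^{iQ\xi/\hbar}-1$ in the amplitude and integrates by parts against the bare phase $u\cdot\xi$; each time $\partial_\xi$ hits $e^{iQ\xi/\hbar}$ it spawns a term $I_{k,\hbar}$, which is then controlled by undoing the change of variables and invoking Lemma~\ref{lem:1}, while the $\eta$-cutoff at scale $\hbar$ is introduced from the outset via Lemma~\ref{lem:trunc} and produces the boundary terms $J_{k,\hbar}$. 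Your variant trades the $I_{k,\hbar}$ bookkeeping for the extra $s$-integral, which is arguably a bit cleaner; note however that the estimate $|(x-y)_s|\asymp|x-y|$ uniformly in $s$ requires $|x-y|$ to be small, so a preliminary localisation near the diagonal (away from which both sides are $O(\hbar^\infty)$ by non-stationary phase in $\eta$) should be inserted before running your argument.
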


\begin{proof}
By construction, the map $\Phi :\R^{3n}\to\R^{3n}$ is a $C^\infty$-diffeomorphism and it is straightforward to check that $G$ is also in the class $\mathcal F$, cf.~Definition~\ref{class:F} (since $\varphi = \mathrm{I}$ on $\R^n \setminus \Omega'$, $G$ has compact support).
In particular, for every $k\in\N_0$ with $k\le n+3$,
\begin{equation} \label{GclassF}
\big\|\partial_\xi^k G(x,y,\xi)\big\| \le |\xi|^{1-k} \varkappa_1(\xi)  \varkappa_2(x,y)  
\end{equation}
where $\varkappa_1\in C^\infty_c(\R^n)$ and $\varkappa_2 \in C^\infty_c(\R^{2n})$.

By a Taylor expansion, we have for $(x,y,\xi) \in \R^{3n}$, 
\[
\big(\varphi(x)-\varphi(y)\big)\cdot D\varphi(\tfrac{x+y}{2})^{-*}  \xi =(x-y)\cdot \xi+Q(\tfrac{x+y}{2},x-y)\xi
\]
where $Q : \R^{2n} \to \R^{n}$ is $C^{\infty}_c$ and vanishes at zero
along with its two first derivatives:
\begin{equation}
  \label{eq:Qorder4}
  |Q(v,u)|\leq C|u|^3.
\end{equation}
Let us denote
\[
\breve{G}(\tfrac{x+y}{2},x-y,\xi) = G(x,y,\xi) ,
\qquad\text{and}\qquad
\Gamma(\tfrac{x+y}{2},x-y) = (\mathrm{g}_B(x)-\mathrm{g}_B(y))^2. 
\]

Let $\chi\in C^{\infty}_c(\R^n)$ be equal to 1 near $0$. By a change of variable using $\Phi$.
\begin{align} \label{intchvar}
\int e^{i\frac{(x-y)\cdot\xi}{\hbar}} F(x,y,\xi) (f|_\Omega(x)-f|_\Omega(y))^2 \dd \xi \dd x \dd y 
 =& \int e^{i\frac{u\cdot \xi+Q(v,u)\xi}{\hbar}} \breve{G}(v,u,\xi)\Gamma(v,u)\dd \xi \dd v\dd u  \\
\notag
=& \int e^{i\frac{(x-y)\cdot\xi}{\hbar}} G(x,y,\xi)
   (\mathrm{g}_B(x)-\mathrm{g}_B(y))^2\dd \xi \dd x \dd y \\ \notag &+ I_\hbar +\O(\hbar^{n+1}) 
\end{align}
where 
\[
I_{\hbar} = \int  (1-\chi(\tfrac{\xi}{\hbar})) e^{i\frac{u\xi}{\hbar}}\Big(e^{i\frac{Q(v,u)\xi}{\hbar}}-1\Big)  \breve{G}(v,u,\xi)\Gamma(v,u)\dd \xi \dd v\dd u .
\]
Here, we used Lemma~\ref{lem:trunc} together with the fact that $G \in
\mathcal F$ to  introduce the cutoff $1-\chi(\tfrac{\xi}{\hbar})$ in
the integral \eqref{intchvar}, up to an error $\O(\hbar^{n+1})$. 

Now, our goal is now to show that $I_\hbar = \O(\hbar^{n+1})$ as $\hbar\to0$.
The argument is similar to the proof of Lemma~\ref{lem:1}, albeit the
induction process is more technical. We will integrate by parts
repeatedly in the variable $u$, stopping every time a derivative hits
either $\chi(\tfrac{\xi}{\hbar})$ or $e^{i\frac{Q}{\hbar}}$. To this
end, we introduce the following integrals for $k\in\N_0$ with $k\le n+2$:
\[\begin{aligned}
J_{k,\hbar} &=2i \int e^{i\frac{u\xi}{\hbar}}  u\cdot\partial_\xi \chi(\tfrac{\xi}{\hbar})\Big(e^{i\frac{Q(v,u)\xi}{\hbar}}-1\Big) \frac{u^{\otimes k}\cdot\partial_\xi^{k} \breve{G}(v,u,\xi)}{|u|^{2(k+1)}} \Gamma(u,v)\dd\xi\dd u \dd v\, ,  \\
I_{k,\hbar}  &=  \int (1-\chi(\tfrac{\xi}{\hbar})) e^{i\frac{u \xi+Q(v,u)\xi}{\hbar}}
\frac{Q(v,u)u}{|u|^2} \frac{u^{\otimes k}\cdot\partial_\xi^{k} \breve{G}(v,u,\xi)}{|u|^{2k}} \Gamma(u,v)\dd\xi\dd u \dd v \, .
\end{aligned} \]
Provided we can show that all these integrals are finite, one has
\begin{equation} \label{Imain}
\begin{aligned}
&I_{\hbar}\\ & =  i\hbar  \int e^{i\frac{u\xi}{\hbar}} \frac{u\cdot\partial_\xi}{|u|^2} \left\{ (1-\chi(\tfrac{\xi}{\hbar}))\Big(e^{i\frac{Q(v,u) \xi}{\hbar}}-1\Big)  \breve{G}(v,u,\xi)\right\}\Gamma(u,v)\dd\xi\dd u \dd v  \\
& =  i\hbar \int(1 - \chi(\tfrac{\xi}{\hbar})) 
e^{i\frac{u\xi}{\hbar}}\Big(e^{i\frac{Q(v,u)
    \xi}{\hbar}}-1\Big)\frac{u\cdot\partial_\xi
  \breve{G}(v,u,\xi)}{|u|^2} \Gamma(u,v)\dd\xi\dd u \dd v  -
I_{0,\hbar}-J_{0,\hbar}\\
&=(i\hbar)^2 \int(1 - \chi(\tfrac{\xi}{\hbar})) 
e^{i\frac{u\xi}{\hbar}}\Big(e^{i\frac{Q(v,u)
    \xi}{\hbar}}-1\Big)\frac{u^{\otimes 2}\cdot\partial^2_\xi
  \breve{G}(v,u,\xi)}{|u|^4} \Gamma(u,v)\dd\xi\dd u \dd v  -
I_{0,\hbar}-J_{0,\hbar}-i\hbar(I_{1,\hbar}+ J_{1,\hbar})\\
&=\cdots\\
&=(i\hbar)^{n+3}\int(1 - \chi(\tfrac{\xi}{\hbar})) 
e^{i\frac{u\xi}{\hbar}}\Big(e^{i\frac{Q(v,u)
    \xi}{\hbar}}-1\Big)\frac{u^{\otimes (n+3)}\cdot\partial^{n+3}_\xi
  \breve{G}(v,u,\xi)}{|u|^{2(n+3)} }\Gamma(u,v)\dd\xi\dd u \dd v  - \sum_{k=0}^{n+2}(i\hbar)^k(I_{k,\hbar}+J_{k,\hbar}).
\end{aligned}
\end{equation}
Let us first bound the $J_{k,\hbar}$ integrals. 
Using \eqref{GclassF} and that $\Big|e^{i\frac{Q(v,u) \xi}{\hbar}}-1\Big|  \le  \tfrac{C}{\hbar} |u|^3 |\xi|$
for $(u,v) \in\R^{2n}$,  for any $k\in\N$ with $k\le n+2$,
\[\begin{aligned}
|J_{k,\hbar} |  & \le \frac{C}{\hbar} \int |\xi| \big| \partial_\xi \chi(\tfrac{\xi}{\hbar})\big|  \frac{\|\partial_\xi^{k} \breve{G}(v,u,\xi)\| }{|u|^{k-2}} \Gamma(u,v)\dd\xi\dd u \dd v \\
&\le C \hbar^{n+1-k}   \int |\partial_\xi \chi(\xi)| |\xi|^{2-k}  \frac{ (\mathrm{g}_B(x)-\mathrm{g}_B(y))^2}{|x-y|^{k-2}} \varkappa_2(x,y) \dd\xi\dd u \dd v .
\end{aligned}\]
where we rescaled $\frac{\xi}{\hbar}\leftarrow \xi$ since $|\partial_\xi\chi|$ has compact support in $\R^n \setminus\{0\}$.
By Proposition~\ref{prop:key}, these integrals are finite so that for every $k\in\N_0$ with $k\le n+2$, 
\begin{equation}\label{eq:bound_Jk}
J_{k,\hbar}= \O(\hbar^{1+n-k}) .
\end{equation}
To estimate $I_{k,\hbar}$, we reverse the change of variables $\Phi$,
and obtain
\begin{equation}\label{Ierror}
  I_{k,\hbar}=\int
  (1-\chi(\tfrac{A(x,y)\xi}{\hbar}))e^{i\frac{(x-y)\cdot
      \xi}{\hbar}}L_k(x,y,\xi)(f|_{\Omega}(x)-f|_{\Omega}(y))^2\dd x
  \dd y \dd \xi
\end{equation}
where $A:\R^{n\times n}\to {\rm GL}_n$ is a smooth map, and $L_k$ is the image by $\Phi^{-1}$ of the
function
\[
(x,y,\xi) \in\R^{3n} \mapsto \frac{Q(\frac{x+y}{2},x-y)(x-y)}{|x-y|^2} \frac{(x-y)^{\otimes k}\cdot\partial_\xi^{k}\breve{G}(x,y,\xi)}{|x-y|^{2k}}|D\phi(x)||D\phi(y)|\det(A(x,y))^{-1}.
\]
Using \eqref{GclassF} and \eqref{eq:Qorder4}, together with the fact
that $\Phi$ maps the diagonal $x=y$ to itself, we verify that  for
every $j \in\N$,
\[
\big|  \partial_\xi^j L_k(x,y,\xi) \big| \le C  |x-y|^{2-k} |\xi|^{1-k-j}  \varkappa(\xi,x,y) .
\]
where $\varkappa$ is compactly supported in a neighbourhood of
$x=y$. Hence, by Proposition \ref{lem:1}, for every $k\geq n+2$,
\[
I_{k,\hbar}= \O(\hbar^{1+n-k}) .
\]

Going back to \eqref{Imain} and using the previous estimates with $k=0$, we obtain 
\[
I_{\hbar}=
i\hbar \int  \overline\chi(\tfrac{\xi}{\hbar}) 
e^{i\frac{u\xi}{\hbar}}\Big(e^{i\frac{Q(v,u) \xi}{\hbar}}-1\Big)\frac{u\cdot\partial_\xi \breve{G}(v,u,\xi)}{|u|^2} \Gamma(u,v)\dd\xi\dd u \dd v   + \O(\hbar^{1+n}) .
\]
The leading term is of the form as $I_{\hbar}$, so we can repeat the process. By induction, after $k\in\N$ integrations by part, we obtain
\[
I_{\hbar}=
(i\hbar)^k \int  \overline\chi(\tfrac{\xi}{\hbar}) 
e^{i\frac{u\xi}{\hbar}}\Big(e^{i\frac{Q(v,u) \xi}{\hbar}}-1\Big)\frac{u^{\otimes k}\cdot\partial_\xi^k \breve{G}(v,u,\xi)}{|u|^{2k}} \Gamma(u,v)\dd\xi\dd u \dd v   
- \sum_{j<k}  (i\hbar)^j \big( I_{j,\hbar}+J_{j,\hbar} \big) . 
\]
The error terms are all controlled as above. Hence, taking $k=n+3$, we obtain
\[ \begin{aligned}
| I_{\hbar} | & \le  \hbar^{n+3} \int  \overline\chi(\tfrac{\xi}{\hbar}) 
e^{i\frac{u\xi}{\hbar}}\Big|e^{i\frac{Q(v,u) \xi}{\hbar}}-1\Big|\frac{\|\partial_\xi^{n+3} \breve{G}(v,u,\xi)\|}{|u|^{n+3}} \Gamma(u,v)\dd\xi\dd u \dd v  + \O(\hbar^{1+n}) \\
&\le C \hbar^{n+2} \int  \overline\chi(\tfrac{\xi}{\hbar}) |\xi|^{-1-n}   \frac{(f|_\Omega(x)-f|_\Omega(y))^2}{|x-y|^{n}} \varkappa_2(x,y)  \dd x \dd y \dd \xi + \O(\hbar^{1+n}) \\
& = \O(\hbar^{1+n})
\end{aligned}\]
where we used again Proposition~\ref{prop:key} to bound the last integral.

In the end, going back to formula \eqref{intchvar}, we conclude that
\[
\int e^{i\frac{(x-y)\cdot\xi}{\hbar}} F(x,y,\xi) (f|_\Omega(x)-f|_\Omega(y))^2 \dd \xi \dd x \dd y 
= \int e^{i\frac{(x-y)\cdot\xi}{\hbar}} G(x,y,\xi) (\mathrm{g}_B(x)-\mathrm{g}_B(y))^2\dd \xi \dd x \dd y +O(\hbar^{n+1}) 
\]
as claimed.
\end{proof}

\subsubsection{Asymptotics in case of the ball} \label{sec:ball}

The goal of this section is to obtain the following asymptotics. 

\begin{prop} \label{prop:Ired}
Let $f\in C^\infty(\R^n)$, $F \in \mathcal F(\R^{2n},\R^n)$ and denote 
\begin{equation} \label{def:q}
q(r) = \frac 1r \int_{\S^{n-1}}  F(\omega,\omega,r\omega) f(\omega)^2 \dd \omega , \qquad r>0 .
\end{equation}
This function is continuous on $\R_+$, with compact support, and we assume that $q(r) \to Q$  as $r\to 0$ and that
\begin{equation} \label{condq}
\int_0^1 \bigg| \frac{q(r)-Q}{r} \bigg| \dd r <\infty . 
\end{equation}
Then, by \eqref{oscint} with $\Omega = B$, 
\[
L_\hbar
= -4 \hbar^{n+1}  \big(  (2\pi)^{n-1} Q \log \hbar^{-1} +\O(1) \big) .
\]
\end{prop}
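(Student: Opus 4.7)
The plan is to reduce $L_\hbar$ to an integral of the form $\int q(r)\Psi_\hbar(r)\,dr$, where $\Psi_\hbar$ carries the Bessel asymptotics of $|\widehat{\1_B}|^2$ supplied by Lemma~\ref{lem:Bessel}, and then extract the $\log(\hbar^{-1})$ using the integrability hypothesis \eqref{condq}.

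First I would replace $F(x,y,\xi)$ by its on-diagonal value $\tilde F(v,\xi):=F(v,v,\xi)$, with $v=(x+y)/2$, using the Taylor-plus-integration-by-parts argument of Proposition~\ref{prop:smooth-diffeo} (based on Lemma~\ref{lem:1} and Proposition~\ref{prop:key}), at the cost of an $\O(\hbar^{n+1})$ error. Setting $g=f\1_B$ and expanding $(g(x)-g(y))^2=g(x)^2+g(y)^2-2g(x)g(y)$, the two squared terms contribute only $\O(\hbar^{n+1})$: after the change of variables $u=x-y$, $v=(x+y)/2$ and a Fourier transform in $u$, each reduces to $\hbar^n\int \hat{\tilde F}(\eta,\hbar\eta/2)\,\widehat{g^2}(\eta)\,d\eta$, and $\tilde F(\cdot,0)=0$ (which follows from $F\in\mathcal{F}$) forces $\hat{\tilde F}(\eta,\hbar\eta/2)=\O(\hbar|\eta|)$. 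A final Taylor-in-$f$ reduction replaces $g(v+u/2)g(v-u/2)$ by $f(v)^{2}\,\1_B(v+u/2)\,\1_B(v-u/2)$, absorbing the smooth part of $f$ into an additional $\O(\hbar^{n+1})$ error via Proposition~\ref{prop:key}.

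The remaining cross-term integral is then computable by Plancherel: with $u=2s$, $\int e^{2is\cdot\xi/\hbar}\1_B(v+s)\1_B(v-s)\,ds$ is a quadratic pairing in $\widehat{\1_B}$. Using the rotational symmetry of $\widehat{\1_B}$ and passing to polar coordinates $\xi=r\omega$, I expect to reach
\[
L_\hbar = -2\int_0^\infty q(r)\,\Psi_\hbar(r)\,dr + \O(\hbar^{n+1}),
\]
where Lemma~\ref{lem:Bessel} gives $\Psi_\hbar(r)\sim \frac{2(2\pi)^{n-1}\hbar^{n+1}}{r}$ for $r\gtrsim \hbar$, coming from $|\widehat{\1_B}(\zeta)|^{2}=(2\pi)^{n}|\zeta|^{-n}J_{n/2}(|\zeta|)^{2}$ and $J_{n/2}(\rho)^{2}\sim\frac{2}{\pi\rho}\cos^{2}(\rho-\frac{(n+1)\pi}{4})$ (with $\cos^2$ averaging to $1/2$). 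Splitting $\int q(r)\Psi_\hbar(r)\,dr = Q\int\Psi_\hbar(r)\,dr + \int (q(r)-Q)\Psi_\hbar(r)\,dr$, the first piece yields $2(2\pi)^{n-1}\hbar^{n+1}Q\log(\hbar^{-1}) + \O(\hbar^{n+1})$ via $\int_{c\hbar}^{C}\frac{dr}{r}=\log(\hbar^{-1})+\O(1)$, while hypothesis \eqref{condq} bounds the second by $\O(\hbar^{n+1})$. Combining the pre-factors gives $L_\hbar=-4\hbar^{n+1}((2\pi)^{n-1}Q\log(\hbar^{-1})+\O(1))$.

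The main obstacle is the careful bookkeeping in the first two reduction steps: because $F$ is only Lipschitz at $\xi=0$ and $\1_B$ jumps, neither standard stationary phase nor pseudodifferential calculus directly applies, so the error estimates rely on the tailored machinery of Section~\ref{sec:est}. The integrability hypothesis \eqref{condq} is crucial to isolate cleanly the $Q\log(\hbar^{-1})$ leading behavior without introducing spurious unbounded sub-leading contributions.
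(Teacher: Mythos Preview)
Your overall strategy matches the paper's: reduce to the diagonal symbol $g(v,\xi)=F(v,v,\xi)f(v)^2$ against $\1_B(x)\1_B(y)$ (this is exactly Lemma~\ref{lem:Ired}), pass to Fourier/Bessel asymptotics via Lemma~\ref{lem:Bessel}, and then split off $Q$ using \eqref{condq}. Your treatment of the squared terms and the Taylor reductions is correct and mirrors the paper's Lemma~\ref{lem:Ired}.

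There is, however, a real gap in the step where you ``expect to reach'' $L_\hbar=-2\int q(r)\Psi_\hbar(r)\,dr$. After Plancherel, the $u$-integral produces the \emph{shifted} product $\widehat{\1_B}(\tfrac{\zeta}{2}+\tfrac{\xi}{\hbar})\,\widehat{\1_B}(\tfrac{\zeta}{2}-\tfrac{\xi}{\hbar})$, not $|\widehat{\1_B}(\xi/\hbar)|^2$, and the $v$-integral has not yet been performed. Rotational symmetry of $\widehat{\1_B}$ alone does not explain why $q(r)=\frac{1}{r}\int_{\S^{n-1}}g(\omega,r\omega)\,d\omega$ appears, i.e.\ why the $v$-variable localises to $\omega\in\partial B$. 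In the paper this comes from expanding the Bessel product as
\[
\widehat{\1_B}(\zeta+\xi)\widehat{\1_B}(\zeta-\xi)=\frac{\cos(2|\xi|+2c_n)+\cos(2\zeta\cdot\omega)+\O(|\xi|^{-1})}{\pi|\xi|^{n+1}},\qquad \xi=|\xi|\omega,
\]
after which the $\cos(2\zeta\cdot\omega)$ term, paired with $\widehat g(\zeta,\xi)$ and Fourier inversion in $\zeta$, evaluates $g$ at $v=\omega$; this is the mechanism that produces $q(r)$ and it is not captured by ``$\cos^2$ averaging to $1/2$''.

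Relatedly, the rapidly oscillating $\cos(2|\xi|/\hbar+2c_n)$ piece (the paper's $L_{2,\hbar}$) does \emph{not} average away automatically: it contributes at the borderline scale $r\sim\hbar$ and must be shown $\O(1)$ by a separate integration by parts in $r$, using that $g\in\mathcal F$ so that $r\mapsto \int_{\S^{n-1}}g(0,r\omega)\,d\omega$ has controlled derivatives near $r=0$. Once you supply these two ingredients, your outline coincides with the paper's proof.
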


We first simplify the integral $L_\hbar$ of \eqref{oscint}, using the estimates from Section~\ref{sec:est}. 

\begin{lem} \label{lem:Ired} 
Let $f\in C^\infty(\R^n)$, $F \in \mathcal F(\R^{2n},\R^n)$ and  $g(x,\xi) := F(x,x,\xi)f(x)^2$ for $(x,\xi)\in\R^{2n}$.
Then  $g \in \mathcal F(\R^n,\R^n)$ and
\[
L_\hbar = -2  \int e^{i\frac{(x-y)\cdot\xi}{\hbar}}  g(\tfrac{x+y}{2},\xi)  \1_{B}(x)  \1_{B}(y) \dd \xi \dd x \dd y  + \O(\hbar^{n+1}) .  
\]
\end{lem}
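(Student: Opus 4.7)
The assertion $g \in \mathcal F$ is immediate: $g(x,\xi) = F(x,x,\xi) f(x)^2$ inherits the bounds $|\partial_\xi^k g(x,\xi)| \le C_k |\xi|^{1-k}$ from $F\in\mathcal F$, is smooth in $x$, and has compact support. For the main identity I would perform an algebraic decomposition and then three separate oscillatory-integral estimates. Using the pointwise identity
\[
(f|_B(x) - f|_B(y))^2 = -2 f(x) f(y) \1_B(x) \1_B(y) + f(x)^2 \1_B(x) + f(y)^2 \1_B(y),
\]
together with the diagonal-vanishing symbol $G(x,y,\xi) := F(x,y,\xi) f(x) f(y) - g(\tfrac{x+y}{2}, \xi)$ (so $G|_{y=x} = 0$), one rewrites
\[
L_\hbar + 2 \int e^{i(x-y)\cdot \xi/\hbar}\, g(\tfrac{x+y}{2}, \xi)\, \1_B(x) \1_B(y)\, \dd\xi\dd x\dd y = A_\hbar + B_\hbar + C_\hbar,
\]
where $A_\hbar = -2 \int e^{i(x-y)\cdot\xi/\hbar}\, G\, \1_B(x) \1_B(y)$, $B_\hbar = \int e^{i(x-y)\cdot\xi/\hbar}\, F\, f(x)^2 \1_B(x)$, and $C_\hbar$ is the symmetric version in $y$. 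The task reduces to showing that each of $A_\hbar, B_\hbar, C_\hbar$ is $\O(\hbar^{n+1})$.

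For $B_\hbar$ (and symmetrically $C_\hbar$) the key structural observation is that only one of the two spatial variables carries the discontinuity $\1_B$, while $F$ is $C^\infty_c$ in the other. After splitting with the cutoff $\chi(\xi/\hbar)$, the low-frequency part $|\xi| \le 2\hbar$ is $\O(\hbar^{n+1})$ directly by Lemma~\ref{lem:trunc} applied to $F\in\mathcal F$. For the high-frequency part, I integrate by parts $N = n+2$ times in $y$ via the operator $\tfrac{i\hbar}{|\xi|^2}\, \xi \cdot \nabla_y$, each step gaining a factor $\hbar/|\xi|$ since $F$ is smooth and compactly supported in $y$; integrating the resulting pointwise bound $C_N (\hbar/|\xi|)^{n+2} |\xi|$ over $\{|\xi| \ge \hbar\}$ inside the compact $\xi$-support of $F$ yields exactly $\O(\hbar^{n+1})$.

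For $A_\hbar$, since $G$ vanishes on the diagonal and is $C^\infty$ in $(x,y)$, Hadamard's lemma furnishes a factorisation $G(x,y,\xi) = (x-y) \cdot H(x,y,\xi)$ with $H \in \mathcal F$ vector-valued. The low-frequency regime $|\xi| \le 2\hbar$ contributes $\O(\hbar^{n+1})$ via $|H| \le C|\xi|$ and boundedness of $\int_{B \times B} |x-y|\, \dd x\dd y$. For the high-frequency regime, one integration by parts in $\xi$ through $(x-y)_j e^{i(x-y)\cdot\xi/\hbar} = -i\hbar\, \partial_{\xi_j} e^{i(x-y)\cdot\xi/\hbar}$ produces a boundary term supported on $|\xi| \sim \hbar$ (bounded by $\O(\hbar^{n+1})$ directly) plus an interior term $\hbar \int (1-\chi)(\xi/\hbar)\, (\nabla_\xi \cdot H)\, e^{i(x-y)\cdot\xi/\hbar}\, \1_B(x)\1_B(y)\, \dd\xi\dd x\dd y$ whose symbol $\nabla_\xi \cdot H$ is of order zero. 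To extract the remaining $\hbar^n$, I decompose $\1_B(x)\1_B(y) = \tfrac12(\1_B(x) + \1_B(y)) - \tfrac12(\1_B(x) - \1_B(y))^2$: the two single-variable pieces reduce to $B_\hbar$/$C_\hbar$-type estimates (thus $\O(\hbar^{n+2})$ after the outer factor $\hbar$), while the jump piece $(\1_B(x) - \1_B(y))^2$ is controlled by iterated IBP in $\xi$ coupled with the Sobolev-type bound $\int (\1_B(x) - \1_B(y))^2 / |x-y|^n\, \dd x\dd y < \infty$ from Proposition~\ref{prop:key}, which exactly compensates the diagonal singularity produced at each step.

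The main obstacle lies in this last estimate for $A_\hbar$: a naive bound after a single IBP in $\xi$ yields only $\O(\hbar)$, so the full scaling $\O(\hbar^{n+1})$ requires careful bookkeeping that exploits the oscillation of $e^{i(x-y)\cdot\xi/\hbar}$ against the jump factor $(\1_B(x) - \1_B(y))^2$ — essentially a higher-order variant of Lemma~\ref{lem:1}, in which the boundary regularity of $\partial B$ encoded in Lemma~\ref{lem:key_estimate} supplies precisely the spatial integrability needed to match the $|x-y|^{-n}$ singularity generated by successive integrations by parts.
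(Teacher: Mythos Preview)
Your treatment of $B_\hbar$ and $C_\hbar$ is correct and close in spirit to the paper's handling of the single-$\1_B$ terms. The genuine gap is in $A_\hbar$. After one integration by parts in $\xi$ you are left with the jump piece
\[
\hbar\int (1-\chi)(\xi/\hbar)\,(\nabla_\xi\!\cdot H)(x,y,\xi)\,e^{i(x-y)\cdot\xi/\hbar}\,(\1_B(x)-\1_B(y))^2\,\dd\xi\dd x\dd y,
\]
with $\nabla_\xi\!\cdot H$ of order~$0$ in $\xi$. Here the spatial integrability $\int(\1_B(x)-\1_B(y))^2/|x-y|^n\,\dd x\dd y<\infty$ is exactly borderline: performing $n$ further integrations by parts in $\xi$ produces a factor $\hbar^{n+1}\int_{c\hbar\le|\xi|\le R}|\xi|^{-n}\,\dd\xi=O(\hbar^{n+1}\log\hbar^{-1})$, and applying Lemma~\ref{lem:1} directly is worse, since \eqref{conda1} can only be verified for $k\ge 2$, yielding $O(\hbar^n)$. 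Either way you are one logarithm short of $O(\hbar^{n+1})$, which at the level of Theorem~\ref{thm:J2} would degrade the $O(1)$ remainder to $O(\log\hbar^{-1})$. The initial IBP in $\xi$ is the culprit: it discards the $(x-y)$-vanishing of $G$, which is precisely the resource needed.

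The paper avoids this by a different algebraic split. It first symmetrises and writes $L_\hbar/2$ as the sum of \eqref{Ired1} and \eqref{Ired2}, so that the diagonal-vanishing error $e$ (your $G$) is paired from the outset with $\1_B(x)\1_{B^c}(y)$ rather than $\1_B(x)\1_B(y)$. The crucial difference is that $\int\1_B(x)\1_{B^c}(y)/|x-y|^n\,\dd x\dd y<\infty$ whereas $\int\1_B(x)\1_B(y)/|x-y|^n\,\dd x\dd y=\infty$: combined with the quadratic vanishing $e=O(|x-y|^2)$ that the paper asserts, Lemma~\ref{lem:1} applies with $k=0$ and delivers $O(\hbar^{n+1})$ directly, without any IBP in $\xi$. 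The residual term \eqref{Ired1}, which carries only a single $\1_B(x)$ and a smooth increment $(f(x)-f(y))$, is handled by a separate Fourier/Plancherel computation \eqref{Ired3}, a technique that has no analogue in your sketch. If you want to salvage your route, you should not integrate by parts in $\xi$ first; instead decompose $\1_B(x)\1_B(y)=\1_B(x)-\1_B(x)\1_{B^c}(y)$ and keep the $(x-y)$-vanishing of $G$ intact for the $\1_B\1_{B^c}$ piece --- but you will then also need $G=O(|x-y|^2)$, not merely the linear factorisation from Hadamard.
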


\begin{proof}
By symmetry of the integral in question, we can assume that 
\[
L_\hbar= 2 \int e^{i\frac{(x-y)\cdot\xi}{\hbar}} F(x,y,\xi) (\mathrm{f}_B(x)-\mathrm{f}_B(y))\mathrm{f}_B(x) \dd \xi \dd x \dd y .
\]
Then, we can write
\begin{align}
\label{Ired1}
\frac{L_\hbar}2=  \int e^{i\frac{(x-y)\cdot\xi}{\hbar}} F(x,y,\xi)  \big(f(x)-f(y)\big) \mathrm{f}_{B}(x) \dd \xi \dd x \dd y \\
\label{Ired2} \quad + \int e^{i\frac{(x-y)\cdot\xi}{\hbar}} G(x,y,\xi) \1_{B}(x)  \1_{B^c}(y)   \dd \xi \dd x \dd y  
\end{align}
where $ G(x,y,\xi) := F(x,y,\xi) f(x)f(y)$. 

\smallskip

We begin by showing that \eqref{Ired1} is negligible.
Since $(x,y)  \mapsto F(x,y,\xi) $ is $C^\infty_c(\R^{2n})$ for $\xi\in\R^n$, by a Taylor expansion, 
\[
\big(f(x)-f(y)\big) F(x,y,\xi) 
=    (x-y)\cdot a(\tfrac{x+y}{2},\xi)   +  b(x,y,\xi)
\]
where  $a \in \mathcal F$, $b \in \mathcal F$ and 
\[
a(x,\xi) = \partial f (x) F(x,x,\xi) , \qquad   b(x,y,\xi) =\O\big(|x-y|^3\big) .
\]
In particular, the function $(x,y,\xi) \mapsto  b(x,y,\xi) \mathrm{f}_B(x)$ satisfies the assumptions of  Lemma~\ref{lem:1}. This implies that 
\[
\eqref{Ired1} = 2  \int e^{i\frac{(x-y)\cdot\xi}{\hbar}}  (x-y)\cdot a(\tfrac{x+y}{2},\xi) \mathrm{f}_B(x) \dd \xi \dd x \dd y  + \O(\hbar^{n+1}) 
\]
Then, we perform an integration by part with respect to $\xi$ and a change of variables $\frac{x+y}{2} \leftarrow v$, $(x-y) \leftarrow u$, we obtain 
\[\begin{aligned}
\int e^{i\frac{(x-y)\cdot\xi}{\hbar}}  (x-y)\cdot a(\tfrac{x+y}{2},\xi) \mathrm{f}_B(x) \dd \xi \dd x \dd y
& = (i\hbar)  \int e^{i\frac{(x-y)\cdot\xi}{\hbar}}  a'(\tfrac{x+y}{2},\xi) \mathrm{f}_B(x) \dd \xi \dd x \dd y \\
& =  (i\hbar)  \int e^{i\frac{u\cdot\xi}{\hbar}}  a'(v,\xi) \mathrm{f}_B(v+\tfrac u2) \dd \xi \dd u \dd v
\end{aligned}\]
where $a' = \operatorname{div}_\xi a $ is $L^\infty$ with compact support. 
Using the Fourier transform, computing the integral over $u$, and then over $v$, we obtain
\begin{align} \notag
\int e^{i\frac{(x-y)\cdot\xi}{\hbar}}  (x-y)\cdot a(\tfrac{x+y}{2},\xi) \mathrm{f}_B(x) \dd \xi \dd x \dd y
& = i\hbar (2\pi)^{\frac n2}  \int e^{-i\frac{v\cdot\xi}{\hbar}} a'(v,\xi)  \overline{\widehat{\mathrm{f}_B}(\tfrac{2\xi}{\hbar})} \dd \xi \dd v \\
&\label{Ired3} = i\hbar (2\pi)^{n} \int \widehat{a'}(\tfrac{\xi}{\hbar},\xi)  \overline{\widehat{\mathrm{f}_B}(\tfrac{2\xi}{\hbar})} \dd \xi 
\end{align}
where for $(\zeta,\xi) \in\R^{2n}$ with $\xi \neq 0$, 
\[
\widehat{a'}(\zeta,\xi) = \frac{1}{(2\pi)^{\frac n2}} \int e^{-i v\cdot\zeta} a'(v,\xi) \dd v
= \operatorname{div}_\xi\bigg( \frac{1}{(2\pi)^{\frac n2}} \int e^{-i v\cdot\zeta} a(v,\xi) \dd v \bigg) .
\]
Since $a \in \mathcal F$, $\zeta \mapsto \widehat{a'}(\zeta,\xi)$ is in the Schwartz class for $\xi\neq 0$ and $\xi \mapsto \widehat{a'}(\zeta,\xi)$ is  $L^\infty$. 
In the end, by scaling and Cauchy-Schwarz, 
\[ 
\int \widehat{a'}(\tfrac{\xi}{\hbar},\xi)  \overline{\widehat{\mathrm{f}_B}(\tfrac{2\xi}{\hbar})} \dd \xi 
= \O\big(\hbar^{n} \|a'\|_{L^2\times L^\infty} \|\mathrm{f}_B\|_{L^2} \big) ,
\]
and we conclude that $\eqref{Ired1}  =\O(\hbar^{n+1})$. 

\medskip 

Going back to \eqref{Ired2}, we have
\[
G(x,y,\xi)  = g(\tfrac{x+y}{2},\xi) + e(x,y,\xi)   \qquad
\text{where $g \in \mathcal F$, $e \in \mathcal F$ and } e(x,y,\xi) =\O\big(|x-y|^2\big),
\]
so that the function 
\[
(x,y,\xi) \mapsto e(x,y,\xi) \1_{B}(x)  \1_{B^c}(y)
\]
satisfies the assumptions of  Lemma~\ref{lem:1} -- the condition \eqref{conda1} for $j\le n+2$  follow directly from \eqref{bdsub} with $\Omega=B$.
Altogether, this implies that 
\begin{equation} \label{Ired4}
\frac{L_\hbar}2 = \int e^{i\frac{(x-y)\cdot\xi}{\hbar}}  g(\tfrac{x+y}{2},\xi)  \1_{B}(x)  \1_{B^c}(y) \dd \xi \dd x \dd y  + \O(\hbar^{n+1}) .
\end{equation}
Moreover, as in \eqref{Ired3}, we have 
\[
\int e^{i\frac{(x-y)\cdot\xi}{\hbar}}  g(\tfrac{x+y}{2},\xi)  \1_{B}(x) \dd \xi \dd x \dd y 
= (2\pi)^{n} \int \widehat{g}(\tfrac{\xi}{\hbar},\xi)  \overline{\widehat{\1_B}(\tfrac{2\xi}{\hbar})} \dd \xi 
\]
where  for $(\zeta,\xi) \in\R^{2n} \mapsto \widehat{g}(\zeta,\xi) $ satisfies $|\widehat{g}(\zeta,\xi)| \le |\xi| \varkappa(\zeta)$ where $\varkappa$  is in the Schwartz class; since  $g \in \mathcal F$. Using this bound and the fact that $\widehat{\1_B} \in L^\infty$, we obtain 
\[
\int e^{i\frac{(x-y)\cdot\xi}{\hbar}}  g(\tfrac{x+y}{2},\xi)  \1_{B}(x) \dd \xi \dd x \dd y = \O(\hbar^{n+1}) .
\]
Hence, the claim follows from \eqref{Ired4}. 
\end{proof}

The proof relies on the following explicit asymptotics.

\begin{lem} \label{lem:Bessel}
Let $\J_\nu$ denote the Bessel function of the first kind for
$\nu\in\R_+$. As $\xi\to +\infty$,
\begin{equation} \label{BesselJ}
\widehat{\1_B}(\xi) = \frac{\mathrm{J}_{n/2}(|\xi|)}{ |\xi|^{n/2}} = 
\frac{2\cos(|\xi|+c_n) + \O(|\xi|^{-1})}{\sqrt{2\pi} |\xi|^{(n+1)/2}}.
\end{equation}
\end{lem}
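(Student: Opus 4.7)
The plan is to compute $\widehat{\1_B}(\xi)$ explicitly by exploiting the rotational symmetry of the ball $B$, obtain the Bessel function representation via standard identities, and then quote the large-argument asymptotics of $\J_{n/2}$. Since $\1_B$ is invariant under orthogonal transformations, $\widehat{\1_B}(\xi)$ depends only on $r = |\xi|$, so I may take $\xi = r\mathrm{e}_n$. Writing the integral in spherical coordinates $x = s\omega$ with $s\in(0,1)$, $\omega\in\S^{n-1}$,
\[
\widehat{\1_B}(\xi) = \frac{1}{(2\pi)^{n/2}}\int_0^1 s^{n-1}\!\left(\int_{\S^{n-1}} e^{-isr\,\omega\cdot\mathrm{e}_n}\dd \sigma(\omega)\right) ds.
\]

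The inner spherical integral is a classical quantity: it equals $(2\pi)^{n/2}\,\J_{(n-2)/2}(sr)/(sr)^{(n-2)/2}$. (This is the Funk--Hecke identity applied to the plane wave $e^{-isr\omega\cdot\mathrm{e}_n}$, and can be derived by expanding in spherical harmonics or by directly recognizing a Bessel integral representation.) Substituting back, the change of variables $u=sr$ reduces the computation to the standard integral $\int_0^r u^{n/2}\,\J_{(n-2)/2}(u)\dd u = r^{n/2}\J_{n/2}(r)$, which follows at once from the Bessel recursion $\frac{d}{du}\bigl[u^\nu \J_\nu(u)\bigr] = u^\nu \J_{\nu-1}(u)$ applied with $\nu=n/2$. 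Collecting powers of $r$ yields the first equality $\widehat{\1_B}(\xi) = \J_{n/2}(|\xi|)/|\xi|^{n/2}$.

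For the second equality, I would invoke the classical large-argument expansion of the Bessel function,
\[
\J_\nu(x) = \sqrt{\tfrac{2}{\pi x}}\,\cos\!\bigl(x-\tfrac{\nu\pi}{2}-\tfrac{\pi}{4}\bigr) + \O(x^{-3/2}), \qquad x\to +\infty,
\]
with $\nu=n/2$ and the choice $c_n = -\tfrac{(n+1)\pi}{4}$. Dividing by $|\xi|^{n/2}$ and using $\sqrt{2/\pi} = 2/\sqrt{2\pi}$ gives the stated form. The error in the asymptotic is of relative order $|\xi|^{-1}$ compared to the leading cosine, which, after division by $|\xi|^{n/2}$, becomes the $\O(|\xi|^{-1})$ term inside the numerator.

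There is essentially no real obstacle: the result is classical, and the only mild care needed is in keeping track of the normalization constants (the factor $(2\pi)^{n/2}$ from the Fourier transform convention and the phase shift $c_n$). I would cite a standard reference (e.g., Watson's treatise on Bessel functions) for the two ingredients--the spherical Fourier identity and the Bessel asymptotic--rather than reprove them.
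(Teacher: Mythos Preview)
Your proposal is correct and takes essentially the same approach as the paper: both treat the identity $\widehat{\1_B}(\xi)=\J_{n/2}(|\xi|)/|\xi|^{n/2}$ and the large-argument asymptotics of $\J_{n/2}$ as classical facts to be cited, with your version simply spelling out the standard derivation (spherical coordinates, the Funk--Hecke identity, the recursion $\frac{d}{du}[u^{\nu}\J_{\nu}(u)]=u^{\nu}\J_{\nu-1}(u)$, and the expansion $\J_{\nu}(x)=\sqrt{2/(\pi x)}\cos(x-\nu\pi/2-\pi/4)+\O(x^{-3/2})$) in more detail than the paper does.
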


\begin{proof}
The relationship between Bessel functions and the Fourier transform of
$\1_B$ where $B$ is the unit ball in $\R^n$ is classical: see
e.g.~\cite{deleporte_universality_2024}, formula (A.5).
The asymptotics of Bessel functions are also classical, see \cite[\href{https://dlmf.nist.gov/10.17.3}{formula (10.17.3)}]{NIST:DLMF}.
\end{proof}

\begin{proof}[Proof of Proposition~\ref{prop:Ired}]~
  
Starting from Lemma~\ref{lem:Ired} and making a change of variables $\frac{x+y}{2} \leftarrow v$, $(x-y) \leftarrow u$,  we have
\[
L_\hbar =  -2\int e^{i\frac{u \cdot\xi}{\hbar}} g(v,\xi)  \1_{B}(v+\tfrac u2) \1_{B}(v-\tfrac u2)
\dd \xi \dd u \dd v   + \O(\hbar^{n+1}) .  
\]
Since $g \in \mathcal F$, by  Lemma~\ref{lem:trunc},  we can introduce a cutoff $ \overline\chi(\tfrac{\xi}{\hbar}) $ in the previous integral, up to an error $\O(\hbar^{n+1})$. 
Moreover, computing this Fourier transform as a convolution (with our convention, $(2\pi)^{\frac n2}\widehat{uv} = \widehat{u} *\widehat{v} $ for functions $u,v \in L^2(\R^n)$), 
\[
\int e^{i\frac{u \cdot\xi}{\hbar}}  \1_{B}(v+\tfrac u2) \1_{B}(v-\tfrac u2) \dd u 
= 2^n \int e^{-2i v \cdot \zeta}  \widehat{\1_B}(\zeta+\tfrac\xi\hbar)\widehat{\1_B}(\tfrac\xi\hbar-\zeta) \dd \zeta  
=  \int e^{-i v \cdot \zeta}  \widehat{\1_B}(\tfrac\zeta2+\tfrac\xi\hbar)\widehat{\1_B}(\tfrac\zeta2-\tfrac\xi\hbar) \dd \zeta  
\]
This implies that 
\[
L_\hbar = - 2  \int  \overline\chi(\tfrac{\xi}{\hbar})  e^{-i v \cdot \zeta}   g(v,\xi)   \widehat{\1_B}(\tfrac\zeta2+\tfrac\xi\hbar)\widehat{\1_B}(\tfrac\zeta2-\tfrac\xi\hbar)  \dd v \dd \zeta \dd \xi
+ \O(\hbar^{n+1}).
\]
This integral is well-defined since $\1_B\in L^2(\R^n)$ and $g$ has compact support on $\R^{2n}$. 
Moreover we can take the Fourier transform of the smooth function $v \in \R^n \mapsto g(v,\xi)$ for a fixed $\xi$, we obtain
\[ \begin{aligned}
L_\hbar =  -2 (2\pi)^{\frac n2}  \int  \overline\chi(\tfrac{\xi}{\hbar})   \widehat g(\zeta,\xi)   \widehat{\1_B}(\tfrac\zeta2+\tfrac\xi\hbar)\widehat{\1_B}(\tfrac\zeta2-\tfrac\xi\hbar) \dd \zeta \dd \xi
+ \O(\hbar^{n+1})  
\end{aligned}\]
where for $k\in\N$, $\zeta\in\R^n$, $r>0$, 
\begin{equation}\label{hatgest}
\sup_{\xi\in\R^n}|\widehat g(\zeta,\xi)|  \le \frac{C_k}{(1+|\zeta|^2)^k}  \qquad\text{and}\qquad \sup_{\omega\in\S^{n-1}}\big\|  \widehat g(\cdot,r \omega) \big\|_{L^1} \le Cr . 
\end{equation}

Now, using the asymptotics \eqref{BesselJ}, for every $\zeta\in\R^n$, $\xi = r \omega$ with $(r,\omega)\in\R_+ \times \S^{n-1}$, as $r\to\infty$, 
\[\begin{aligned}
\widehat{\1_B}(\zeta+\xi)  \widehat{\1_B}(\zeta-\xi)  & = 
\frac{2\cos(r + \omega \cdot \zeta +c_n)\cos(r - \omega \cdot \zeta +c_n) + \O(r^{-1})}{\pi r^{n+1}} \\
& = \frac{\cos(2r +2c_n)+\cos(2\zeta \cdot\omega) + \O(r^{-1})}{\pi r^{n+1}} .
\end{aligned}\]
The error term is uniform for $|\zeta| \le r^{\alpha}$ for any $0<\alpha<1$ and $\omega\in \S^{n-1}$,
so by \eqref{hatgest}, we can substitute these asymptotics into the previous formula for $L_\hbar$ (up to a negligible error).
This yields 3 terms,
\begin{equation} \label{Ired5}
L_\hbar  = - \tfrac1{\pi^2} (2\pi\hbar)^{n+1}  \big( I_{1,\hbar} + I_{2,\hbar} + I_{3,\hbar} \big) + \O(\hbar^{n+1})  
\end{equation}
where
\[\begin{aligned}
L_{1,\hbar} & = (2\pi)^{-\frac n2}  \int  \overline\chi(\tfrac{r}{\hbar})   \widehat g(\zeta,r \omega) \cos(\zeta \cdot\omega)    \frac{\dd r}{r^2} \dd \zeta \dd \omega \\
L_{2,\hbar} & =  (2\pi)^{-\frac n2} \int  \overline\chi(\tfrac{r}{\hbar})   \widehat g(\zeta,r \omega)  \cos(2r/\hbar +2c_n) \frac{\dd r}{r^2}  \dd \zeta \dd \omega \\
\end{aligned}\]
and $L_{3,\hbar}$ is controlled using \eqref{hatgest} as
\[
L_{3,\hbar} =\O \bigg( \hbar\int  \overline\chi(\tfrac{r}{\hbar}) \sup_{\omega\in\S^{n-1}}\big\|  \widehat g(\cdot,r \omega) \big\|_{L^1}\frac{\dd r}{r^3}  \bigg)  = \O \bigg( \hbar\int  \overline\chi(\tfrac{r}{\hbar})\frac{\dd r}{r^2}  \bigg) . 
\]
Then, since $ \int  \overline\chi(r) \frac{\dd r}{r^2} <\infty$, 
\begin{equation} \label{Ired6}
L_{3,\hbar} =\O (1) .
\end{equation}

We now turn to the highly oscillating term $L_{2,\hbar}$ that we can rewrite (by Fourier's inversion formula)
\[
L_{2,\hbar} =\int  \overline\chi(\tfrac{r}{\hbar}) g(0,r\omega) \cos(2r/\hbar +2c_n)  \frac{\dd r}{r^2} \dd  \omega . 
\]
Let $j(r) := \int  g(0,r\omega)  \dd \omega$, which we may view as an even function in $ \mathcal F(\R)$. So
\[
L_{2,\hbar} =  \Re\bigg( e^{2i c_n} \int  \overline\chi(\tfrac{r}{\hbar}) j(r)e^{i\tfrac{2r}{\hbar}}  \frac{\dd r}{r^2} \bigg) . 
\]
and we can make an integration by parts, 
\[
L_{2,\hbar} = -  \frac\hbar2\Im\bigg(  e^{2i c_n} \int  \partial_r\big\{\overline\chi(\tfrac{r}{\hbar}) j(r)\tfrac1{r^2} \big\} e^{i\tfrac{2r}{\hbar}}  \dd r \bigg) . 
\]
Then, we control $L_{2,\hbar}$ as follows,
\[
\left| \int  \partial_r\overline\chi(\tfrac{r}{\hbar})j(r) r^{-2} \dd r \right| \le C \int |\chi'(r)| \frac{\dd r}{r} <\infty , 
\]
similarly
\[
\hbar \int  \left| \overline\chi(\tfrac{r}{\hbar})j(r) r^{-3}  \right|\dd r 
\le C \hbar \int \overline\chi(\tfrac{r}{\hbar}) \frac{\dd r}{r^2}  =  C \int \overline\chi(r) \frac{\dd r}{r^2} <\infty
\]
and 
\[
\hbar  \int \left|  \overline\chi(\tfrac{r}{\hbar})j'(r) r^{-2}  \right|  \dd r 
\le C \int \overline\chi(r) \frac{\dd r}{r^2} <\infty .
\]
We conclude that 
\begin{equation} \label{Ired7}
L_{2,\hbar} = \O(1) .
\end{equation}
We finally turn to the main $L_{1,\hbar}$ which we compute using Fourier's inversion formula, 
\[
L_{1,\hbar} =   \int  \overline\chi(\tfrac{r}{\hbar})  g(\omega,r \omega)   \frac{\dd r}{r^2} \dd \zeta \dd \omega  
\]
where we used the invariance of the Haar measure $\dd \omega  $ on $\S^{n-1}$. 
Using the notation \eqref{def:q}, this yields
\[
L_{1,\hbar}  =  \int  \overline\chi(\tfrac{r}{\hbar})  q(r)  \frac{\dd r}{r}  .
\]
Since $q (r)\to Q$ as $r\to0$, if $Q\neq 0$,  this integral cannot be bounded as $\hbar\to0$. However, it diverges logarithmically under the assumption \eqref{condq}.
Indeed, we can split if $C>0$ is sufficiently large (depending only on the cutoff), 
\[
L_{1,\hbar} =   Q\bigg(\int_{C\hbar}^1\frac{\dd r}{r}  
+ \int_0^{C\hbar}  \overline\chi(\tfrac{r}{\hbar})  \frac{\dd r}{r} \bigg)
+ \int_1^\infty \frac{ q(r)}{r} \dd r + \O\bigg(  \int_0^1\left| \frac{ q(r)-Q}{r}\right| \dd r \bigg)
\]
The last three terms are $\O(1)$ since $ \int_0^{C}  \overline\chi(r)  \frac{\dd r}{r}<\infty$, $q \in C_c$ and \eqref{condq} holds. This shows that 
\[L_{1,\hbar} =  Q \log \hbar^{-1} +\O(1).\]
Combining this asymptotics with \eqref{Ired6}, \eqref{Ired7} into \eqref{Ired5}, we conclude that as $\hbar\to0$,
\[
L_\hbar = - \tfrac1{\pi^2} (2\pi\hbar)^{n+1}  \big(Q \log \hbar^{-1} +\O(1) \big) . \qedhere
\]
\end{proof}

\subsubsection{Proof of Proposition~\ref{prop:intasymp}} \label{sec:proofintasymp}

We now put together the different steps of the proof of Proposition~\ref{prop:intasymp}.
Recall that $\varphi:\R^n\to\R^n$ is a $C^\infty$-diffeomorphism such that $\varphi^{-1}(\Omega)=B$.
First by Proposition~\ref{prop:smooth-diffeo}  with $g=f(\varphi)$
\begin{align*}
L_\hbar &= \int e^{i\frac{(x-y)\cdot\xi}{\hbar}} F(x,y,\xi) (f|_\Omega(x)-f|_\Omega(y))^2 \dd \xi \dd x \dd y \\
&= \int e^{i\frac{(x-y)\cdot\xi}{\hbar}} G(x,y,\xi) (g|_B(x)-g|_B(y))^2\dd \xi \dd x \dd y 
+ \O(\hbar^{n+1}) 
\end{align*}
and on the diagonal,
\[
G(x,x,\xi) = F\big(\varphi(x),\varphi(x), D\varphi(x)^{-*}\xi\big) \J_\varphi(x) , \qquad (x,\xi) \in\R^{2n}.
\]
Here we used that by construction, $\J_\Phi(x,y,\xi) = \J_\varphi(x)\J_\varphi(y)\J_\varphi^{-1}(\tfrac{x+y}{2})$. 

Then, as  $G \in \mathcal F(\R^{2n},\R^n)$, by Proposition~\ref{prop:Ired}, we obtain
\[
L_\hbar
=  (2\pi\hbar)^{n+1}  \big( \tfrac1{\pi^2} Q \log \hbar^{-1} +\O(1) \big) 
\]
where, if it exists, 
\[
Q = \lim_{r\to0^+} \bigg(\frac 1r \int_{\S^{n-1}}  G(\omega,\omega,r\omega) g(\omega)^2 \dd \omega \bigg) . 
\]
It remains to argue under the assumption \eqref{condR} this limits is well-defined and  \eqref{condq} holds. We have as $r\to0$,
\begin{equation} \label{Jacborder1}
\begin{aligned}
G(w,w,rw)  & = r R(\varphi(\omega))  \J_\varphi(\omega) |D\varphi(\omega)^{-*}\omega| +\O(r^2) \\
&=  r R(\hat\varphi(\omega)) \J_{\hat\varphi}(\omega)  +\O(r^2) 
\end{aligned}
\end{equation}
where the map $\hat\varphi : \partial B \to \partial \Omega$ is the $C^\infty$-diffeomorphism induced by $\varphi$ on the boundary and the errors are controlled uniformly over $\omega\in\partial B$. 
To obtain \eqref{Jacborder1}, note that for any $\omega\in \partial B$, we can decompose $T_\omega(\R^n) = T_\omega(\partial B)\oplus \R\omega$ and, with $\hat x = \varphi(\omega)$,  $T_{\hat x}(\R^n) = T_{\hat x}(\partial\Omega) \oplus \R \nu(\hat x)$ where $\nu(\hat x)$ is the (unit) normal to $\partial\Omega$ at $\hat x$. 
In this decomposition, since $\varphi^{-1}(\Omega)=B$ the matrix of the differential $D\varphi$ of the map $\varphi$ has the following from 
\begin{equation} \label{defalpha}
D\varphi = \begin{pmatrix} 
D\hat\varphi  & 0 \\
\star & \alpha
\end{pmatrix} ,
\qquad \text{where}\quad
\alpha(\omega) = \nu(\hat x) \cdot D\varphi(\omega) \omega , \quad
\hat x = \varphi(\omega) .
\end{equation}
In particular $\alpha>0$ on $\partial B$ and, by taking determinants, this implies that for $\omega\in\partial B$, 
\begin{equation} \label{Jacborder2}
\J_\varphi(\omega) = \J_{\hat\varphi}(\omega) \alpha(\omega)
\end{equation}

Moreover by definition of $\nu$, for any $\omega\in\partial B$ and $v \in T_\omega(\partial B) = \omega^{\perp}$, 
\[
0 = \nu(\varphi(\omega)) \cdot D\varphi(\omega) u ,  
\]
which shows that $D\varphi(\omega)^{*}\nu(\varphi(\omega))$ is proportional to $\omega$. 
According to \eqref{defalpha}, this implies that for $\omega \in\partial B$, 
\[
D\varphi(\omega)^{*}\nu(\varphi(\omega)) = \alpha(\omega) \omega
\]
and since $ \big|\nu(\varphi(\omega))\big| =1$, we conclude that 
\[
1 = \alpha(\omega) |D\varphi(\omega)^{-*}\omega|
\]
Combined with \eqref{Jacborder2}, this proves formula \eqref{Jacborder1}. 
Then, we deduce from this expansion that 
\[
Q= \int_{\partial B}  R(\hat\varphi(\omega))f(\hat\varphi(\omega))^2\J_{\hat\varphi}(\omega) \dd \omega
= \int_{\partial\Omega} R(\hat x) f(\hat x)^2 \dd \hat x
\]
by a change of variable. In addition, we have
\[
\int_0^1  \int_{\S^{n-1}}  \bigg| \frac{G(w,w,rw)-rR(\hat\varphi(\omega)) \J_{\hat\varphi}(\omega) }{r^2} \bigg| g(\omega)^2 \dd r \dd\omega <\infty .
\]
which guarantees that the condition \eqref{condq} holds.
This completes the proof. \qed

\subsection{Partition of unity; Proof of Theorem \ref{thm:J2}.} 
\label{sec:partition-unity-end}

We now combine our previous results, with a decomposition using a partition of unity, to prove Theorem \ref{thm:J2}.
Recall that $\Omega \Subset \mathcal{D}$ is an open set with a smooth boundary and, according to Proposition~\ref{prop:red}, $(2\pi\hbar)^{n-1} \big\| [\widetilde{\Pi}, f|_\Omega] \big\|^2_{\J^2}= \mathcal{Q}(f)+\O(1)$ as $\hbar\to0$ where $\mathcal{Q}_\Omega$ is the quadratic form 
\[
\mathcal{Q}_\Omega:f \mapsto  \frac{-1}{(2\pi\hbar)^{n+1}}\int e^{i\frac{(x-y)\cdot\xi}{\hbar}}
F(x,y,\xi) (f|_\Omega(x)-f|_\Omega(y))^2 \dd \xi \dd x \dd
y
\]
acting on smooth functions.

Moreover, by Proposition~\ref{prop:intasymp} and \eqref{Fpp}, in case
$\Omega$ is contractible ($C^\infty$-diffeormorphic to $B_n$), one has
$\mathcal{Q}_\Omega(f)= \log \hbar^{-1}  \mathcal{V}(f) +\O(1)$
as $\hbar\to0$ where $\mathcal{V}$ is the quadratic form
\begin{equation} \label{var}
\mathcal{V}_\Omega:f\mapsto \frac{\cst_{n-1}}{\pi^2}   \int_{\partial\Omega} |V(\hat x)|^{\frac{n-1}2} f(\hat x)^2 \dd \hat x 
\end{equation}
acting on smooth functions.

\medskip

We claim that there is a smooth partition of unity  $(\chi_j)_{1\leq j\leq J}$ such that
\begin{itemize}[leftmargin=*] 
\item $\chi_j|_\Omega = \chi_j|_{\Omega_j}$ where $\Omega_j$ is a contractible set, for $j\in[J]$.
\item  $(\chi_i+\chi_j)|_\Omega =  (\chi_i+\chi_j)|_{\Omega_{i,j}} $ where $\Omega_{i,j}$ is a contractible set, for $i,j\in[J]$.
\end{itemize}
These sets are arbitrary smooth contractible sets. The condition $\chi|_\Omega=\chi|_{\Omega_j}$ is equivalent to \[\Omega\cap(\supp\chi) = \Omega_j\cap(\supp\chi),\] so we can always choose a  partition sufficiently fine so that these conditions hold. This partition is finite since $\Omega$ is compact. 

Thus, it follows that
\[\begin{aligned}
\mathcal{Q}_\Omega(1) = \mathcal{Q}_\Omega\big({\textstyle \sum_{j=1}^J} \chi_j\big) 
&={\textstyle \sum_{i,j=1}^J}  \mathcal{Q}_\Omega(\chi_j+\chi_i)- {\textstyle \sum_{j=1}^J}  \mathcal{Q}_\Omega(\chi_j) \\
&= {\textstyle \sum_{i,j=1}^J}  \mathcal{Q}_{\Omega_{ij}}(\chi_j+\chi_i)- {\textstyle \sum_{j=1}^J}  \mathcal{Q}_{\Omega_j}(\chi_j) \\
& = \log \hbar^{-1} \Big( {\textstyle \sum_{i,j=1}^J}  \mathcal{V}_{\Omega_{ij}}(\chi_j+\chi_i)- {\textstyle \sum_{j=1}^J}  \mathcal{V}_{\Omega_j}(\chi_j) \Big) +\O(1) 
\end{aligned}\]
Using that  $\mathcal{V}_{\Omega_{ij}}(\chi_j+\chi_i) =  \mathcal{V}_{\Omega}(\chi_j+\chi_i)$ and  $\mathcal{V}_{\Omega_j}(\chi_j)=\mathcal{V}_{\Omega}(\chi_j)$ for $i,j\in[J]$, we conclude that 
\[
\mathcal{Q}_\Omega(1) = \log \hbar^{-1} \mathcal{V}_\Omega\big({\textstyle \sum_{j=1}^J} \chi_j\big) +\O(1) .
\]
Similarly, we have for any $f\in C^\infty(\R^n)$, 
\(
\mathcal{Q}_\Omega(f) = \log \hbar^{-1} \frac{\cst_{n-1}}{\pi^2} \mathcal{V}_\Omega(f) +\O(1) 
\)
as $\hbar\to0$. 

According to Proposition~\ref{prop:regularised_commutator}, this shows
that for any open set $\Omega \Subset \mathcal{D}$ with smooth
boundary and any $f\in C^\infty(\R^n)$, 
\[
(2\pi\hbar)^{n-1} \big\| [\Pi, f|_\Omega] \big\|^2_{\J^2}= (2\pi\hbar)^{n-1} \big\| [\widetilde{\Pi}, f|_\Omega] \big\|^2_{\J^2} +\O(1)=  \log \hbar^{-1}\mathcal{V}_\Omega(f) +\O(1) 
\qquad\text{as $\hbar\to0$}. 
\]
This completes the proof (with $\mu=0$ and $V(x)<0$ for $x\in\Omega$). 

\subsection{Central limit theorem; Proof of Theorem~\ref{thm:clt}.}
\label{sec:clt}

The Gaussian fluctuations of the counting statistics, after rescaling,
are due to the determinantal structure of the free fermions point
process $\X$ and the fact that for a non-trivial smooth set $\Omega$,
$\operatorname{var} \X(\Omega) \to\infty$ as $\hbar\to\infty$. In the
random matrix context, this observation is due to
\cite{costin_gaussian_1995} and it has been extended to general
determinantal processes in \cite{soshnikov_gaussian_2002}. By
\cite[Thm 1]{soshnikov_gaussian_2002}, we deduce from
Theorem~\ref{thm:J2} that for any open set $\Omega \Subset
\mathcal{D}$ with smooth boundary and any $f\in C^\infty(\R^n)$, 
(in distribution) as $\hbar\to 0$, 
\[
\frac{\X(f|_\Omega) - \mathbb{E}[\X(f|_\Omega)]}{\sqrt{-(2\pi\hbar)^{1-n}\log \hbar}} \Rightarrow \mathcal{N}_{0,\mathcal{V}_\Omega(f)} . 
\]
In particular, if $\Omega$ has disjoint components $\{\Omega_1, \cdots , \Omega_k\}$,
$\mathcal{V}_\Omega=\sum_{j=1}^k\mathcal{V}_{\Omega_j}$
according to \eqref{var}, 
then 
\[
\left(\frac{\X(\Omega_1) - \mathbb{E}[\X(\Omega_1)]}{\sqrt{-(2\pi\hbar)^{1-n}\log \hbar}} , \cdots, \frac{\X(\Omega_k) -  \mathbb{E}[\X(\Omega_k)]}{\sqrt{-(2\pi\hbar)^{1-n}\log \hbar}} \right) \Rightarrow \mathcal{N}_{0,\Sigma}
\]
where
\(
\Sigma = \operatorname{diag}\big(\mathcal{V}_{\Omega_1},\cdots, \mathcal{V}_{\Omega_k}\big).
\)

Note that this argument cannot be directly applied to a collection of
sets $\Omega_1, \cdots , \Omega_k \Subset \mathcal{D}$ (open, with
smooth boundaries) with intersecting boundaries. 
However, we can use the off-diagonal decay of the regularized kernel $\widetilde{\Pi}$ and  the estimates from  Lemma~\ref{lem:cov} to show that the cross terms are negligible.

\begin{lem}\label{prop:kerdiag}
Let $\Omega \Subset \mathcal{D}$ be open.
The kernel \eqref{kern_reg} satisfies for $(x,y)\in\Omega^2$, 
\[
|\widetilde{\Pi}(x,y)|^2\leq  \frac{C\hbar^{1-n}}{(\hbar+|x-y|)^{n+1}}
\]
\end{lem}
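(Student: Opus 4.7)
The plan is to analyze the oscillatory integral \eqref{kern_reg} and derive the bound via a stationary-phase reduction, splitting into the near-diagonal regime $|x-y|\leq \hbar$ and the off-diagonal regime $|x-y|\geq \hbar$. The target bound reduces respectively to $|\widetilde{\Pi}(x,y)|\leq C\hbar^{-n}$ on the diagonal and $|\widetilde{\Pi}(x,y)|\leq C\hbar^{(1-n)/2}/|x-y|^{(n+1)/2}$ off-diagonal, and the two match at $|x-y|\asymp\hbar$.

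The main step is stationary phase in the $(t,\lambda)$ variables, with $\xi$ held as a parameter. The $(t,\lambda)$-Hessian of the phase $-t\lambda+\varphi(t,x,\xi)-y\cdot\xi$ is constant with determinant $-1$, and the unique critical point $(t_c,\lambda_c)=(0,V(x)+|\xi|^2)$ lies strictly inside the half-space $\{\lambda\leq 0\}$ precisely when $|\xi|^2+V(x)<0$. For such $\xi$, Proposition~\ref{lem:statphase} applied on a smoothly cutoff interior domain yields a factor $2\pi\hbar$ times the amplitude at $(t_c,\lambda_c)$, whose principal part is $\vartheta(|\xi|^2+V(x))\chi(V(x)+|\xi|^2)$ by Proposition~\ref{prop:regularised_commutator}. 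For $|\xi|^2+V(x)>0$ the critical point is exterior and iterated integration by parts in $(t,\lambda)$ gives $\O(\hbar^{\infty})$. The resulting reduction is
\[
\widetilde{\Pi}(x,y) = \frac{1}{(2\pi\hbar)^n}\int e^{i(x-y)\cdot\xi/\hbar}\,b(x,\xi)\,\1\{|\xi|^2+V(x)\leq 0\}\dd \xi + \O(\hbar^{1-n}),
\]
where $b$ is smooth, compactly supported, and does not vanish near the sphere $|\xi|^2=-V(x)$. After a smooth change of variables in $\xi$ reducing $\{|\xi|^2+V(x)\leq 0\}$ to a round ball (as in Section~\ref{sec:chv}), the Bessel-function asymptotics of Lemma~\ref{lem:Bessel} give the Fourier transform of $\1_B$ decaying as $|z|^{-(n+1)/2}$ at infinity; evaluating at $z=(y-x)/\hbar$, and absorbing the smooth factor $b$ via repeated integration by parts, yields the off-diagonal estimate, while on the diagonal a direct volume estimate on the $\xi$-support of $b$ gives $|\widetilde{\Pi}(x,y)|\leq C\hbar^{-n}$.

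The main obstacle is the rigorous treatment of the sharp indicator $\1\{\lambda\leq 0\}$ within the stationary phase step, since standard stationary phase requires smooth amplitudes. To deal with this I would adapt the arguments of Section~\ref{sec:statphase}: one integration by parts in $\lambda$ using $\partial_\lambda\Psi=-t$ (after excising a neighborhood $|t|\lesssim\hbar$, whose contribution is bounded trivially by $C\hbar^{-n}$) produces a boundary term at $\lambda=0$ with smooth amplitude, plus a remainder with $\partial_\lambda a$ in place of $a$ which can be treated iteratively. An equivalent alternative is to perform stationary phase in $(t,\xi)$ directly on the boundary contribution: using the Hamilton--Jacobi expansion $\varphi(t,x,\xi)=x\cdot\xi+t(V(x)+|\xi|^2)+\O(t^2)$, the critical points satisfy $\xi_c=(y-x)/(2t)+\O(t)$ and $|\xi_c|^2=-V(x)$, giving $|t_c|\asymp|x-y|$ and Hessian determinant of order $|x-y|^{n-1}$, which combined with the $1/|t_c|$ factor from the $\lambda$-boundary term recovers the same $\hbar^{(1-n)/2}/|x-y|^{(n+1)/2}$ decay.
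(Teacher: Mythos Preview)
Your endgame---reducing $\widetilde{\Pi}(x,y)$ to a Fourier transform of the indicator of a convex body and invoking the $|z|^{-(n+1)/2}$ decay of $\widehat{\1_B}$---is correct and is also how the paper finishes. The gap is in the reduction step.

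Your displayed formula
\[
\widetilde{\Pi}(x,y)=\frac{1}{(2\pi\hbar)^n}\int e^{i(x-y)\cdot\xi/\hbar}\,b(x,\xi)\,\1\{|\xi|^2+V(x)\le 0\}\,\dd\xi+\O(\hbar^{1-n})
\]
cannot hold with a \emph{pointwise} error $\O(\hbar^{1-n})$ uniform in $(x,y)$: at $|x-y|\asymp 1$ the target bound is $|\widetilde{\Pi}(x,y)|\le C\hbar^{(1-n)/2}$, and $\hbar^{1-n}\gg\hbar^{(1-n)/2}$ for $n\ge 2$, so the error would swamp the main term. The same problem affects your excised region $|t|\lesssim\hbar$: bounding it ``trivially by $C\hbar^{-n}$'' is fine on the diagonal but yields nothing off-diagonal. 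To repair this you would also have to exploit the $\xi$-oscillation (on $|t|\lesssim\hbar$ the phase is $(x-y)\cdot\xi+\O(\hbar)$, so integration by parts in $\xi$ recovers $|x-y|$-decay); this is doable but absent from the proposal, and without it the argument does not close.

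The paper avoids all of this by a different order of operations: it writes $\xi=r\omega$ and applies stationary phase in $(t,r)$ with $(\omega,\lambda)$ as parameters. Since $\1\{\lambda\le 0\}$ involves only a passive variable, the amplitude is smooth and no remainder appears. The resulting phase vanishes on $\{x=y\}$, so factors as $(x-y)\cdot\zeta(x,y,\omega,\lambda)$; an \emph{exact} change of variables $(\omega,\lambda)\to\zeta$ gives
\[
\chi(x)\widetilde{\Pi}(x,y)\chi(y)=\frac{1}{(2\pi\hbar)^n}\int e^{i(x-y)\cdot\zeta/\hbar}\,g(x,y,\zeta)\,\1\{\lambda(x,y,\zeta)\le 0\}\,\dd\zeta,
\]
with $\lambda(x,x,\zeta)=|\zeta|^2+V(x)$. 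The set $\{\lambda(x,y,\cdot)\le 0\}$ is a strongly convex body for every $(x,y)$, and the off-diagonal bound follows from H\"ormander's estimate on Fourier transforms of convex indicators, convolved with the Schwartz function $\widehat{g}$. The key difference from your reduction is that $g$ and the convex body are allowed to depend on $y$; your formula freezes them at $y=x$, and the price of that freezing is exactly the uncontrolled error.
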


\begin{proof} 
Let $\chi :\R^n\to[0,1]$ be a smooth cutoff such that $\chi=1$ on $\Omega$ and  $\chi=0$ on $\Omega'$ where $\Omega'$ is a neighborhood of $\Omega$ in $\mathcal D$.  
We proceed as in Section~\ref{sec:statphase}, we make a change of variable $\xi=r\omega$ with $r>\underline{\cst}$ (cf.~\eqref{supp_bulk}), $\omega\in\S^{n-1}$ and we apply the stationary phase method to the integral \eqref{kern_reg} in the variables $(t,r)$ with $(x,y,\omega,\lambda)$ fixed.
The critical point is non-degenerate, given by \eqref{critptrt} with $\sigma=0$, so we obtain
\begin{equation}\label{kern_red} 
\chi(x)\widetilde{\Pi}(x,y)\chi(y) = \frac{1}{(2\pi\hbar)^{n}}\int
e^{i\tfrac{\Psi_1(x,x-y,\omega,\lambda)}{\hbar}}b(x,y,\omega,\lambda)\1\{\lambda\le0\} \dd
\omega \dd \lambda 
\end{equation}
where $b\in S^1(\R^{3n+1})$ is supported on $\big\{|\lambda|, |x-y|\le
\underline{\ell} \}$, and the phase $\Psi_1$ vanishes on the diagonal
$\{x=y\}$. This already implies the following saturated bound, locally uniformly in the bulk:
\[
|\widetilde{\Pi}(x,y) | \le C \hbar^{-n}  .
\]

Then, we can write
\[
\Psi_1(x,u,\omega,\lambda) = u\cdot \zeta(x,u,\omega,\lambda) 
\]
where the map $\zeta :\R^{3n+1} \to \R^n$ is smooth  on $\supp(b)$
and make a change of coordinates $(\lambda,\omega) \to \zeta(x,u,\omega,\lambda)$ in \eqref{kern_red}.
As in Section~\ref{sec:phase} (albeit using a simpler analysis), we claim that 
\[
\zeta(x,u,\omega,\lambda) = R(x,\lambda) \omega+ \O(u) , \qquad R=\sqrt{\lambda-V(x)}
\]
and the error is smooth, so the matrix 
\[
\frac{\partial\zeta}{\partial\lambda\partial\omega} = R^{-1}\mathrm{I}_n + \O(u)
\]
is non-degenerate since $R\ge \underline{\cst}$ on $\supp(b)$ and $\underline{\ell}\ll \underline{\cst}$. Hence, there is $g\in S^1(\R^{3n})$, supported on $\big\{|x-y|\le \underline{\ell} \}$, so that 
\[
\eqref{kern_red}=  \frac{1}{(2\pi\hbar)^{n}}\int
e^{i\tfrac{(x-y)\cdot\zeta}{\hbar}}g(x,y,\zeta)\1\{\lambda(x,y,\zeta)\le0\} \dd\zeta
\]
where the map $\lambda :\R^{3n} \to \R$ is smooth with on the diagonal $\{x=y\}$,
\[
\lambda(x,x,\zeta) = |\zeta|^2 +V(x) . 
\]
By perturbation, ${\rm Hess}_\zeta \lambda(x,y,\zeta) $ is positive-definite for $x\in \Omega' ,|x-y|   \le \underline{\ell}$  so that $\{\zeta\in\R^n :\lambda(x,u,\zeta)\le0\}$ is a strongly convex body.
Thus, by \cite[Corollary 7.7.15]{hormander_analysis_2003}, 
if $u\in\R^n \mapsto \Lambda (x,y,u)$ denotes the Fourier transform of $\zeta \in\R^n\mapsto\1\{\lambda(x,y,\zeta)\le0\}$, then
\[
\sup_{x,y\in\Omega'}| \Lambda (x,y,u)|\leq \frac C{1+|u|^{\frac{n+1}2}}.
\]
This is to be compared with the case of a ball (Lemma~\ref{lem:Bessel}).
This implies that 
\[
\chi(x)\widetilde{\Pi}(x,y)\chi(y) =\hbar^{-n} \Lambda(x,y,\cdot)*\widehat{g}(x,y,\cdot)|_{\tfrac{x-y}{\hbar}}
\]
where $u\in\R^n \mapsto \widehat{g} (x,y,u)$ is a Schwartz function.
We conclude that 
\[
|\chi(x)\widetilde{\Pi}(x,y)\chi(y)| \le C\hbar^{-\frac{n-1}{2}}|x-y|^{-\frac{n+1}{2}}.
\]
which is the appropriate away from the diagonal.
\end{proof}

\begin{rem}
The bound from Lemma~\ref{prop:kerdiag} corresponds to 
$|\widetilde{\Pi}(x,y)| \le C \hbar^{-n}\big|\widehat{\1}\{\cdot \le1\}(\frac{x-y}{\hbar})\big|$  for $(x,y)\in\Omega^2$ (inside the bulk).
This is consistent with the scaling limit of the regularized kernel $\widetilde{\Pi}$. Namely, we showed in \cite{deleporte_universality_2024} that for $x\in  \mathcal{D}$,
$\hbar^n \widetilde{\Pi}(x+ \hbar u ,x+\hbar v ) \to K(u-v) $ where $K = C_n\widehat{\1}\{\cdot \le c_n\}$ for some constants $C_n,c_n$ depending only on the dimension $n$.
\end{rem}

Given two open sets $\Omega_1,\Omega_2 \Subset \mathcal D$ with smooth boundary such that $|\partial \Omega_1\cap \partial \Omega_2|=0$, Lemmas~\ref{prop:kerdiag} and~\ref{lem:cov} imply that as $\hbar\to\infty$, 
\[
\big|\tr( [\widetilde{\Pi},\1_{\Omega_1}][\widetilde{\Pi},\1_{\Omega_2}])\big|
\le C\hbar^{1-n} \int \frac{|(\1_{\Omega_1}(x)-\1_{\Omega_1}(y))(\1_{\Omega_2}(x)-\1_{\Omega_2}(y))|}{(\hbar+|x-y|)^{n+1}}  \dd
x \dd y = o\big(\log(\hbar^{-1})\hbar^{1-n}\big) . 
\]

Then, expanding the square, under the assumptions of Theorem~\ref{thm:clt}, it holds for $\alpha \in\R^k$, 
\[
\frac{ [\widetilde{\Pi},{\textstyle\sum_{j=1}^k \alpha_k \1_{\Omega_k}}] \|^2_{\J^2}}{(2\pi\hbar)^{n-1}(\log \hbar^{-1})}=
\sum_{j=1}^k  \frac{\alpha_k [\widetilde{\Pi},\1_{\Omega_k}] \|^2_{\J^2}}{(2\pi\hbar)^{n-1}(\log \hbar^{-1})} + o(1)
\qquad\text{as $\hbar\to0$}. 
\]
Replacing $\widetilde{\Pi}$ by the projector $\Pi$ using Proposition~\ref{prop:regularised_commutator}, we conclude by Theorem~\ref{thm:J2} that as $\hbar\to0$,
\[
\frac{\|[\Pi,{\textstyle\sum_{j=1}^k \alpha_k \1_{\Omega_k}}] \|^2_{\J^2}}{(2\pi\hbar)^{n-1}(\log \hbar^{-1})}=
\sum_{j=1}^k \alpha_k \mathcal{V}_{\Omega_k}(1) + o(1) .
\]
Again, applying Soshnikov's result \cite[Thm 1]{soshnikov_gaussian_2002}, by the Cramér–Wold argument, this completes the proof of Theorem~\ref{thm:clt}. 

One has the following more precise covariance estimate in the case of
a relatively non-singular intersection.
\begin{rem}
  If $H^{\beta}(\partial \Omega_1\cap \partial \Omega_2)<+\infty$ for
  some $\beta<n-1$, then the last covariance bound
  improves to
  \[
    {\rm Cov}(\X(\1_{\Omega_1}),\X(\1_{\Omega_2}))=\O(\hbar^{1-n})
  \]
  using the second part of Lemma \ref{lem:cov}. At this scale, we
  cannot use the regularised projector from
  Proposition \ref{prop:regularised_commutator} to compute the limit
  of the covariance.
  This difficulty already appears in the variance estimates in \cite{deleporte_universality_2024}.
\end{rem}
\section{Trace norm of commutators \& entropy estimates} \label{sec:tr}

\subsection{Proof of Theorem~\ref{thm:J1}: Multi-scale argument} \label{sec:MS}
The goal of this section is to prove Theorem~\ref{thm:J1}, that is, to
show that for any open set $\Omega \Subset \mathcal{D}$ with a smooth boundary 
\begin{equation} \label{comm0} 
\big\| [\Pi, \1_\Omega] \big\|_{\J^1} = \O\big( \hbar^{1-n} |
\log \hbar|^2 \big) . 
\end{equation}

We begin by a simple truncation step. 

\begin{lem} \label{lem:decomp}
Let $\vartheta : \R \to [0,1]$ be any smooth function which equal to $1$ on $[-\underline{\cst}/2, \underline{\cst}/2]$. 
Let $\Gamma_\pm = \vartheta\1_{\R_{\pm}}(H)$ and $\Theta_{\pm} = \vartheta1_{\R_{\pm}}(\mathrm{w})$. 
It holds
\begin{equation*}  
\| [\Pi,\1_{\Omega}] \|_{\J^1}
\le   
\| \Gamma_- \Theta_+\Gamma_+\Theta_- \|_{\J^1} 
+ \| \Gamma_+ \Theta_+\Gamma_-\Theta_- \|_{\J^1} 
+\| \Gamma_+ \Theta_-\Gamma_-\Theta_+ \|_{\J^1} 
+ \| \Gamma_- \Theta_-\Gamma_+\Theta_+ \|_{\J^1}
+\O\big(\hbar^{1-n}\big). 
\end{equation*}
\end{lem}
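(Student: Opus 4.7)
My approach rests on the algebraic identity
\begin{equation*}
[\Pi,\1_\Omega] = \Pi\1_\Omega\Pi^c\1_{\Omega^c} - \Pi^c\1_\Omega\Pi\1_{\Omega^c} - \Pi\1_{\Omega^c}\Pi^c\1_\Omega + \Pi^c\1_{\Omega^c}\Pi\1_\Omega,
\end{equation*}
with $\Pi^c := 1-\Pi$, which I derive as follows. Starting from the two standard expressions $[\Pi,\1_\Omega] = \Pi\1_\Omega\Pi^c - \Pi^c\1_\Omega\Pi$ and $[\Pi,\1_\Omega] = -[\Pi,\1_{\Omega^c}] = -\Pi\1_{\Omega^c}\Pi^c + \Pi^c\1_{\Omega^c}\Pi$, I insert $1 = \1_\Omega + \1_{\Omega^c}$ on the right of each and add the two resulting formulas for $2[\Pi,\1_\Omega]$. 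The ``diagonal'' terms (those with the same $\1_\Omega$-index in positions $2$ and $4$) regroup into $[\Pi,\1_\Omega]\1_\Omega + [\Pi,\1_\Omega]\1_{\Omega^c} = [\Pi,\1_\Omega]$; subtracting this from $2[\Pi,\1_\Omega]$ leaves exactly the displayed four-term identity. Each of the four products matches, modulo the substitutions $\Pi = \Gamma_- + R_-$, $\Pi^c = \Gamma_+ + R_+$, $\1_\Omega = \Theta_- + S_-$, $\1_{\Omega^c} = \Theta_+ + S_+$, one of the four terms in the statement.

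Here the residuals are $R_\pm := (1-\vartheta)(H)\1_{\R_\pm}(H) = \tilde\vartheta_\pm(H)$ and $S_\pm := (1-\vartheta)(\mathrm w)\1_{\R_\pm}(\mathrm w) = \tilde\vartheta_\pm(\mathrm w)$, where $\tilde\vartheta_\pm := (1-\vartheta)\1_{\R_\pm}$. Because $\vartheta \equiv 1$ on $[-\underline{\cst}/2,\underline{\cst}/2]$, both $\tilde\vartheta_\pm$ are smooth on $\R$, so $R_\pm$ is a smooth spectral function of $H$ and $S_\pm$ is a smooth compactly supported function of position. Expanding each of the four identity terms produces, besides the desired pure $\Gamma\Theta$-product, fifteen mixed terms containing at least one residual factor. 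The orthogonality relations $R_-\Gamma_+ = R_+\Gamma_- = 0$ and $S_-\Theta_+ = S_+\Theta_- = 0$ (coming from $(1-\vartheta)\vartheta\cdot\1_{\R_\mp}\1_{\R_\pm} = 0$) annihilate several of these outright. Each surviving mixed term can be rewritten, after commuting the residual past neighbouring operators, as a bounded operator acting on one of the commutators $[\tilde\vartheta_\pm(H), f]$ (with $f$ a bounded multiplication operator) or $[g(H),\tilde\vartheta_\pm(\mathrm w)]$ (with $g$ a bounded spectral function of $H$). Each such commutator has trace norm $\O(\hbar^{1-n})$ by Proposition~\ref{prop:J1commsmooth} combined with the Fournais--Mikkelsen bound of Proposition~\ref{thm:FM}, and summing over the finitely many surviving mixed terms gives the advertised error $\O(\hbar^{1-n})$.

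The main technical obstacle is that Proposition~\ref{prop:J1commsmooth} is stated only for $\vartheta$ compactly supported in $(-\infty,\underline{\cst}]$, whereas the residuals $\tilde\vartheta_\pm$ are supported in $(-\infty,-\underline{\cst}/2]$ and $[\underline{\cst}/2,+\infty)$, neither of which is compact. I would circumvent this by further splitting $\tilde\vartheta_\pm = \chi_0\tilde\vartheta_\pm + (1-\chi_0)\tilde\vartheta_\pm$ with $\chi_0$ a smooth compactly supported cutoff adapted to the support of the spatial factors involved: the first piece falls directly under Proposition~\ref{prop:J1commsmooth}, while the commutator of the high-energy piece $(1-\chi_0)\tilde\vartheta_\pm(H)$ with any bounded spatial function is handled by Proposition~\ref{thm:FM}, since the Fourier integral representations of Section~\ref{sec:semiclassic} extend uniformly to spectral projections of $H$ at arbitrary energy levels $\mu'$.
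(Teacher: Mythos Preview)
Your overall strategy—the four-term identity for $[\Pi,\1_\Omega]$, the substitutions $\Pi=\Gamma_-+R_-$, $\Pi^c=\Gamma_++R_+$, $\1_\Omega=\Theta_-+S_-$, $\1_{\Omega^c}=\Theta_++S_+$, the orthogonalities, and the reduction of the mixed terms to commutators controlled by Proposition~\ref{prop:J1commsmooth} together with~\eqref{FM}—is exactly the paper's. The gap is in your last paragraph, where you correctly spot the obstacle but propose an incorrect workaround.

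The claim that the Fourier integral representations of Section~\ref{sec:semiclassic} ``extend uniformly to spectral projections of $H$ at arbitrary energy levels $\mu'$'' is false under Assumptions~\ref{asymp:V}: $V$ is only $L^1_{\rm loc}$ globally, smooth only near $\{V\le 0\}$, and $\nabla V\neq 0$ is only assumed on $\{V=0\}$; for large $\mu'$ neither Proposition~\ref{prop:propag_as_FIO} nor the Fournais--Mikkelsen bound is available, so your high-energy piece $(1-\chi_0)\tilde\vartheta_+(H)$ cannot be handled that way. The correct fix is elementary and is precisely the role of the functions $f_1,f_2$ in the paper's display~\eqref{truncation}. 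For $\tilde\vartheta_-$: since both $H$ and $\mathrm w$ are bounded below, one has $\tilde\vartheta_-(H)=g(H)$ and $\tilde\vartheta_-(\mathrm w)=g(\mathrm w)$ for some $g\in C^\infty_c((-\infty,0])$, so Proposition~\ref{prop:J1commsmooth} applies directly. For $\tilde\vartheta_+$: write $\tilde\vartheta_+=1-f_2$ with $f_2:=\vartheta+\tilde\vartheta_-$, which after the same modification at $-\infty$ lies in $C^\infty_c((-\infty,\underline{\cst}])$; since commutators with the identity vanish, $[R_+,\cdot]=-[f_2(H),\cdot]$ and $[\cdot,S_+]=-[\cdot,f_2(\mathrm w)]$, and Proposition~\ref{prop:J1commsmooth} again applies. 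Note also that some of the surviving commutators are of the form $[\Gamma_\pm,S_\mp]$, involving the non-smooth spectral function $\Gamma_\pm$; these do not fall under Proposition~\ref{prop:J1commsmooth} directly and require the further decomposition $\Gamma_-=\Pi-f_1(H)$, $\Gamma_+=f_2(H)-\Pi$, after which Lemma~\ref{lem:J1commsmooth} controls the smooth--smooth part and~\eqref{FM} the $\Pi$-part.
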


\begin{proof}
First observe that since the operator $H$ and the map $\mathrm{w}$ are bounded from below 
\begin{equation} \label{truncation}\begin{cases}
\Pi=\Gamma_- + f_1(H) \\
1-\Pi = \Gamma_+ +1- f_2(H)
\end{cases}
\qquad \qquad 
\begin{cases}
\1_\Omega=\Theta_- + f_1(\mathrm{w}) \\
1-\Pi = \Theta_+ +1- f_2(\mathrm{w})
\end{cases} 
\end{equation}
where  $f_j : \R \to [0,1]$ are smooth functions, supported in $(-\infty,\underline{\cst}]$ with $f_1\1_{\R_+} =0$ and $(1-f_2)\1_{\R_-} =0$.  
Second, observe that
\[\begin{aligned}[]
[\Pi,\1_{\Omega}]
& = \1_{\Omega^c}\Pi \1_{\Omega} - \1_{\Omega} \Pi \1_{\Omega^c}  \\
&= \Pi \1_{\Omega^c}\Pi \1_{\Omega} + (1-\Pi) \1_{\Omega^c}\Pi \1_{\Omega} 
- (1-\Pi) \1_{\Omega} \Pi \1_{\Omega^c} -\Pi \1_{\Omega} \Pi \1_{\Omega^c} .
\end{aligned}\]
Then,  using that $\1_{\Omega} \1_{\Omega^c} =0 $, we can change the
first and fourth terms as follows: 
\begin{equation} \label{expcomm}
[\Pi,\1_{\Omega}]
=  -\Pi\1_{\Omega^c}(1-\Pi)\1_{\Omega}
+(1-\Pi) \1_{\Omega^c}\Pi \1_{\Omega} 
- (1-\Pi) \1_{\Omega} \Pi \1_{\Omega^c} 
+\Pi\1_{\Omega}(1-\Pi)\1_{\Omega^c} .
\end{equation}

Every term on the RHS of \eqref{expcomm} can be handled in the same way, so we focus on the last one. 
Using \eqref{truncation}, since $f_1(H)(1-\Pi) =0$ and $\Gamma_-(1-f_2(H))=0$, one has 
\[
\Pi\1_{\Omega}(1-\Pi)\1_{\Omega^c}
= \Gamma_-\1_{\Omega}\Gamma_+\1_{\Omega^c}
+ [f_1(H) ,\1_{\Omega}](1-\Pi)\1_{\Omega^c}
+ \Gamma_-[\1_{\Omega},1-f_2(H)]\1_{\Omega^c}
\]
According to Proposition~\ref{prop:J1commsmooth}, both $\| [f_j(H) ,\1_{\Omega}] \|_{\J^1}=\O(\hbar^{1-n})$ for $j\in\{1,2\}$, so 
\[
\Pi\1_{\Omega}(1-\Pi)\1_{\Omega^c}
= \Gamma_-\1_{\Omega}\Gamma_+\1_{\Omega^c} +  \O_{\J^1}(\hbar^{1-n} ) .
\]
Similarly, 
\[
\Gamma_-\1_{\Omega}\Gamma_+\1_{\Omega^c} 
=  \Gamma_-\Theta_-\Gamma_+\Theta_+
+   \Gamma_-[ f_1(\mathrm{w}), \Gamma_+]\1_{\Omega^c} 
+ \Gamma_-\Theta_-[\Gamma_+,1-f_2(\mathrm{w})] .
\]
Observe that according to Lemma~\ref{lem:J1commnonsmooth} and \eqref{FM},
\[\begin{aligned}[]
[ f_1(\mathrm{w}), \Gamma_\pm] 
=  [ f_1(\mathrm{w}), f_j(H)]-  [ f_1(\mathrm{w}),\Pi]  = \O_{\J^1}(\hbar^{1-n}) .
\end{aligned}\]
Thus, we conclude that 
\[
\Pi\1_{\Omega}(1-\Pi)\1_{\Omega^c}= \Gamma_-\Theta_-\Gamma_+\Theta_+ +\O_{\J^1}(\hbar^{1-n}) . \qedhere
\]
\end{proof}

Again, every term in the bound from Lemma~\ref{lem:decomp} can be handled in the same way. 
We proceed using a dyadic argument to \emph{isolate the contributions from different scales} that we will estimate based on Proposition~\ref{prop:J2_estimates_dyadic}.
To setup the dyadic decompositions, on top of the notations from Section~\ref{sec:not},  we use the following convention throughout this section.

\begin{notation} \label{def:g} ~
\begin{itemize}[leftmargin=*]
\item Let $\mathrm{w}:\R^n \to \R$ be a smooth function such that $\Omega = \{\mathrm{w} < 0\} \Subset \mathcal{D}$ and $\partial_x \mathrm{w}(x)\neq 0$ for all $x\in \partial \Omega =\{\mathrm{w}=0 \}$. 
In particular, we assume that $\partial_x \mathrm{w} \neq 0$ on $\{|\mathrm{w}|\le \underline{\cst} \}$ by choosing the constant $\underline{\cst}$ sufficiently small.
\item Let $\vartheta :\R \to[0,1]$ supported in $[-\underline{\cst},\underline{\cst}]$ and equal to $1$ on $[-\underline{\cst}/2, \underline{\cst}/2]$.
Let $\chi:\R \to[0,1]$ supported in $[-1,1]$ and equal to 1 on $[-\underline{\cst},\underline{\cst}]$. 
\item Let $L := \lfloor  \log_2(\hbar) \rfloor$ and let  $\eta_k := \hbar2^k$ for $k\in \N_{<L}$.  
\item
Let $\chi_0 := \chi(\hbar^{-1}\cdot)$ and  
\[
\chi_k:x\mapsto \chi_0(2^{-k}x)-\chi_0(2^{1-k}x) ,  \qquad  k\in\N_{<L}. 
\]
We also let $\chi_L : x\mapsto \chi(x) - \chi_0(2^{1-L}x)$.
We use a similar notation for $(\vartheta_k)_{k=0}^L$.
\item We use the shorthand notation, $\chi_k^{\pm}= \chi_k\1_{\R_\pm}$
for $k\in[0,L]$;  similarly for $(\vartheta_k^\pm)_{k=0}^L$.
\item We let $\psi_k^{\pm}:= \vartheta \cdot
  (\chi_k^{\pm}*\rho_\hbar)$ for~$k\in [1,L]$, as in the
  Notation of Section \ref{sec:not}. 
In particular,  $\chi_k^{\pm} , \psi_k^{\pm}: \R \to [0,1]$ are in $S^{\eta_k}$ for~$k\in [1,L]$. 
\item For $k\in [1,L]$, let $\breve{\vartheta}_k^\pm$ be in $S^{\eta_k}$ such that 
\begin{equation} \label{commchi}
\chi_{k}^{\pm} = \chi_{k}^{\pm} \breve\chi_k^{\pm} ,  \qquad \chi_{k}^{\mp} \breve\chi_m^{\pm} =0, \qquad\text{for~$k,m\in [1,L]$}.
\end{equation}
\end{itemize}
\end{notation}

This yields a dyadic decomposition of the operators appearing in Lemma~\ref{lem:decomp}. 

\begin{lem} \label{lem:split}
Using the notation~\ref{def:g}, we have 
$\Theta_{\pm} = \sum_{k=0}^L \vartheta_k(\mathrm{w})$
and 
$\Gamma_{\pm} = \sum_{k=1}^L \widetilde\chi^{\pm}_k(H) +
\Upsilon_{\pm}$ where the errors satisfy \[\|\Upsilon_\pm\|_{\J^1}=
\O(\hbar^{1-n})\qquad \qquad \|\Upsilon_\pm\|_{L^2\to L^2} \le 1.\]
\end{lem}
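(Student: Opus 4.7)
The proof proceeds by a direct computation reducing everything to telescoping of the dyadic partition and a spectral multiplier estimate controlled by Lemma~\ref{lem:Weyl}. The first identity is immediate: by construction $\sum_{k=0}^L \chi_k = \chi$ and hence $\sum_{k=0}^L \vartheta_k = \vartheta$, so, decomposing $\vartheta_k = \vartheta_k^+ + \vartheta_k^-$ and evaluating at $\mathrm{w}$, one obtains $\sum_k \vartheta_k^\pm(\mathrm{w}) = \vartheta(\mathrm{w}) \1_{\R_\pm}(\mathrm{w}) = \Theta_\pm$.

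For the second identity I plan to compute the remainder explicitly. Using $\sum_{k=1}^L \chi_k^\pm = (\chi - \chi_0)\1_{\R_\pm}$ together with $\vartheta (1-\chi) \equiv 0$ yields
\[
\Upsilon_\pm = \phi_\pm(H), \qquad \phi_\pm := \vartheta \cdot \bigl( \1_{\R_\pm} - ((\chi - \chi_0)\1_{\R_\pm}) * \rho_\hbar \bigr).
\]
Since $(\chi - \chi_0)\1_{\R_\pm} \in [0,1]$ (as $0 \le \chi_0 \le \chi \le 1$) and $\rho$ is a probability density, the convolution again lies in $[0,1]$, so $\phi_\pm \in [-1,1]$, and the functional calculus of the self-adjoint operator $H$ immediately gives $\|\Upsilon_\pm\|_{L^2 \to L^2} \le 1$.

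The trace-norm bound is the one substantive step and the main (modest) obstacle. The cutoff $\chi_0$ is introduced precisely so that $(\chi - \chi_0)\1_{\R_\pm}$ vanishes in an $\underline{\cst}\hbar$-neighbourhood of $0$; convolving with $\rho_\hbar$ then produces no jump at zero, and the Schwartz decay of $\rho$ (inherited from the compact support of $\hat{\rho}$) yields the pointwise bound
\[
|\phi_\pm(\lambda)| \le C_N \bigl( \hbar/(\hbar + |\lambda|) \bigr)^N, \qquad \lambda \in [-\underline{\cst}, \underline{\cst}],\; N \in \N.
\]
Decomposing $\supp\vartheta$ into dyadic shells $\{|\lambda| \in [\hbar 2^{k-1}, \hbar 2^k]\}$ for $0 \le k \le L$, Lemma~\ref{lem:Weyl} (combined with a covering of each shell by intervals of length $\hbar$) bounds the eigenvalue count of $H$ in shell $k$ by $\O(2^k \hbar^{1-n})$; since $\|\phi_\pm(H)\|_{\J^1} = \sum_n |\phi_\pm(\lambda_n)|$, summing gives
\[
\|\Upsilon_\pm\|_{\J^1} \le C \hbar^{1-n} \sum_{k=0}^L 2^k \cdot 2^{-kN} = \O(\hbar^{1-n})
\]
for any $N \ge 2$, which completes the proof.
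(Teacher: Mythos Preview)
Your argument follows essentially the same route as the paper's: express $\Upsilon_\pm=\phi_\pm(H)$ as an explicit spectral multiplier, read off the operator-norm bound from $|\phi_\pm|\le 1$, and obtain the trace-norm bound by combining the Schwartz decay of $\rho$ with the local Weyl estimate of Lemma~\ref{lem:Weyl} via a dyadic decomposition. Your version is in fact more detailed than the paper's brief sketch, which simply asserts the decomposition $\sum_{k=1}^L\chi_k^+*\rho_\hbar=\1_{\R_+}+\varkappa_m(\hbar^{-1}\cdot)+\O(\hbar^m)$ and invokes Lemma~\ref{lem:Weyl}.

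One minor gap: the pointwise bound $|\phi_\pm(\lambda)|\le C_N\bigl(\hbar/(\hbar+|\lambda|)\bigr)^N$ is not quite correct near $|\lambda|=\underline{\cst}$. Since $\chi$ is only assumed to equal $1$ on $[-\underline{\cst},\underline{\cst}]=\supp\vartheta$ (and not on a neighbourhood), the convolution $\bigl((\chi-\chi_0)\1_{\R_\pm}\bigr)*\rho_\hbar(\lambda)$ sees the transition region of $\chi$ just outside $\supp\vartheta$; this contributes an extra term of order $C_N\bigl(\hbar/(\hbar+\underline{\cst}-|\lambda|)\bigr)^N$ that your dyadic shells centred at $0$ do not absorb. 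The repair is routine: either take $\chi=1$ on a slightly larger interval than $\supp\vartheta$ (a harmless adjustment of Notation~\ref{def:g}), or run a second dyadic decomposition centred at $\pm\underline{\cst}$ and apply Lemma~\ref{lem:Weyl} once more. The paper's own sketch glosses over the same point.
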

\begin{proof} The decomposition of $\Theta_\pm$ is obvious, so we focus on the decomposition of $\Gamma_+$ ($\Gamma_-$ is handled similarly).
By construction $\sum_{k=0}^L \chi_k^+ = \1_{\R_+}$ on $[-\underline{\cst},\underline{\cst}]$. So, since $\rho$ is a Schwartz mollifier, we claim that for every $m\in\N$, there is $\varkappa_m:\R \to[-1,1]$, smooth with compact support so that
\[
\textstyle \sum_{k=1}^L \chi_k^+ *\rho_\hbar = \1_{\R_+}+ \varkappa_m(\hbar^{-1}\cdot) +\O(\hbar^m) \qquad \text{with a smooth error on $[-\underline{\cst},\underline{\cst}]$} .
\]
By Lemma~\ref{lem:Weyl}, this implies that $ \sum_{k=1}^L \widetilde\chi^{\pm}_k(H) = \Gamma_+ + \Upsilon_+$ where the operator $\Upsilon_+$ satisfies \[\|\Upsilon_\pm\|_{\J^1}=
\O(\hbar^{1-n})\qquad \qquad \|\Upsilon_\pm\|_{L^2\to L^2} \le 1.\]
\end{proof}

We are now ready to proceed to obtain the estimate \eqref{comm0}. 

\begin{proof}[Proof of Theorem~\ref{thm:J1}]
According to Lemma~\ref{lem:decomp}, it suffices to show that 
\(
\|\Gamma_+ \Theta_+\Gamma_-\Theta_- \|_{\J^1} \le \Cst \hbar^{1-n} L^2
\);
since the other terms are handled similarly. 
By  Lemma~\ref{lem:split}, 
\[
\Gamma_+ \Theta_+\Gamma_-\Theta_-  = 
\sum_{i,k \ge 1} \sum_{j,m \ge 0} \widetilde\chi^+_i(H) \vartheta_j^+(\mathrm w)\widetilde\chi^-_k(H) \vartheta_m^-(\mathrm w) + \O_{\J^1}(\hbar^{1-n})
\] 
where the indices range up to $L$. 
The idea is to split this sum in two parts:
\[
\mathrm{I} := \sum_{\substack{ i,k \ge 1 ; j,m \ge 0\\ i+j \le k+m}} \widetilde\chi^+_i(H) \vartheta_j^+(\mathrm w)\widetilde\chi^-_k(H) \vartheta_m^-(\mathrm w)
\qquad\text{and}\qquad
\mathrm{II} := \sum_{\substack{ i,k \ge 1 ; j,m \ge 0\\ i+j > k+m}} 
\widetilde\chi^+_i(H) \vartheta_j^+(\mathrm w)\widetilde\chi^-_k(H) \vartheta_m^-(\mathrm w)
\]
Both sums will be handled in a similar way, so we focus on $\mathrm{I} $ for now. Using \eqref{commchi}, one has 
\[
\widetilde\chi_{i}^+(H)\vartheta_{j}^+(\mathrm{w})\widetilde\chi_{k}^-(H)\vartheta_{m}^-(\mathrm{w})
= \widetilde\chi_{i}^+(H)\vartheta_{j}^+(\mathrm{w})\big[\big[\widetilde\chi_{k}^-(H) ,\vartheta_{m}^-(\mathrm{w}) \big],\breve\vartheta_{m}^-(\mathrm{w}) \big] .
\]
Then, we can bound 
\[\begin{aligned}
\| \widetilde\chi_{i}^+(H)\vartheta_{j}^+(\mathrm{w})\widetilde\chi_{k}^-(H)\vartheta_{m}^-(\mathrm{w})\|_{\J^1}
\le \|\widetilde\chi_{i}^+(H)\vartheta_{j}^+(\mathrm{w})\|_{\J^2}
\begin{cases}
\| \widetilde\chi_{k}^-(H)\vartheta_{m}^{-}(\mathrm{w})\|_{\J^2}  &\text{if } \eta_k\eta_m\le \hbar \\
\big\| \big[\big[\widetilde\chi_{k}^-(H),\vartheta_{m}^{-}(\mathrm{w}) \big],\breve\vartheta_{m}^{-}(\mathrm{w}) \big]\big\|_{\J^2} &\text{if } \eta_k\eta_m>\hbar
\end{cases}.
\end{aligned}\]
According to Proposition \ref{prop:J2_estimates_dyadic}, there exists a constant $C>0$ so that for any $k \in[1,L]$ and $m\in[0,L]$,
\[ \begin{cases}
\big\| \big[\big[\widetilde\chi_{k}^\pm(H),\vartheta_{m}^{\pm}(\mathrm{w}) \big],\breve\vartheta_{m}^{\pm}(\mathrm{w}) \big]\big\|_{\J^2}^2
\le C \hbar^{3-n}\eta_k^{-2}\eta_m^{-2}  = C\hbar^{-1-n} 2^{-2(k+m)} &\text{if } \eta_k\eta_m>\hbar , \\
\|\widetilde\chi_{k}^\pm(H)\vartheta_{m}^\pm(\mathrm{w})\|_{\J^2}^2 \lesssim \hbar^{-n}\eta_k \|\vartheta_{m}^\pm(\mathrm{w}) \|_{L^2}^2
\le C
\hbar^{-n}\eta_k\eta_m =C \hbar^{2-n} 2^{k+m}.
\end{cases}\]
Here we used that $ \|\vartheta_{m}^\pm(\mathrm{w}) \|_{L^2}^2 \lesssim \eta_m$ for $m\in[0,L]$. 
Altogether, this implies that
\begin{equation*}\label{ms1}
\begin{aligned}
\|\mathrm{I}\|_{\J^1} & \le \sum_{\substack{ i,k \ge 1 ; j,m \ge 0\\ i+j \le k+m}} \|\widetilde\chi_{i}^+(H)\vartheta_{j}^+(\mathrm{w})\widetilde\chi_{k}^-(H)\vartheta_{m}^-(\mathrm{w})\|_{\J^1}
\\ &\le C^2 \Bigg( \hbar^{2-n}  \sum_{\substack{i+j \le k+m \\ \hbar 2^{k+m} \le1}} 2^{\frac{i+j+k+m}{2}} + \hbar^{\frac12-n} \sum_{\substack{i+j \le k+m \\ 1 \le \hbar 2^{k+m} }} 2^{\frac{i+j}{2}-(k+m)} \Bigg). 
\end{aligned}
\end{equation*}
Since all indices range up to $L$, these sums are controlled by 
\[
\sum_{\substack{i+j \le k+m \\ \hbar 2^{k+m} \le1}} 2^{\frac{i+j+k+m}{2}}  \le L^2 \sum_{\hbar 2^m\le 1} 2^{m} \le 2\hbar^{-1} L^2  
\qquad\text{and}\qquad
\sum_{\substack{i+j \le k+m \\ 1 \le \hbar 2^{k+m} }} 2^{\frac{i+j}{2}-(k+m)}
\le  L^2  \sum_{  \hbar 2^m\ge 1} 2^{-\frac m2}  \le 2 \hbar^{\frac12}  L^2 .
\]
Hence, we conclude that  
\begin{equation}\label{ms2}
\|\mathrm{I}\|_{\J^1} =\O\big(L^2 \hbar^{1-n}\big) .
\end{equation}

We have a similar control for $\mathrm{II}$, which we expect by symmetry. 
Let us first record that 
\[
\sum_{i,k \ge 1} \sum_{j,m \ge 0}
\big\|\big[\widetilde\chi_{i}^+(H),\vartheta_{j}^+(\mathrm{w})\big]\big\|_{\J^2} \big\| \big[\widetilde\chi_{k}^-(H),\vartheta_{m}^-(\mathrm{w}) \big]\big\|_{\J^2}
= \bigg(\sum_{k\ge 1, m\ge 0} \big\| \big[\widetilde\chi_{k}^+(H),\vartheta_{m}^{+}(\mathrm{w}) \big]\big\|_{\J^2}\bigg)^2
\]
and, as above (by Proposition \ref{prop:J2_estimates_dyadic}),  
\[
\sum_{k\ge 1, m\ge 0} \big\| \big[\widetilde\chi_{k}^+(H),\vartheta_{m}^{+}(\mathrm{w}) \big]\big\|_{\J^2}
\le  C\bigg( \hbar^{1-\frac n2}   \sum_{ \hbar 2^{k+m} \le1} 2^{\frac{k+m}{2}} 
+ \hbar^{-\frac n2} \sum_{\hbar 2^{k+m} \ge1 } 2^{-\frac{k+m}{2}} \bigg)
=\O\big(L \hbar^{\frac{1-n}2} \big).
\]
This estimate shows that 
\[
\mathrm{II} = \sum_{\substack{ i,k \ge 1 ; j,m \ge 0\\ i+j > k+m}} 
\vartheta_{j}^+(\mathrm{w})\widetilde\chi_{i}^+(H)\vartheta_{m}^-(\mathrm{w})\widetilde\chi_{k}^-(H) + \O_{\J^1}\big(L^2 \hbar^{1-n}\big)  .
\]
Now, using that 
\[
\vartheta_{j}^+(\mathrm{w})\widetilde\chi_{i}^+(H)\vartheta_{m}^-(\mathrm{w})\widetilde\chi_{k}^-(H)=
\big[\breve\vartheta_{j}^+(\mathrm{w}) ,\big[\vartheta_{j}^-(\mathrm{w}),\widetilde\chi_{i}^-(H) \big]\big] \vartheta_{m}^-(\mathrm{w})\widetilde\chi_{k}^-(H)
\]
we can proceed exactly as above to show that 
\(
\|\mathrm{II}\|_{\J^1} =\O\big(L^2 \hbar^{1-n}\big) .
\)
This completes the proof.
\end{proof}

\subsection{Bounds for the entanglement entropy: Proof of Theorem~\ref{thm:ent}} \label{sec:ent}

The estimates of Theorem~\ref{thm:ent} for the (entanglement) entropy follow by interpolation from the bounds for the Hilbert-Schmidt and trace norm of the commutator $[\Pi,\1_\Omega]$ of Theorems~\ref{thm:J2} and~\ref{thm:J1}. 
Our goal is to prove the following general inequalities. 

\begin{lem} \label{lem:ent}
Let $\X$ be a determinantal process on a Polish space $\mathcal{X}$
associated with a (self-adjoint) operator $0<\Pi \le 1$
locally trace-class. Then, for any open set $\Omega
  \Subset \mathcal{X}$ with smooth boundary, 
\[
2\big\| [\Pi,\1_\Omega] \big\|_{\J^2}^2  \le \mathcal{S}_\Omega(\X)\le 4\big\| [\Pi,\1_\Omega] \big\|_{\J^2}^2 \log\bigg(\frac{\| [\Pi,\1_\Omega]\|_{\J^1}}{\| [\Pi,\1_\Omega] \|_{\J^2}^2 } \bigg) . 
\]
\end{lem}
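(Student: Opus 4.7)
The key observation is that both sides depend only on the eigenvalues $(\sigma_n)_{n\in\N}\subset[0,1]$ of the trace-class operator $\Pi|_\Omega$, so the lemma reduces to a scalar inequality. Decomposing $L^2(\mathcal{X}) = L^2(\Omega)\oplus L^2(\Omega^c)$ and writing
\[
\Pi = \begin{pmatrix}\Pi|_\Omega & Q \\ Q^* & R\end{pmatrix}, \qquad [\Pi,\1_\Omega] = \begin{pmatrix} 0 & -Q \\ Q^* & 0 \end{pmatrix},
\]
one obtains $-[\Pi,\1_\Omega]^2 = \diag(QQ^*,Q^*Q)$. In the projection case $\Pi^2=\Pi$ (the relevant setting for Theorem~\ref{thm:ent}), the identity $\1_{\Omega^c}=1-\1_\Omega$ yields $QQ^* = \Pi|_\Omega(1-\Pi|_\Omega)$, so the non-zero singular values of $[\Pi,\1_\Omega]$ are exactly $\sqrt{\sigma_n(1-\sigma_n)}$, each with multiplicity two, and hence
\[
\tfrac12\|[\Pi,\1_\Omega]\|_{\J^2}^2 = \sum_n \sigma_n(1-\sigma_n), \qquad \tfrac12\|[\Pi,\1_\Omega]\|_{\J^1} = \sum_n\sqrt{\sigma_n(1-\sigma_n)}.
\]
Combined with $\mathcal{S}_\Omega = \sum_n s(\sigma_n)$, both inequalities become scalar statements about $(\sigma_n)$.

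For the lower bound I would invoke the elementary pointwise estimate $s(\sigma) \ge c\,\sigma(1-\sigma)$ on $[0,1]$, valid because the ratio $s(\sigma)/[\sigma(1-\sigma)]$ is continuous on $(0,1)$ and blows up at the endpoints (since $s(\sigma)\sim -\sigma\log\sigma$ there), hence attains a positive minimum. Summing over $n$ and adjusting the constant yields the first inequality.

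For the upper bound the starting point is the symmetric pointwise bound $s(\sigma)\le C(-t\log t)$ with $t=\sigma(1-\sigma)$, valid on $[0,1]$ because both sides vanish at $\sigma\in\{0,1\}$ at the same leading order $-\sigma\log\sigma$. It then suffices to prove the interpolation-type estimate
\[
\sum_n(-t_n\log t_n) \le 2\,T\log(S/T), \qquad T := \sum_n t_n,\quad S := \sum_n \sqrt{t_n}.
\]
This follows from Jensen's inequality for the concave function $\varphi(x) = -x\log x$ with probability weights $p_n := \sqrt{t_n}/S$: indeed
\[
\tfrac{1}{2S}\sum_n(-t_n\log t_n) \;=\; \sum_n p_n\, \varphi(\sqrt{t_n}) \;\le\; \varphi\Big(\sum_n p_n\sqrt{t_n}\Big) \;=\; \varphi(T/S) \;=\; \tfrac{T}{S}\log(S/T),
\]
which rearranges to the claim (the logarithm is positive because $t_n\le 1/4$ forces $T\le S/2$). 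Translating back via $\|[\Pi,\1_\Omega]\|_{\J^2}^2 = 2T$ and $\|[\Pi,\1_\Omega]\|_{\J^1} = 2S$ gives the second inequality, up to the explicit numerical constant.

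The main obstacle, besides chasing the explicit constants, is that the identification $QQ^* = \Pi|_\Omega(1-\Pi|_\Omega)$ genuinely requires $\Pi^2=\Pi$; for a general $0<\Pi\le 1$ one only has $QQ^* = (\Pi^2)|_\Omega - \Pi|_\Omega^2$ and the singular values of $[\Pi,\1_\Omega]$ are no longer encoded by the spectrum $(\sigma_n)$ of $\Pi|_\Omega$ alone. Since the lemma is applied in this paper only to the Fermi projector $\Pi_{\hbar,\mu}$, restricting to the projection case is harmless; under the stated hypothesis one would instead perform the analogous reduction directly on the singular values of the block $Q$.
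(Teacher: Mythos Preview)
Your proposal is correct and matches the paper's approach: the paper packages your block-matrix computation as a separate spectral lemma (Lemma~\ref{lem:spectral_link}) relating the spectra of $PQP$ and $-[P,Q]^2$ for two projections, then uses the same pointwise bounds $c\,\sigma \le f(\sigma) \le -2\sigma\log\sigma$ on $[0,\tfrac14]$ and an equivalent Jensen argument (with weights $\sigma_n/\sum\sigma_n$ and the concavity of $\log$, which is the same inequality you wrote with weights $\sqrt{t_n}/S$ and the concavity of $-x\log x$). Your caveat about needing $\Pi^2=\Pi$ is apt---the paper's proof likewise only treats the projection case via Lemma~\ref{lem:spectral_link}, and like you it does not verify the exact constant in the lower bound.
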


Since we establish that $\| [\Pi,\1_\Omega] \|_{\J^2}^2 \simeq \cst
\hbar^{1-n} \log(\hbar^{-1})$ for some constant $\cst>0$ if
$\Omega\Subset\mathcal D$ is an open set with smooth boundary
and $\| [\Pi,\1_\Omega] \|_{\J^1} \le C  \log(\hbar^{-1}) \| [\Pi,\1_\Omega] \|_{\J^2}^2$ as $\hbar \to0$, Theorem~\ref{thm:ent} follows directly from Lemma~\ref{lem:ent}. We now turn to its proof.

\smallskip

Let $s :  [0,1] \to \R_+$, given by $ s:\lambda \mapsto - \lambda
\log(\lambda) - (1-\lambda)\log(1-\lambda)$ . Recall that for a
general determinantal point process $\X$, defined on a Polish space
$\mathcal{S}$ and associated with a locally trace-class operator
$0<\Pi \le 1$, the (entanglement) entropy of any open set $\Omega
\Subset \mathcal{X}$ with smooth boundary is 
\begin{equation} \label{sEnt}
\mathcal{S}_\Omega(\X)
=  \sum_{n\in\N} s(\lambda_{n}) ,
\end{equation}
where $\{\lambda_{n}\}$ denote the non-trivial eigenvalues of operator $\Pi|_{\Omega} = \1_\Omega \Pi \1_{\Omega}$ --  $\{\lambda_n\}$ is a countable sequence in $(0,1]$ by the spectral theorem for locally compact operators.

\smallskip

The main step of the proof is to describe the relationship between the
spectra of $\Pi\1_{\Omega}\Pi$ and the square-commutator $-[\Pi,\1_{\Omega}]^2$. This follows from a general result. 

\begin{lem}\label{lem:spectral_link}Let $P,Q$ be two self-adjoint projections on a separable Hilbert space.
The non-negative operators $PQP$, $-[P,Q]^2$ 
commute. 
The spectral data of $-[P,Q]^2$, on the orthogonal of its kernel, consists exactly of the eigenpairs
$(\lambda(1-\lambda),u)$ and $(\lambda(1-\lambda),[P,Q]u)$,
where $(\lambda,u)$ is an eigenpair of
$PQP$ with $\lambda> 0$. \\
Thus, given $f:[0,\frac14]\to \R_+$ continuous with $f(0)=0$ and
$g:\lambda \in [0,1]\mapsto f(\lambda(1-\lambda))$, one has
\begin{equation}\label{eq:spectral_link}
\tr f(-[P,Q]^2)=2\tr g(PQP).
\end{equation}
\end{lem}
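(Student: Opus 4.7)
The plan rests on the relation
\[
P[P,Q] = [P,Q](1-P),
\]
obtained by direct expansion of $(2P-1)[P,Q]+[P,Q](2P-1)$ using $P^2=P$, together with its analogue for $Q$. A first consequence is that $P$ commutes with $[P,Q]^2$: indeed, $P[P,Q]^2 = [P,Q](1-P)[P,Q] = [P,Q]\cdot[P,Q]P = [P,Q]^2 P$. Likewise for $Q$, and therefore for $PQP$. Since $[P,Q]^* = -[P,Q]$ we have $-[P,Q]^2 = [P,Q]^*[P,Q] \geq 0$, which together with the above yields the commutativity and nonnegativity stated first in the lemma.

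For the spectral correspondence, take $(\lambda,u)$ an eigenpair of $PQP$ with $\lambda > 0$. From $\lambda u = PQPu$ one reads $u\in \mathrm{Im}(P)$, so $PQu = PQPu = \lambda u$. Expanding $[P,Q]^2 = PQPQ - PQP - QPQ + QPQP$ and using $Pu = u$ (which makes the last two terms cancel) gives
\[
[P,Q]^2 u = PQPQu - PQu = \lambda\cdot PQu - PQu = -\lambda(1-\lambda) u.
\]
Setting $v := [P,Q]u$, the anticommutation $(2P-1)[P,Q] = -[P,Q](2P-1)$ forces $Pv = 0$, so in particular $u \perp v$; moreover $\|v\|^2 = \langle u, -[P,Q]^2 u\rangle = \lambda(1-\lambda)\|u\|^2$, which is nonzero precisely when $\lambda \in (0,1)$, and in that case $v$ is another eigenvector of $-[P,Q]^2$ at $\lambda(1-\lambda)$ since $[P,Q]$ commutes with $[P,Q]^2$.

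The main obstacle is completeness: one must show this construction exhausts the non-kernel part of $-[P,Q]^2$. For $\mu > 0$ let $V_\mu := \ker(-[P,Q]^2 - \mu)$. Because $P$ commutes with $[P,Q]^2$, $V_\mu$ decomposes orthogonally as $V_\mu^+ \oplus V_\mu^-$ according to $\mathrm{Im}(P)$ and $\ker(P)$; the anticommutation then shows $[P,Q]$ interchanges $V_\mu^\pm$, and since $[P,Q]^2 = -\mu$ on $V_\mu$, the map $[P,Q]/\sqrt\mu$ is an isometric bijection $V_\mu^+ \leftrightarrow V_\mu^-$. On $V_\mu^+$, where $Pu = u$, one has $PQP|_{V_\mu^+} = PQ|_{V_\mu^+}$, which is self-adjoint as an operator on $V_\mu^+$ (since $\mathrm{Im}(P)$ is $PQ$-invariant and $PQ$ is symmetric there). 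The identity $-[P,Q]^2|_{V_\mu^+} = PQ(1-PQ)$ obtained above forces $PQ|_{V_\mu^+}$ to satisfy the polynomial relation $T(1-T) = \mu I$, so by the spectral theorem $PQ|_{V_\mu^+}$ diagonalises with eigenvalues in $\bigl\{(1\pm\sqrt{1-4\mu})/2\bigr\}\subset (0,1)$.

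Finally, any orthonormal basis $\{u_i\}$ of $\bigoplus_{\mu>0}V_\mu^+$ consisting of $PQP$-eigenvectors (with eigenvalues $\lambda_i\in(0,1)$), together with $\bigl\{[P,Q]u_i/\sqrt{\lambda_i(1-\lambda_i)}\bigr\}$, is an orthonormal basis of the non-kernel part of $-[P,Q]^2$, and the trace identity reads
\[
\tr f(-[P,Q]^2) = 2\sum_i f(\lambda_i(1-\lambda_i)) = 2\sum_i g(\lambda_i) = 2\tr g(PQP),
\]
where the last equality uses $g(0) = g(1) = f(0) = 0$ to absorb the $PQP$-eigenvalues at $0$ and $1$, which contribute no eigenvectors to the non-kernel part of $-[P,Q]^2$ (eigenvectors with $\lambda = 1$ lie in $\mathrm{Im}(P)\cap\mathrm{Im}(Q)$ and satisfy $[P,Q]u = 0$).
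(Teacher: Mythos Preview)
Your proof is correct and follows essentially the same route as the paper's: decompose the non-kernel part of $-[P,Q]^2$ according to $P$, and use $[P,Q]$ to pair the two pieces. The paper reaches the key identity $-P[P,Q]^2P = PQP(1-PQP)$ by a direct expansion, whereas you organise everything through the anticommutation $(2P-1)[P,Q]=-[P,Q](2P-1)$; your completeness argument via the polynomial relation $T(1-T)=\mu I$ on $V_\mu^+$ is a clean variant of the paper's map $T:E_\sigma\to F_\sigma$. Both proofs tacitly assume pure point spectrum away from zero (which holds in the application, where $P$ is finite rank).
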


\begin{proof}
Using that $P^2=P$ and $Q^2=Q$, we simply compute
\begin{equation}\label{PQ}
P[P,Q]^2 = [P,Q]^2P = PQPQP -PQP  = PQP (PQP -1). 
\end{equation}
So $(P,-[P,Q]^2)$ commute. 
Then, using that $(P,-[P,Q]^2)$ and $(Q,-[P,Q]^2)$ commute, we also have 
\[
[PQP,[P,Q]^2] = PQ [P,Q]^2P - P[P,Q]^2QP =P[Q, [P,Q]^2]P =0. 
\]
Thus  $(PQP,-[P,Q]^2)$ also commute as claimed. 

\smallskip

Since $(P,-[P,Q]^2)$ commute, the spectral data of $-[P,Q]^2$ can be decomposed into the spectral data of $-P[P,Q]^2P$ and  $-(1-P)[P,Q]^2(1-P)$. 
Then, \eqref{PQ} shows that any eigenpair $(\lambda,u)$ of $PQP$ with $\lambda>0$ gives an eigenpair $(\lambda(1-\lambda),u)$  of $-P[P,Q]^2P$. 
This means that for $\sigma>0$, the eigenspace of $-P[P,Q]^2P$ associated with $\sigma$ is 
$F_\sigma = \ker(PQP-\lambda)$ if $\sigma=\lambda(1-\lambda)$.
By the spectral theorem, this gives the complete spectral data of $-P[P,Q]^2P$, on the orthogonal of its kernel. 

Now, let $\sigma>0$ be an eigenvalue of $-(1-P)[P,Q]^2(1-P)$ and let $E_{\sigma}$ the associated eigenspace and 
$T:= [P,Q]$. 
Since $ E_\sigma\subset\ker(P)$, one has for $u\in E_\sigma$, 
\[
Tu =  PQu , \qquad 
-[P,Q]^2u = \sigma u
\qquad\text{and}\qquad
-P[P,Q]^2P(Tu) = - PQ[P,Q]^2u= \sigma PQ u = \sigma Tu
\]
using the commutation relations. 
This shows that $(\sigma,Tu)$ is an eigenpair of $-P[P,Q]^2P$.
Moreover, $T^2 = -\sigma \mathrm{I} $ on $E_\sigma$, so that $T: E_\sigma \to F_\sigma$ is 1-1.

In conclusion, $\ker(-[P,Q]^2-\sigma) = \operatorname{span}\{ u, Tu : 
u\in \ker(PQP-\lambda)\}$ if $\sigma=\lambda(1-\lambda)$ with $\lambda>0$ so that choosing an orthonormal basis $\{(\lambda_k,u_k)\}_{k\in\N}$ of eigenfunction of $PQP$, by the spectral theorem ($PQP\ge 0$), for any $f:\R_+ \to\R_+$, continuous with $f(0)=0$, one has
\[
\tr f(-[P,Q]^2) = 2 \sum_{\lambda_k>0} f(\lambda_k(1-\lambda_k)) =2 \tr g(PQP)
\]
where $g(\lambda)= f(\lambda(1-\lambda))$. 
Note that since $PQP \le 1$, one can consider only test functions $f$ defined on $[0,\tfrac14]$, both sides of \eqref{eq:spectral_link} are non-negative and possibly infinite.
\end{proof}

\begin{rem} \label{rem:Widom}
Considering Widom's conjecture, in light of Lemma~\ref{lem:spectral_link}, the spectral asymptotics of  Conjecture~\ref{conj:Widom} are equivalent to, given $f:[0,\frac14]\to \R_+$ continuous with $f(0)=0$, 
\[
\frac{\tr f(-[\Pi,\1_\Omega]^2)}{{(2\pi\hbar)^{1-n}}\log\hbar^{-1}} \to 8C_{\Omega}\int_{[0,\tfrac14]}\frac{f(\sigma)}{\sigma\sqrt{1-4\sigma}}\dd \sigma . 
\]
\end{rem}

We now turn to show the inequalities for the entropy.

\begin{proof}[Proof of Lemma~\ref{lem:ent}]
Considering \eqref{sEnt}, the map $s= f(\lambda(1-\lambda))$ where  $f:[0,\frac14]\to \R_+$ is given by 
\[
f:\sigma\mapsto
-\tfrac{1-\sqrt{1-4\sigma}}{2}\log(\tfrac{1-\sqrt{1-4\sigma}}{2})-\tfrac{1+\sqrt{1-4\sigma}}{2}\log(\tfrac{1+\sqrt{1-4\sigma}}{2}). 
\]
This function is continuous with $f(0)=0$ (one has $f(\sigma)\sim -\sigma\log(\sigma)$ as $\sigma \to 0$)
so that, according to Lemma~\ref{lem:spectral_link}, we can rewrite
\[
\mathcal{S}_\Omega(\X)
=  \tr f(-  [\Pi,\1_\Omega]^2) .
\]
The proof is based on the basic inequalities, for $\sigma \in[0,\frac14]$,
\begin{equation} \label{entub}
\cst \sigma \le f(\sigma) \le -2 \sigma\log \sigma .
\end{equation}
Observe that these function are all smooth, increasing, concave on $(0,\frac14)$  and take the values $0$, respectively $\log 2$ at the endpoints; so the inequality \eqref{entub} follows from the fact that 
$f'(\sigma)\sim \log(1/\sigma)$ as $\sigma \to 0$.

Thus, by monotonicity, with $g:\sigma \mapsto -2\sigma\log \sigma$,  we obtain
\[
\cst \| [\Pi,\1_\Omega] \|_{\J^2}^2  \le \mathcal{S}_\Omega \le   \tr g(-  [\Pi,\1_\Omega]^2) .
\]

We now relate the upper-bound to the trace-norm of the commutator $[\Pi,\1_\Omega]$. We can assume that $\| [\Pi,\1_\Omega]\|_{\J^1} <\infty$, otherwise there is nothing to prove, in which case $  \| [\Pi,\1_\Omega]\|_{\J^1}^2 = \sum_{n\in\N} \sigma_n <\infty$
where $\{\sigma_n\}$ denotes the non-zero eigenvalues of $-[\Pi,\1_\Omega]^2$. Then, by convexity of $\sigma\mapsto \log(\sigma^{-1/2})$ on $\R_+$ and Jensen's inequality, 
\[
{\textstyle \sum_{n\in\N}}  \sigma_n\log(\sigma_n^{-1/2}) \le {\textstyle \big(\sum_{n\in\N}}  \sigma_n\big) \log\bigg(\frac{\sum_{n\in\N}   \sigma_n^{1/2}}{ \sum_{n\in\N}\sigma_n}\bigg)
\]
or equivalently, 
\[
\tfrac14  \tr g(-  [\Pi,\1_\Omega]^2) \le \big\| [\Pi,\1_\Omega] \big\|_{\J^2}^2 \log\bigg(\frac{\| [\Pi,\1_\Omega]\|_{\J^1}}{\| [\Pi,\1_\Omega] \|_{\J^2}^2 } \bigg) .
\]
This completes the proof.
\end{proof}

\appendix

\section{Stationary phase with mild amplitudes}
\label{sec:stat-phase}

We first recall a standard version of the stationary phase method where the amplitude is controlled in $\Co^k$, independently of semiclassical parameter  $\hbar$, and the integral depends on a parameter $z\in\mho$.
We refer for instance to theorem 7.7.5 in \cite{hormander_analysis_2003} or to
our previous work~\cite[Proposition A.15]{deleporte_universality_2024}.

\begin{prop}[Stationary phase lemma] \label{lem:statphase}
For $d,q\in\N$, let $\Omega \Subset \R^d$ and $\mho\Subset\R^q$ be open. 
Let $\Phi : \Omega\times\mho \to \R$ be a smooth function\footnote{In fact, it is not necessary to assume that the phase $\Phi$ and the symbol $a$ are smooth with respect to the parameter $z\in\mho$; continuity suffices.} such that $\partial_x \Phi(x,z)=0$ has a unique solution $(x_z,z) \in \Omega\times\mho$ with $\Phi(x_z,z)=0$  and the Hessian $\nabla^2_x \Phi(x_z,z)$ is non-degenerate for $z\in\mho$. 
Let $a \in S^1 (\Omega\times \mho)$ be classical symbol.
Then, there exists another classical symbol $b \in S^1(\mho)$ so that 
\[
\int_{\Omega}e^{i\tfrac{\Phi(x,z)}\hbar}a(x,z)\dd x =   (2\pi \hbar)^{\frac d2} b(z,\hbar) 
\]
and the principal part of $b$ is given by $b_0(z) =a_0(x_z,z)/\sqrt{\det \nabla^2_x \Phi(x_z ,z) }$ for $z\in A$. \\
On the other hand, if $x\mapsto\Phi(x,z)$ has no critical points in $\Omega$, then
\[
\int_{\Omega}e^{i\tfrac{\Phi(x,z)}\hbar}a(x,z)\dd x =  \O(\hbar^\infty) 
\]
where the error term is controlled in $S^1(\mho)$.\end{prop}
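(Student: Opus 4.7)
The plan is to prove the two statements by the standard two-step reduction: first handle the non-stationary case by repeated integration by parts, then reduce the stationary case to a Gaussian integral via Morse's lemma and a cut-off argument. Throughout, we track the dependence on $z\in\mho$ to obtain a classical symbol in $z$.

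For the non-stationary statement, suppose $\partial_x\Phi(x,z)\neq 0$ on $\Omega$. Since $\overline\Omega$ is compact and $\Phi$ is smooth in $(x,z)$, there is a constant $c>0$ so that $|\partial_x\Phi(x,z)|\geq c$ uniformly for $(x,z)$ in a neighbourhood of $\overline\Omega\times\overline\mho$. Define the transpose differential operator
\[
L = \frac{\hbar}{i}\,\frac{\partial_x\Phi(x,z)}{|\partial_x\Phi(x,z)|^2}\cdot\partial_x,
\]
which satisfies $L(e^{i\Phi/\hbar})=e^{i\Phi/\hbar}$. Integrating by parts $N$ times and using that $a\in S^1(\Omega\times\mho)$ has compact support in $x$, one obtains
\[
\int_\Omega e^{i\Phi/\hbar}a\,\dd x = \hbar^N \int_\Omega e^{i\Phi/\hbar} (L^*)^N a\,\dd x,
\]
and each derivative of $a$ and $\Phi$ produces $\Co^\infty$-bounds uniform in $z$; this gives the $\O(\hbar^\infty)$ bound, with the error estimated in $S^1(\mho)$ by differentiating the identity above under the integral sign.

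For the stationary case, fix a small neighbourhood $U\Subset\Omega$ of the critical set $\{x_z:z\in\mho\}$ (which is smooth in $z$ by the implicit function theorem applied to $\partial_x\Phi=0$, using non-degeneracy of the Hessian). Pick a cutoff $\chi\in C^\infty_c(U)$ equal to $1$ on a smaller neighbourhood. On the complement $\supp(1-\chi)$, $\partial_x\Phi$ does not vanish, so the non-stationary bound above shows
\[
\int_\Omega e^{i\Phi/\hbar}(1-\chi)a\,\dd x = \O(\hbar^\infty) \quad\text{in } S^1(\mho).
\]
On $U$, apply the Morse lemma with parameters: there is a smooth family of local diffeomorphisms $x=\kappa_z(y)$ with $\kappa_z(0)=x_z$ such that
\[
\Phi(\kappa_z(y),z) = \tfrac12\,y^T Q(z)\, y, \qquad Q(z):=\nabla^2_x\Phi(x_z,z),
\]
using that $\Phi(x_z,z)=0$. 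Pulling back, the integral reduces to
\[
(2\pi\hbar)^{d/2}\cdot\frac{1}{(2\pi\hbar)^{d/2}}\int e^{i y^T Q(z) y/(2\hbar)} \tilde a(y,z)\,\dd y,
\]
where $\tilde a(y,z) = \chi(\kappa_z(y))a(\kappa_z(y),z)|\det\kappa_z'(y)|$ still lies in $S^1$ with compact $y$-support. A standard Gaussian oscillatory integral computation (expand $\tilde a$ in Taylor series and use the Plancherel identity for the complex Gaussian, see e.g.~\cite[Thm.~7.7.5]{hormander_analysis_2003}) yields a full asymptotic expansion in powers of $\hbar$ whose coefficients depend smoothly on $z$; this defines the symbol $b(z,\hbar)\in S^1(\mho)$, with principal part
\[
b_0(z) = \frac{\tilde a(0,z)}{\sqrt{\det Q(z)}} = \frac{a_0(x_z,z)}{\sqrt{\det\nabla^2_x\Phi(x_z,z)}},
\]
(choosing the branch of the square root with the appropriate Maslov phase).

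The main technical point is handling the $z$-dependence uniformly: one must check that all the constants in the integration by parts, the Morse lemma, and the asymptotic expansion behave as classical symbols in $z$. This is routine under the hypothesis that $\Phi$ is smooth and $\nabla^2_x\Phi(x_z,z)$ is non-degenerate on $\overline\mho$, but requires some care. Since this is an entirely classical result, I would refer the reader to \cite[Thm.~7.7.5]{hormander_analysis_2003} for the non-parametric version and note that the extension to parameters follows from the implicit function theorem and differentiation under the integral sign.
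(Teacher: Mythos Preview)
Your proposal is correct and follows the standard route (integration by parts for the non-stationary part, Morse lemma with parameters plus the Gaussian expansion for the stationary part), which is exactly the approach in \cite[Thm.~7.7.5]{hormander_analysis_2003} that the paper cites. The paper does not give its own proof of this proposition; it simply refers to H\"ormander and to \cite[Proposition A.15]{deleporte_universality_2024}, so your sketch is in line with what the paper intends.
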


The goal of this section is to generalize the above expansion when the symbol $a \in S^{\delta}$ is sufficiently regular ($\delta \geq \hbar^{\frac 12}$)
with an explicit control of the error. 
We begin by the simple case where the phase $\Phi$ has no critical point within the support of the symbol $a$. 

\begin{lem}\label{prop:nonstatphase}
For $d,q\in\N$, let $\Omega \Subset \R^d$ and $\mho\Subset\R^q$ be open. 
Let $\Phi : \Omega\times\mho \to \R$ be a smooth function such that $\partial_x \Phi(x,z)=0$ has a unique solution $(x_z,z) \in \Omega\times\mho$ and the Hessian $\nabla^2_x \Phi(x_z,z)$ is non-degenerate for all $z\in\mho$. 
Let $\delta \geq \hbar^{\frac 12}$ and $a \in \Co_c(\Omega\times\mho) $ with $a\in  S^{\delta}_x$ and suppose that for $z\in\mho$, $a(x,z)=0$ for 
$x\in B(x_z,\epsilon)$  for some $\epsilon \ge \delta$. Then, for every $k\in \N$, 
\[
\sup_{z\in\mho}\bigg| \hbar^{-\frac{d}{2}}\int e^{i\frac{\Phi(x,z)}{\hbar}}a(x,z)\dd x\bigg| \le C_{k} \left(\frac{\hbar}{\delta\epsilon}\right)^k  . 
\]
In particular, if $a \in S^\delta(\Omega\times\mho)$ with $\delta \geq \hbar^{\frac 12}$ and   $\epsilon\delta\gg \hbar$, then  as $\hbar\to0$
\begin{equation} \label{loc}
\int e^{i\tfrac{\Phi(x,z)}{\hbar}}a(x,z)\dd x =\O_{\Co^\infty}(\hbar^\infty) . 
\end{equation}
\end{lem}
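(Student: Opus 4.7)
The plan is to run a standard non-stationary phase integration by parts, but with careful bookkeeping to keep track of the two competing small scales $\delta$ (regularity of the amplitude) and $\epsilon$ (distance of the support of $a$ from the critical point $x_z$). The first step is to extract the lower bound on $|\partial_x \Phi|$ from the hypotheses: since $\nabla_x^2\Phi(x_z,z)$ is non-degenerate uniformly in $z\in\mho$, a Taylor expansion gives $|\partial_x\Phi(x,z)|\geq c|x-x_z|$ for $x$ in a fixed neighbourhood of $x_z$, and since $(x_z,z)$ is the only critical point and everything is compactly supported, $|\partial_x\Phi(x,z)|\geq c'$ away from that neighbourhood. Combining with the support hypothesis $a(x,z)=0$ on $B(x_z,\epsilon)$, we get
\[
|\partial_x\Phi(x,z)|\geq c''\max(\epsilon,|x-x_z|)\qquad\text{on }\supp(a).
\]

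The second step is the usual trick: introduce the transport operator $L := \frac{\hbar}{i|\partial_x\Phi|^2}\partial_x\Phi\cdot\nabla_x$ which satisfies $L(e^{i\Phi/\hbar})=e^{i\Phi/\hbar}$, so that integration by parts gives
\[
\int e^{i\Phi(x,z)/\hbar}a(x,z)\dd x = \int e^{i\Phi(x,z)/\hbar}(L^t)^k a(x,z)\dd x
\]
for every $k\in\N$. One application of $L^t$ produces two kinds of terms: either a derivative lands on the amplitude $a\in S^\delta_x$, which by hypothesis yields a factor $\O(\delta^{-1})$, or it lands on the coefficient $\partial_x\Phi/|\partial_x\Phi|^2$, which using $\partial_x\Phi=\O(|\partial_x\Phi|)$ and the lower bound above yields a factor $\O(1/|\partial_x\Phi|)=\O(\epsilon^{-1})$. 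Either way, the leading multiplicative factor after one step is $\O(\hbar/(\delta\epsilon))$ using $\delta\le\epsilon$.

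The main obstacle is the iteration: after $k$ applications of $L^t$, derivatives may have hit the coefficient up to $k$ times, producing weights of the form $|\partial_x\Phi|^{-m}$ with $m$ up to $2k$ and $\hbar^k$ in front. The clean way to package this is to track, by induction on $k$, a uniform bound of the form
\[
\big|(L^t)^k a(x,z)\big|\le C_k\Big(\tfrac{\hbar}{\delta\epsilon}\Big)^{\!k}\sum_{j}\Big(\tfrac{\epsilon}{\max(\epsilon,|x-x_z|)}\Big)^{\!j}\mathbf 1_{\supp a}(x,z),
\]
with the number of terms $j\le 2k$; the inductive step follows from the Leibniz rule together with the bound $|\partial^\alpha \partial_x\Phi|/|\partial_x\Phi|^2 \lesssim |\partial_x\Phi|^{-1}$ and $|\partial^\alpha|\partial_x\Phi|^{-1}|\lesssim |\partial_x\Phi|^{-1-|\alpha|}$ on $\supp(a)$. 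Integrating this pointwise bound over $\Omega$ of fixed volume gives $\O\big((\hbar/(\delta\epsilon))^k\big)$, and there is no loss of $\hbar^{-d/2}$ because no stationary contribution is extracted; the claimed factor $\hbar^{-d/2}$ in the statement is simply the scale chosen so that the estimate compares to stationary phase.

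Finally, to upgrade the bound to the $\Co^\infty(\mho)$-control \eqref{loc} under $a\in S^\delta$ and $\epsilon\delta\gg\hbar$, it suffices to differentiate the integral in $z$ under the integral sign. Each $\partial_z$ either differentiates the amplitude, which remains in $S^\delta_x$ with one extra factor of $\delta^{-1}\le \hbar^{-1/2}$ (harmless since $\delta\geq\hbar^{1/2}$), or falls on $e^{i\Phi/\hbar}$ producing a factor $\partial_z\Phi/\hbar$ which, together with $\Phi\in C^\infty$ independent of $\hbar$, only changes $a$ by a polynomial factor in $\hbar^{-1}$; since the $k$ in the pointwise estimate is arbitrary and $(\hbar/(\delta\epsilon))^k\to 0$ at any polynomial rate, any finite number of $z$-derivatives is $\O(\hbar^\infty)$.
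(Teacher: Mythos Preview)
Your approach is the same as the paper's --- repeated integration by parts with the first-order operator $L=\frac{\hbar}{i}\frac{\partial_x\Phi\cdot\nabla_x}{|\partial_x\Phi|^2}$ --- and your lower bound $|\partial_x\Phi|\gtrsim |x-x_z|$ on $\supp(a)$ is exactly right. But there is a genuine gap in how you handle the prefactor $\hbar^{-d/2}$.

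The statement asks for $\hbar^{-d/2}\bigl|\int e^{i\Phi/\hbar}a\,\dd x\bigr|\le C_k(\hbar/(\delta\epsilon))^k$, and this factor is \emph{not} a cosmetic normalisation: in the borderline case $\delta=\epsilon=\hbar^{1/2}$ the right-hand side equals $C_k$, so you really must show $\bigl|\int\bigr|\le C_k\hbar^{d/2}$. Your argument only yields $\bigl|\int\bigr|\le C_k(\hbar/(\delta\epsilon))^k$, which in that regime is $\O(1)$, not $\O(\hbar^{d/2})$. The problem is your last step, where you bound the sum $\sum_j(\epsilon/\max(\epsilon,|x-x_z|))^{j}$ by a constant and then integrate over $\Omega$ of fixed volume. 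This discards the crucial decay.

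The fix is to track the exponent more carefully and integrate radially. After $k$ integrations by parts, the terms are of the form $\hbar^k\delta^{-j}|x-x_z|^{j-2k}$ with $0\le j\le k$ (in your parametrisation, the power $m=2k-j$ of $|x-x_z|^{-1}$ satisfies $m\ge k$, not just $m\le 2k$). For $k>d$, every such term has $m>d$, and radial integration over $\Omega\setminus B(x_z,\epsilon)$ gives
\[
\int_{|x-x_z|\ge\epsilon}|x-x_z|^{j-2k}\,\dd x \ \lesssim\ \epsilon^{\,j-2k+d}.
\]
Using $\epsilon\ge\delta$ this yields $\bigl|\int\bigr|\le C_k\hbar^k\delta^{-k}\epsilon^{-k+d}$, and then
\[
\hbar^{-d/2}\Bigl|\int\Bigr|\ \le\ C_k\Bigl(\tfrac{\hbar}{\delta\epsilon}\Bigr)^{k-d}\Bigl(\tfrac{\hbar^{1/2}}{\delta}\Bigr)^{d}\ \le\ C_k\Bigl(\tfrac{\hbar}{\delta\epsilon}\Bigr)^{k-d},
\]
where the last inequality uses $\delta\ge\hbar^{1/2}$. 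Relabelling $k\leftarrow k+d$ gives the stated bound. Your treatment of the $z$-derivatives for \eqref{loc} is fine once this is repaired.
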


\begin{proof}
To ease notation, we  treat the case without parameter: $\mho=\emptyset$, $\Phi : \Omega \to \R$ is a smooth function with a unique non-degenerate critical point $x_\emptyset\in\Omega$ and $a \in S^{\delta}(\Omega) $ with $a=0$ on
$B(x_\emptyset,\epsilon)$ with $\epsilon \ge \delta \ge \hbar^{1/2}$.
By assumptions, $x\mapsto \frac{a(x)}{|\nabla\Phi(x)|^2}$ is a smooth function, so that integrating by parts, we have
\[
\int e^{i\frac{\Phi(x)}{\hbar}}a(x)\dd x =i\hbar\int
e^{i\frac{\Phi(x)}{\hbar}}\frac{a(x)\Psi(x)+\nabla a(x)\cdot \nabla \Phi(x)}{|\nabla \Phi(x)|^2}\dd x
\]
where $\Psi : x\mapsto \Delta \Phi(x) + |\nabla \Phi(x)|^2  \nabla \Phi(x)  \cdot \nabla |\nabla \Phi(x)|^{-2}$ is also smooth on $\Omega$. 
We can perform this operation $k$ times, and we obtain differential operators $L^{j;k}$ of degree $\le j$ (taking values into symmetric $j$-tensors) whose coefficients are smooth functions (depending on $\Phi$) so that
\begin{equation} \label{ipp}
\int e^{i\frac{\Phi(x)}{\hbar}}a_{\hbar}(x)\dd x=(i\hbar)^{k}\int
e^{i\frac{\Phi(x)}{\hbar}}\sum_{0\le j\le k}\frac{
\nabla\Phi(x)^{\otimes j}\cdot L^{j;k}a (x)}{|\nabla \Phi(x)|^{2k}} \dd x .
\end{equation}
Since $x_\emptyset$ is the unique non-degenerate critical point of
$\Phi$, $|\nabla\Phi(x)| \asymp |x-x_\emptyset|$ for $x\in\Omega$, so
we have the following estimates, for every $0\le j\le k$:
\[
\frac{|\nabla\Phi(x)^{\otimes j}\cdot L^{j;k}a(x)|}{|\nabla
\Phi(x)|^{2k}} \leq C_k\delta^{-j} |x-x_\emptyset|^{-2k+j} ,\qquad x\in\Omega. 
\]
Then, using that $a=0$ on
$B(x_\emptyset,\epsilon)$ with $\epsilon\ge \delta$, we obtain for every $k\ge d$ and for  every $0\le j\le k$,
\[
\int \frac{|\nabla\Phi(x)^{\otimes j}\cdot L^{j;k}a(x)|}{|\nabla
\Phi(x)|^{2k}} \dd x\leq C_k\delta^{-j} \epsilon^{-2k+j +d}
\le C_k\delta^{-k} \epsilon^{-k+d} .
\]
We conclude that for every $k\ge d$, 
\[
\bigg| \hbar^{-\frac{d}{2}}\int_{\Omega}e^{i\frac{\Phi(x)}{\hbar}}a(x)\dd x \bigg| \le C_k \hbar^{k-d/2} \delta^{-k} \epsilon^{-k+d}
= C_k \left(\frac{\hbar}{\delta\epsilon}\right)^{k-d} \left(\frac{\hbar^{1/2}}{\delta}\right)^{d}
\]
which proves the claim in case $\mho=\emptyset$ using the condition $\delta\geq \hbar^{1/2}$. 
The argument remains the same if $(\Phi,a)$ depend continuously on a parameter $z\in\mho$ and the estimates are uniform provided that $\mho$ is relatively compact. 
In particular, if $\epsilon\delta \gg \hbar$, we obtain  uniformly for $z\in\mho$, 
\[
\int e^{i\frac{\Phi(x,z)}{\hbar}}a(x,z)\dd x = \O(\hbar^\infty) .
\]
If we apply $\partial_z^\alpha$ to the LHS, we obtain a similar integral with another symbol $a_\alpha \in S^\delta$ times $\hbar^{-|\alpha|}$, so this proves \eqref{loc}. 
\end{proof}

Proposition~\ref{prop:nonstatphase} already implies that such oscillatory integrals are uniformly bounded. 

\begin{corr} \label{corr:bdd}
Let $\Phi$ satisfy the assumptions of Lemma~\ref{prop:nonstatphase}. 
Then, for any $a \in S^{\delta}(\Omega\times\mho) $ with $\delta \geq \hbar^{\frac 12}$, 
as $\hbar\to 0$,
\[
\hbar^{-\frac{d}{2}}\int_{\Omega}e^{i\tfrac{\Phi(x,z)}{\hbar}}a(x,z)\dd x=\O_{S^\delta}(1) .
\]
\end{corr}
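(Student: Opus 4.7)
The plan is to localize near the critical point of the phase and apply a parabolic rescaling adapted to $\delta \ge \hbar^{1/2}$. Fix a cutoff $\chi \in C^\infty_c(\Omega)$ equal to $1$ on a fixed neighbourhood $W$ of the compact set $\{x_z : z\in\mho\}$, small enough to lie inside a uniform Morse chart for $\Phi$. Split $I(z) := \int e^{i\Phi/\hbar} a\,\dd x = I_1(z) + I_2(z)$ according to $a = \chi a + (1-\chi)a$. The outside piece $(1-\chi)a$ vanishes on a ball of some fixed radius $\epsilon_0>0$ around $x_z$, and since $\epsilon_0 \ge \delta$ for $\hbar$ small, Lemma~\ref{prop:nonstatphase} yields $\hbar^{-d/2} I_2 = \O_{S^\delta(\mho)}(\hbar^\infty)$.

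For $I_1$, apply the Morse lemma with parameters: there exists a smooth family $\psi_z$ of local diffeomorphisms with $\psi_z(x_z) = 0$ such that
\[
\Phi(x,z) = \Phi(x_z, z) + \tfrac{1}{2}\langle H_0 \psi_z(x), \psi_z(x)\rangle
\]
for a fixed non-degenerate symmetric matrix $H_0$. Changing variables $y = \psi_z(x)$ and then rescaling $y = \hbar^{1/2} u$ gives
\[
\hbar^{-d/2} I_1(z) = e^{i\Phi(x_z,z)/\hbar} \int e^{i\langle H_0 u, u\rangle/2} b(u,z)\,\dd u, \qquad b(u,z) := \tilde{a}(\hbar^{1/2} u, z),
\]
where $\tilde{a}$ is the compactly supported pull-back of $\chi a$ by $\psi_z^{-1}$ times the Jacobian. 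Since $\psi_z$ and its inverse have uniformly bounded $C^k$ norms, $\tilde{a}\in S^\delta$, and the rescaled amplitude satisfies $|\partial_u^\alpha \partial_z^\beta b| \le C_{\alpha,\beta} \hbar^{|\alpha|/2} \delta^{-|\alpha|-|\beta|} \le C_{\alpha,\beta} \delta^{-|\beta|}$ thanks to $\delta \ge \hbar^{1/2}$.

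It remains to bound $J(z) := \int e^{i\langle H_0 u, u\rangle/2} b(u,z)\,\dd u$ uniformly in $\hbar$ (and $z$), together with its $z$-derivatives. Split $b = \phi b + (1-\phi) b$ for a fixed cutoff $\phi \in C^\infty_c(\R^d)$ equal to $1$ near $0$: the first piece is trivially $O(1)$ by the $L^\infty$ bound on its bounded support. For the second, $|H_0 u| \gtrsim |u|$ allows arbitrarily many integrations by parts against $e^{i\langle H_0 u, u\rangle/2}$; each step gains a factor $|u|^{-1}$ without degrading the $\hbar$-uniform $C^k$ bounds on $b$, so after $d+1$ steps the integrand is in $L^1(\R^d)$ with a bound independent of $\hbar$ (even though the support of $b$ in $u$ has diameter $O(\hbar^{-1/2})$). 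Differentiating under the integral, $z$-derivatives act only on $b$ and produce $|\partial_z^\beta J| \le C_\beta \delta^{-|\beta|}$; combined with the contribution of $I_2$, this yields $\hbar^{-d/2} I = \O_{S^\delta(\mho)}(1)$. The main technical point is the parameter-dependent Morse lemma with smooth $z$-dependence of the change of coordinates, which is standard and follows from the implicit function theorem applied to the smooth family of Hessians $\nabla^2_x \Phi(x_z,z)$.
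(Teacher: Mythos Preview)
Your argument is correct but takes a different, more elaborate route than the paper's. You localise at a \emph{fixed} scale around the critical point and handle the inner piece via the Morse lemma with parameters, the parabolic rescaling $y=\hbar^{1/2}u$, and integration by parts against the quadratic phase $e^{i\langle H_0u,u\rangle/2}$. The paper instead localises at the \emph{critical} scale $\delta$: after translating so that $x_z=0$ (and reducing to the extremal case $\delta=\hbar^{1/2}$), it cuts off with $\chi(x/\delta)$; the inner piece is then $O(\hbar^{d/2})$ by a bare volume bound, and the outer piece is $O(1)$ directly from Lemma~\ref{prop:nonstatphase} with $\epsilon=\delta$, since $(\hbar/\delta^2)^k\le1$. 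No Morse lemma, no rescaling, no further integration by parts. Your approach is essentially the opening move of the proof of Proposition~\ref{prop:stat_tempered} and sets up the full stationary-phase expansion, so it buys more; for the boundedness statement alone, however, the $\delta$-scale cutoff is considerably quicker. One minor caveat: to pass from $J\in S^\delta(\mho)$ to $\hbar^{-d/2}I_1\in S^\delta(\mho)$ you multiply by $e^{i\Phi(x_z,z)/\hbar}$, whose $z$-derivatives are $O(\hbar^{-1})$ unless the critical value is constant in $z$; the paper's statement and proof (which only treats the sup-bound explicitly) share this omission, and in the sole application (Proposition~\ref{prop:stat_tempered}) one has $\Phi(x_z,z)=0$, so the point is harmless.
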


\begin{proof}
By a change of variable, we can assume without loss of generality that $x_z=0$ for $z\in\mho$ and $\delta=\hbar^{\frac 12}$. 
Let  $\chi\in C^{\infty}_c(\R^d)$ be a cutoff with $\chi=1$ on the unit ball and we split 
\[
\hbar^{-\frac{d}{2}}\int_{\Omega}e^{i\tfrac{\Phi(x,z)}{\hbar}}a(x,z)\dd x=
\hbar^{-\frac{d}{2}}\int_{\Omega}e^{i\tfrac{\Phi(x,z)}{\hbar}}a(x,z)\chi(x/\delta)\dd x
+ \hbar^{-\frac{d}{2}}\int_{\Omega}e^{i\tfrac{\Phi(x,z)}{\hbar}}b(x,z)\dd x
\]
where $b: (x,z)\mapsto a(x,z)(1-\chi(\delta^{-1}x))$ is also in $S^\delta$. Hence, by  
Lemma~\ref{prop:nonstatphase}, the second integral is $\O(1)$, while the first integral is also $\O(1)$ by a direct volume estimate.
\end{proof}

Then, we can improve this a-priori estimate by giving the asymptotic expansion of such oscillatory integrals using the stationary phase method with precise estimates on the error term.

\begin{prop}\label{prop:stat_tempered}
For $d,q\in\N$, let $\Omega \Subset \R^d$ and $\mho\Subset\R^q$ be open. 
Let $\Phi : \Omega\times\mho \to \R$ be a smooth function such that $\partial_x \Phi(x,z)=0$ has a unique solution $(x_z,z) \in \Omega\times\mho$ with $\Phi(x_z,z)=0$  and the Hessian $\nabla^2_x \Phi(x_z,z)$ is non-degenerate for $z\in\mho$. 
Let $a \in S^{\delta}(\Omega\times\mho) $ with $\delta \geq \hbar^{\frac 12}$. Then for every $\ell \in \N_0$, as $\hbar\to 0$,
\begin{equation}\label{eq:phase_stat}
(2\pi i\hbar)^{-\frac{d}{2}}\int e^{i\tfrac{\Phi(x,z)}{\hbar}}a(x,z)\dd x
= \sum_{0\le j<\ell} \hbar^j  L^j_xa (x_z,z)+ \O_{S^\delta}\big((\hbar\delta^{-2})^{\ell}\big)
\end{equation}
where, for $j\in\N_0$, $L^j_x$  is a differential operator $($acting on $x)$ of degree $2j$ whose coefficients depend only
on the phase~$\Phi$. 
In addition, if $\supp(a)\subseteq\mathcal{A}$, then for every $\ell \in \N_0$ and $\epsilon\in[\delta,1]$, as $\hbar\to 0$,
\begin{equation}\label{eq:supp_stat}
(2\pi i\hbar)^{-\frac{d}{2}}\int e^{i\tfrac{\Phi(x,z)}{\hbar}}a(x,z)\dd x
= \O_{S^\delta}\bigg(\bigg(\frac{\hbar}{\delta\epsilon}\bigg)^{\ell}\bigg) \qquad\text{uniformly on }\big\{z\in\mho : \dist\big((x_z,z) , \mathcal{A}\big) \ge \epsilon\big\} .
\end{equation}
\end{prop}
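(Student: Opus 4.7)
The strategy combines a parameter-dependent Morse normal form with a controlled-remainder Taylor expansion, the point being that the natural expansion parameter for a symbol in $S^{\delta}$ against a phase with non-degenerate Hessian is $\hbar/\delta^2$, whence the requirement $\delta\geq \hbar^{1/2}$. First, the implicit function theorem gives a smooth family $z\mapsto x_z$ of non-degenerate critical points on $\mho$. Picking a smooth cutoff $\chi$ equal to $1$ on a fixed neighborhood $\mathcal{V}$ of $\{(x_z,z):z\in\overline{\mho}\}$ and decomposing $a=\chi a+(1-\chi)a$, the off-critical piece is supported at distance bounded below by a constant, so Lemma~\ref{prop:nonstatphase} (with $\epsilon$ a constant) yields an $O_{S^{\delta}}(\hbar^{\infty})$ contribution. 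It thus suffices to treat the localized integral.

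On $\mathcal{V}$, apply the smooth ($z$-dependent) Morse lemma to find a family of diffeomorphisms $y=\kappa(x,z)$ fixing the critical point such that $\Phi(x,z)=\tfrac12\langle Q(z)(y-x_z),y-x_z\rangle$ with $Q(z)=\nabla_x^2\Phi(x_z,z)$. Because $\kappa$ and $\kappa^{-1}$, together with all their $z$-derivatives, are smooth with bounds independent of $\hbar$, the pulled-back amplitude $\tilde a(y,z)$ still lies in $S^{\delta}$. After centering $x_z$ to the origin and rescaling $y=\sqrt{\hbar}\, u$, the integral becomes
\[
\hbar^{d/2}\int e^{i\langle Q(z)u,u\rangle/2}\,\tilde a(\sqrt\hbar\, u,z)\,du.
\]
Taylor-expanding $\tilde a(\sqrt\hbar\, u,z)$ at $u=0$ to order $2\ell$ and computing Gaussian moments of $u^{\alpha}e^{i\langle Qu,u\rangle/2}$ yields $\sum_{j<\ell}\hbar^j L^j_x \tilde a(0,z)$, where $L^j_x$ are the classical stationary-phase operators of order $2j$ (whose coefficients depend only on $Q(z)$, hence only on $\Phi$); undoing the Morse change of variables produces the operators $L^j_x$ claimed in the statement. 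The Taylor remainder is bounded by $C\hbar^{d/2}\sup_{|\alpha|=2\ell}\|\partial^{\alpha}\tilde a(\cdot,z)\|_{\infty}\int|u|^{2\ell} |e^{i\langle Qu,u\rangle/2}|\,du$; since $\tilde a\in S^{\delta}$ gives $\|\partial^{\alpha}\tilde a\|_{\infty}\leq C\delta^{-|\alpha|}$ and the Gaussian moments are $O(1)$ after a standard integration by parts in $u$ (exploiting the non-degenerate quadratic phase), this yields exactly the error $O_{S^{\delta}}((\hbar/\delta^2)^{\ell})$ after dividing by $(2\pi i\hbar)^{d/2}$. The $S^{\delta}$-uniformity in $z$ is obtained by noting that $\partial_z$ applied to the integral produces oscillatory integrals of the same form (derivatives falling on $\tilde a$, on $Q$, or on $x_z$), all smooth and $\hbar$-independent in $z$.

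For the support estimate \eqref{eq:supp_stat}, assume $\supp(a)\cap B(x_z,\epsilon)=\emptyset$ and iterate the integration-by-parts scheme of Lemma~\ref{prop:nonstatphase}: applying $\ell$ times the transpose of $L=(i\hbar)\,\nabla\Phi\cdot\nabla_x/|\nabla\Phi|^2$, we obtain an integrand of the form $\hbar^{\ell}\sum_{j\le \ell}|\nabla\Phi|^{-2\ell+j}|\partial^j a|$ times smooth coefficients depending on $\Phi$. Since $|\nabla\Phi(x,z)|\asymp|x-x_z|\geq\epsilon$ on $\supp(a)$ and $|\partial^j a|\leq C_j\delta^{-j}$, the worst term is bounded by $C\hbar^{\ell}\delta^{-\ell}\epsilon^{-\ell}=C(\hbar/(\delta\epsilon))^{\ell}$ (using $\epsilon\geq\delta$), and integrating over the compact support gives the claimed uniform bound on the set $\{z:\dist((x_z,z),\mathcal{A})\geq\epsilon\}$; differentiating in $z$ preserves the structure, yielding $S^{\delta}$ control.

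The main subtlety is the bookkeeping in Step~2/3: one must verify that the Morse chart, together with its $z$-dependence, does not degrade the $S^{\delta}$ class of the amplitude, and that the Taylor remainder, once expressed back in the original coordinates, keeps track of the quadratic scaling in $\delta$ (which is what forces the threshold $\delta\geq\hbar^{1/2}$). Everything else — the Gaussian moment computations and the integration-by-parts count in the support version — is routine once the correct change of variables and scaling are set up.
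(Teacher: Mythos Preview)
Your localisation step and your treatment of \eqref{eq:supp_stat} are fine (the latter is essentially Lemma~\ref{prop:nonstatphase}). The gap lies in the remainder control for \eqref{eq:phase_stat}. After the rescaling $y=\sqrt{\hbar}\,u$, the phase $e^{i\langle Qu,u\rangle/2}$ has \emph{modulus one} --- $Q$ is real symmetric, there is no Gaussian decay --- and the amplitude lives on the growing ball $|u|\lesssim\hbar^{-1/2}$. Your pointwise bound on the Taylor remainder therefore gives, after integrating in absolute value,
\[
(\hbar/\delta^{2})^{\ell}\!\int_{|u|\lesssim\hbar^{-1/2}}|u|^{2\ell}\,\dd u
\ \asymp\ \hbar^{-d/2}\delta^{-2\ell},
\]
which carries no useful power of $\hbar$. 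The phrase ``Gaussian moments are $O(1)$ after a standard integration by parts'' does not repair this: the Taylor remainder is not a monomial, so you cannot simply invoke the oscillatory moments $\int u^{\alpha}e^{i\langle Qu,u\rangle/2}\,\dd u$. A correct argument must integrate by parts in the oscillatory integral with the integral-form remainder $\sum_{|\alpha|=2\ell}u^{\alpha}\!\int_{0}^{1}(1-t)^{2\ell-1}(\partial^{\alpha}\tilde a)(t\sqrt{\hbar}\,u)\,\dd t$ as amplitude, exploiting that each $u$-derivative of $(\partial^{\alpha}\tilde a)(t\sqrt{\hbar}\,u)$ costs only a factor $\sqrt{\hbar}/\delta\le1$; this can be made to work, but it is a genuine non-stationary-phase argument rather than a one-liner, and the same care is needed for the polynomial main terms, which are likewise not absolutely integrable against $|e^{i\langle Qu,u\rangle/2}|=1$.

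The paper takes a different route that sidesteps this entirely. After the Morse reduction to $\Phi=\tfrac12\,x\!\cdot\!Hx$ it introduces an auxiliary parameter $\lambda\ge1$ and uses the identity (two integrations by parts)
\[
-\partial_{\lambda}\Big[\lambda^{d/2}\!\int e^{i\lambda xHx/2\hbar}b\,\dd x\Big]
=\hbar\,\lambda^{d/2-2}\!\int e^{i\lambda xHx/2\hbar}Lb\,\dd x,
\qquad L=i\nabla\!\cdot\!H\nabla.
\]
Integrating from $\lambda=1$ to $\infty$, the endpoint $\lambda\to\infty$ is read off from the classical stationary phase (Proposition~\ref{lem:statphase}), while the $\lambda$-integral on the right is bounded uniformly by the a~priori estimate of Corollary~\ref{corr:bdd}. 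This produces $R_{0}=i^{-\alpha}b(0,z)+(\hbar\delta^{-2})R_{1}$ with $R_{1}\in S^{\delta}$, and iteration yields the expansion with remainder $O_{S^{\delta}}\big((\hbar\delta^{-2})^{\ell}\big)$ directly, without ever writing a Taylor remainder.
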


\begin{proof}
To compute the integral \eqref{eq:phase_stat}, we apply Morse lemma (with parameters) to the phase~$\Phi$, see \cite[p502]{hormander_analysis_2003}. 
By Lemma~\ref{prop:nonstatphase} applied with a fixed $\epsilon$, we can assume that $a$ is supported on an arbitrary small neighborhood $\mathcal U \subset \R^{d\times q}$ of the critical point $(x_z,z)$, up to a negligible error (in the sense of \eqref{loc}). 
Then, by a $C^\infty$ diffeormorphism $\varphi$, there is coordinate system such that the critical points are $(0,z)$ and the phase 
\[
\Phi(x,z) = \frac{x\cdot H x}{2} ,\qquad 
H=\begin{pmatrix}I_{d-\alpha}&0\\0&-I_{\alpha}
\end{pmatrix},
\]
where $\alpha\in[0,d]$ is the Morse index, that is, the number of negative eigenvalues of the non-degenerate matrix matrix $\nabla^2_x \Phi(x_z,z)$ -- by continuity $\alpha$ is constant  (independent of $z\in\mho$ and so is the phase $\Phi$ in this new coordinate system).  
Moreover, the symbol $b= a(\varphi)\in S^\delta$ with the same $\delta$. Consequently, it suffices to show that the integral 
\[
R_0: z\mapsto (2\pi i\hbar)^{-\frac{d}{2}}\int e^{i\frac{xHx}{2\hbar}}b(x,z)\dd x
\]
has an expansion of the type \eqref{eq:phase_stat} as $\hbar\to0$.
First, by Corollary~\ref{corr:bdd}, since the phase is independent of~$z$, by differentiating under the integral, $R_0\in S^\delta(\mho)$ and it satisfies \eqref{eq:supp_stat}, by Lemma~\ref{prop:nonstatphase}. 

\smallskip

Let $L := i(\nabla_x \cdot H\nabla_x)$ and $b_1 := \delta^{2} Lb$.
Observe that if we integrate by parts twice, for any $\lambda>0$, 
\begin{equation}\label{eq:diff_eq_phase_stat}
-\partial_\lambda\left[\lambda^{\frac d2}\int e^{i\lambda
\tfrac{xHx}{2\hbar}} b(x,z)\dd x\right]= \hbar\lambda^{\frac d2-2}\int
e^{i\lambda \tfrac{xHx}{2\hbar}}L b(x,z)\dd x.
\end{equation}
Moreover, by the standard stationary phase (Proposition~\ref{lem:statphase} -- for a fixed $\hbar$ with $b$ independent of $\lambda$): 
\begin{equation} \label{statphaselim}
\begin{aligned}
\lim_{\lambda\to +\infty}\left(\frac\lambda{2\pi i \hbar}\right)^{\frac d2}\int e^{i\lambda \tfrac{xHx}{2\hbar}}b(x,z)\dd x
&= i^{-\alpha} b(0,z) \\
& = i^{-\alpha} |\det\nabla^2_x \Phi(x_z,z)|^{-1/2} a(x_z,z)
\end{aligned}
\end{equation}
going back to the original coordinate system. 
We have $b_1\in S^\delta$, so by Corollary~\ref{corr:bdd},
\[
\sup_{z\in\mho, \lambda \ge 1}\bigg|\hbar^{-\frac d2}\lambda^{\frac d2}\int
e^{i\lambda \tfrac{xHx}{2\hbar}}b_1(x,z)\dd x \bigg| =O(1)
\]
and 
\[
R_1(z) :=  (2\pi i\hbar)^{-\frac{d}{2}}  \int_1^\infty \lambda^{\frac d2-2}\bigg\{\int e^{i\lambda \tfrac{xHx}{2\hbar}}b_1(x,z)\dd x \bigg\}\dd\lambda
\]
is also in $S^\delta$ (just like the symbol $b_1$ by differentiating under the integral). 
Thus, integrating both sides of \eqref{eq:diff_eq_phase_stat} for $\lambda \in [1,\infty)$, by \eqref{statphaselim}, we obtain
\begin{equation*}
R_0(z)
= i^{-\alpha} b(0,z) + \hbar \delta^{-2} R_1(z) .
\end{equation*}
This proves the claim for $\ell=1$ with $L_0 = i^{-\alpha}$ (in this coordinate system). 
The general expansion follows by induction: for $\ell\in\N$,  let $b_\ell := \delta^{2\ell} L^\ell b$ and 
\begin{equation} \label{Rerror}
R_\ell(z) :=   (2\pi i\hbar)^{-\frac{d}{2}}\int_1^\infty \lambda^{\frac d2-2} \mathrm{g}_\ell(\lambda)\bigg\{\int e^{i\lambda \tfrac{xHx}{2\hbar}}b_\ell(x,z)\dd x \bigg\}\dd\lambda 
\end{equation}
with $ \mathrm{g}_1=1$ and for $\ell\in\N$,  $\mathrm{g}_{\ell+1} :[1,\infty)\to [0,1]$ is the solution of 
\[
\mathrm{g}_{\ell+1}'(\lambda)= \lambda^{-2}\mathrm{g}_\ell(\lambda) \, ,\qquad
\mathrm{g}_{\ell+1}(1)=0.
\]
In particular, $\mathrm{g}_{\ell}$ is non-decreasing and $\mathrm{g}_\ell(\infty)=\lim_{\lambda\to\infty} \mathrm{g}_\ell(\lambda)$ exists for every $\ell\ge 2$.
By an integration by parts, using \eqref{eq:diff_eq_phase_stat}--\eqref{statphaselim}, 
\[\begin{aligned}
R_\ell(z) &= \mathrm{g}_{\ell+1}(\infty)i^{-\alpha} b_\ell(0,z) -(2\pi i\hbar)^{-\frac{d}{2}}   \int_1^\infty \mathrm{g}_{\ell+1}(\lambda) \partial_\lambda\bigg\{ \lambda^{\frac d2}\int e^{i\lambda \tfrac{xHx}{2\hbar}}b_\ell(x,z)\dd x \bigg\}\dd\lambda \\
& = \mathrm{g}_{\ell+1}(\infty)i^{-\alpha} b_\ell(0,z)+\hbar \delta^{-2}R_{\ell+1}(z) . 
\end{aligned}\]
By induction, $\displaystyle \mathrm{g}_{\ell+1}(\infty) = \int_1^\infty \frac{\mathrm{g}_\ell'(\sigma)}{\sigma}\dd\sigma
= \int_1^\infty \frac{\mathrm{g}_{\ell-1}(\sigma)}{\sigma^3}\dd\sigma
=\cdots = \frac{1}{(\ell-1)!} \int_1^\infty \frac{\mathrm{g}_{1}(\sigma)}{\sigma^{\ell+1}} \dd \sigma =  \frac{1}{\ell!}$, so we conclude that  for $\ell\in \N$, 
\[
R_0(z)
= i^{-\alpha}  {\textstyle\sum_{k=0}^{\ell}}  (\hbar \delta^{-2})^k b_k(0,z)/k! +  (\hbar \delta^{-2})^{\ell+1} R_{\ell+1}(z) .
\]
Since $b_k = \delta^{2k} L^k b$ for $k\in\N_0$, this proves \eqref{eq:phase_stat} with $L^k= i^{-\alpha}\frac{(i \nabla \cdot H\nabla)^k}{k!}$ in this coordinate system. Going back to the original coordinates, $L^k$  are differential operators of degree $2k$ whose coefficients depend only
on the phase~$\Phi$ with $L^0 = i^{-\alpha} |\det\nabla_x^2\Phi|^{-1/2} $ as a multiplication operator.
\end{proof}

\begin{rem}\label{rk:decay}
We give a slight generalization of Proposition~\ref{prop:stat_tempered}. 
Let $\mathrm{g} : \Omega\times\mho \to \R$, $\mathcal{C}^1$, such that $\nabla \mathrm{g} \neq 0 $ on $\{|\mathrm{g}| <1\}$.
Let $a \in S^{\delta}(\Omega\times\mho) $ with $\delta \geq \hbar^{\frac 12}$,  assume that $\supp(a) \subset  \{|\mathrm{g}| < 1\}$ and it holds for $\ell\in\N$, $\epsilon\in[\delta,1]$, 
\begin{equation}\label{Lyap}
a = \O_{S^\delta}\big(\big(\tfrac{\hbar}{\delta\epsilon}\big)^{\ell} \big) 
\qquad\text{uniformly on }\{|\mathrm g(x,z)|> \epsilon\} .
\end{equation}
Then, setting $\mathrm{f}(z) := \mathrm{C} \mathrm g(x_z,z) $ for $z\in\mho$ and some constant $\mathrm{C}  \ge 1$, it holds for $\ell\in\N$,$\epsilon\in[\delta,1]$, 
\[
(2\pi i\hbar)^{-\frac{d}{2}}\int e^{i\tfrac{\Phi(x,z)}{\hbar}}a(x,z)\dd x
= \O_{S^\delta}\bigg(\bigg(\frac{\hbar}{\delta\epsilon}\bigg)^{\ell}\bigg) \qquad\text{uniformly on }\{|\mathrm f(z)|> \epsilon\} .
\]
This follows directly by splitting the integral 
\[
(2\pi i\hbar)^{-\frac{d}{2}} \bigg\{ \int e^{i\tfrac{\Phi(x,z)}{\hbar}}a(x,z)\chi_\epsilon(\mathrm g(x,z))\dd x + \int  e^{i\tfrac{\Phi(x,z)}{\hbar}}a(x,z)\big\{1-\chi_\epsilon(\mathrm g(x,z))\big\}\dd x \bigg\}
\]
where $\chi :\R\to[0,1]$ is a smooth cutoff supported in $B_1$ and equals to 1 on a neighborhood of $0$.
In particular, since $\epsilon\ge\delta$, both symbols are in $S^\delta$ and the first integral satisfies \eqref{eq:supp_stat} with $\mathcal{A} = \{|\mathrm g(x,z)|\le\epsilon\}$. Moreover, since $\mathrm{g}$ is not degenerate, 
\[
\big\{z\in\mho :  |\mathrm f(z)|> \epsilon\big\} \subset
\big\{z\in\mho : \dist\big((x_z,z) , \mathcal{A}\big) \ge \epsilon\big\} 
\]
if $\mathrm{C}$ is sufficiently large. 
By Lemma~\ref{prop:nonstatphase}, the second integral is also $\O\big(\big(\tfrac{\hbar}{\delta\epsilon}\big)^\infty\big)$.
\end{rem}

In the case of the ``standard'' phase $\Phi(x,\xi) = x\cdot \xi$, which is relevant for pseudo-differential calculus, we can also perform a stationary phase even in the case where the scalings in $x,\xi$ are different and the amplitude does not necessarily belong to $S^{\delta}(\R^{2n})$ with $\delta\ge \hbar^{\frac 12}$. 
Adapting the proof of Proposition~\ref{prop:stat_tempered}, we obtain the following statement.

\begin{prop} \label{prop:stat_tempered_classic}
Let $a : \R^{2n+m} \to \R$ with $a\in S^{\varepsilon_1}_x \times S^{\varepsilon_2}_\xi \times S^\eta_z$ with $\varepsilon_1,\varepsilon_2,\eta \in [\hbar,1]$ and assume that $ \varepsilon_1\varepsilon_2 \ge \hbar$. Then for every $\ell\in\N_0$, as $\hbar\to0$,
\begin{equation} \label{eq:phase_stat_classic}
\frac1{(2\pi i\hbar)^n}\int e^{i\tfrac{x\cdot\xi}{\hbar}}a(x,\xi,z)\dd x\dd \xi 
= \sum_{0\le k<\ell} \frac1{k!} (i\hbar\partial_x\cdot\partial_\xi)^k a(x,\xi,z)\big|_{x=\xi=0}+ \O_{S^\eta}\big( \big(\tfrac\hbar{\varepsilon_1\varepsilon_2}\big)^{\ell}\big) .
\end{equation}
\end{prop}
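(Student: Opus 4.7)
The statement is a stationary phase expansion for the bilinear phase $\Phi(x,\xi) = x\cdot\xi$, which has a unique non-degenerate critical point at the origin. The plan is to follow the Taylor-plus-integration-by-parts strategy of Proposition~\ref{prop:stat_tempered}, while handling the anisotropic scaling of the symbol.

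First, I would rescale to a uniform semiclassical problem. Set $\tilde\hbar := \hbar/(\varepsilon_1\varepsilon_2) \in (0,1]$ and substitute $x = \varepsilon_1 X$, $\xi = \varepsilon_2\Xi$. The integral becomes
\[
\frac{1}{(2\pi i\tilde\hbar)^n}\int e^{iX\cdot\Xi/\tilde\hbar}\tilde a(X,\Xi,z)\dd X\dd\Xi,\qquad \tilde a(X,\Xi,z) := a(\varepsilon_1 X,\varepsilon_2\Xi,z),
\]
and the rescaled symbol has $C^k$ seminorms bounded uniformly in $\hbar$, i.e. $\tilde a \in S^1_{X,\Xi}\times S^\eta_z$. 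The trade-off is that its support may grow like $1/\varepsilon_i$, but all amplitude estimates are independent of $\hbar$. This rescaling also makes transparent why the assumption $\varepsilon_1\varepsilon_2\ge\hbar$ is needed: it is exactly $\tilde\hbar\le 1$.

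Second, I would derive the expansion by Taylor expansion plus integration by parts. Expand $\tilde a$ in $\Xi$ around $0$ to order $\ell$. For each polynomial term $\tfrac{\Xi^\beta}{\beta!}\partial_\Xi^\beta\tilde a(X,0,z)$ with $|\beta|<\ell$, use the identity $\Xi^\beta e^{iX\Xi/\tilde\hbar} = (-i\tilde\hbar\partial_X)^\beta e^{iX\Xi/\tilde\hbar}$ and integrate by parts $|\beta|$ times in $X$. The distributional identity $\int e^{iX\Xi/\tilde\hbar}\dd\Xi = (2\pi\tilde\hbar)^n\delta(X)$ then extracts the contribution $\tfrac{(i\tilde\hbar)^{|\beta|}}{\beta!}\partial_X^\beta\partial_\Xi^\beta\tilde a(0,0,z)$. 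Summing over $|\beta|<\ell$ and applying the combinatorial identity $(\partial_X\cdot\partial_\Xi)^k = \sum_{|\beta|=k}\tfrac{k!}{\beta!}\partial_X^\beta\partial_\Xi^\beta$ recovers the claimed finite sum; reverting to $(x,\xi)$ variables is immediate since $\tilde\hbar\partial_X\partial_\Xi = \hbar\partial_x\partial_\xi$.

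Third, and most delicately, I would control the Taylor remainder. After the same integration by parts it becomes
\[
\tilde R_\ell = \frac{(i\tilde\hbar)^\ell}{(2\pi i\tilde\hbar)^n}\sum_{|\beta|=\ell}\int e^{iX\Xi/\tilde\hbar}g_\beta(X,\Xi,z)\dd X\dd\Xi,
\]
with the $g_\beta$ uniformly in $S^1_{X,\Xi}\times S^\eta_z$. It therefore suffices to show $(2\pi\tilde\hbar)^{-n}\int e^{iX\Xi/\tilde\hbar}g\,\dd X\dd\Xi = \O_{S^\eta}(1)$ uniformly for $g\in S^1$ compactly supported, with no size restriction on its support. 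I would prove this by a two-region decomposition: on $\{|X|^2+|\Xi|^2\le R^2\}$ for a fixed $R$ the symbol has fixed compact support and Proposition~\ref{lem:statphase} (or Proposition~\ref{prop:stat_tempered}) yields a uniformly bounded contribution; on the complement, decompose dyadically into annuli $\{2^k R\le \max(|X|,|\Xi|)\le 2^{k+1}R\}$ where $|\nabla_{X,\Xi}\Phi|^2 = |X|^2+|\Xi|^2 \gtrsim 2^{2k}R^2$. On each annulus, iterated integration by parts against the vector field
\[
V := \frac{\tilde\hbar}{i(|X|^2+|\Xi|^2)}(\Xi\cdot\partial_X + X\cdot\partial_\Xi), \qquad V e^{i\Phi/\tilde\hbar} = e^{i\Phi/\tilde\hbar},
\]
produces gains of order $(\tilde\hbar/(2^k R)^2)^N$ per application, which when balanced against the annular volume $(2^k R)^{2n}$ and summed in $k$ (taking $N>2n$) gives the uniform bound. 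Uniformity in $z$ is automatic, since $\partial_z^\alpha$ commutes with the oscillatory integral and preserves the $S^\eta_z$ class. The main obstacle I foresee is this last step: the rescaled symbol has unbounded support as $\hbar\to 0$, so Proposition~\ref{prop:stat_tempered} cannot be invoked directly on the whole integral, and the dyadic IBP must carefully balance polynomial volume growth against IBP gains.
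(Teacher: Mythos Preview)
Your strategy—rescale to an effective parameter $\tilde\hbar=\hbar/(\varepsilon_1\varepsilon_2)$, Taylor expand, then control the remainder by a dyadic non-stationary-phase argument—is reasonable in spirit but contains a concrete error in step~3. You assert that the remainder amplitudes $g_\beta$ lie in ``$S^1$ compactly supported, with no size restriction on the support''; this is false. The integral form of the Taylor remainder is $\int_0^1\ell(1-t)^{\ell-1}\partial_\Xi^\beta\tilde a(X,t\Xi,z)\,dt$, and at small $t$ the point $t\Xi$ lies inside the support of $\tilde a$ no matter how large $|\Xi|$ is; so $g_\beta$ is compactly supported in $X$ but only decays like $1/|\Xi|$ in $\Xi$. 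The same issue afflicts the polynomial terms in step~2: each $\Xi^\beta\partial_\Xi^\beta\tilde a(X,0,z)$ is not integrable in $\Xi$, so the ``distributional identity'' you invoke needs an oscillatory-integral interpretation before the decomposition $I=\sum I_\beta+\tilde R_\ell$ even makes sense term by term. Your dyadic bound can be repaired by exploiting the $1/|\Xi|$ decay of $g_\beta$ together with the fixed $X$-support, but as written the argument does not close.

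The paper takes a different and tidier route that avoids unbounded supports altogether. Assuming $\varepsilon_1\ge\varepsilon_2$ without loss, it first \emph{truncates} the $\xi$-variable to a ball of radius $\gamma=\sqrt{\varepsilon_2/\varepsilon_1}$: on $\{|\xi|\gtrsim\gamma\}$, iterated integration by parts in $x$ gains a factor $\hbar/(\gamma\varepsilon_1)=\hbar/\sqrt{\varepsilon_1\varepsilon_2}\le\sqrt\hbar$ each time, so that piece is $\O(\hbar^\infty)$. It then rescales $(x,\xi)\mapsto(x/\gamma,\gamma\xi)$; the new amplitude lies in $S^\delta$ with $\delta=\sqrt{\varepsilon_1\varepsilon_2}\ge\sqrt\hbar$ and, crucially, has \emph{fixed} compact support (the $\xi$-support was cut to size $\gamma$ before rescaling, and the $x$-support only shrinks). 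Proposition~\ref{prop:stat_tempered} then applies directly and yields the expansion. Compared with your plan, this buys a direct reduction to the isotropic result already proved, with no need to redo stationary phase from scratch or manage growing or non-compact supports.
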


\begin{proof}
First, we record from the proof of Proposition~\ref{prop:stat_tempered} that if the phase is (independent of $z$) and given by
$ \Phi: X \mapsto \tfrac{X\cdot H X}{2}$ where $H$ is a constant non-degenerate matrix ($\det H\neq 0$), then the operator
\begin{equation*}
\label{Morse_coord}
L^k :=  i^{-\alpha} (\det H)^{-1/2} \frac{(i \nabla \cdot H\nabla)^k}{k!} , \qquad\text{for $k\in\N_0$} . 
\end{equation*}
Moreover, in  the special case $H = (\begin{smallmatrix} 0 & I_n \\I_n&0\end{smallmatrix})$ and $X=(x,\xi)$ for $n\in\N$  (so that
$\Phi(X) = x\cdot \xi$, if $ \varepsilon_1,\varepsilon_2 \ge \sqrt\hbar$, then the expansion \eqref{eq:phase_stat_classic} follows  directly from Proposition~\ref{prop:stat_tempered}. Note that by differentiating under the integral, since $a \in S^\eta_z$, the error term is controlled in the class $S^\eta$.  

We deal with the general case by a change of variable. 
Without loss of generality, we assume that $\varepsilon_1\ge\varepsilon_2$ and that the symbol $a$ is independent of the parameter $z$. Let $(\gamma,\delta) = (\sqrt{\varepsilon_2/\varepsilon_1},\sqrt{\varepsilon_1\varepsilon_2})$, let $\chi\in \Co^\infty_c(\R^n,[0,1])$ be such that $\chi=1$ on $B_n$ and, let $\theta^k(\xi) =  \frac{1-\chi(\xi)}{|\xi|^{2k}}\xi^{\otimes k}$ for $\ell\in\N_0$.
Note that $\gamma,\delta \in [\sqrt{\hbar},1]$ and let $\chi_\gamma=\chi(\cdot/\gamma)$ and $\theta^k_\gamma =\theta^k(\cdot/\gamma)$. We split
\[
\int e^{i\tfrac{x\cdot\xi}{\hbar}}a(x,\xi)\dd x\dd \xi =
\int e^{i\tfrac{x\cdot\xi}{\hbar}}a(x,\xi)\chi_\gamma(\xi)\dd x\dd \xi 
+ \int e^{i\tfrac{x\cdot\xi}{\hbar}}a(x,\xi)\theta^0_\gamma(\xi)\dd x\dd \xi .
\]
We can perform repeated integration by parts in the second integral (writing $\tfrac{-i\hbar\xi\cdot \partial_x }{|\xi^2|}e^{i\tfrac{x\cdot\xi}{\hbar}} =e^{i\tfrac{x\cdot\xi}{\hbar}}$), we obtain for any $k\in \N$, 
\[
\int e^{i\tfrac{x\cdot\xi}{\hbar}}a(x,\xi)\dd x\dd \xi =
\int e^{i\tfrac{x\cdot\xi}{\hbar}}a(x,\xi)\chi_\gamma(\xi)\dd x\dd \xi 
+ \frac{(i\hbar)^k}{(\varepsilon_1\gamma)^k} \int e^{i\tfrac{x\cdot\xi}{\hbar}}a_k(x,\xi) \cdot\theta^k_\gamma(\xi)\dd x\dd \xi .
\]
where $a_k= \varepsilon_1^k \partial_x^k a$ is in $S^{\varepsilon_1}$ and $\varepsilon_1\gamma=\delta$. Since $a_k$ is uniformly bounded and compactly supported and $ \big\| \theta^k_\gamma(\xi) \big\|_{L^1} =\O_k(1)$ if $k>n$, this shows that 
\[
\int e^{i\tfrac{x\cdot\xi}{\hbar}}a(x,\xi)\dd x\dd \xi =
\int e^{i\tfrac{x\cdot\xi}{\hbar}}a(x,\xi)\chi_\gamma(\xi)\dd x\dd \xi 
+ \O(\hbar^\infty) .
\]
Then, by rescaling (the phase and measure are invariant under this change of variables), letting \[\widetilde{a}(x,\xi) := a(x/\gamma,\xi\gamma)\chi(\xi),\] we have $\widetilde{a}\in S^\delta(\R^{2n})$ (in particular this symbol has a fixed compact support and $\delta \ge \sqrt{\hbar}$) and 
\begin{align*}
\frac1{(2\pi i\hbar)^n}\int e^{i\tfrac{x\cdot\xi}{\hbar}}a(x,\xi)\dd x\dd \xi &=
\frac1{(2\pi i\hbar)^n}\int e^{i\tfrac{x\cdot\xi}{\hbar}}\widetilde{a}(x,\xi)\dd x\dd \xi 
+ \O(\hbar^\infty) \\
&=  \sum_{0\le k<\ell} \hbar^k  L^k \widetilde{a}(x,\xi)\big|_{x=\xi=0}+ \O\big((\hbar\delta^{-2})^{\ell}\big)
\end{align*}
by applying Proposition~\ref{prop:stat_tempered}. Since 
$ L^k \widetilde{a}= L^k a  = \frac1{k!} (i\hbar\partial_x\cdot\partial_\xi)^k a$ in this case, this completes the proof.
\end{proof}

More specifically, we also need the following consequence of Proposition~\ref{prop:stat_tempered_classic}. 

\begin{corr} \label{corr:phase_stat_dyadic}
Let $n,m\in\N$. 
Let $a:\R^{2n+m} \to\R$, bounded with compact support, and assume that
$\displaystyle (x,\xi) \mapsto \int a (x,\xi,z) \dd z$ is in $S^{\varepsilon_1}_x \times S^{\varepsilon_2}_\xi$ for some $\varepsilon_1,\varepsilon_2 \in [\hbar,1]$ with  $ \varepsilon_1\varepsilon_2 \ge \hbar$. 
Then for every $\ell\in\N_0$, as $\hbar\to0$, 
\[
\frac1{(2\pi i\hbar)^n}\int e^{i\tfrac{x\cdot\xi}{\hbar}}a(x,\xi,z)\dd x\dd \xi \dd z
= \sum_{0\le k<\ell} \frac1{k!} \int (i\hbar\partial_x\cdot\partial_\xi)^k a(x,\xi,z)\big|_{x=\xi=0} \dd z+ \O\Big(\big(\tfrac{\hbar}{\varepsilon_1\varepsilon_2}\big)^{\ell}\Big) .
\]
\end{corr}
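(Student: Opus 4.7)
My plan is to reduce the claim directly to Proposition~\ref{prop:stat_tempered_classic} by integrating out the auxiliary variable $z$ first. Since $a$ is bounded with compact support in all variables, the integrand $e^{ix\cdot\xi/\hbar} a(x,\xi,z)$ is absolutely integrable on $\R^{2n+m}$, so Fubini's theorem yields
\[
\int e^{ix\cdot\xi/\hbar} a(x,\xi,z)\dd x\dd\xi\dd z = \int e^{ix\cdot\xi/\hbar}\, A(x,\xi)\dd x\dd\xi, \qquad A(x,\xi):=\int a(x,\xi,z)\dd z.
\]

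By hypothesis $A\in S^{\varepsilon_1}_x\times S^{\varepsilon_2}_\xi$, and $A$ inherits its compact support from that of $a$ in $(x,\xi)$. Applying Proposition~\ref{prop:stat_tempered_classic} (with no auxiliary parameter, i.e.\ $m=0$ in the notation of that statement) to the amplitude $A$ then gives for every $\ell\in\N_0$,
\[
\frac{1}{(2\pi i\hbar)^n}\int e^{ix\cdot\xi/\hbar} A(x,\xi)\dd x\dd\xi = \sum_{0\le k<\ell} \frac{1}{k!}(i\hbar\partial_x\cdot\partial_\xi)^k A(x,\xi)\big|_{x=\xi=0} + \O\big((\hbar/(\varepsilon_1\varepsilon_2))^\ell\big).
\]

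It remains to interpret the $k$-th term on the right-hand side as a $z$-integral. Since $a$ is bounded with compact support in~$z$, dominated convergence allows one to exchange derivatives in $(x,\xi)$ with the $z$-integration whenever the pointwise derivatives of $a(\cdot,\cdot,z)$ exist; in the general case one simply reads the right-hand side of the corollary as
\[
\int (i\hbar\partial_x\cdot\partial_\xi)^k a(x,\xi,z)\big|_{x=\xi=0}\dd z := (i\hbar\partial_x\cdot\partial_\xi)^k A(x,\xi)\big|_{x=\xi=0},
\]
which is well-defined by the regularity assumption on $A$. This yields the claimed expansion.

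There is no genuine obstacle in this argument: the content of the corollary is a convenient reformulation of Proposition~\ref{prop:stat_tempered_classic} that bypasses any $S^\eta_z$-regularity requirement in $z$, keeping only the integrated-regularity hypothesis on $(x,\xi)$. All of the substantive analytic work (Morse normal form and iterated integration by parts producing the error of order $(\hbar/(\varepsilon_1\varepsilon_2))^\ell$) is already contained in the proof of the underlying proposition, which this corollary invokes as a black box.
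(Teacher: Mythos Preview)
Your proposal is correct and follows essentially the same approach as the paper: integrate out $z$ first via Fubini, then apply Proposition~\ref{prop:stat_tempered_classic} to the resulting symbol $A(x,\xi)=\int a(x,\xi,z)\dd z$. Your handling of the identification of the expansion terms (reading the $z$-integral on the right-hand side as the derivatives of $A$ when $a$ itself is not assumed smooth in $(x,\xi)$) is in fact slightly more careful than the paper's one-line justification.
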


\begin{proof}
By assumption, we can apply Proposition~\ref{prop:stat_tempered_classic} to the symbol
\(\displaystyle
(x,\xi) \mapsto   \int a (x,\xi,z) \dd z .
\)
This yields the required expansion. Note that since $a$ is smooth with a fixed compact support, one can differentiate under the integral with respect to $z$.  
\end{proof}

\begin{corr} \label{corr:phase_stat_perturbation}
Let $m,\ell\in\N$ and $\varepsilon_1, \varepsilon_2 , \delta\in [\hbar,1]$ with  $ \varepsilon_1 \le \delta$ and  $\varepsilon_1 \varepsilon_2 \ge \hbar$. 
Let $ (t,\sigma,z) \mapsto a (t,\sigma,z) $ be in $S^{\varepsilon_1}_t(\R) \times S^{\varepsilon_2}_\sigma(\R)$ and  supported in $\{z\in\mho\}$ with $\mho \Subset \R^m$ open $(\mho$ is allowed to depend on the scales $\varepsilon_1, \varepsilon_2, \delta)$. 
Assume that $ a (t,\sigma,z) = \O_{S^{\varepsilon_1}_t \times S^{\varepsilon_2}_\sigma}\big(\big(\tfrac{\hbar}{\varepsilon\delta}\big)^\ell\big)$ uniformly for $t\in[-\delta,\delta] , \sigma\in\R,z\in\mho$, then  as $\hbar\to0$, 
\[
\frac1{2\pi i\hbar}\int e^{i\tfrac{t\sigma}{\hbar}}a(t,\sigma,z)\dd t\dd\sigma \dd z = \O\Big(|\mho|\big(\tfrac{\hbar}{\varepsilon\delta}\big)^\ell\Big) , \qquad \varepsilon = \min(\varepsilon_1,\varepsilon_2). 
\]
Let $\mathrm{g} : \R^n \to \R$, $\mathcal{C}^1$, such that $\{|\mathrm{g}| \le 1\}$, $\partial_x \mathrm{g} \neq 0$ on $\{\mathrm{g}=0\}$ is compact, and  $a \in S^{\varepsilon_1}_t(\R) \times S^{\varepsilon_2}_\sigma(\R)$  is supported in $\{|\mathrm{g}| \le 1\}$. 
In particular, if it holds for $\ell\ge 2$ and  $\delta \in[\varepsilon_1 ,1]$ 
\[
a (t,\sigma,z) = \O_{S^{\varepsilon_1}_t \times S^{\varepsilon_2}_\sigma}\big(\big(\tfrac{\hbar}{\varepsilon\delta}\big)^\ell\big) \text{ uniformly for $t\in[-\delta,\delta] , \sigma\in\R, z\in \{|\mathrm{g}|\ge \delta\}$}
\]
then 
\[
\frac1{2\pi i\hbar}\int e^{i\tfrac{t\sigma}{\hbar}}a(t,\sigma,z)\dd t\dd\sigma \dd z =  \O\big(\big(\tfrac{\hbar}{\varepsilon\varepsilon_1}\big)^\ell\big). 
\]
\end{corr}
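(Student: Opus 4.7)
The corollary is proved by combining the stationary-phase machinery of Proposition \ref{prop:stat_tempered_classic} (or its $z$-integrated form Corollary \ref{corr:phase_stat_dyadic}) with careful localization in $t$ and $z$.

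For the first claim, the plan is to split the amplitude with a smooth cutoff $\chi \in C^\infty_c(\R)$ supported in $[-2,2]$ and equal to $1$ on $[-1,1]$, writing $a = a_1 + a_2$ with $a_1 := a\,\chi(t/\delta)$ and $a_2 := a\,(1-\chi(t/\delta))$. Because $\delta \ge \varepsilon_1$, the cutoff $\chi(t/\delta)$ lies in $S^\delta \subset S^{\varepsilon_1}$, so $a_1$ stays in $S^{\varepsilon_1}_t \times S^{\varepsilon_2}_\sigma$ and inherits the improved bound $\O((\hbar/(\varepsilon\delta))^\ell)$ uniformly on its support. Applying Corollary \ref{corr:phase_stat_dyadic} to $a_1$ with $\ell' = 1$ yields a leading term $\int a_1(0,0,z)\,\dd z$ plus an error of order $|\mho|(\hbar/(\varepsilon_1\varepsilon_2))(\hbar/(\varepsilon\delta))^\ell$, both dominated by $C|\mho|(\hbar/(\varepsilon\delta))^\ell$ since $\hbar \le \varepsilon_1\varepsilon_2$. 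For the outer piece $a_2$, supported in $\{|t|\ge\delta\}$, I would perform $\ell+1$ integrations by parts in $\sigma$ (boundary terms vanish by the compact $\sigma$-support), rewriting the integral as $(i\hbar)^{\ell+1}$ times an oscillatory integral of the rescaled symbol $b := \delta^{\ell+1}\varepsilon_2^{\ell+1}\,t^{-(\ell+1)}(1-\chi(t/\delta))\partial_\sigma^{\ell+1}a$. This $b$ belongs to $S^{\varepsilon_1}_t \times S^{\varepsilon_2}_\sigma$ with $\O(1)$ seminorms and vanishes identically near $t = 0$; applying Corollary \ref{corr:phase_stat_dyadic} to $b$ with $\ell' = 1$ therefore gives $\O(|\mho|\hbar^2/(\varepsilon_1\varepsilon_2))$ for the residual integral, which after undoing the rescaling yields $|I_2| = \O(|\mho|\hbar^{\ell+2}/(\varepsilon_1\varepsilon_2^{\ell+2}\delta^{\ell+1}))$. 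A short case analysis on whether $\varepsilon = \varepsilon_1$ or $\varepsilon = \varepsilon_2$, using $\hbar \le \varepsilon_1\varepsilon_2$ and $\delta \ge \varepsilon_1$, then shows this is $\O(|\mho|(\hbar/(\varepsilon\delta))^\ell)$.

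For the second claim, I plan a dyadic decomposition of the $z$-domain guided by the level sets of $\mathrm{g}$. With $\delta_j := 2^j\varepsilon_1$ and $\Omega_j := \{\delta_j \le |\mathrm{g}| < \delta_{j+1}\}$ for $j \ge 0$, the non-degeneracy $\partial_x \mathrm{g} \neq 0$ on $\{\mathrm{g} = 0\}$ gives $|\Omega_j| \le C\delta_j$. On each $\Omega_j$ the hypothesis supplies the improved bound at scale $\delta_j$, and applying the first claim to $a|_{\Omega_j}$ with parameter $\delta_j$ produces $|I_j| \le C\hbar^\ell \varepsilon^{-\ell}\delta_j^{1-\ell}$. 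Since $\ell \ge 2$, the series $\sum_{j\ge 0} \delta_j^{1-\ell}$ is geometric and dominated by its $j = 0$ term, giving $\sum_{j\ge 0}|I_j| = \O(\varepsilon_1(\hbar/(\varepsilon\varepsilon_1))^\ell) = \O((\hbar/(\varepsilon\varepsilon_1))^\ell)$.

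The innermost piece $\Omega_{-1} := \{|\mathrm{g}| < \varepsilon_1\}$ of measure $\O(\varepsilon_1)$ is not covered by the hypothesis, and direct application of the stationary phase expansion (Proposition \ref{prop:stat_tempered_classic} followed by $z$-integration) gives only $|I_{-1}| = \O(\varepsilon_1)$. The closing observation is the algebraic inequality $\varepsilon_1 \le (\hbar/(\varepsilon\varepsilon_1))^\ell$: using $\hbar^\ell \le \varepsilon_1^\ell\varepsilon_2^\ell$, this reduces to $\varepsilon_1\varepsilon^\ell \le \varepsilon_2^\ell$, which is $\varepsilon_1 \le 1$ when $\varepsilon = \varepsilon_2 \le \varepsilon_1$, and $\varepsilon_1^{\ell+1} \le \varepsilon_2^\ell$ when $\varepsilon = \varepsilon_1 \le \varepsilon_2$ (the latter follows from $\varepsilon_1 \le 1$ together with $\varepsilon_1^\ell \le \varepsilon_2^\ell$). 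Hence $|I_{-1}|$ is also absorbed into the target. I expect the most delicate part to be the arithmetic book-keeping in the $a_2$ analysis of the first claim, as this is where the three scales $(\varepsilon_1,\varepsilon_2,\delta)$ must all be balanced simultaneously against the constraint $\hbar \le \varepsilon_1\varepsilon_2$.
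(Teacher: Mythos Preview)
Your approach to the first claim is essentially the paper's: both split with a cutoff $\chi(t/\delta)$, use the smallness hypothesis together with Proposition~\ref{prop:stat_tempered_classic} on the inner piece, and integrate by parts in $\sigma$ on the outer piece (where $|t|$ is bounded below). The paper performs $k=\ell$ integrations by parts and bounds the remaining oscillatory integral by $\O(\hbar)$; you do $\ell+1$ and then invoke Corollary~\ref{corr:phase_stat_dyadic} with $\ell'=1$ to exploit that the new amplitude vanishes at $t=0$ --- a slight over-elaboration, but correct. One cosmetic fix: take $\chi$ supported in $[-1,1]$ (as the paper does) rather than $[-2,2]$, so that $a_1$ is entirely contained in the region $|t|\le\delta$ where the smallness hypothesis applies.

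For the second claim, the paper's proof is in fact silent; your dyadic decomposition over shells $\Omega_j=\{\delta_j\le|\mathrm g|<\delta_{j+1}\}$ is the natural route and the summation over $j\ge 0$ is fine. However, your treatment of the innermost region $\Omega_{-1}=\{|\mathrm g|<\varepsilon_1\}$ contains a genuine error. The ``closing observation'' $\varepsilon_1 \le (\hbar/(\varepsilon\varepsilon_1))^\ell$ is \emph{false} in general, and your justification uses $\hbar\le\varepsilon_1\varepsilon_2$ in the wrong direction: that inequality bounds $\hbar$ from above, whereas you need a lower bound on $\hbar$ to control $\varepsilon_1$ by $(\hbar/(\varepsilon\varepsilon_1))^\ell$. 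For a concrete counterexample take $\varepsilon_1=\varepsilon_2=\tfrac12$, $\ell=2$, $\hbar=10^{-6}$: then $(\hbar/(\varepsilon\varepsilon_1))^\ell\approx 1.6\times10^{-11}\ll \tfrac12=\varepsilon_1$. So the $\O(\varepsilon_1)$ contribution from $\Omega_{-1}$ cannot be absorbed into the target bound without further input. In the paper's actual application (Section~\ref{sec:commutators-at-small}), the region $\{x\in\mho_1\}$ is handled separately, using the additional information \eqref{B2_exp} about the amplitude there, and only the \emph{first} claim of the corollary is invoked on the outer shells --- so the second claim as stated seems to carry an implicit extra hypothesis that neither you nor the paper supplies.
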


\begin{proof} We simply introduce a smooth cutoff $\chi:\R\to[0,1]$ such that $\chi=0$ on $\R\setminus[-1,1]$ and $\chi^\dagger=1-\chi$ satisfies $\chi^\dagger=0$ on $[-\frac12,\frac12]$. Then, by repeated integrations by part, it holds for $z\in\mho$, 
\[
\int e^{i\tfrac{t\sigma}{\hbar}}a(t,\sigma,z)\dd t\dd\sigma 
= \int e^{i\tfrac{t\sigma}{\hbar}} \chi(t/\delta)a(t,\sigma,z)\dd t\dd\sigma 
+\big(\tfrac{i\hbar}{\eta\delta}\big)^k\int e^{i\tfrac{t\sigma}{\hbar}}   \tfrac{\chi^\dagger(t/\delta)}{(t/\delta)^k}\eta^k\partial_\sigma^ka(t,\sigma,z)\dd t\dd\sigma 
\]
By assumption, both symbols $(t,\sigma) \mapsto \chi(t/\delta)a(t,\sigma,z)$ and 
$(t,\sigma) \mapsto
\tfrac{\chi^\dagger(t/\delta)}{(t/\delta)^k}\eta^k\partial_\sigma^ka(t,\sigma,z)$
are
$\O_{S^{\varepsilon_1 }_t \times S^{\varepsilon_2}_\sigma}\big(\big(\tfrac{\hbar}{\varepsilon\delta}\big)^k\big)$ uniformly  for $z\in\mho$, hence  Proposition~\ref{prop:stat_tempered_classic} shows that the first integral is $\O\big(\hbar\big(\tfrac{\hbar}{\varepsilon\delta}\big)^k\big)$ and the second is $\O(\hbar)$ uniformly  for $z\in\mho$.
Then, by integrating over $z\in\mho$, this completes the proof. 
\end{proof}

\bibliographystyle{abbrv}
\bibliography{main}
\end{document}